\tikzset{node distance=1.8cm, auto}
\theoremstyle{definition}
\newtheorem{theorem}{Theorem}[section]
\newtheorem{proposition}[theorem]{Proposition}
\newtheorem{corollary}[theorem]{Corollary}
\newtheorem{definition}[theorem]{Definition}
\newtheorem{example}[theorem]{Example}
\newtheorem{lemma}[theorem]{Lemma}
\newtheorem{remark}[theorem]{Remark}
\newtheorem{conjecture}[theorem]{Conjecture}
\newtheorem{algorithm}[theorem]{Algorithm}
\let\phi\varphi
\let\theta\vartheta
\DeclareMathOperator\Sp{Sp}
\DeclareMathOperator\End{End}
\DeclareMathOperator\Aut{Aut}
\DeclareMathOperator\rk{rk}
\DeclareMathOperator\Tr{Tr}
\DeclareMathOperator\Frac{Frac}
\renewcommand{\:}{\colon}
\newcommand{\T}{\mathbf{T}}
\renewcommand{\S}{\operatorname{S}}
\newcommand{\M}{\operatorname{M}}
\newcommand{\Tmod}{\widetilde{\T}}
\newcommand{\Smod}{\widetilde{\S}}
\newcommand{\bF}{\mathbb{F}}
\title{\textsc{Traces of Hecke operators on Drinfeld modular forms for \texorpdfstring{$\operatorname{GL}_2(\bF_q[T])$}{GL2(Fq[T])}}} 
\author{Sjoerd de Vries \\ \\ \emph{With an appendix joint with Jonas Bergstr\"om}}
\date{} 
\begin{document}
\maketitle

\begin{abstract}
    \noindent In this paper, we study traces of Hecke operators on Drinfeld modular forms of level~1 in the case $A = \mathbb{F}_q[T]$. We deduce closed-form expressions for traces of Hecke operators corresponding to primes of degree at most~2 and provide algorithms for primes of higher degree. We improve the Ramanujan bound and deduce the decomposition of cusp forms of level~$\Gamma_0(\mathfrak{p})$ into oldforms and newforms, as conjectured by Bandini--Valentino, under the hypothesis that each Hecke eigenvalue has multiplicity less than~$p$.
\end{abstract}

\tableofcontents

\section{Introduction}

Drinfeld modular forms are function field analogues of elliptic modular forms. At the heart of this analogy lies the one between Drinfeld modules of rank~2 and elliptic curves, both of which can be realised as rank~2 lattices in the algebraic closure of the completion of a global field at infinity. To fix ideas, let $\bF_q$ be a finite field of order~$q$ and characteristic~$p$, let $K$ be the function field of a smooth projective curve over~$\mathbb F_q$, let $\infty$ be a place of~$K$, and let $A$ be the ring of integers in~$K$ (the elements of~$K$ with poles only at~$\infty$). Drinfeld modular forms are then defined as certain functions on the moduli space of Drinfeld $A$-modules. 

The theory of Drinfeld modular forms bears, as expected, many similarities to the classical theory of modular forms. For instance, every modular form has a weight, and the space of modular forms of fixed weight is finite-dimensional. However, there are also notable differences. Let us mention two examples. First, Drinfeld modular forms 
come equipped with a type, which can be seen as an element of $\mathbb Z/(q-1)\mathbb Z$ and which is absent in the classical setting. Second, since everything happens in positive characteristic, there is no obvious analogue of a Petersson inner product, and as such there is also no straightforward analogue of notions derived from it. Chief among these is the decomposition of modular forms into oldforms and newforms. There is recent work initiated by Bandini--Valentino which aims to achieve a decomposition into oldforms and newforms via non-classical methods \cite{band-val_oldnew,val_atkin-lehner-theory}.

In both settings, Hecke operators play a central role in the study of modular forms. Again, one finds striking similarities and differences. In both settings, there is a Hecke operator for each prime, and each Hecke operator respects the modular forms of a given weight. On the other hand, eigenvalues of Hecke operators classically arise as coefficients of $q$-expansions of modular forms; in the Drinfeld setting, there is no such relationship, except for some special modular forms (those with $A$-expansions). Even more jarringly, classical Hecke operators are always diagonalisable (they are orthogonal with respect to the Petersson inner product), but for Drinfeld modular forms this is not always the case \cite{li-meemark}.

This mixture between familiarity and strangeness makes Drinfeld modular forms interesting objects of study, and much is yet to be understood, even in the simplest case $A = \mathbb F_q[T]$. In this paper we study the Hecke operators~$\T_{\mathfrak p}$, when $A = \mathbb F_q[T]$, using a trace formula proved in~\cite{devries_rambound}. The trace formula is geometric, in the sense that it expresses the trace of a Hecke operator as a sum over points on the moduli space of Drinfeld modules. The number of isomorphism classes of Drinfeld modules over a finite field in a given isogeny class has been studied by Yu and Gekeler \cite{yu_isogs,gekeler_frob_dist_dm}, allowing for an explicit evaluation of the trace formula in some cases.\\

We now state our main results. Denote by $\S_{k,l}$ the space of Drinfeld cusp forms of weight~$k$ and type~$l$, so that $\S_{k,l} \neq 0$ only if $k \equiv 2l \pmod{q-1}$. Write $\wp$ for the monic generator of~$\mathfrak p \trianglelefteq A$. Firstly, we obtain a closed-form expression for traces of Hecke operators for primes of degree~1.

\begin{theorem}[Thm.~\ref{thm:deg1}]\label{thm:intro1}
    Fix $k \geq 0$ and $l \in \mathbb Z$ such that $k+2 \equiv 2l \pmod{q-1}$. Then we have
    \[
    \Tr(\T_{T} \hspace{0.2em} | \hspace{0.1em} \S_{k+2,l}) = \sum_{\substack{0 \leq j < k/2 \\ j \equiv l-1 \pmod{q-1}}} (-1)^j {\binom{k-j}{j}}T^j.
    \]
\end{theorem}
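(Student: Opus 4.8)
The plan is to start from the geometric trace formula of \cite{devries_rambound}, which expresses $\Tr(\T_{\mathfrak p} \mid \S_{k+2,l})$ as a weighted sum over the finitely many isomorphism classes of rank-2 Drinfeld modules over the residue field $A/\mathfrak p$ in the relevant isogeny classes, each class contributing its mass (a class number weighted by the order of its $\Aut$ group) times a ``Hecke eigenvalue'' factor and a type-dependent weight. For $\mathfrak p = (T)$ the residue field is simply $\bF_q$, and the Frobenius endomorphism $\pi$ of such a module satisfies a characteristic equation $\pi^2 - a\pi + \mu T = 0$. The degree-$1$ case of the Weil-type bound of Yu \cite{yu_isogs} forces $\deg a \leq 1/2$, hence $a \in \bF_q$ is a constant and $\mu \in \bF_q^\times$; the isogeny classes are thus parametrised by the finite set of pairs $(a,\mu)$, which is what makes a closed form attainable.

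For each such pair the eigenvalue factor attached to weight $k+2$ is the degree-$k$ symmetric function of the Frobenius roots $\alpha,\beta$ (with $\alpha+\beta = a$, $\alpha\beta = \mu T$), namely
\[
\frac{\alpha^{k+1}-\beta^{k+1}}{\alpha-\beta} \;=\; \sum_{j=0}^{\lfloor k/2\rfloor}(-1)^j\binom{k-j}{j}\,a^{\,k-2j}(\mu T)^j ,
\]
the standard Gegenbauer expansion. The type-$l$ condition enters as a character of $\bF_q^\times$, weighting the class $(a,\mu)$ by a fixed power $\mu^{s}$ with $s \equiv 1-l \pmod{q-1}$. I would next substitute the Gekeler--Yu class-number formulas \cite{gekeler_frob_dist_dm,yu_isogs} for degree-$1$ primes; the essential point is that, after incorporating the automorphism weights, the masses of the relevant isogeny classes are explicit and simple enough (up to the global normalisation of the trace formula) that the trace collapses to a combination of power sums over $\bF_q$ and $\bF_q^\times$.

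Granting this, I would interchange summation to bring the sum over $j$ outside, leaving the coefficient of $T^j$ controlled by $\bigl(\sum_{a\in\bF_q}a^{\,k-2j}\bigr)\bigl(\sum_{\mu\in\bF_q^\times}\mu^{\,j+s}\bigr)$. The second factor equals $-1$ exactly when $j \equiv l-1 \pmod{q-1}$ and vanishes otherwise, producing the congruence condition in the statement; the first factor equals $-1$ precisely when $k-2j \geq 1$ and $(q-1)\mid(k-2j)$, and vanishes (it is $q \equiv 0$) when $k-2j = 0$, which accounts for the strict inequality $j < k/2$. By the weight--type compatibility $k \equiv 2(l-1) \pmod{q-1}$, the condition $j \equiv l-1$ already forces $(q-1)\mid(k-2j)$, so both factors are simultaneously nonzero and their product is $(-1)(-1)=1$, delivering exactly $(-1)^j\binom{k-j}{j}T^j$. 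The main obstacle I anticipate is the class-number step: pinning down the Gekeler--Yu masses for degree-$1$ primes with the correct automorphism weights, separating the ordinary ($a\neq 0$) from the supersingular ($a=0$) classes, and checking that the non-elliptic (boundary/Eisenstein) contributions to the trace formula either vanish on cusp forms or are absorbed, so that the global normalisation constant is exactly $1$.
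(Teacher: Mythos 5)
Your skeleton is the paper's: start from the trace formula of Thm.~\ref{thm:heckesum}, note that for $\mathfrak p = (T)$ the Weil polynomials are $X^2 - aX + \mu T$ with $a \in \bF_q$, $\mu \in \bF_q^\times$, expand $h_k(\pi,\bar\pi)$ by the Gegenbauer identity (Lemma~\ref{lem:hmexpression}), and evaluate the character sums $\sum_{a \in \bF_q} a^{k-2j}$ and $\sum_{\mu \in \bF_q^\times}\mu^{j+l-k-1}$, which is exactly how the paper produces the congruence $j \equiv l-1 \pmod{q-1}$ and the strict inequality $j < k/2$ (your observation that the $j = k/2$ term dies because $\sum_{a}a^0 = q = 0$ in characteristic $p$ is a slight streamlining: the paper instead strips the top term $\epsilon_k$ out in Prop.~\ref{prop:trace_formula} via Lemma~\ref{lem:isog_sum}, which it needs anyway for primes of higher degree). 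The one place you genuinely diverge is the step you flag as your main obstacle: the masses $\#\text{Iso}_{(T)}(a,\mu)$. You propose to extract them from the Gekeler--Yu class-number formulas; the paper avoids class numbers entirely with a two-line direct count (Lemma~\ref{lem:isoclasses}): every rank-2 Drinfeld module over $\bF_q$ is $\phi_T = x + \alpha\tau + \beta\tau^2$ with $\beta \neq 0$, its characteristic polynomial is $X^2 + \beta^{-1}\alpha X - \beta^{-1}(T-x)$, and since conjugation by $c \in \bF_q^\times$ acts trivially on $\bF_q\{\tau\}$, the assignment $(\alpha,\beta) \mapsto (a,b)$ is a bijection, so each pair contributes exactly $1$. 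For the record, your class-number route does close: by Prop.~\ref{prop:isom} the mass in the relevant cases is $H(A[\pi])$, and for a degree-$1$ prime the discriminant $a^2 - 4\mu T$ has odd degree $1$, so the quadratic extension is automatically imaginary (no imaginarity check is needed, unlike for higher-degree primes), $A[\pi]$ is maximal, the associated curve has genus $0$, and $H = 1$ by Lemma~\ref{lem:jac}. Finally, your anticipated worries about automorphism weightings, Eisenstein/boundary contributions, and a global normalisation constant are moot here: Thm.~\ref{thm:heckesum} is already a clean unweighted sum over isomorphism classes with no extra terms, so nothing needs to be absorbed. In short: same proof as the paper modulo one deferred lemma, which you correctly identified as the crux but left unexecuted, and which admits a much more elementary proof than the class-number machinery you planned to invoke.
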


Theorem~\ref{thm:intro1} allows for the computation of the $\T_T$-eigenvalues on all 1-dimensional spaces of cusp forms (Thm.~\ref{thm:deg1dim1}).

In the appendix, 
we completely describe the number of isomorphism classes of Drinfeld modules over finite fields in a fixed isogeny class in terms of Hurwitz class numbers, extending the aforementioned results by Yu and Gekeler. The main application of this is the following: if $q$ is even, then Theorem~\ref{thm:intro1} models the traces of every Hecke operator.

\begin{theorem}[Thm.~\ref{thm:char2traces}]\label{thm:intro_char2traces}
Suppose $2 \mid q$. Fix $k \geq 0$ and $l \in \mathbb Z$ such that $k+2 \equiv 2l \pmod{q-1}$. Then for any Hecke operator $\T_{\mathfrak p}$ and any $n \geq 1$, we have
\[
    \Tr(\T_{\mathfrak p}^n \hspace{0.2em} | \hspace{0.1em} \S_{k+2,l})  = \sum_{\substack{0 \leq j < k/2 \\ j \equiv l-1 \pmod{q-1}}} {\binom{k-j}{j}}\wp^{nj}.
\]
\end{theorem}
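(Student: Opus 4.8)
The plan is to evaluate the geometric trace formula of \cite{devries_rambound} using the class-number computations of the appendix, and to show that when $2 \mid q$ the resulting sum collapses to the single contribution already responsible for the degree-$1$ formula of Theorem~\ref{thm:intro1}. Recall that the trace formula expresses $\Tr(\T_{\mathfrak p}^n \mid \S_{k+2,l})$ as a weighted sum over isogeny classes of rank-$2$ Drinfeld modules over the residue extension $\mathbb F_{q^{n\deg\mathfrak p}}$. Such a class is recorded by the characteristic polynomial of its Frobenius, $X^2 - aX + \mu\wp^n$ with $\mu \in \mathbb F_q^{\times}$ and $\deg a \le \tfrac{1}{2}n\deg\mathfrak p$, and it contributes a weight factor $w_{k,l}(a,\mu\wp^n)$ times the number $h(a,\mu\wp^n)$ of isomorphism classes in the isogeny class. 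The weight factor is the type-$l$ part of the symmetric function of the Frobenius eigenvalues attached to weight $k+2$; concretely it is the coefficient of $t^k$ in $\frac{1}{1 - at + \mu\wp^n t^2}$, restricted to exponents $j \equiv l-1 \pmod{q-1}$. Thus, schematically,
\[
\Tr(\T_{\mathfrak p}^n \mid \S_{k+2,l}) = \sum_{a,\mu} w_{k,l}(a,\mu\wp^n)\, h(a,\mu\wp^n) \;-\; (\text{boundary terms}),
\]
where the boundary terms account for the split classes with $\deg a = \tfrac{1}{2}n\deg\mathfrak p$ and are precisely what removes the top index $j=k/2$ from the final sum.

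First I would isolate the target of the computation by revisiting Theorem~\ref{thm:intro1}. Expanding $\frac{1}{1 - t + T t^2}$ shows that its degree-$1$ formula is exactly the boundary-corrected weight factor $w_{k,l}(1,T)$, i.e.\ the contribution of the isogeny class with unit Frobenius trace $a = 1$ and norm $T$. Consequently it suffices to prove that when $2 \mid q$ the whole sum above reduces, after removing the boundary, to $w_{k,l}(1,\wp^n)$; the binomial shape $\sum_{j}\binom{k-j}{j}\wp^{nj}$ then follows by the same expansion, the signs $(-1)^j$ disappearing because $2 \mid q$. The reduction amounts to showing that, in the type-$l$ part, the contributions of all isogeny classes other than $(a,\mu) = (1,1)$ sum to zero.

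This last vanishing is where characteristic~$2$ enters decisively, and it is the main obstacle. Here I would feed in the appendix: it expresses $h(a,\mu\wp^n)$ through Hurwitz class numbers of the discriminant $a^2 - 4\mu\wp^n$. When $2 \mid q$ this discriminant equals $a^2$, a perfect square, so the endomorphism orders degenerate and the class-number formula takes a uniform shape across all admissible $a$. The plan is to substitute these explicit values and to show, using (i) the resulting identities among Hurwitz class numbers of squares and (ii) the recurrences satisfied by the weight polynomials $w_{k,l}$, that the contributions of all non-unit-trace classes cancel in pairs, leaving only $a=1,\mu=1$. I expect the bookkeeping of the type decomposition (the congruence $j \equiv l-1$, together with the hypothesis $k+2 \equiv 2l$) and the precise matching of the boundary terms to be the delicate points, since the congruence conditions modulo $q-1$ must be shown compatible with the pairing that produces the cancellation; once this cancellation is established, the closed form is immediate.
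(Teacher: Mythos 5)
Your proposal has a fatal structural error: the claimed reduction to the single isogeny class $(a,\mu)=(1,1)$ is false for every even $q>2$, and it would also produce the wrong formula. What the paper actually establishes (via Propositions~\ref{prop:isom} and~\ref{prop:evenclassnumber} of the appendix) is that in characteristic~2 one has $\# \text{Iso}_{\mathfrak p^n}(a,b) \equiv 1 \pmod 2$ for \emph{every} constant $a$ (including $a=0$) and every $b \in \bF_q^\times$, and $\equiv 0$ whenever $\deg(a)>0$. All of these classes survive; there is no pairwise cancellation leaving only the unit-trace class. The congruence condition $j \equiv l-1 \pmod{q-1}$ in the final formula is generated precisely by the character sums $\sum_{a \in \bF_q} a^{k-2j}\sum_{b \in \bF_q^\times} b^{j+l-k-1}$ over this full set of surviving classes (as in the proof of Thm.~\ref{thm:deg1} and Rmk.~\ref{rmk:simple_traces}): each sum is $-1=1$ in $\bF_q$ when the exponent is divisible by $q-1$ and $0$ otherwise. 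Under your reduction the trace would be $\sum_{0 \leq j < k/2} \binom{k-j}{j}\wp^{nj}$ with \emph{no} restriction on $j$, contradicting the stated theorem for $q \in \{4,8,\dots\}$; the same objection invalidates your reading of Theorem~\ref{thm:intro1} as the contribution of the single class with $a=1$ and norm $T$ (it too is a sum over all $a \in \bF_q$, $b \in \bF_q^\times$, by Lemma~\ref{lem:isoclasses}).

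The mechanism you invoke for characteristic~2 is also wrong, and it is exactly where the real work lies. The discriminant $a^2 - 4b\wp^n = a^2$ carries no information in characteristic~2: quadratic extensions are Artin--Schreier rather than Kummer, the polynomial $X^2 - aX + b\wp^n$ with $a \neq 0$ is separable and typically generates an imaginary field with nontrivial class number, and nothing ``degenerates'' or becomes uniform in $a$. The actual input is a parity theorem (Prop.~\ref{prop:evenclassnumber}): $H(A[\pi])$ is even if and only if $\deg(a) > 0$, proved by identifying $J_L[2](\bF_q)$ for the hyperelliptic curve attached to~$L$ via explicit $2$-torsion divisors supported on the zeros of the (reduced) trace coefficient, together with the Deuring--Shafarevich formula; this requires the characteristic-2 reduction algorithm of Section~\ref{sec:hypers} to even define the right curve, since naive discriminants are useless there. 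Your sketch offers no substitute for this computation — ``identities among Hurwitz class numbers of squares'' does not exist as a tool here — so the step you yourself flag as the main obstacle is unresolved. A smaller inaccuracy: the removal of the $j = k/2$ term is not a boundary correction from split classes with $\deg a = \tfrac{1}{2}n\deg\mathfrak p$; in the paper it follows from Lemma~\ref{lem:isog_sum}, i.e.\ from the vanishing of the weight-2 trace.
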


Since Theorem~\ref{thm:intro_char2traces} is valid for any $n \geq 1$, we can draw conclusions about the Hecke eigenvalues.

\begin{theorem}[Thm.~\ref{thm:char2eigs} and Thm.~\ref{thm:char2_repetition}]
    Suppose $2 \mid q$. Then $\lambda$ is an eigenvalue of $\T_{\mathfrak p}$ on~$\S_{k+2,l}$ with odd algebraic multiplicity if and only if $\lambda = \wp^j$ for some $j \in \mathbb Z$ satisfying $0 \leq j < k/2$,  $j \equiv l-1 \pmod{q-1}$, and $\binom{k-j}{j} \equiv 1 \pmod{2}$.
    Moreover, there are only finitely many weights in which the action of~$\T_{\mathfrak p}$ has no repeated eigenvalues.
\end{theorem}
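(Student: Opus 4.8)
The plan is to read off the full mod-$2$ spectrum of $\T_{\mathfrak p}$ from the power-sum identity of Theorem~\ref{thm:intro_char2traces}, and then to play the (subpolynomial) number of odd binomial coefficients against the (linear) growth of $\dim\S_{k+2,l}$.

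For the eigenvalue statement (Thm.~\ref{thm:char2eigs}), I would first recall that for an operator on a finite-dimensional space one has $\Tr(\T_{\mathfrak p}^n)=\sum_\lambda m(\lambda)\,\lambda^n$, where $\lambda$ runs over the distinct eigenvalues in $\overline K$ and $m(\lambda)$ is the algebraic multiplicity (put $\T_{\mathfrak p}$ in triangular form). As $\char=2$, only odd-multiplicity eigenvalues survive, so
\[
\Tr(\T_{\mathfrak p}^n\mid\S_{k+2,l})=\sum_{\lambda:\,m(\lambda)\text{ odd}}\lambda^n .
\]
Reducing the right-hand side of Theorem~\ref{thm:intro_char2traces} modulo~$2$ rewrites it as $\sum_{j\in S}\wp^{nj}$, where $S=\{\,0\le j<k/2:\ j\equiv l-1\pmod{q-1},\ \binom{k-j}{j}\text{ odd}\,\}$ and the $\wp^j$ are pairwise distinct since $\deg\wp\ge1$. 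Equating the two and using $0^n=0$ gives $\sum_{\gamma\in D}\gamma^n=0$ for all $n\ge1$, with $D$ the symmetric difference of $\{\wp^j:j\in S\}$ and the set of nonzero odd-multiplicity eigenvalues (terms common to both appear twice, hence cancel in characteristic~$2$). Were $D=\{\gamma_1,\dots,\gamma_s\}$ nonempty, the square system for $n=1,\dots,s$ would have matrix $(\gamma_i^{\,n})$ of determinant $(\prod_i\gamma_i)\prod_{i<i'}(\gamma_{i'}-\gamma_i)\ne0$, forcing the all-ones vector to vanish, which is impossible. Hence $D=\varnothing$: the nonzero eigenvalues of odd multiplicity are exactly the $\wp^j$, $j\in S$.

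The power sums do not detect $\lambda=0$, so I would exclude it separately by a parity count: since $\dim\S_{k+2,l}=\sum_\lambda m(\lambda)\equiv\#\{\lambda:m(\lambda)\text{ odd}\}\pmod 2$ and the nonzero such $\lambda$ number exactly $|S|$, the value $0$ has odd multiplicity precisely when $\dim\S_{k+2,l}\not\equiv|S|\pmod 2$. A routine check against Gekeler's dimension formula (here $q$ even makes the type bookkeeping clean, as $2$ is invertible modulo~$q-1$) shows the two parities always agree, so $0$ never occurs with odd multiplicity, completing the characterisation.

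For the finiteness statement (Thm.~\ref{thm:char2_repetition}), suppose $\T_{\mathfrak p}$ has no repeated eigenvalue on $\S_{k+2,l}$. Then every eigenvalue has multiplicity~$1$, hence odd multiplicity, so by the previous part each equals some $\wp^j$ ($j\in S$) and $0$ is not an eigenvalue; counting yields $\dim\S_{k+2,l}=|S|$. It therefore suffices to prove $|S|<\dim\S_{k+2,l}$ for all but finitely many~$k$. Now $\dim\S_{k+2,l}$ grows linearly in~$k$ (it equals $\tfrac{k}{q^2-1}+O(1)$), whereas $|S|$ is at most the number of $j$ with $\binom{k-j}{j}$ odd. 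The crux—and the step I expect to be the main obstacle—is to bound this last count: by Kummer's theorem $\binom{k-j}{j}$ is odd exactly when the binary digits of $j$ are dominated by those of $k-j$, and a transfer-matrix (digit-automaton) analysis, equivalently the identification of these counts with Stern's diatomic sequence, gives a subpolynomial bound $O(k^{\log_2\varphi})$ with $\varphi=\tfrac{1+\sqrt5}{2}$; for our purposes only $o(k)$ is needed. Hence $|S|=o(k)<\dim\S_{k+2,l}$ once $k$ is large, so a repeated eigenvalue must occur outside a finite set of weights. Everything else is linear algebra and a dimension count; the subpolynomial estimate for the number of odd antidiagonal binomial coefficients is the one genuinely nontrivial ingredient.
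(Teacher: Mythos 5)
Your treatment of the nonzero eigenvalues is sound, and in fact a little more elementary than the paper's: where you take the symmetric difference $D$ of $\{\wp^j : j\in S\}$ with the set of nonzero odd-multiplicity eigenvalues and kill it with a single Vandermonde determinant, the paper compares ranks of the Hankel matrices $M_c = A_c^tA_c = B_c^tB_c$ built from the two Vandermonde systems and then cites \cite{newton} for the fact that power sums of distinct elements determine those elements; the two arguments are equivalent in content. Your finiteness argument also coincides with the paper's proof of Thm.~\ref{thm:char2_repetition} (Stern--Brocot sequence, Lehmer's maximum $F_{N+1}$, hence $O(k^{\log_2\phi})$ against the $\Theta(k)$ growth of $\dim\S_{k+2,l}$). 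The genuine gap is the sentence claiming that ``a routine check against Gekeler's dimension formula shows the two parities always agree.'' That parity agreement is precisely Lemma~\ref{lem:dim_congruence}, $N(k+2,l,q)\equiv\dim\S_{k+2,l}\pmod 2$, and it is not routine: the paper does \emph{not} prove it by comparing the dimension formula with the count of odd binomial coefficients, but by invoking the mod-$\mathfrak p$ Hecke operators of Joshi--Petrov \cite{joshi-petrov}, whose unique eigenvalue on $\Smod_{k,l}$ is $Q(x)^{l-1}$, giving $\dim\S_{k,l}\bmod 2=\Tr(\T_T\,|\,\S_{k,l})\bmod(T+1)$. The paper even remarks explicitly that a direct combinatorial verification works for $q=2$ via the Stern--Brocot recursion but becomes difficult for $q>2$, because the relevant recursion for $N(k+2,l,q)$ changes the type $l$. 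So as written, your exclusion of $\lambda=0$ rests on an unproved and genuinely nontrivial claim.

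The repair is available inside the paper and would actually simplify your write-up: Theorem~\ref{thm:injectivity} (also deduced from \cite{joshi-petrov}) says $\T_{\mathfrak p}$ is injective on $\S_{k,l}$, so $0$ is never an eigenvalue and no parity bookkeeping is needed at all; indeed the paper observes that Lemma~\ref{lem:dim_congruence} is \emph{equivalent} to $0$ having even multiplicity. Either way, some input beyond digit combinatorics and the trace formula (namely the Joshi--Petrov structure results) is required, and your proposal hides exactly that ingredient. Note finally that your finiteness argument is insensitive to this gap: if $\T_{\mathfrak p}$ had no repeated eigenvalue, then even allowing $0$ as a simple eigenvalue you would get $\dim\S_{k+2,l}\le |S|+1=o(k)$, a contradiction for $k$ large, so the second half of the statement stands on its own.
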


If $q$ is odd, the traces of Hecke operators are more complicated for primes of higher degree; see for instance Theorem~\ref{thm:deg2} for primes of degree~2. Nonetheless, the traces can be effectively computed, as described in Algorithm~\ref{alg}.

Generally speaking, it is a difficult task to determine whether a given Hecke operator is injective. Bandini and Valentino conjectured \cite{band-val_slopes} that the Hecke operator~$\T_T$ is always injective on~$\S_{k,l}$. We note that a result by Petrov and Joshi \cite{joshi-petrov} in fact implies this result for any Hecke operator on~$\S_{k,l}$.

\begin{theorem}[Thm.~\ref{thm:injectivity}]\label{thm:intro3}
    The Hecke operator $\T_{\mathfrak p}$ is injective on $\bigoplus_{k,l} \S_{k,l}$ for any prime~$\mathfrak p$.
\end{theorem}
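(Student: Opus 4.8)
The plan is to split the statement into a formal reduction and a single substantive input, the latter being the theorem of Joshi and Petrov. The key structural fact is that $\T_{\mathfrak p}$ respects the bigrading: it carries each $\S_{k,l}$ into itself, preserving both the weight~$k$ and the type~$l$. Hence $\T_{\mathfrak p}$ acts block-diagonally on $\bigoplus_{k,l}\S_{k,l}$ with respect to this decomposition, and injectivity on the whole space is equivalent to injectivity on each summand separately.

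Making this precise: suppose $f=\sum_{k,l}f_{k,l}$ is a nonzero element of $\bigoplus_{k,l}\S_{k,l}$, a finite sum with $f_{k,l}\in\S_{k,l}$, and suppose $\T_{\mathfrak p}f=0$. Then $\sum_{k,l}\T_{\mathfrak p}f_{k,l}=0$; since $\T_{\mathfrak p}f_{k,l}\in\S_{k,l}$ and the sum is direct, each $\T_{\mathfrak p}f_{k,l}$ vanishes on its own. If $\T_{\mathfrak p}$ is injective on every $\S_{k,l}$, this forces every $f_{k,l}=0$, contradicting $f\neq0$. Thus the theorem reduces to the injectivity of $\T_{\mathfrak p}$ on a single space $\S_{k,l}$, for an arbitrary prime~$\mathfrak p$.

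For this remaining point I would appeal to the result of Joshi and Petrov~\cite{joshi-petrov}, read through the action of $\T_{\mathfrak p}$ on $t$-expansions at the cusp. Writing $\wp$ for the monic generator of $\mathfrak p$ of degree~$d$, the two constituents of the double-coset operator occupy very different ranges. The homothety term $\wp^{k}f(\wp z)$ only affects coefficients in high order, since the Carlitz relation $e_C(\tilde\pi\wp z)=\phi_{\wp}(e_C(\tilde\pi z))$ forces $t(\wp z)$ to begin in degree~$q^{d}$; consequently, whether $\T_{\mathfrak p}f$ vanishes is governed by the lowest-order behaviour of the averaging term $\sum_{\deg b<d}f((z+b)/\wp)$, a symmetric sum over the Carlitz $\wp$-division values $e_C(\tilde\pi(z+b)/\wp)$. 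The content I would extract from Joshi and Petrov is that these low-order coefficients cannot all vanish for a nonzero $f$, which is exactly the injectivity on $\S_{k,l}$ needed above.

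The whole difficulty is concentrated in this last input, and it is a genuine difficulty rather than a formal one: in contrast to the classical picture, where a Hecke operator can easily have a nonzero kernel on cusp forms, injectivity here is special to the Carlitz setting. The delicate step is to analyse the symmetric sum over the $\wp$-division values sharply enough to rule out cancellation in the lowest order, and to do so uniformly in $\mathfrak p$, $k$ and~$l$. Granting that, the homothety term cannot interfere in the decisive order, and the block-diagonal reduction then upgrades injectivity on each $\S_{k,l}$ to injectivity on $\bigoplus_{k,l}\S_{k,l}$, completing the argument.
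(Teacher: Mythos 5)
Your formal reduction to the individual summands $\S_{k,l}$ is fine and matches what the paper does implicitly (the theorem in the body is stated for a single $\S_{k,l}$, and block-diagonality gives the direct sum version). The gap is in the substantive step: you attribute to Joshi--Petrov a statement they do not prove, namely that the lowest-order $t$-expansion coefficients of the averaging term $\wp^{-1}\sum_{\deg b<\deg \wp} f\bigl((z+b)/\wp\bigr)$ cannot all vanish for a nonzero $f$. That assertion is essentially the injectivity you are trying to establish --- it is the content of the Bandini--Valentino conjecture, which by direct $t$-expansion arguments had previously been verified only when $\dim \S_{k,l}=1$ --- so your argument is circular at the decisive point. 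The $t$-expansion heuristic is also shaky on its own terms: the averaging part acts through Goss polynomials and can a priori produce cancellation at low order, and your claim that the homothety term (which indeed only enters in order at least $q^{d}\cdot\operatorname{ord}_t f$) ``cannot interfere in the decisive order'' presumes you already know where the decisive order of the averaging term sits, which is exactly the missing analysis.

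What Joshi--Petrov actually supply, and what the paper uses, is a statement about reduction modulo a degree-one prime \emph{away from} $\mathfrak p$: choosing $x\in\bF_q$ with $\wp(x)\neq 0$, their Cor.~3.3 gives a well-defined reduction $\Tmod_{\mathfrak p}$ of $\T_{\mathfrak p}$ modulo $(T-x)$ whose only eigenvalue on $\Smod_{k,l}$ is $\wp(x)^{l-1}$, occurring with multiplicity $\dim\S_{k,l}$. Since $\T_{\mathfrak p}$ is defined over $A$, a nontrivial kernel on $\S_{k,l}$ would make $0$ an eigenvalue, and $0$ would persist as an eigenvalue of $\Tmod_{\mathfrak p}$ after reduction, contradicting $\wp(x)^{l-1}\neq 0$. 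Your proposal never performs this reduction mod $(T-x)$ and never invokes the eigenvalue statement, so as written the key step is unsupported; repairing it means replacing your cusp-expansion reading of \cite{joshi-petrov} with this mod-$(T-x)$ eigenvalue argument.
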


Further conjectures of Bandini and Valentino aim to establish a decomposition of Drinfeld cusp forms of level~$\Gamma_0(\mathfrak p)$ into oldforms and newforms. This is related to bounds on slopes of Hecke operators on Drinfeld cusp forms of level~1. We study analogous bounds on traces of Hecke operators in Section~\ref{sec:ram}. We have a precise guess for a sharp bound in this case (Conj.~\ref{conj:strong_ram_traces}), which we can prove in some special cases (Thm.~\ref{thm:strong_ram_known}). 

Although we are unable to prove Conjecture~\ref{conj:strong_ram_traces} in general, let alone the stronger version for slopes (Conj.~\ref{conj:strong_ram}), we are able to improve the Ramanujan bound from~\cite{devries_rambound} (see Prop.~\ref{prop:ram_bd_strict}). This already has implications for the decomposition of cusp forms into oldforms and newforms.

\begin{theorem}[Cor.~\ref{cor:oldnew}]\label{thm:intro2}
If $\dim \S_{k,l} < p$, then $\S_{k,l}(\Gamma_0(\mathfrak p)) = \S^\text{new}_{k,l}(\Gamma_0(\mathfrak p)) \oplus \S^\text{old}_{k,l}(\Gamma_0(\mathfrak p))$.
\end{theorem}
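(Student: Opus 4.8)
The plan is to reduce the claimed direct-sum decomposition to the invertibility of a single endomorphism built from the degeneracy maps, and then to force that invertibility using the strengthened Ramanujan bound of Prop.~\ref{prop:ram_bd_strict}, with the hypothesis $\dim \S_{k,l} < p$ entering precisely to make the trace-based bound sharp.

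First I would fix the two degeneracy maps $\delta_1, \delta_{\mathfrak p} \colon \S_{k,l} \to \S_{k,l}(\Gamma_0(\mathfrak p))$ together with their trace-down companions $\pi_1, \pi_{\mathfrak p} \colon \S_{k,l}(\Gamma_0(\mathfrak p)) \to \S_{k,l}$, and set $\S^{\text{old}}_{k,l}(\Gamma_0(\mathfrak p)) = \im \delta_1 + \im \delta_{\mathfrak p}$ and $\S^{\text{new}}_{k,l}(\Gamma_0(\mathfrak p)) = \ker \pi_1 \cap \ker \pi_{\mathfrak p}$. Writing $\Delta = (\delta_1, \delta_{\mathfrak p}) \colon \S_{k,l}^{\oplus 2} \to \S_{k,l}(\Gamma_0(\mathfrak p))$ and $\Pi = (\pi_1, \pi_{\mathfrak p}) \colon \S_{k,l}(\Gamma_0(\mathfrak p)) \to \S_{k,l}^{\oplus 2}$, a short linear-algebra argument shows that the decomposition $\S_{k,l}(\Gamma_0(\mathfrak p)) = \S^{\text{new}} \oplus \S^{\text{old}}$ holds as soon as the composite $T := \Pi \circ \Delta$ is an automorphism of $\S_{k,l}^{\oplus 2}$. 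Indeed, invertibility of $T$ forces $\Delta$ to be injective, so that $\S^{\text{old}}$ has the expected dimension $2\dim \S_{k,l}$ and meets $\S^{\text{new}} = \ker \Pi$ only in $0$; and writing an arbitrary $f$ as $\Delta(T^{-1}\Pi f) + (f - \Delta T^{-1}\Pi f)$ exhibits it as a sum of an old form and an element of $\ker \Pi$.

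It then remains to compute $T$ and to prove it invertible. The diagonal entries $\pi_1\delta_1$ and $\pi_{\mathfrak p}\delta_{\mathfrak p}$ are scalars of the form $q^{\deg \mathfrak p}+1$ (possibly weighted by a power of $\wp$ coming from the weight-$k$ action of scaling by $\wp$), which reduce to a unit modulo $p$, while the off-diagonal entries are, up to the same normalisation, the Hecke operator $\T_{\mathfrak p}$. As all four entries are polynomials in the single commuting operator $\T_{\mathfrak p}$, the matrix $T$ is invertible exactly when $\det T$ --- a polynomial in $\T_{\mathfrak p}$ whose roots are certain explicit powers $\pm\wp^{j}$ of the prime --- has no eigenvalue of $\T_{\mathfrak p}$ among its roots. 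These forbidden values sit on the boundary of the Ramanujan region, so the original (non-strict) bound of \cite{devries_rambound} is not enough to exclude them; this is exactly the gap closed by the strict bound of Prop.~\ref{prop:ram_bd_strict}. Here the injectivity of $\T_{\mathfrak p}$ (Theorem~\ref{thm:intro3}) disposes of the degenerate root that survives the characteristic-$p$ collapse of the scalar entries.

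The step I expect to be the main obstacle is the passage from a bound on the \emph{trace} of $\T_{\mathfrak p}$ to the exclusion of a \emph{single} boundary eigenvalue. In characteristic $p$ the trace only sees eigenvalue multiplicities modulo $p$: an eigenvalue occurring with multiplicity divisible by $p$ contributes nothing to $\Tr(\T_{\mathfrak p}^n)$ and is therefore invisible to any trace-based estimate. This is precisely why the hypothesis must be imposed: the condition $\dim \S_{k,l} < p$ guarantees that every eigenvalue multiplicity is less than $p$, so that a forbidden boundary eigenvalue, were it present, would necessarily register in the trace and violate Prop.~\ref{prop:ram_bd_strict}. Getting the bookkeeping of this argument right --- in particular pinning down the exact normalisation of the off-diagonal entries of $T$, hence the precise forbidden values $\pm\wp^{j}$, and checking that the strict bound really excludes them --- is the technical heart of the proof; once $T$ is invertible the decomposition follows formally from the first step.
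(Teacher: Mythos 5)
Your proposal is correct and, at the decisive step, coincides with the paper's own argument: the paper (Thm.~\ref{thm:oldnew} and Cor.~\ref{cor:oldnew}) shows that a failure of the decomposition would force $\pm\wp^{(k-2)/2}$ to occur as a $\T_{\mathfrak p}$-eigenvalue on $\S_{k,l}$ with multiplicity a positive multiple of~$p$, by combining the strict Ramanujan bound (Prop.~\ref{prop:ram_bd_strict}) with the trace-detection principle (Prop.~\ref{prop:trace_vals}), and the hypothesis $\dim \S_{k,l} < p$ rules such multiplicities out — exactly your closing paragraph. The one genuine difference is at the front end: where you propose to re-derive the criterion ``the decomposition holds if and only if $\pm\wp^{(k-2)/2}$ is not an eigenvalue of $\T_{\mathfrak p}$ at level~1'' by hand, via invertibility of the composite $\Pi \circ \Delta$ of degeneracy and trace maps and a determinant computation in the commutative algebra generated by $\T_{\mathfrak p}$, the paper simply cites this equivalence from Bandini--Valentino \cite[Cor.~2.10]{band-val_oldnew}. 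Your route buys self-containedness, but at the cost of the normalisation bookkeeping that you yourself identify as the technical heart and leave unverified; citing the criterion removes that burden entirely, and it also makes your appeal to the injectivity of $\T_{\mathfrak p}$ (Thm.~\ref{thm:injectivity}) superfluous, since in the paper's formulation there is no degenerate zero root to exclude. Modulo filling in that determinant computation (or replacing it by the citation), your proof matches the paper's.
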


Using the trace formula, we also prove the following theorem, which addresses an open question about Drinfeld modular forms with $A$-expansions. Namely, given an eigenform $f \in \S_{k,l}$ and an integer $n \geq 1$ such that $\T_{\mathfrak p}f = \wp^{n-1}f$ for all~$\mathfrak p$, it does not necessarily follow that $f$ has an $A$-expansion with $A$-exponent~$n$.

\begin{theorem}[Thm.~\ref{thm:trivial_eigs}]\label{thm:intro_Aexp} 
For any $\mathfrak p \trianglelefteq \bF_3[T]$, the $\T_{\mathfrak p}$-eigenvalue of $g^2h^2 \in \S_{12,0}$ equals~$\wp^3$.
\end{theorem}

Many of our results have been implemented in \textsc{Magma}. The code can be found at~\url{https://github.com/Sjoerd-deVries/DMF_Trace_Formula.git}.

This paper grew out of the author's licentiate thesis \cite{diva}.  

\subsection*{Outline of the paper}
In Section~\ref{sec:intro}, we recall the basic theory of Drinfeld modules and Drinfeld modular forms, including the necessary background on $A$-expansions.

In Section~\ref{sec:traces}, we begin our study of the trace formula. One of the first observations is a certain symmetry in weights of the form~$k = p^m+1$ for $m \in \mathbb Z_{\geq 1}$ which turns out to be very useful. We prove Theorems~\ref{thm:intro1} and~\ref{thm:intro_char2traces} and deduce several consequences. We obtain some results on traces of Hecke operators for primes of higher degree and describe an algorithm to compute these. We also obtain results on the traces of $\T_{\mathfrak p}$ modulo~$\mathfrak p$.

In Section~\ref{sec:ram}, we study the Ramanujan bound from~\cite{devries_rambound}. We prove that it is not sharp at level~1 and propose a stronger version. We prove the strong Ramanujan bound in some cases and give a sufficient condition for it to hold in general.

In Section~\ref{sec:slopes}, we explore to what extent our results give information about the Hecke eigenvalues. In characteristic~2, we reach a satisfactory answer: we can explicitly determine all eigenvalues which occur with odd multiplicity, and we get no information whatsoever about the ones which occur with even multiplicity. We also prove Theorem~\ref{thm:intro3}, and discuss the relationship between traces and slopes. In particular, we deduce the decomposition into oldforms and newforms at level $\Gamma_0(\mathfrak p)$ under the condition that no Hecke eigenvalue is repeated $p$~times. We also formulate several conjectures based on our computations.

Section~\ref{sec:comp} is of a more computational nature. We mostly focus on 1-dimensional spaces of cusp forms. A computation in weight~$12$ for $q=3$ yields the aforementioned result on $A$-expansions. We explicitly compute the $\T_T$-eigenvalues occurring for 1-dimensional spaces, and show that there exist quadratic $\T_T$-eigenvalues on some 2-dimensional space for any odd $q$. We also briefly consider the action of $\T_{\mathfrak p}$ when $\mathfrak p$ is a prime of degree~2. 

In Appendix~\ref{sec:appendix}, we make explicit the connection between Hurwitz class numbers, points on Jacobians of hyperelliptic curves, and isogeny classes of Drinfeld modules over finite fields.

\subsection*{Notation and conventions}
Throughout, $p$ denotes a prime number and $q$ denotes a power of~$p$. We write $A = \bF_q[T]$ and $K = \text{Frac}(A) = \bF_q(T)$. The symbol~$\mathfrak p$ always denotes a non-zero prime ideal of~$A$. Its unique monic generator is denoted by~$\wp$. We write $\bF_{\mathfrak p}$ for the finite field $A/\mathfrak p$, and $\bF_{\mathfrak p^n}$ for the unique degree~$n$ extension of~$\bF_{\mathfrak p}$. Every polynomial $a \in A$ has a degree; we work under the convention that $\deg(0) = -\infty$. We use the shorthand $x \equiv_n y$ to denote the congruence $x \equiv y \pmod{n}$.

\subsection*{Acknowledgements}
I would like to thank my supervisor Jonas Bergstr\"om for his indispensable encouragement and countless conversations on the topic of pursuit. I would also like to thank my co-supervisor Olof Bergvall for his constructive comments, and Shin Hattori for pointing out an error in an earlier version.

\section{Background}\label{sec:intro}

\subsection{Drinfeld modular forms for \texorpdfstring{$\mathbb F_q[T]$}{Fq[T]}}
Drinfeld modular forms are function field analogues of modular forms, originally defined by Goss in his PhD thesis \cite{goss_pi-adic} and further studied by himself and Gekeler \cite{goss_modf,gekeler_modcurves,gekeler_coeffs}; all the material of this subsection can be found in more detail there. Let $A = \mathbb F_q[T]$ and $K = \Frac(A)$. Let $K_\infty = \mathbb F_q(\!(T^{-1})\!)$ be the completion of~$K$ at the place~$\infty$, and let $\mathbb C_\infty$ be the completion of an algebraic closure of~$K_\infty$. Denote by $\Omega := \mathbb C_\infty \setminus K_\infty$ the Drinfeld upper half-plane, seen as a rigid-analytic space. It has an action of $\operatorname{GL}_2(K_\infty)$ by M\"obius transformations.

\begin{definition}
    Fix $k \in \mathbb Z$ and $l \in \mathbb Z$. A \emph{Drinfeld modular form of weight~$k$ and type~$l$} is a function $f\: \Omega \to \mathbb C_\infty$ satisfying the following properties:
    \begin{enumerate}
        \item For each $\gamma = \begin{pmatrix} a & b \\ c & d \end{pmatrix} \in \operatorname{GL}_2(A)$, we have
        \begin{equation}\label{eq:transf}
        f(\gamma z) = \det(\gamma)^{-l} (cz + d)^k f(z).
        \end{equation}
        \item $f$ is holomorphic on $\Omega$ and at infinity.
    \end{enumerate}
    If additionally $f$ vanishes at infinity, it is called a \emph{cusp form}. A cusp form which vanishes to order at least 2 at infinity is called a \emph{double cusp form}. Denote the $\mathbb C_\infty$-vector space of Drinfeld modular forms of weight $k$ and type $l$ by $\M_{k,l}$, the subspace of cusp forms by $\S_{k,l}$, and the subspace of double cusp forms by $\S^2_{k,l}$.
\end{definition}

Note that $\M_{k,l} = \M_{k,l'}$ whenever $l \equiv l' \pmod{q-1}$, since $\det(\operatorname{GL}_2(\bF_q[T])) = \bF_q^\times$. Moreover, $\M_{k,l} = 0$ unless $k \equiv 2l \pmod{q-1}$.


\begin{remark}Classically, holomorphicity of a modular form at infinity implies that its $q$-series is a power series. For Drinfeld modular forms, an analogous property holds, but instead of a $q$-series one has a so-called $t$-expansion; holomorphicity at infinity is then equivalent to the $t$-expansion being a power series, and being a cusp form is equivalent to the constant term being~0. For details, see \cite[Section~5]{gekeler_coeffs}.
\end{remark}

Multiplication induces maps
\[
\M_{k,l}\times \M_{k',l'} \longrightarrow \M_{k+k',l+l'}.
\]
This turns the space of all Drinfeld modular forms into a doubly graded algebra, which we denote by $\M := \bigoplus_{k,l} \M_{k,l}$. The doubly graded ideal of cusp forms is denoted $\S := \bigoplus_{k,l} \S_{k,l}$.

When $k > 0$ is a multiple of $q-1$, there exists a non-zero Drinfeld modular form of weight~$k$, non-vanishing at the cusp, called an Eisenstein series. We denote this modular form by $g_k \in \M_{k,0}$, and write $g := g_{q-1}$. The first cusp form of type zero is denoted by $\Delta \in \S_{q^2-1,0}$. Moreover, there exists a modular form $h \in \S_{q+1,1}$ such that $h^{q-1} = -\Delta$. The following theorem shows that the algebra of Drinfeld modular forms has a very simple structure \cite[Thm.~5.13]{gekeler_coeffs}.

\begin{theorem}\label{thm:modforms}
    We have an isomorphism of doubly graded algebras
    \[
    \M = \mathbb C_{\infty}[g,h].
    \]
\end{theorem}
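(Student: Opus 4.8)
The plan is to show that the natural $\mathbb C_\infty$-algebra homomorphism $\Phi\colon \mathbb C_\infty[X,Y] \to \M$ sending $X \mapsto E_{q-1}$ and $Y \mapsto h$ is an isomorphism of bigraded algebras, where $X$ and $Y$ carry the bidegrees $(q-1,0)$ and $(q+1,1)$ respectively (weight, type mod $q-1$). Since any polynomial identity decomposes into its bihomogeneous components, it suffices to prove that $\Phi$ is bijective in each bidegree $(k,l)$: injectivity amounts to the algebraic independence of $E_{q-1}$ and $h$, while surjectivity says that every form is a polynomial in them.

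For injectivity I would use the order of vanishing at the cusp. The monomials $E_{q-1}^a h^b$ of a fixed bidegree $(k,l)$ are indexed by pairs of non-negative integers with $a(q-1)+b(q+1)=k$ and $b \equiv l \pmod{q-1}$, and here $b$ determines $a$. Since $E_{q-1}$ is non-vanishing at infinity while $h$ has a simple zero there (as $h^{q-1}=-\Delta$ and $\Delta$ vanishes to order $q-1$ at the cusp), the $t$-expansion of $E_{q-1}^a h^b$ begins in degree exactly $b$. Distinct monomials therefore have distinct leading orders, so a nontrivial bihomogeneous relation cannot cancel its lowest-order term; the monomials are linearly independent.

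For surjectivity I would argue by strong induction on the weight $k$. Given $f \in \M_{k,l}$ with $k \geq 0$, an analysis of the $t$-expansion under the scaling action of $\mathbb F_q^\times$ (which forces the coefficient of $t^n$ to vanish unless $n \equiv l \pmod{q-1}$) shows that the constant term of $f$ can be nonzero only when $l \equiv 0$, in which case $(q-1)\mid k$ and $E_{q-1}^{k/(q-1)}$ is a form of the same bidegree non-vanishing at the cusp. Subtracting a suitable multiple of it reduces us to the case where $f$ is a cusp form. Then $f/h$ is holomorphic on $\Omega$ (since $h$ has no zeros there) and at infinity, hence lies in $\M_{k-(q+1),\,l-1}$; if $k-(q+1)<0$ this space vanishes, forcing $f=0$, and otherwise the induction hypothesis makes $f/h$ a polynomial in $E_{q-1}$ and $h$, whence so is $f=h\cdot(f/h)$. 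The base case $k=0$ contributes only the constants.

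The main obstacle is not the algebra but the analytic input that the induction rests on: the vanishing $\M_{k,l}=0$ for $k<0$ and the non-vanishing of $\Delta$ (hence of $h$) on all of $\Omega$. Both follow from Gekeler's valence formula, which in turn depends on the structure of $\Omega$ as a rigid-analytic space and the reduction theory of the Bruhat--Tits tree; I would cite this rather than reprove it. Granting these two facts, the induction closes cleanly, and together with the independence argument this shows that $\Phi$ is an isomorphism in every bidegree.
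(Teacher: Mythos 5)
Your proof is correct, and it is essentially the standard argument: the paper itself does not prove this theorem but cites Gekeler (\emph{On the coefficients of Drinfeld modular forms}, Thm.~5.13), and your route --- linear independence via distinct vanishing orders of $E_{q-1}^a h^b$ at the cusp in each bidegree, plus surjectivity by killing the constant term with a power of $E_{q-1}$ and then dividing cusp forms by the nowhere-vanishing $h$ --- is precisely the argument found there and in Goss's original treatment. The analytic inputs you flag (the valence formula, non-vanishing of $\Delta$ on $\Omega$, and $\M_{k,l}=0$ for $k<0$) are exactly the facts the cited source supplies, so citing them rather than reproving them is appropriate.
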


In particular, the ideal $\S$ of cusp forms is the principal ideal generated by~$h$, and similarly $\S^2$ is generated by $h^2$. Theorem~\ref{thm:modforms} implies that each $\M_{k,l}$ is finite-dimensional and yields the following dimension formulae for the spaces of (double) cusp forms.

\begin{lemma}\label{lem:cuspdim}
    Let $k \in \mathbb{Z}_{\geq 0}$ and $1 \leq l \leq q-1$. Then we have
    \[
    \dim \S_{k,l} = \begin{cases}
        \left\lfloor \frac{k+(q-1-l)(q+1)}{q^2 - 1} \right\rfloor & \text{if }  k \equiv 2l \pmod{q-1}; \\ 0 & \text{otherwise.}
    \end{cases}
    \]
Moreover, the dimension of the space of double cusp forms is
    \[
    \dim \S^2_{k,l} = \begin{cases} 
    \dim \S_{k,l} - 1 & \text{if } l=1 \text{ and } \S_{k,l} \neq 0; \\
    \dim \S_{k,l} & \text{otherwise.}
    \end{cases}
    \]
\end{lemma}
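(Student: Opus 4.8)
The plan is to reduce the computation to counting monomials, using the structure theorem $\M = \mathbb{C}_\infty[E_{q-1},h]$ (Theorem~\ref{thm:modforms}). First I would record that a basis of $\M_{k,l}$ is given by the monomials $E_{q-1}^a h^b$ with $a,b \in \mathbb{Z}_{\geq 0}$ of the correct weight and type, namely those satisfying
\[
a(q-1) + b(q+1) = k \qquad \text{and} \qquad b \equiv l \pmod{q-1},
\]
the type condition coming from $E_{q-1}$ having type~$0$ and $h$ having type~$1$. Since $\S$ is the principal ideal $(h)$ and $\S^2 = (h^2)$, within weight~$k$ and type~$l$ the cusp forms are spanned by the monomials with $b \geq 1$ and the double cusp forms by those with $b \geq 2$. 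So all three dimensions are counts of admissible exponent pairs under the side conditions $b \geq 0$, $b \geq 1$, $b \geq 2$, respectively.

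For $\S_{k,l}$ I would argue as follows. Since $q+1 \equiv 2 \pmod{q-1}$, the weight equation forces $k \equiv 2b \equiv 2l \pmod{q-1}$, so if $k \not\equiv 2l \pmod{q-1}$ there are no admissible monomials and the dimension is~$0$. Assuming the congruence holds, for $1 \leq l \leq q-1$ the least $b \geq 1$ with $b \equiv l \pmod{q-1}$ is $b = l$; this includes the boundary case $l = q-1$, where $b \equiv 0$ and the least positive value is again $q-1 = l$. Thus the admissible $b$ are exactly $b = l + m(q-1)$ for $m \geq 0$, and for each the exponent $a = (k - b(q+1))/(q-1)$ is a non-negative integer precisely when $b(q+1) \leq k$ (integrality being automatic from the congruence). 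Rewriting $(l+m(q-1))(q+1) \leq k$ as $m \leq (k - l(q+1))/(q^2-1)$, the number of valid $m$ is $1 + \lfloor (k-l(q+1))/(q^2-1) \rfloor$ when $k \geq l(q+1)$ and $0$ otherwise, as claimed.

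For $\S^2_{k,l}$ the only change is to require $b \geq 2$. If $2 \leq l \leq q-1$ the smallest admissible power is already $b = l \geq 2$, so no monomial is dropped and $\dim \S^2_{k,l} = \dim \S_{k,l}$. If $l = 1$, the single monomial with $b = 1$ is present exactly when $\S_{k,1} \neq 0$, and discarding it lowers the dimension by one, giving $\dim \S^2_{k,1} = \dim \S_{k,1} - 1$ in that case (and $0 = 0$ otherwise).

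I do not expect a genuine obstacle: the argument is essentially the bookkeeping of one arithmetic progression of exponents. The only steps demanding care are extracting the congruence $k \equiv 2l \pmod{q-1}$ from the weight equation and checking the boundary types $l = q-1$ (for the cusp-form count) and $l = 1$ (for the double cusp-form count), which is exactly where the two cases in the stated formulae originate.
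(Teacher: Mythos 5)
Your proof is correct and is precisely the monomial-counting argument the paper leaves implicit: the paper states the lemma without proof, merely noting that it follows from Theorem~\ref{thm:modforms}, i.e.\ from $\M = \mathbb{C}_\infty[E_{q-1},h]$ together with $\S = (h)$ and $\S^2 = (h^2)$, which is exactly the route you take. Your bookkeeping of the exponent pairs $(a,b)$, including the boundary cases $l = q-1$ and $l = 1$, is accurate throughout.
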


One can also define Drinfeld modular forms of higher level. For a non-zero ideal $\mathfrak n \trianglelefteq A$, define
\[
\Gamma(\mathfrak n) = \left \{ 
    M \in \operatorname{GL}_2(A) \ \bigg{|} \ M \equiv \begin{pmatrix} 1 & 0 \\ 0 & 1 
\end{pmatrix} \pmod {\mathfrak{n}} \right \}.
\]

\begin{definition}
    A subgroup $\Gamma \subseteq \operatorname{GL}_2(A)$ is called a \emph{congruence subgroup} if $\Gamma(\mathfrak n) \subseteq \Gamma$ for some ideal $\mathfrak n$.
\end{definition}

In particular, 
\[
\Gamma_0(\mathfrak n) := \left\{ M \in \operatorname{GL}_2(A) \ \bigg{|} \ M \equiv \begin{pmatrix} * & * \\ 0 & * \end{pmatrix} \pmod{\mathfrak n} \right\}
\]
is a congruence subgroup for any $\mathfrak n$.

Let $\Gamma \subseteq \operatorname{GL}_2(A)$ be a congruence subgroup. A \emph{Drinfeld modular form of level~$\Gamma$} is a rigid-analytic function on $\Omega$, holomorphic at the cusps of~$\Gamma$ \cite[Sec.~V.2]{gekeler_modcurves}, which satisfies the transformation property~\eqref{eq:transf} for all $\gamma \in \Gamma$. By a modular form of level 1, we mean a modular form for $\operatorname{GL}_2(A)$.

The space of modular forms of weight~$k$, type~$l$, and level~$\Gamma$ is denoted by~$\M_{k,l}(\Gamma)$, and the subspace of cusp forms by~$\S_{k,l}(\Gamma)$. The set of all Drinfeld modular forms of level~$\Gamma$ is again a doubly graded algebra.

Finally, we fix the notation 
\[
\S_{k} := \bigoplus_{l=1}^{q-1} \S_{k,l}.
\]
It then follows that $\S_k = \S_k(\operatorname{SL}_2(A))$ \cite[Thm.~17.6]{BBP}.

\subsection{Hecke operators}
As in the classical case, Drinfeld modular forms admit actions of Hecke operators. 

\begin{definition}\label{def:hecke_operators}
    Let $\wp \in A$ be monic and irreducible, and write $\mathfrak p = (\wp)$ for the maximal ideal generated by~$\wp$. Define the \emph{Hecke operator associated to $\mathfrak p$} to be the linear map $\T_{\mathfrak p}\: \M_{k,l} \to \M_{k,l}$ given by
    \[
    (\T_{\mathfrak p}f)(z) := \wp^{k-1} f(\wp z) + \wp^{-1} \sum_{\deg(b) < \deg(\wp)} f\left( \frac{z+b}{\wp} \right).
    \]
    The subalgebra of $\End_{\mathbb C_\infty}(\M)$ generated by all Hecke operators is called the \emph{Hecke algebra}. A non-zero modular form $f \in \M$ is called an \emph{eigenform} if $\T_{\mathfrak p}$ acts as a scalar on~$f$ for every $\mathfrak p \trianglelefteq A$.
\end{definition}

\begin{remark}\hfill
\begin{enumerate}
    \item If $\wp$ is a generator of~$\mathfrak p$, we sometimes also write $\T_{\wp}$ for the Hecke operator associated to~$\mathfrak p$.
    \item Our definition of $\T_{\mathfrak p}$ is a rescaled version of the Hecke operator $\T_{\mathfrak p}^{\mathbb F_q[T]}$ defined in \cite{goss_pi-adic,gekeler_coeffs}. More precisely, 
    we have
    \[
    \T_{\mathfrak p} := \wp^{-1}\T^{\mathbb F_q[T]}_{\mathfrak p}.
    \]
    This normalisation is made purely to simplify computations and should not cause confusion. Note that the same normalisation is used in e.g.~\cite{hattori_gouv,nicole-rosso}.
\item The Hecke operators are linear endomorphisms of $\M$ but do not preserve the ring structure in general. In particular, the Hecke eigenvalues of the generators $h$ and $g$ a~priori say nothing about the Hecke eigenvalues of other modular forms.
\item
    One can extend the definition of Hecke operators from prime ideals to arbitrary ideals. In doing so, one obtains the relation $\T_{\mathfrak n }\T_{\mathfrak n'} = \T_{\mathfrak n \mathfrak n'}$ for all maximal ideals $\mathfrak n, \mathfrak n' \trianglelefteq A$. In the classical setting, this relation only holds when $\mathfrak n, \mathfrak n' \trianglelefteq \mathbb Z$ are coprime. This difference can be explained by the fact that the Galois representation associated to a cuspidal eigenform is one-dimensional, as opposed to two-dimensional in the classical setting \cite[Sec.~14]{bockle}.
\end{enumerate}
\end{remark}

The Hecke operators preserve the spaces $\M_{k,l}$ and $\S_{k,l}$ for any $k$ and $l$. Moreover, they preserve the space $\S^2_{k,l}$ of double cusp forms,
a feature unique to the function field setting.

\subsection{\texorpdfstring{$A$}{A}-expansions}
Every Drinfeld modular form has a $t$-expansion, analogous to the $q$-series of elliptic modular forms. The $t$-expansion of $f \in \M$ is an expansion of the form
\[
f = \sum_{n = 0}^\infty a_nt^n, \qquad a_n \in \mathbb C_\infty,
\]
where $t = t(z)$ is a parameter at infinity. In the spirit of function field arithmetic, one would like to replace the sum over $\mathbb Z$ by a sum over $A$. In \cite{petrov_a-exp}, Petrov showed that this can be done for certain Drinfeld modular forms~$f$, but that such modular forms are very special. We recall some key points of the theory of $A$-expansions here.

Denote by $A_+$ the set of monic polynomials in $A$. For $a \in A_+$, write $t_a := t(az)$. For $n \geq 1$, denote by $G_n(X)$ the $n$-th Goss polynomial (suitably normalised; see~\cite{petrov_a-exp}).

\begin{definition}
    Let $f \in \S$ be a Drinfeld cusp form. Then $f$ has an \emph{$A$-expansion} if there exists an integer $n \geq 1$ and constants $c_a \in \mathbb C_\infty$ such that
    \[
    f = \sum_{a \in A_+} c_a G_n(t_a).
    \]
    In this case, $n$ is called an \emph{A-exponent} of~$f$.
\end{definition}

If $f \in \S_{k,l}$ has an $A$-expansion with $A$-exponent~$n$, then necessarily $n \equiv l \pmod{q-1}$. If $f$ is an eigenform with $A$-expansion, then its $A$-exponent is uniquely determined, but in general it is not known whether a non-zero cusp form can have $A$-expansions with different $A$-exponents.

The main theorem of \cite{petrov_a-exp} gives infinitely many examples of eigenforms with $A$-expansions; moreover, all known examples of modular forms with $A$-expansions can be obtained by applying Petrov's theorem. We highlight the following cases.

\begin{proposition}\label{prop:a-exp_examples}
    Let $q = p^r$. The following cusp forms are eigenforms with $A$-expansions.
    \begin{enumerate}
        \item The forms $g^nh^q$ for $0 \leq n \leq q-1$, with $A$-exponent~$(n+1)(q-1)+1$.
        \item The forms $g^{ap^s}h^l$ for $1 \leq s \leq r$, $0 \leq a \leq p^{r-s}$, and $1 \leq l \leq p^s$, with $A$-exponent~$l$.
    \end{enumerate}
    Moreover, for all $k$ such that $\S_{k,1} \neq 0$, the space $S_{k,1} / S^2_{k,1}$ is spanned by an eigenform with $A$-expansion and $A$-exponent~1.
\end{proposition}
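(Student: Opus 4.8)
The plan is to derive all three assertions from the main theorem of \cite{petrov_a-exp}, which states that for positive integers $k,n$ with $k \equiv 2n \pmod{q-1}$ and $n \leq p^{v_p(k-n)}$, the series
\[
f_{k,n} := \sum_{a \in A_+} a^{k-n} G_n(t_a)
\]
is a non-zero Drinfeld cusp form of weight~$k$ and type~$n$ which is a Hecke eigenform with an $A$-expansion of $A$-exponent~$n$. Writing $E := E_{q-1}$, each part then amounts to producing the right pair $(k,n)$, verifying the two numerical hypotheses, and identifying $f_{k,n}$ with the stated product.

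The numerical verification is short. For $E^nh^q$ in part~(1) the weight is $k = n(q-1)+q(q+1)$ and the type is $q \equiv 1 \pmod{q-1}$; with $n' := (n+1)(q-1)+1$ one computes $k - n' = q^2$, so $v_p(k-n') = 2r$ and $n' \leq q(q-1)+1 < q^2 = p^{v_p(k-n')}$. For $E^{ap^s}h^l$ in part~(2) the weight is $k = ap^s(q-1)+l(q+1)$ and the type is~$l$; taking $n = l$ gives $k - l = ap^s(q-1)+lq$, which is divisible by $p^s$ because $s \leq r$, so $l \leq p^s \leq p^{v_p(k-l)}$. Both congruences $k \equiv 2n \pmod{q-1}$ are immediate. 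For the last assertion take $n = 1$: the hypothesis $1 \leq p^{v_p(k-1)}$ is automatic, so whenever $\S_{k,1} \neq 0$ the eigenform $f_{k,1} = \sum_{a \in A_+} a^{k-1} t_a$ is available.

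The identifications all come down to the observation that in each case the product and the corresponding $f_{k,n}$ lie in a common one-dimensional space, and are therefore forced to be proportional. Concretely, Lemma~\ref{lem:cuspdim} gives $\dim \S_{k,l} = 1$ for every form in part~(2) with $1 \leq l \leq q-1$, so that product and $f_{k,l}$ are proportional and the former is again an eigenform with $A$-expansion. The remaining products, namely all of part~(1) and the case $l = q$ of part~(2), are of type~$1$ and lie in a space $\S_{k,1}$ of dimension~$2$; here I would instead work in $\S^2_{k,1}$, which has dimension~$1$ by Lemma~\ref{lem:cuspdim}. Each such product vanishes to order at least~$2$ at infinity (its $h$-exponent is $\geq 2$), and the matching form $f_{k,n}$ likewise lies in $\S^2_{k,1}$ because its $A$-exponent satisfies $n \geq 2$; the two are therefore proportional, and since $f_{k,n}$ is an eigenform with $A$-expansion, so is the product. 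Finally, the closing assertion follows at once: the quotient $\S_{k,1}/\S^2_{k,1}$ is one-dimensional when $\S_{k,1} \neq 0$, and since $G_1(t) = t$ the form $f_{k,1}$ vanishes to order exactly~$1$, so it spans this quotient.

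The crux is the claim that $f_{k,n}$ is a \emph{double} cusp form whenever $n \geq 2$, which is precisely what collapses the two-dimensional type-$1$ spaces onto the one-dimensional $\S^2_{k,1}$ and thereby circumvents the fact that products of eigenforms need not be eigenforms. This reduces to showing that $t^2 \mid G_n(t)$ for $n \geq 2$; I would prove it by induction on~$n$ using the standard recursion $G_n(X) = X(G_{n-1}(X) + \cdots)$, which writes $G_n$ as $X$ times a combination of strictly lower Goss polynomials, noting that $G_1 = X$ is the only Goss polynomial contributing a term of degree~$1$. The only other inputs specific to the function field setting are the double cusp form spaces and their dimensions from Lemma~\ref{lem:cuspdim}; in particular Petrov's precise eigenvalue is not needed.
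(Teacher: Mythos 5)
Your proposal is correct and follows essentially the same route as the paper: both identify each listed product with Petrov's eigenform from \cite[Thm.~1.3]{petrov_a-exp} by placing the two forms in a common one-dimensional space --- $\S_{k,l}$ itself, or $\S^2_{k,1}$ for the type-$1$ cases, with dimensions supplied by Lemma~\ref{lem:cuspdim}. The only differences are that the paper outsources the single-cusp-form statements to \cite[Thm.~3.2]{petrov_a-exp} and leaves implicit that $A$-exponent $n \geq 2$ forces double cuspidality, whereas you verify both directly from Thm.~1.3 (your $t^2 \mid G_n$ argument via the Goss polynomial recursion is sound); the one nitpick is that Petrov's hypothesis requires $k-2n$ to be a \emph{positive} multiple of $q-1$, which you should check explicitly, though it does hold in every case you use.
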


\begin{proof}
    The statement about the quotient of cusp forms by double cusp forms is \cite[Thm.~3.2]{petrov_a-exp}, which in particular proves the claim about the forms $g^{ap^s}h$. In the other cases, note that each cusp form is the unique (up to scalar multiplication) double cusp form of the given weight and type. Hence it suffices to show that there is a non-zero doubly cuspidal eigenform with $A$-expansion and the claimed $A$-exponent in that weight and type. In each case, this follows from \cite[Thm.~1.3]{petrov_a-exp}.
\end{proof}

The main reason why eigenforms with $A$-expansions are relevant for our purposes is that their Hecke eigensystems are particularly simple.

\begin{theorem}\label{thm:a-exp_eigs}
    Let $f$ be an eigenform with $A$-expansion and $A$-exponent~$n$. Then
    \[
    \T_{\mathfrak p} f = \wp^{n-1}f \quad \text{for any } \mathfrak p \trianglelefteq A.
    \]
\end{theorem}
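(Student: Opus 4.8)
The plan is to compute the action of $\T_{\mathfrak p}$ directly on the $A$-expansion $f = \sum_{a \in A_+} c_a G_n(t_a)$ and exploit the multiplicative structure of the coefficients $c_a$. The key input, established in Petrov's work, is that for an eigenform with $A$-expansion the coefficients are essentially monomial-like: after normalisation one has $c_a = a^{?}$ up to the leading-coefficient/type bookkeeping, and in particular $c_1 = 1$ and $c_{\wp a} $ relates to $c_a$ in a controlled way. The heart of the argument is to rewrite $(\T_{\mathfrak p} f)(z)$ in terms of the functions $t_a$ and to match the result against $\wp^{n-1} f$ coefficient-by-coefficient in the $A$-expansion.

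Concretely, first I would recall how the two summands in the definition of $\T_{\mathfrak p}$ act on a single term $G_n(t_a)$. The term $\wp^{k-1} f(\wp z)$ replaces $t_a = t(az)$ by $t(a\wp z) = t_{a\wp}$, so it contributes (up to the scalar $\wp^{k-1}$ and the type factor) a sum over monic $a$ of the terms indexed by $a\wp$, i.e.\ over monic multiples of~$\wp$. The averaging term $\wp^{-1}\sum_{\deg b < \deg \wp} f((z+b)/\wp)$ is the subtler one: here $t((a(z+b))/\wp)$ must be expanded, and the standard computation (using the distribution/summation properties of the parameter $t$ and the Goss polynomials under the substitution $z \mapsto (z+b)/\wp$ summed over residues $b$) collapses the inner sum. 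The clean way to organise this is to use the known behaviour of $t_a$ under these operations, which effectively sorts the index $a$ according to whether $\wp \mid a$ or $\wp \nmid a$, exactly paralleling the classical computation of $\T_p$ on a $q$-expansion $\sum a_n q^n$ splitting into $\wp \mid n$ and $\wp \nmid n$ cases.

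After this bookkeeping, the coefficient of $G_n(t_a)$ in $\T_{\mathfrak p} f$ should come out as a linear combination of $c_a$, $c_{a/\wp}$ (when $\wp \mid a$), and $c_{a\wp}$, with explicit powers of $\wp$ as coefficients. I would then substitute the explicit shape of the coefficients $c_a$ coming from an $A$-expansion eigenform — the multiplicativity and the precise power of $\wp$ appearing in $c_{\wp a}$ versus $c_a$ — and check that all terms conspire to give precisely $\wp^{n-1} c_a$ for every $a$. That the eigenvalue is independent of $\mathfrak p$ and equals $\wp^{n-1}$ is then forced by comparing the $\wp$-adic powers; the dependence on $n$ enters exactly through the $A$-exponent appearing in the Goss polynomial index and in the coefficient relation.

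The main obstacle I expect is the averaging term: correctly expanding $t((z+b)/\wp)$ and carrying out the sum over residues $b$ with $\deg b < \deg \wp$, while keeping track of the normalisation of the Goss polynomials $G_n$ and of the type factor $\det(\gamma)^{-l}$ hidden in the substitution. This is where the precise conventions of \cite{petrov_a-exp} must be invoked, and where an error in normalisation would shift the final power of $\wp$. A cleaner alternative, which I would pursue if the direct computation becomes unwieldy, is to reduce to the special eigenforms listed in Proposition~\ref{prop:a-exp_examples} together with Petrov's structural result that all $A$-expansion eigenforms arise from his construction, and to verify the eigenvalue on that explicit family — but the uniform coefficient-matching argument is more satisfying and is the approach I would attempt first.
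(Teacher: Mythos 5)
Your outline parallels Petrov's original computation --- which is appropriate, since the paper itself gives no proof of this statement but quotes it from \cite{petrov_a-exp} --- but as written it has genuine gaps. First, the step you defer as ``the standard computation \ldots collapses the inner sum'' is precisely the heart of the theorem: for $\wp \nmid a$ one must prove the identity
\[
\wp^{-1} \sum_{\deg(b) < \deg(\wp)} G_n\left(t\left(\frac{a(z+b)}{\wp}\right)\right) = \wp^{n-1}\, G_n(t_a),
\]
and this is where the eigenvalue $\wp^{n-1}$ is actually produced; it requires the specific properties of the Goss polynomials and of the parameter $t$ and cannot be waved through. Second, your expected bookkeeping is structurally wrong: you predict a classical three-term recursion in which the coefficient of $G_n(t_a)$ involves $c_a$, $c_{a/\wp}$ \emph{and} $c_{a\wp}$. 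In fact, when $\wp \mid a$, say $a = \wp a'$, the $A$-periodicity of $t$ gives $t(a(z+b)/\wp) = t_{a'}(z)$ for every residue $b$, so the averaging term contributes $\wp^{-1} q^{\deg(\wp)} c_a\, G_n(t_{a'})$, which vanishes identically in characteristic $p$. Hence the coefficient of $G_n(t_a)$ in $\T_{\mathfrak p} f$ is $\wp^{n-1} c_a$ if $\wp \nmid a$ and $\wp^{k-1} c_{a/\wp}$ if $\wp \mid a$ --- the index $a\wp$ never enters --- and no multiplicativity of the $c_a$ is needed or, for a general eigenform with $A$-expansion, available: the ``monomial-like'' shape $c_a = a^{k-n}$ holds for Petrov's constructed family, not a priori in general. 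What the argument genuinely requires, and your proposal omits, is (i) linear independence of the functions $G_n(t_a)$, $a \in A_+$, for fixed $n$, so that coefficients can be compared, and (ii) the existence of some $a$ coprime to $\wp$ with $c_a \neq 0$, which one gets from the relation $\lambda c_{\wp a} = \wp^{k-1} c_a$ by ruling out $\lambda = 0$ and descending in the support.

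Your fallback route is also unavailable: it invokes ``Petrov's structural result that all $A$-expansion eigenforms arise from his construction,'' but no such classification exists. The paper says only that all \emph{known} examples of forms with $A$-expansions arise from Petrov's theorem --- an empirical observation, not a theorem --- so verifying the eigenvalue on the family of Prop.~\ref{prop:a-exp_examples} would not prove the statement for an arbitrary eigenform with $A$-expansion. The direct coefficient computation, with the two ingredients above supplied, is the only route here.
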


The Hecke eigensystems arising from eigenforms with $A$-expansions are called \emph{power eigensystems}. In Theorem~\ref{thm:trivial_eigs}, we show that an eigenform can have a power eigensystem even though it does not have an $A$-expansion.


\subsection{Drinfeld modules over finite fields}\label{sec:dm/ff}
Recall that $A = \mathbb F_q[T]$. We give a quick introduction to Drinfeld $A$-modules over finite fields in order to understand the terms involved in the trace formula from \cite{devries_rambound}. 
We also recall some results from \cite{yu_isogs} about isogeny classes of Drinfeld modules of rank~2. 
For proofs of the results in this section, see \cite[Chapter 4]{papikian}.

\begin{definition}
    Let $F$ be a field containing~$\mathbb F_q$. The \emph{ring of additive polynomials over~$F$}, denoted $F\{\tau\}$, is defined as the non-commutative polynomial ring over~$F$ in the variable~$\tau$ satisfying $\tau x = x^q \tau$ for all $x \in F$.
\end{definition}

\begin{definition}
    Let $r \geq 1$ be an integer and let $\bF_q \subseteq F \subseteq \overline{\mathbb F}_q$ be a field. A \emph{Drinfeld $A$-module of rank $r$ over $F$} is an $\bF_q$-algebra homomorphism $\phi \colon A \to F\{\tau\}$ such that 
    \[
    \phi_T := \phi(T) =  \alpha_0 + \alpha_1\tau + \ldots + \alpha_r \tau^r, \quad \alpha_0,\ldots,\alpha_r \in F, \quad \alpha_r \neq 0.
    \]
    Given a Drinfeld module $\phi$ with $\phi_T$ as above, the \emph{characteristic} of~$\phi$ is the prime ideal of~$A$ generated by the minimal polynomial of~$\alpha_0$ over~$\bF_q$. A \emph{morphism of Drinfeld modules $\phi \to \psi$} is an element $f \in F\{\tau\}$ such that $f \phi_T = \psi_Tf$. We say that $\phi$ is \emph{isogenous} to $\psi$ if there exists a non-zero morphism $\phi \to \psi$. The ring of endomorphisms of~$\phi$ is denoted by $\End(\phi) \subseteq F\{\tau\}$.
\end{definition}

\begin{remark}\hfill
\begin{enumerate}
\item Clearly, $\phi$ is determined by $\phi_T$.
\item It follows from the definition that if $\phi$ is a Drinfeld module with characteristic~$\mathfrak p$, then $F$ is an extension of $\mathbb F_{\mathfrak p} = A/\mathfrak p$. We tacitly assume that a Drinfeld module over~$\mathbb F_{{\mathfrak p}^n}$ has characteristic~$\mathfrak p$. 
\item Isogeny is an equivalence relation.
\end{enumerate}
\end{remark}

\begin{definition}
    Suppose $F = \mathbb F_{q^m}$ is finite, and let $\phi$ be a Drinfeld module over~$F$. The \emph{Frobenius endomorphism} of $\phi$ is defined to be $\pi_{\phi} := \tau^m \in F\{\tau\}$.
\end{definition}

Since $\tau^m$ lies in the center of $F\{\tau\}$, the Frobenius endomorphism is indeed an endomorphism of~$\phi$. To understand its importance, we consider $\End(\phi)$ as a finitely generated free $A$-module via the map~$\phi$. Tensoring with $K = \Frac(A)$ yields the $K$-algebra $\End^0(\phi) := K \otimes_A \End(\phi)$.

\begin{proposition}\label{prop:endalg}
Let $\phi$ be a Drinfeld $A$-module of rank $r$ over a finite field.
    \begin{enumerate}
        \item $\End^0(\phi)$ is a division algebra, which depends up to isomorphism only on the isogeny class of~$\phi$.
        \item The center $Z(\End^0(\phi))$ of $\End^0(\phi)$ equals $K(\pi_{\phi})$.
        \item The reduced degree of $\End^0(\phi)$ equals
        \[
        [\End^0(\phi):K]^\text{red} := [\End^0(\phi):K(\pi_\phi)]^{1/2}[K(\pi_\phi):K] = r.
        \]
    \end{enumerate}
\end{proposition}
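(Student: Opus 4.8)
Write $F = \mathbb F_{q^m}$ and $\pi := \pi_\phi = \tau^m$. The plan is to reduce all three statements to two inputs: the elementary fact that the twisted polynomial ring $F\{\tau\}$ is a domain, and the deep analogue of Tate's isogeny theorem for Drinfeld modules (Drinfeld, Taguchi), which identifies $\End^0(\phi)$ locally with a commutant inside the endomorphisms of a Tate module.

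For part~(1), I would first note that $\phi(A)$ is central in $\End(\phi)$, so that $\End^0(\phi) = \End(\phi) \otimes_A K$ is obtained from the subring $\End(\phi) \subseteq F\{\tau\}$ by inverting the nonzero central elements coming from $A$. Since $F\{\tau\}$ is a domain (leading $\tau$-coefficients multiply), so is $\End(\phi)$, and hence so is the localization $\End^0(\phi)$. I would then argue that $\End(\phi)$ is a finitely generated $A$-module: for a prime $\mathfrak l \neq \mathfrak p$ the Tate module $T_{\mathfrak l}(\phi)$ is free of rank~$r$ over $A_{\mathfrak l}$, the action gives a compatible embedding $\End(\phi)\otimes_A A_{\mathfrak l} \hookrightarrow \End_{A_{\mathfrak l}}(T_{\mathfrak l}(\phi)) \cong \M_r(A_{\mathfrak l})$, and a standard discreteness argument yields finite generation of rank at most~$r^2$. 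Thus $\End^0(\phi)$ is a finite-dimensional $K$-algebra that is a domain; left multiplication by any nonzero element is an injective, hence bijective, $K$-linear map, so $\End^0(\phi)$ is a division algebra. Isogeny invariance is then formal: a nonzero isogeny $u\colon \phi \to \psi$ admits a dual $v$ with $vu = \phi_a$ and $uv = \psi_a$ for some nonzero $a \in A$, so $u$ becomes invertible in $\End^0$, and conjugation by $u$ gives an isomorphism $\End^0(\phi) \xrightarrow{\sim} \End^0(\psi)$.

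For part~(2), a direct computation shows $\pi = \tau^m$ is central: for $x \in F = \mathbb F_{q^m}$ we have $\tau^m x = x^{q^m}\tau^m = x\tau^m$, so $\pi$ commutes with all of $F\{\tau\}$, and $K(\pi)$ is a field since $A[\pi]$ is a domain integral over~$A$. This gives the easy inclusion $K(\pi) \subseteq Z(\End^0(\phi))$. For the reverse inclusion I would invoke Tate's theorem in the form $\End^0(\phi) \otimes_K K_{\mathfrak l} \xrightarrow{\sim} \End_{K_{\mathfrak l}[\pi]}(V_{\mathfrak l})$, where $V_{\mathfrak l} = T_{\mathfrak l}(\phi)\otimes K_{\mathfrak l}$ is $r$-dimensional. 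Granting that $\pi$ acts semisimply, so that $K_{\mathfrak l}[\pi] = K(\pi)\otimes_K K_{\mathfrak l}$ is a product of fields acting faithfully, the double centralizer theorem identifies the center of this commutant with $K_{\mathfrak l}[\pi]$. Hence $Z(\End^0(\phi)) \otimes_K K_{\mathfrak l} = K(\pi)\otimes_K K_{\mathfrak l}$ for every such~$\mathfrak l$, forcing $Z(\End^0(\phi)) = K(\pi)$. For part~(3) everything is then automatic: $\End^0(\phi)$ is central division over $L := K(\pi)$, say $\dim_L \End^0(\phi) = d^2$; decomposing $L\otimes_K K_{\mathfrak l} = \prod_i L_i$ and comparing $\End^0(\phi)\otimes_L L_i = \M_{c_i}(L_i)$ with the commutant forces $c_i = d$, so $V_{\mathfrak l}$ is free of rank~$d$ over $L\otimes_K K_{\mathfrak l}$, giving $r = \dim_{K_{\mathfrak l}} V_{\mathfrak l} = d\sum_i [L_i:K_{\mathfrak l}] = d\,[L:K]$, which is exactly $[\End^0(\phi):K]^{\text{red}} = d\,[K(\pi):K] = r$.

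The main obstacle is the second input: the Tate isogeny theorem for Drinfeld modules, together with the semisimplicity of the Frobenius action on $V_{\mathfrak l}$ (equivalently, separability of $K(\pi)/K$). These are genuinely deep facts, the function-field analogue of Tate's theorem on abelian varieties; once they are in hand, the center computation in~(2) and the reduced-degree count in~(3) are purely formal, and only the division-algebra statement in~(1) is accessible by elementary means.
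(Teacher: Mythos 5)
The paper does not prove this proposition at all: it is stated as background, with the blanket citation ``For proofs of the results in this section, see [Papikian, Chapter~4]''. Your proposal is essentially a reconstruction of the standard proof found in that reference (and going back to Drinfeld and to Tate's argument for abelian varieties): domain plus finite $K$-dimension gives the division algebra, the Tate isogeny theorem identifies $\End^0(\phi)\otimes_K K_{\mathfrak l}$ with the commutant of Frobenius on $V_{\mathfrak l}$, and the double centralizer theorem plus a dimension count over the factors of $K(\pi)\otimes_K K_{\mathfrak l}$ yields the center and the reduced degree. So in structure your argument is the same as the one the paper implicitly relies on, and all the main steps are sound, including the isogeny invariance via the dual isogeny and the finite generation of $\End(\phi)$ through the Tate module.

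One inaccuracy is worth correcting, because it matters for this very paper: your parenthetical claim that semisimplicity of the Frobenius action on $V_{\mathfrak l}$ is ``equivalently, separability of $K(\pi)/K$'' is false, and taken literally it would make your proof inapplicable in exactly the cases the paper needs. When $q$ is even, the Weil polynomials $X^2 + b\wp^n$ with $n$ odd (case~2 of Prop.~2.12) give an \textbf{inseparable} quadratic extension $K(\pi)/K$ --- the appendix of the paper notes this explicitly --- yet the proposition, and the trace computations in characteristic~2, apply to these isogeny classes. What your argument actually needs is weaker: that the image of $K(\pi)\otimes_K K_{\mathfrak l}$ in $\End_{K_{\mathfrak l}}(V_{\mathfrak l})$ is semisimple, i.e.\ that this tensor product is reduced (a product of fields, possibly inseparable over $K_{\mathfrak l}$); the double centralizer theorem is indifferent to separability. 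And reducedness does hold here: for $u = b\wp^n$ with $n$ odd one checks, e.g.\ via the derivation criterion ($u \in K_{\mathfrak l}^2$ forces $u' = 0$, hence $u \in K^2$, contradicting the odd exponent), that $u$ is not a square in any completion $K_{\mathfrak l}$ with $\mathfrak l \neq \mathfrak p$, so $K(\pi)\otimes_K K_{\mathfrak l}$ remains a field. With the parenthetical deleted and semisimplicity of Frobenius on $V_{\mathfrak l}$ invoked in the form it is actually proved in the literature (it is part of the Tate-theorem package over finite fields, with no separability hypothesis), your proof is complete and agrees with the cited source.
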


In particular, we see that $r \leq \text{rk}_A\End(\phi) \leq r^2$. Moreover, the minimal polynomial of $\pi_\phi$ has degree $[K(\pi_\phi):K]$, which divides $r$. Hence the following definition makes sense.

\begin{definition}
The \emph{characteristic polynomial of Frobenius} is the unique monic polynomial $c_\phi(X) \in A[X]$ of degree $r$ which is a power of the minimal polynomial of $\pi_{\phi} \in \End(\phi)$. A polynomial $f(X) \in A[X]$ is called a \emph{Weil polynomial of rank~$r$ over~$F$} if there exists a Drinfeld module~$\phi$ of rank~$r$ over~$F$ such that $f(X) = c_\phi(X)$.
\end{definition}

Weil polynomials have remarkable properties \cite[Thm.~4.2.7]{papikian}.

\begin{proposition}\label{prop:charpol_properties}
    Let $\phi$ and $\psi$ be Drinfeld modules of rank~$r$ over~$\mathbb F_{q^m}$ with characteristic~$\mathfrak p$. Consider the characteristic polynomial of Frobenius
    \[
    c_\phi(X) = X^r + a_{r-1}X^{r-1} + \ldots + a_1X +  a_0 \in A[X].
    \]
    Then $c_{\phi}(X) = c_{\psi}(X)$ if and only if $\phi$ is isogenous to~$\psi$. Moreover, the following properties hold:
    \begin{enumerate}
        \item (Riemann Hypothesis) Any root $\pi \in \bar{K}$ of $c_{\phi}(X)$ satisfies $|\pi|_{\infty} = q^{m/r}$.
        \item For each $0 \leq i < r$, we have
        \[
        \deg(a_i) \leq \frac{(r-i)m}{r}.
        \]
        \item There exists some $\lambda \in \mathbb F_q^\times$ such that $a_0 = \lambda \wp^{m/\deg(\wp)}$.
    \end{enumerate}
\end{proposition}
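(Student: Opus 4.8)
The plan is to organise all four statements around a single object, the Frobenius endomorphism $\pi = \pi_\phi = \tau^m$, together with the structure of the division algebra $\End^0(\phi)$ recorded in Proposition~\ref{prop:endalg}. Write $m_\pi \in A[X]$ for the minimal polynomial of $\pi$ over $K$; by definition $c_\phi = m_\pi^{\,r/\deg(m_\pi)}$, and since $\pi$ lies in the finite $A$-module $\End(\phi)$ it is integral over $A$, so $c_\phi$ is monic of degree $r$ with coefficients in $A$. The roots of $c_\phi$ are then the $\Gal$-conjugates of $\pi$, each repeated $r/\deg(m_\pi)$ times, and every coefficient $a_i$ is, up to sign, an elementary symmetric function of these roots. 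This observation is what links statements (2), (3), and (4).

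For statement (1), the forward implication is formal: a nonzero isogeny $f\colon \phi \to \psi$ induces an isomorphism $\End^0(\phi) \xrightarrow{\ \sim\ } \End^0(\psi)$, and because $\tau^m$ is central in $F\{\tau\}$ we have $f\pi_\phi = \pi_\psi f$, so this isomorphism carries $\pi_\phi$ to $\pi_\psi$; equal minimal polynomials force $c_\phi = c_\psi$. The converse is the genuinely hard input, namely the Drinfeld-module analogue of Tate's isogeny theorem. I would argue via Tate modules: for a prime $\mathfrak l \neq \mathfrak p$ the module $T_{\mathfrak l}(\phi)$ is free of rank $r$ over $A_{\mathfrak l}$, and $\pi$ acts on $V_{\mathfrak l}(\phi) = T_{\mathfrak l}(\phi)\otimes K_{\mathfrak l}$ with characteristic polynomial $c_\phi$. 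The isogeny theorem (Taguchi, Tamagawa) gives
\[
\Hom(\phi,\psi)\otimes_A A_{\mathfrak l} \xrightarrow{\ \sim\ } \Hom_{A_{\mathfrak l}[\pi]}\bigl(T_{\mathfrak l}(\phi),T_{\mathfrak l}(\psi)\bigr),
\]
so it suffices to produce a nonzero $\pi$-equivariant map $V_{\mathfrak l}(\phi)\to V_{\mathfrak l}(\psi)$. Since $c_\phi = c_\psi$ and the $\mathfrak l$-adic representations are semisimple, the two $K_{\mathfrak l}[\pi]$-modules are isomorphic, and such a map exists, yielding a nonzero isogeny.

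For the Riemann Hypothesis (2), I would use that there is a unique place of $K(\pi)$ above $\infty$ (reflecting the ramification of $\End^0(\phi)$ at $\infty$); consequently all $\Gal$-conjugates of $\pi$ share one $\infty$-adic absolute value $|\pi|_\infty$. To pin down its value I would compare the two normalisations of the degree of an endomorphism: on one hand $\deg_\tau\pi = m$, and on the other this degree is read off from the valuation of the reduced norm of $\pi$, which together with $[\End^0(\phi):K]^{\mathrm{red}} = r$ forces $|\pi|_\infty = q^{m/r}$. Statement (3) is then immediate: $a_i = \pm e_{r-i}(\pi_1,\dots,\pi_r)$ is a sum of products of $r-i$ roots, each of absolute value $q^{m/r}$, so the ultrametric inequality gives $|a_i|_\infty \le q^{(r-i)m/r}$, that is $\deg(a_i)\le (r-i)m/r$.

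Finally, for (4), I would observe that $a_0 = (-1)^r\prod_j \pi_j$ is, up to sign, the reduced norm of $\pi$. Because $\pi = \tau^m$ is purely inseparable, the kernel of the isogeny $\pi$ is concentrated at the characteristic $\mathfrak p$, so its reduced norm is a power of $\wp$ times a factor in $\mathbb F_q^\times$; writing $a_0 = \lambda\wp^e$ with $\lambda \in \mathbb F_q^\times$, a degree count using $\deg(a_0) = m$ (the product of all $r$ roots, each of absolute value $q^{m/r}$) and $\deg(\wp^e) = e\deg(\wp)$ forces $e = m/\deg(\wp)$. The main obstacle is clearly the converse half of (1): everything else is valuation bookkeeping around the Frobenius, but that implication rests on the full strength of the Tate-module formalism and the isogeny theorem for Drinfeld modules.
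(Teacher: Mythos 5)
This proposition is background imported from the literature: the paper gives no argument of its own, citing Papikian's book (Thm.~4.2.7) for all parts, and your sketch faithfully reconstructs exactly the standard proof found there --- the Taguchi--Tamagawa isogeny theorem plus semisimplicity of the Frobenius action on $V_{\mathfrak l}$ for the isogeny criterion, imaginarity of $K(\pi)/K$ at $\infty$ (coming from $\End^0(\phi)\otimes_K K_\infty$ being a division algebra) combined with the norm/kernel computation for $\pi = \tau^m$ (whose kernel is infinitesimal of order $q^m$, supported at $\mathfrak p$) for the Riemann Hypothesis and the shape of $a_0$, and the ultrametric inequality on elementary symmetric functions of the roots for the coefficient bounds. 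Your outline is correct and non-circular as organised, so there is nothing to fix relative to the paper's (cited) proof.
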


Let us now consider the case of rank $r=2$. Then for any Drinfeld module $\phi$ over a finite field, we have
\[
c_{\phi}(X) = X^2 + a_1X + a_0 = (X - \pi_{\phi})(X-\bar{\pi}_\phi),
\]
where $\bar{\pi}_{\phi}$ from now on denotes the Galois  conjugate of~$\pi_\phi$.
For $k \in \mathbb Z_{\geq 0}$, denote by $h_k \in \mathbb Z[X_1,X_2]$ the $k$-th homogeneous symmetric polynomial in two variables. Concretely,
\[
h_k(X_1,X_2) = \sum_{i=0}^k X_1^i X_2^{k-i}.
\]
We can now state the trace formula \cite[Thm.~4.10]{devries_rambound}.

\begin{theorem}\label{thm:heckesum}
    For any $n \geq 1$, $k \geq 0$, and $l \in \mathbb Z$, we have
    \begin{equation}\label{eq:trace_formula}
    \Tr(\T^n_{\mathfrak p} \hspace{0.1em} | \hspace{0.1em} \S_{k+2,l}) = \sum_{[\phi] / \mathbb F_{{\mathfrak p}^n}} h_{k}(\pi_\phi,\bar{\pi}_\phi) \cdot \left(\frac{\pi_{\phi} \bar{\pi}_\phi}{\wp^n}\right)^{l-k-1},
    \end{equation}
    where the sum is over isomorphism classes of Drinfeld modules of rank~2 over $\mathbb F_{{\mathfrak p}^n}$.
\end{theorem}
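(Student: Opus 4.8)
The plan is to prove this as an instance of the Grothendieck--Lefschetz trace formula in the function-field (crystal / $\tau$-sheaf) setting, by realising $\S_{k+2,l}$ cohomologically and then identifying $\T_{\mathfrak p}$ with a Frobenius on the reduction of the moduli space at~$\mathfrak p$. Let $\cM^2$ be the moduli stack of rank-$2$ Drinfeld $A$-modules and $\cE$ the universal Drinfeld module over it, and let $V$ denote the rank-$2$ crystal (the $\tau$-sheaf attached to~$\cE$) on~$\cM^2$. First I would invoke an Eichler--Shimura-type comparison to identify $\S_{k+2,l}$, as a module over the Hecke algebra, with a Frobenius-stable piece of the interior cohomology $H^1_!\bigl(\cM^2, \Sym^k V \otimes L_l\bigr)$, where $L_l$ is a twist by a power of $\det V$ recording the type~$l$. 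Passing to cusp forms is exactly what lets us work with interior rather than full cohomology, since the boundary contributions at the cusps carry the Eisenstein part.

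The decisive step is to show that, over the special fibre at~$\mathfrak p$, the operator $\T_{\mathfrak p}$ acts on this cohomology as the geometric Frobenius of~$\bF_{\mathfrak p}$. This is a characteristic-$p$ phenomenon with no classical counterpart: the $\mathfrak p$-isogeny correspondence that defines $\T_{\mathfrak p}$ away from~$\mathfrak p$ specialises modulo~$\mathfrak p$ to the graph of Frobenius, because the canonical connected $\mathfrak p$-isogeny of a Drinfeld module of characteristic~$\mathfrak p$ is purely inseparable. Granting this, $\T_{\mathfrak p}^n$ acts as the $n$-th Frobenius power, namely the $\bF_{\mathfrak p^n}$-Frobenius, which is precisely why the field $\bF_{\mathfrak p^n}$ and the exponent $n$ appear together on the geometric side.

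I would then apply the Lefschetz trace formula for the $n$-th Frobenius acting on $H^\bullet_c\bigl(\cM^2_{\overline{\bF}_{\mathfrak p}}, \Sym^k V \otimes L_l\bigr)$. Its fixed points are exactly the $\bF_{\mathfrak p^n}$-points of the stack, i.e.\ the isomorphism classes $[\phi]$ of rank-$2$ Drinfeld modules over~$\bF_{\mathfrak p^n}$. At each such point Frobenius acts on the stalk $V_{[\phi]}$ with characteristic polynomial $c_\phi(X) = (X-\pi_\phi)(X-\bar\pi_\phi)$, so its trace on $\Sym^k V_{[\phi]}$ is the complete symmetric function $h_k(\pi_\phi,\bar\pi_\phi)$, while the twist by $L_l$ contributes the factor $(\pi_\phi\bar\pi_\phi/\wp^n)^{l-k-1}$; here $\pi_\phi\bar\pi_\phi = a_0 = \lambda\wp^n$ is a unit multiple of~$\wp^n$ by Proposition~\ref{prop:charpol_properties}, so this factor lies in $\bF_q^\times$ and records the type normalisation built into our rescaled~$\T_{\mathfrak p}$. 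Summing the local terms yields the right-hand side of~\eqref{eq:trace_formula}.

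The hard part is the second step: making precise that $\T_{\mathfrak p}$ is Frobenius on cohomology, which requires a moduli-theoretic analysis of the $\mathfrak p$-isogeny correspondence modulo~$\mathfrak p$ together with its compatibility with the Eichler--Shimura map. Two further technical points must be handled with care. First, the stacky Lefschetz formula naturally produces automorphism weights $1/|\Aut(\phi)|$; one must check that the scalar automorphisms $\bF_q^\times$ act on $\Sym^k V \otimes L_l$ through the character pinned down by the congruence $k+2 \equiv 2l \pmod{q-1}$, so that these weights are absorbed and the clean unweighted sum is recovered. Second, one must verify that $H^0_c$ and $H^2_c$ contribute nothing net, so that the alternating sum collapses onto the cuspidal $H^1$ with the correct sign.
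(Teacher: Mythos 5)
You should first note that this paper does not prove Theorem~\ref{thm:heckesum} at all: it is imported verbatim from \cite[Thm.~4.10]{devries_rambound}, so the only possible comparison is with the strategy of that reference. Your sketch does reconstruct the right general strategy, and it matches the geometric character of the cited proof: a cohomological realisation of cusp forms via B\"ockle's Eichler--Shimura theory for crystals, the identification of $\T_{\mathfrak p}$ with Frobenius on the fibre at~$\mathfrak p$, and a Lefschetz/Anderson-style trace formula over the moduli of rank-$2$ Drinfeld modules; your local terms also have the correct shape (fixed points of $\mathrm{Frob}^n$ are the $\bF_{\mathfrak p^n}$-points, $\Sym^k$ contributes $h_k(\pi_\phi,\bar\pi_\phi)$, the determinant twist contributes the type factor). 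But as written this is a programme rather than a proof: the precise Eichler--Shimura comparison with the twist~$L_l$, the statement that $\T_{\mathfrak p}$ becomes geometric Frobenius on the reduction, and the cancellation of $H^0_c$ and $H^2_c$ are each substantial theorems, and you defer all of them --- indeed the second is essentially the content of the cited result. Relatedly, you slide between interior cohomology $H^1_!$ and compactly supported cohomology $H^\bullet_c$; since in the Drinfeld setting the Hecke operators also preserve the double cusp forms $\S^2_{k,l}$, checking that the alternating sum collapses onto exactly $\S_{k+2,l}$ (and not, say, $\S^2_{k+2,l}$ or a space differing by Eisenstein or boundary pieces) is a genuine issue your sketch leaves open.

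One concrete claim in your proposal is wrong as stated: that the automorphism weights $1/\#\Aut(\phi)$ are ``absorbed'' by the type character. Every rank-$2$ Drinfeld module over a finite field has $\bF_q^\times \subseteq \Aut(\phi)$ (scalars $c \in \bF_q$ are central in $F\{\tau\}$ since $\tau c = c^q\tau = c\tau$), and supersingular modules over $\bF_{\mathfrak p^n}$ with $n\deg(\mathfrak p)$ even can have $\Aut(\phi) = \bF_{q^2}^\times$. The congruence $k+2 \equiv 2l \pmod{q-1}$ makes the scalar automorphisms act \emph{trivially} on the stalk of $\Sym^k V \otimes L_l$ --- this is precisely the condition for the sheaf to be nonzero --- so the weight $1/\#\Aut$ survives as an honest factor in any stacky Lefschetz formula rather than cancelling pointwise. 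For instance, for $\deg(\mathfrak p) = n = 1$, Lemma~\ref{lem:isoclasses} gives $q(q-1)$ isomorphism classes, all with $\Aut = \bF_q^\times$, so the groupoid-weighted sum is $1/(q-1)$ times the unweighted right-hand side of~\eqref{eq:trace_formula}; at supersingular points the $\bF_{q^2}^\times$-automorphisms interact with twists in the more delicate way recorded in Proposition~\ref{prop:isom} (case~4, with the count $(q^{\deg(\mathfrak p)}-1)/(q-1)$). Reconciling the weighted cohomological trace with the clean unweighted sum of the theorem requires a rigidified (fine, level-structure) moduli problem and a descent to level~$1$, and this bookkeeping is genuinely part of the content of the cited theorem, not a detail one may wave through.
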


A direct consequence of the trace formula is the Ramanujan bound \cite[Cor.~4.16]{devries_rambound}, which holds for arbitrary~$\mathfrak p$ and~$n$. We state it here as a definition, taking into account our chosen normalisation of Hecke operators.

\begin{definition}\label{def:ram_bd}
    The \emph{Ramanujan bound for~$\T_{\mathfrak p}^n$} states that for any integers~$k$ and~$l$,
    \[
    \deg \Tr(\T_{\mathfrak p}^n \hspace{0.2em} | \hspace{0.1em} \S_{k,l} ) \leq \frac{n\deg(\wp)(k-2)}{2}.
    \]
\end{definition}

In the remainder of this section, we recall some results on isogeny classes. By Prop.~\ref{prop:endalg}, the endomorphism algebra $\End^0(\phi)$ of a Drinfeld module only depends on its isogeny class. Moreover, given a division algebra $D_\pi$ which is the endomorphism algebra of some Drinfeld module $\phi$, one can determine the number of isomorphism classes of Drinfeld modules isogenous to $\phi$ from $D_\pi$. Indeed, any Drinfeld module $\phi'$ isogenous to $\phi$ will give rise to an $A$-order $A[\pi] \subseteq \End(E,\phi') \subset D_\pi$, which depends only on the isomorphism class of $\phi'$. In this way, counting isomorphism classes in a given isogeny class becomes a problem of counting certain $A$-ideals in division algebras.

In general, this is a difficult problem; we refer the reader to~\cite{KKP} for details. For Drinfeld modules of rank~2, however, one can completely describe the isomorphism classes in a given isogeny class (see Appendix~\ref{sec:appendix} for details). In what follows, we describe the possible Weil polynomials of Drinfeld modules over finite fields.

Starting from the characterization of Weil polynomials in~\cite[Thm.~3]{yu_isogs}, it is straightforward to generalize the classification of these in~\cite[Prop.~4]{yu_isogs} to also include characteristic~2. Recall that a quadratic extension $L/K$ of function fields is called \emph{imaginary} if there is only one place of~$L$ lying over~$\infty$.

\begin{proposition} \label{prop:yu_pols}
 The Weil polynomials of rank~2 over~$\bF_{\mathfrak{p}^n}$ are precisely the following: 
    \begin{enumerate}
        \item $c(X) = X^2 - aX + b\wp^n$ such that $a \in A$, $b \in \bF_q^\times$, $(a,\wp) = 1$, $\deg(a)\leq n \deg (\wp)/2 $, and the splitting field of~$c(X)$ over~$K$ is imaginary;
        \item if $n$ is odd: $c(X) = X^2 + b \wp^n$ such that $b \in \bF_q^\times$ and the splitting field of~$c(X)$ over~$K$ is imaginary;
        \item if $n$ is even and $\deg(\wp)$ is odd: $c(X) = X^2 - \lambda \wp^{n/2}X + b\wp^n$ where $X^2 - \lambda X + b$ is irreducible in $\mathbb F_q[X]$;
        \item if $n$ is even: $c(X) = (X - \mu \wp^{n/2})^2$ where $\mu \in \bF_q^\times$.
    \end{enumerate}
\end{proposition}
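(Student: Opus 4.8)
The plan is to take the characterization of Weil numbers in \cite[Thm.~3]{yu_isogs} --- which is proved without any restriction on the characteristic --- as a black box, and to make the rank-$2$ list explicit by a direct analysis of the coefficients, the only genuinely new input being the bookkeeping forced by characteristic~$2$. By Proposition~\ref{prop:charpol_properties} (applied with $r=2$ and $m=n\deg\wp$), every rank-$2$ Weil polynomial over $\bF_{\mathfrak p^n}$ has the shape $c(X)=X^2-aX+b\wp^n$ with $b\in\bF_q^\times$ and $\deg a\leq n\deg\wp/2$, both roots having $\infty$-absolute value $q^{n\deg\wp/2}$. The decisive invariant is the behaviour of $c$ at~$\mathfrak p$: the local theory at the characteristic encoded in \cite[Thm.~3]{yu_isogs} forces the $\mathfrak p$-adic Newton polygon of $c$ to have root valuations $\{0,n\}$ (ordinary) or $\{n/2,n/2\}$ (supersingular), with no intermediate slopes, reflecting that the associated formal module has height~$2$. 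Since $a=\pi+\bar\pi$, this is the dichotomy $v_{\mathfrak p}(a)=0$ versus $v_{\mathfrak p}(a)\geq n/2$.

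In the ordinary case $(a,\wp)=1$; moreover a characteristic polynomial of Frobenius is always either irreducible or a perfect square, being a power of the minimal polynomial of $\pi_\phi$, and here the two root valuations $0\neq n$ are distinct, so $c$ is irreducible. Yu's criterion then reduces to requiring the splitting field to be imaginary, which is Case~1. In the supersingular case I would combine $v_{\mathfrak p}(a)\geq n/2$ with the degree bound: if $a\neq 0$ then $\deg a\geq v_{\mathfrak p}(a)\deg\wp\geq (n/2)\deg\wp\geq \deg a$, forcing equality throughout, so $v_{\mathfrak p}(a)=n/2$ and $a=\lambda\wp^{n/2}$ with $\lambda\in\bF_q^\times$. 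For $n$ odd this is impossible, hence $a=0$ and $c(X)=X^2+b\wp^n$ is ramified at~$\mathfrak p$; imposing the imaginary condition gives Case~2. For $n$ even one writes $c(X)=\wp^n\,g(X/\wp^{n/2})$ with $g(Y)=Y^2-\lambda Y+b\in\bF_q[Y]$ (allowing $\lambda=0$), so that everything reduces to how $g$ factors over~$\bF_q$.

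The $n$-even analysis then splits three ways. If $g$ is irreducible, then $K(\pi)=\bF_{q^2}\cdot K$ is the constant-field extension; this is automatically imaginary, since $\infty$ has degree~$1$ and is therefore inert, but Yu's criterion additionally requires $\mathfrak p$ to be non-split in $K(\pi)$ (the local endomorphism algebra at the characteristic must be a division algebra), which for a constant-field extension holds precisely when $\deg\wp$ is odd --- this is Case~3. If $g=(Y-\mu)^2$, then $\pi=\mu\wp^{n/2}\in K$ and the endomorphism algebra is the quaternion algebra ramified exactly at $\{\infty,\mathfrak p\}$, valid for any $\deg\wp$, giving Case~4. The remaining possibility that $g$ has two distinct roots in $\bF_q$ is excluded, since $c$ would then have two distinct roots in~$K$ and could not be a power of a minimal polynomial. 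Verifying the imaginary condition and the local invariant at $\mathfrak p$ in each case is routine once Yu's criterion is in hand.

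The main obstacle is the characteristic-$2$ bookkeeping in this last step, which is exactly what \cite[Prop.~4]{yu_isogs} sidesteps by assuming $q$ odd. Two phenomena demand care. First, $(X-\mu\wp^{n/2})^2=X^2+\mu^2\wp^n$ has no linear term, so Case~4 polynomials take the \emph{same} shape $X^2+b\wp^n$ as the $\lambda=0$ supersingular polynomials. Second, over a field of characteristic~$2$ the polynomial $Y^2+b$ is always a square, as Frobenius is surjective, so the $\lambda=0$ subcase of Case~3 never occurs and every $X^2+b\wp^n$ with $n$ even lies in Case~4. Consequently the discriminant test for irreducibility of $g$ that works in odd characteristic must be replaced by the Artin--Schreier condition $\Tr_{\bF_q/\bF_2}(b/\lambda^2)\neq 0$ when $\lambda\neq 0$. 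Beyond this, the only remaining task is to confirm that the Weil-number criterion of \cite[Thm.~3]{yu_isogs} is genuinely characteristic-free, so that the case analysis above applies verbatim.
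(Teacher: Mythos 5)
Your proposal is correct and takes essentially the same route as the paper: the paper's entire proof is the remark that, starting from the characterization of Weil numbers in \cite[Thm.~3]{yu_isogs}, it is straightforward to extend the classification of \cite[Prop.~4]{yu_isogs} to even characteristic, and your Newton-polygon dichotomy at~$\mathfrak p$, the coefficient analysis forcing $a=0$ or $a=\lambda\wp^{n/2}$, and the local invariant check singling out $\deg(\wp)$ odd in case~3 are exactly the details being elided. Your characteristic-$2$ bookkeeping (every $X^2+b\wp^n$ with $n$ even falling into case~4 since $Y^2+b$ is a square, and the Artin--Schreier condition replacing the discriminant test) matches the paper's own remarks in Appendix~\ref{sec:appendix} on the inseparable case.
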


\begin{definition}\label{def:iso_sets}
Let $a \in A$ and $b \in \bF_q^\times$. For any $n \geq 1$, we define
\[
\text{Iso}_{\mathfrak p^n}(a,b) := \{ \text{Drinfeld modules } \phi \text{ of rank 2 over } \bF_{\mathfrak p^n} \ | \ c_\phi(X) = X^2 - aX + b\wp^n \} / \cong,
\]
the set of isomorphism classes of Drinfeld modules with characteristic polynomial $X^2 - aX + b\wp^n$.
\end{definition}

\begin{remark}\label{rem:iso_c}
    Let $a \in \mathbb F_q[T]$ and $b,c \in \mathbb F_q^\times$. Then
    \[
    \# \text{Iso}_{\mathfrak p^n}(a,b) = \# \text{Iso}_{\mathfrak p^n}(ca,c^2b).
    \]
    To see this, let $F := \bF_{\mathfrak p^n}$ and fix $\lambda \in F^\times$ such that $\operatorname{Nm}_{F/\mathbb F_q}(\lambda)=c$. 
    Then the ring isomorphism $m_\lambda\: F\{\tau\} \to  F\{\tau\}$ defined by $\tau \mapsto \lambda\tau$ induces a bijection $[\phi] \mapsto [m^{-1}_\lambda \circ \phi]$ between $\text{Iso}_{\mathfrak p^n}(a,b)$ and $\text{Iso}_{\mathfrak p^n}(ca,c^2b)$.
\end{remark}

\begin{remark}
    The cardinalities $\# \text{Iso}_{\mathfrak p^n}(a,b)$ are intimately related to Hurwitz class numbers. We refer the reader to Appendix~\ref{sec:appendix} for details.
\end{remark}
\section{Traces}\label{sec:traces}

\subsection{Rewriting the trace formula}

Suppose that $\pi$ is the Frobenius endomorphism of some Drinfeld module $\phi$ over $\bF_{\mathfrak p^n}$, where $\mathfrak p = (\wp)$ for some monic irreducible polynomial~$\wp$ of degree~$d$. Then we have seen that the characteristic polynomial is of the form
\[
c_\phi(X) = (X-\pi)(X-\bar{\pi}) = X^2-aX+b\wp^n,
\]
where $b \in \mathbb F_q^\times$ and $a \in \mathbb F_q[T]$ has degree at most $nd/2$. It will be convenient to rewrite the trace formula~\eqref{eq:trace_formula} in terms of $a$, $b$, and~$\wp$.

For an integer $m \geq 0$, we let $e_m,p_m,h_m \in \mathbb Z[X_1,X_2]$ denote the elementary symmetric, resp.\ power sum, resp.\ homogeneous symmetric polynomials of degree~$m$ in~2 variables. In particular, $e_0 = h_0 = 1$ and $p_0 = 2$.

\begin{lemma}\label{lem:pmexpression}
For any $m \geq 1$, we have (under the convention that $0^0 = 1$):
\[
p_m(\pi,\bar{\pi}) = \pi^m + \bar{\pi}^m = m \sum_{r_1 + 2r_2 = m} (-1)^{r_2} \frac{(r_1+ r_2 - 1)!}{r_1!r_2!}a^{r_1}(b\wp^n)^{r_2}.
\]
\end{lemma}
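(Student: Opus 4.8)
The identity is the two-variable instance of the Newton--Girard formula expressing power sums in terms of the elementary symmetric functions $e_1 = \pi + \bar\pi = a$ and $e_2 = \pi\bar\pi = b\wp^n$, so the plan is to prove it as a universal polynomial identity and then specialise. First I would treat $\pi,\bar\pi$ as independent indeterminates $X_1, X_2$ over $\mathbb{Q}$ and work with the reversed characteristic polynomial $C(t) := (1 - X_1 t)(1 - X_2 t) = 1 - e_1 t + e_2 t^2$. Taking the logarithmic derivative gives
\[
-\frac{C'(t)}{C(t)} = \frac{X_1}{1 - X_1 t} + \frac{X_2}{1 - X_2 t} = \sum_{m \geq 1} p_m\, t^{m-1},
\]
and integrating yields the clean generating-function identity $-\log C(t) = \sum_{m\geq 1} \frac{p_m}{m}\, t^m$ in $\mathbb{Q}[X_1,X_2][[t]]$.

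The second step is to expand the left-hand side. Writing $-\log C(t) = -\log\bigl(1 - (e_1 t - e_2 t^2)\bigr) = \sum_{j\geq 1} \frac1j (e_1 t - e_2 t^2)^j$ and applying the binomial theorem to each $(e_1 t - e_2 t^2)^j$, I would set $r_1 = j - r_2$ so that the monomial $e_1^{r_1} e_2^{r_2}$ carries $t^{r_1 + 2r_2}$ and the coefficient $\frac1j\binom{j}{r_2}$ becomes $\frac{(r_1+r_2-1)!}{r_1! r_2!}$. Reading off the coefficient of $t^m$ and comparing with $\frac{p_m}{m}$ produces exactly the claimed sum; the sign $(-1)^{r_2}$ comes directly from the $-e_2 t^2$ term, with no further juggling. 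This is the routine computational core.

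The step requiring genuine care is the passage to $\mathbb{C}_\infty$, which has characteristic $p$: the generating-function manipulation divides by the integers $j$ and $m$, so it is only valid in characteristic zero. I would resolve this exactly as in the classical proof of Newton--Girard, by arguing that the displayed formula is a universal identity with \emph{integer} coefficients. Concretely, having established equality in $\mathbb{Q}[e_1,e_2]$, the ring of symmetric polynomials in $X_1,X_2$, the fundamental theorem of symmetric polynomials over $\mathbb{Z}$ shows $p_m \in \mathbb{Z}[e_1,e_2]$; since the monomials $e_1^{r_1}e_2^{r_2}$ are linearly independent, the rational coefficients $m\,\frac{(r_1+r_2-1)!}{r_1! r_2!}$ must in fact be integers, so the identity already holds in $\mathbb{Z}[e_1,e_2]$. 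Specialising along the ring map $\mathbb{Z} \to \mathbb{C}_\infty$ and substituting $e_1 = a$, $e_2 = b\wp^n$ then gives the stated formula; the convention $0^0 = 1$ is precisely what is needed to read the $r_1 = 0$ terms correctly when $a = 0$ (as occurs in Prop.~\ref{prop:yu_pols}(2)), noting that $e_2 = b\wp^n \neq 0$ always.

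As an alternative avoiding the characteristic-zero lift, one could prove the closed form directly by induction on $m$ using the recurrence $p_m = a\,p_{m-1} - b\wp^n\,p_{m-2}$, valid for $m \geq 2$ because each root satisfies $X^2 = aX - b\wp^n$, after checking the base cases $p_1 = a$ and $p_2 = a^2 - 2b\wp^n$. This route is characteristic-free but replaces the analytic obstacle with the need to verify a Pascal-type identity among the coefficients $m\,\frac{(r_1+r_2-1)!}{r_1! r_2!}$; I would prefer the generating-function argument for its transparency.
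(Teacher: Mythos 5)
Your proposal is correct and follows essentially the same route as the paper: the paper's proof simply invokes Newton's identities over $\mathbb{Z}$ (in their general $n$-variable form) and specialises $e_1 = a$, $e_2 = b\wp^n$, $e_m = 0$ for $m > 2$ with the same $0^0 = 1$ convention, which is exactly the universal-integral-identity-plus-specialisation strategy you carry out. The only difference is that you supply a self-contained derivation (logarithmic derivative, then integrality descent via the fundamental theorem of symmetric polynomials) of the identity the paper cites, and your checks — including the characteristic-$p$ subtlety — are all sound.
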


\begin{proof}
One can express the power polynomials in terms of the elementary symmetric polynomials over $\mathbb Z$ using Newton's identities:
\[
p_m = (-1)^m m \sum_{r_1 + 2r_2 + \ldots + mr_m = m} \frac{(r_1+r_2 + \ldots + r_m - 1)!}{r_1!r_2!\ldots r_m!} \prod_{i=1}^m (-e_i)^{r_i}.
\]
The result follows because $e_1(\pi,\bar{\pi}) = a$, $e_2(\pi,\bar{\pi}) = b\wp^n$, and $e_m(\pi,\bar{\pi}) = 0$ for $m > 2$. The convention $0^0 = 1$ has to be adopted since $e_i^0$ is the constant polynomial $1$, which evaluates to~1 even if the input is~0.
\end{proof}

To get the trace formula into its desired form, we will need a binomial identity. We believe this identity to be known, but include a proof for lack of reference.

\begin{lemma}\label{binomlemma}
    Let $k,j \in \mathbb Z$ with $k > 2j \geq 0$. Then we have
    \[
    \sum_{i = 0}^{j} (-1)^{i}\frac{k-2i}{k-j-i}\binom{k-j-i}{j-i} = \binom{k-j}{j}.
    \]
\end{lemma}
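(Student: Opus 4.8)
The plan is to prove the identity by reindexing the sum and then recognizing a telescoping structure. First I would substitute $m = j - i$, so that $m$ runs over $0 \le m \le j$ as $i$ runs over $0 \le i \le j$. Writing $N := k - 2j$ (a positive integer by the hypothesis $k > 2j$, which guarantees that no denominator below vanishes), the summand transforms via $k - 2i = N + 2m$, $\ k - j - i = N + m$, and $\binom{k-j-i}{j-i} = \binom{N+m}{m}$, while the right-hand side becomes $\binom{k-j}{j} = \binom{N+j}{j}$. Thus it suffices to prove
\[
\sum_{m=0}^{j} (-1)^{j-m}\,\frac{N+2m}{N+m}\binom{N+m}{m} = \binom{N+j}{j}.
\]

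The key algebraic step is to split the coefficient. Since $N + 2m = (N+m) + m$, I would write
\[
\frac{N+2m}{N+m}\binom{N+m}{m} = \binom{N+m}{m} + \frac{m}{N+m}\binom{N+m}{m} = \binom{N+m}{m} + \binom{N+m-1}{m-1},
\]
where the last equality uses $\frac{m}{N+m}\binom{N+m}{m} = \binom{N+m-1}{m-1}$, valid also at $m=0$ under the convention $\binom{N-1}{-1} = 0$. The sum then separates as $S_1 + S_2$, where $S_1 = \sum_{m=0}^{j}(-1)^{j-m}\binom{N+m}{m}$ and $S_2 = \sum_{m=0}^{j}(-1)^{j-m}\binom{N+m-1}{m-1}$.

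Finally I would shift the index in $S_2$ by setting $m' = m-1$ (the $m=0$ term vanishes), obtaining $S_2 = \sum_{m'=0}^{j-1}(-1)^{j-1-m'}\binom{N+m'}{m'} = -\sum_{m=0}^{j-1}(-1)^{j-m}\binom{N+m}{m}$. This is precisely the negative of all terms of $S_1$ except the top one at $m=j$, so the two sums cancel termwise and leave $S_1 + S_2 = \binom{N+j}{j}$, as desired. I expect the only genuine obstacle to be spotting the splitting $N+2m = (N+m)+m$ together with the absorption identity $\frac{m}{N+m}\binom{N+m}{m} = \binom{N+m-1}{m-1}$; once these are in hand, the telescoping is immediate. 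An alternative route would be induction on $j$ (using Pascal's rule to relate the sums for $j$ and $j-1$) or an automated proof via Zeilberger's algorithm, but the telescoping argument is the most transparent and self-contained, and it makes clear exactly where the hypothesis $k > 2j$ is used.
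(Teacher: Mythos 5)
Your proof is correct, and it takes a genuinely different route from the paper. The paper proves the more general identity $\binom{n}{j} = \sum_{i=0}^j (-1)^i \frac{n+j-2i}{n-i}\binom{n-i}{j-i}$ for all integers $j \leq n$ (the lemma is the case $n = k-j$), arguing by induction on $n$: it peels off the $i=j$ term, applies Pascal's rule $\binom{n}{j} = \binom{n-1}{j-1} + \binom{n-1}{j}$ to invoke the inductive hypothesis twice, and recombines the two sums using an auxiliary polynomial identity $(n+j-2(i+1))(j-i) + (n-1+j-2i)(n-j) = (n-1-i)(n+j-2i)$; because $j = n$ is allowed, it must also adopt a convention for the degenerate term $i=j=n$ where the denominator vanishes. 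Your argument instead reindexes by $m = j-i$, splits $N+2m = (N+m)+m$, applies the absorption identity $\frac{m}{N+m}\binom{N+m}{m} = \binom{N+m-1}{m-1}$, and telescopes directly --- no induction, no auxiliary identity, no convention needed, since the hypothesis $k > 2j$ (i.e.\ $N \geq 1$) keeps all denominators nonzero, and your write-up makes that usage explicit. What each buys: your telescoping proof is shorter and more self-contained for the lemma as stated, while the paper's induction yields the slightly stronger statement that remains valid at the boundary $k = 2j$ (under its convention), though that extra generality is never used elsewhere in the paper.
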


\begin{proof}
    We will prove more generally that for integers $j \leq n$, we have
    \begin{equation}\label{binomidentity}
    \binom{n}{j} = \sum_{i=0}^j (-1)^i \frac{n+j-2i}{n-i} \binom{n-i}{j-i}.
    \end{equation}
    For this to make sense, we take on the convention that the term for $i = j = n$ equals $(-1)^j$.

    We use induction. Our base case consists of the formula for $\binom{n}{0}$ and $\binom{n}{n}$ for all $n \in \mathbb N$; one can check directly that these indeed equal 1. Now suppose we know that the formula holds for all $m < n$. Let $j < n$. Note that the term for $i=j$ in (\ref{binomidentity}) contributes $(-1)^j$. By induction, we obtain
    \begin{align*}
\binom{n}{j} - (-1)^j &= \binom{n-1}{j-1} + \left( \binom{n-1}{j} - (-1)^j \right) \\
        &= \sum_{i=0}^{j-1}  (-1)^i \left( \frac{n+j-2(i+1)}{n-1-i} \binom{n-1-i}{j-1-i} + \frac{n-1+j-2i}{n-1-i}{\binom{n-1-i}{j-i}} \right)   \\
        &= \sum_{i=0}^{j-1} (-1)^i  \frac{n+j-2i}{n-i} {\binom{n-i}{j-i}}.
    \end{align*}
    The last equality above follows from the fact that $\binom{x}{y} + \binom{x}{y+1} = \binom{x+1}{y+1}$ and the equality
    \[
        (n+j-2(i+1))(j-i) + (n-1+j-2i)(n-j) = (n-1-i)(n+j-2i),
    \]
    which is true for all $i,j,n \in \mathbb N$.
\end{proof}

\begin{lemma}\label{lem:hmexpression}
Let $k \in \mathbb N$. Set $\epsilon_k = 0$ if $k$ is odd, and $\epsilon_k = (-b\wp^n)^{k/2}$ if $k$ is even. Then we have (under the convention that $0^0 = 1$):
\begin{equation}\label{eq:hmexpression}
h_k(\pi,\bar{\pi}) = \epsilon_k + \sum_{j=0}^{\lceil k/2 \rceil - 1} (-1)^{j}\binom{k-j}{j} a^{k-2j} (b\wp^n)^{j}.
\end{equation}
\end{lemma}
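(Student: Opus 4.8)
The plan is to reduce $h_k(\pi,\bar\pi)$ to the power sums $p_m(\pi,\bar\pi)$ already computed in Lemma~\ref{lem:pmexpression}, and then to recognise the resulting coefficient of each monomial as the left-hand side of the binomial identity of Lemma~\ref{binomlemma}. Since $\pi,\bar\pi$ are the roots of $X^2 - aX + b\wp^n$, we have $e_1(\pi,\bar\pi) = a$ and $e_2(\pi,\bar\pi) = b\wp^n$, while all higher elementary symmetric polynomials vanish in two variables.

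First I would record the two-variable Newton relation
\[
p_m = h_m - e_2\,h_{m-2}, \qquad\text{equivalently}\qquad h_m = p_m + e_2\,h_{m-2} \quad (m \geq 1),
\]
with $h_0 = 1$ and $h_1 = a = p_1$; this follows in one line from $h_m = (\pi^{m+1} - \bar\pi^{m+1})/(\pi - \bar\pi)$. Iterating the recurrence downward expresses $h_k$ as
\[
h_k \;=\; \sum_{i=0}^{\lceil k/2\rceil - 1} (b\wp^n)^i\, p_{k-2i} \;+\; \delta,
\]
where $\delta = (b\wp^n)^{k/2}$ if $k$ is even and $\delta = 0$ if $k$ is odd. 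The correction $\delta$ appears precisely because the descent for even~$k$ terminates at $h_0 = 1$ rather than at $p_0 = 2$.

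Next I would substitute the expression for $p_{k-2i}$ from Lemma~\ref{lem:pmexpression} and collect the coefficient of each monomial $a^{k-2j}(b\wp^n)^j$. For a fixed $j$ with $k-2j \geq 1$ (that is, $j < \lceil k/2\rceil$, the range of the sum in~\eqref{eq:hmexpression}), the contributions arise from the terms of $p_{k-2i}$ carrying $a^{k-2j}(b\wp^n)^{j-i}$, for $i = 0,\ldots,j$. After factoring out $(-1)^j$, the coefficient becomes exactly
\[
\sum_{i=0}^{j}(-1)^i\,\frac{k-2i}{k-j-i}\binom{k-j-i}{j-i},
\]
which Lemma~\ref{binomlemma} evaluates to $\binom{k-j}{j}$. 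This produces the claimed coefficient $(-1)^j\binom{k-j}{j}$ and accounts for the entire sum in~\eqref{eq:hmexpression}.

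The one place needing care --- and the main obstacle --- is the top monomial $(b\wp^n)^{k/2}$ when $k$ is even, corresponding to the excluded index $j = k/2$ with $k-2j = 0$, where the convention $0^0 = 1$ is in force. Here the coefficient receives the pure $e_2$-power contribution of \emph{every} $p_{k-2i}$ (each equal to $2(-1)^{(k-2i)/2}$) together with the correction $\delta = (b\wp^n)^{k/2}$; summing these alternating terms telescopes to $(-1)^{k/2}$, which is exactly the coefficient encoded in $\epsilon_k = (-b\wp^n)^{k/2}$. Pulling this term out of the sum is what makes the clean statement~\eqref{eq:hmexpression} possible, and verifying that its coefficient is correct is the only genuinely fiddly computation in the argument.
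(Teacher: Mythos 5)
Your proposal is correct and follows essentially the same route as the paper: both iterate the recurrence $h_m = p_m + e_2 h_{m-2}$ to write $h_k$ as $\sum_{i=0}^{\lceil k/2\rceil - 1}(b\wp^n)^i p_{k-2i}$ plus the remainder from $h_0$, substitute Lemma~\ref{lem:pmexpression}, identify the coefficient of $a^{k-2j}(b\wp^n)^j$ with the left-hand side of Lemma~\ref{binomlemma}, and handle the top monomial $(b\wp^n)^{k/2}$ for even~$k$ by the same alternating-sum computation showing its coefficient is $(-1)^{k/2}$, i.e.\ that the term equals $\epsilon_k$. All the steps you flag as delicate, including the range $i = 0,\ldots,j$ matching the binomial identity and the pure $e_2$-power contribution $2(-1)^{(k-2i)/2}$ of each $p_{k-2i}$, check out against the paper's argument.
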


\begin{proof}
Clearly $h_0(\pi,\bar{\pi}) = 1$ and $h_1(\pi,\bar{\pi}) = \pi + \bar{\pi} = a$. For $k \geq 2$, we have $h_k(\pi,\bar{\pi}) = p_k(\pi,\bar{\pi}) + b\wp^nh_{k-2}(\pi,\bar{\pi})$,
so we see inductively that
\[
h_k(\pi,\bar{\pi}) = (-1)^{k/2}\epsilon_k + \sum_{i=0}^{\lceil k/2 \rceil - 1} (b\wp^n)^i p_{k-2i}(\pi,\bar{\pi}).
\]
Combining this with Lemma~\ref{lem:pmexpression} gives
\[
h_k(\pi,\bar{\pi}) = (-1)^{k/2}\epsilon_k  + \sum_{i=0}^{\lceil k/2 \rceil - 1}(b\wp^n)^i (k-2i) \sum_{r_1 + 2r_2 = k-2i} (-1)^{r_2} \frac{(r_1+ r_2 - 1)!}{r_1!r_2!}a^{r_1}(b\wp^n)^{r_2}.
\]
Noting that $r_1$ and $k$ have the same parity, we substitute $r_1 = k-2j$ for $j = i,\ldots,\lfloor k/2 \rfloor$, which gives $r_2 = j-i$. Thus,
\[
h_k(\pi,\bar{\pi}) = (-1)^{k/2}\epsilon_k  + \sum_{i=0}^{\lceil k/2 \rceil - 1} (k-2i) \sum_{j=i}^{\lfloor k/2 \rfloor} (-1)^{j-i}\frac{(k-j-i - 1)!}{(k-2j)!(j-i)!} a^{k-2j} (b\wp^n)^{j}.
\]
Changing the order of the sums, we get
\begin{equation}\label{eq:inter_expr}
h_k(\pi,\bar{\pi}) = (-1)^{k/2}\epsilon_k  + \sum_{j=0}^{\lfloor k/2 \rfloor} c_{k,j} a^{k-2j} (b\wp^n)^{j},
\end{equation}
where
\[
    c_{k,j} = \sum_{i=0}^{\min(j,\lceil k/2 \rceil - 1)} (-1)^{j-i} (k-2i) \frac{(k-j-i - 1)!}{(k-2j)!(j-i)!}.
\]
If $j \leq \lceil k/2 \rceil - 1$, then $c_{k,j} = (-1)^j \binom{k-j}{j}$ by Lemma~\ref{binomlemma}. If $j = \lfloor k/2 \rfloor > \lceil k/2 \rceil - 1$, then $k$ is even and we see directly that
\[
c_{k,k/2} = 2 \sum_{i=0}^{k/2 - 1} (-1)^{k/2-i} = \begin{cases} 0 & \text{if } k/2 \text{ is even;} \\ -2 & \text{if } k/2 \text{ is odd.} \end{cases}
\]
Hence the term corresponding to $j = k/2$ in equation~\eqref{eq:inter_expr} becomes $\epsilon_k$. This yields the desired expression.
\end{proof}

By Prop.~\ref{prop:charpol_properties}, the
pair $(a,b)$ of coefficients of~$c_\phi(X)$ determines the Drinfeld module~$\phi$ up to isogeny. Because the trace formula is a sum over isomorphism classes of Drinfeld modules, the numbers $\# \text{Iso}_{\mathfrak p^n}(a,b)$ (see Definition~\ref{def:iso_sets}) naturally show up when rewriting the trace formula in terms of~$a$ and~$b$. The following lemma, which is a partial generalization of \cite[Prop.~3]{yu_isogs}, will help to simplify the resulting trace formula.

\begin{lemma}\label{lem:isog_sum}
    Let $\mathfrak p \trianglelefteq A$ be a maximal ideal. Then for any $n \geq 1$ and any $t \in \mathbb Z$, we have
    \[
    \sum_{a \in A} \sum_{b \in \mathbb F_q^\times} \# \text{Iso}_{\mathfrak p^n}(a,b) b^t = 0
    \]
    as elements of $\bF_q$.
\end{lemma}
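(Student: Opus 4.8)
The plan is to evaluate the sum directly, by re-expressing it as a sum over isomorphism classes and exploiting the structure of the leading coefficient of a Drinfeld module. By Proposition~\ref{prop:charpol_properties} and the discussion preceding Theorem~\ref{thm:heckesum}, for a rank-2 Drinfeld module $\phi$ over $F := \bF_{\mathfrak p^n} = \bF_{q^m}$, with $m = n\deg(\wp)$, the constant coefficient of $c_\phi$ equals $\pi_\phi\bar\pi_\phi = b\wp^n$, so $b = b(\phi) \in \bF_q^\times$ is an isogeny (hence isomorphism) invariant. Since every rank-2 Weil polynomial over $F$ has the shape $X^2 - aX + b\wp^n$ (Proposition~\ref{prop:yu_pols}), summing over all $a \in A$ and $b \in \bF_q^\times$ with the multiplicities $\#\text{Iso}_{\mathfrak p^n}(a,b)$ of Definition~\ref{def:iso_sets} simply amounts to summing over all isomorphism classes:
\[
\sum_{a \in A}\sum_{b \in \bF_q^\times} \#\text{Iso}_{\mathfrak p^n}(a,b)\, b^t = \sum_{[\phi]} b(\phi)^t,
\]
the right-hand sum running over all isomorphism classes of rank-2 Drinfeld modules over $F$ of characteristic $\mathfrak p$.

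Next I would parametrise these classes. Writing $\phi_T = \alpha_0 + \alpha_1\tau + \alpha_2\tau^2$ with $\alpha_0$ a root of $\wp$ and $\alpha_2 \in F^\times$, two such modules are isomorphic precisely when related by a conjugation $\tau \mapsto c^{1-q}\tau$ for some $c \in F^\times$, i.e.\ by $(\alpha_1,\alpha_2) \mapsto (\alpha_1 c^{1-q}, \alpha_2 c^{1-q^2})$. The stabiliser of such a point is $\bF_q^\times$ when $\alpha_1 \neq 0$ and $\bF_{q^{\gcd(2,m)}}^\times$ when $\alpha_1 = 0$, so the orbits have sizes $(q^m-1)/(q-1)$ and $(q^m-1)/(q^{\gcd(2,m)}-1)$ respectively. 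The key input is the explicit shape of $b$: the leading coefficient determines it via $b(\phi) = \kappa \cdot \mathrm{Nm}_{F/\bF_q}(\alpha_2)^{-1}$ for a constant $\kappa \in \bF_q^\times$ depending only on $\alpha_0$. This is the rank-2 case of the standard formula for the constant term of the Frobenius characteristic polynomial, and it is in any case forced, up to the value of $\kappa$, by the scaling $b \mapsto c^2 b$ of Remark~\ref{rem:iso_c}. In particular $b(\phi)$ does not depend on $\alpha_1$, and $\alpha_2 \mapsto b(\phi)^t$ is, up to the constant $\kappa^t$, the character $\mathrm{Nm}_{F/\bF_q}(\cdot)^{-t}$ of $F^\times$.

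I would then compute the sum orbit by orbit. Fixing $\alpha_0$, grouping the classes according to whether $\alpha_1 = 0$, converting each orbit sum into a sum over triples weighted by the reciprocal orbit size, and using that $b^t$ is independent of $\alpha_1$, the inner sum collapses to a multiple of $\sum_{\alpha_2 \in F^\times} \mathrm{Nm}_{F/\bF_q}(\alpha_2)^{-t}$. By orthogonality of characters this vanishes identically whenever $\mathrm{Nm}_{F/\bF_q}(\cdot)^{-t}$ is nontrivial, i.e.\ whenever $(q-1) \nmid t$, which disposes of all such exponents at once; via Remark~\ref{rem:iso_c} one sees the same vanishing even more cheaply over the larger range $(q-1)\nmid 2t$. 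In the remaining case $(q-1) \mid t$ one has $b^t = \kappa^t$ constant, so the contribution of each $\alpha_0$ is $\kappa^t$ times the total number of isomorphism classes with that $\alpha_0$, namely $(q^m-1)(q-1) + (q^{\gcd(2,m)}-1)$. Since $q \equiv 0 \pmod p$, this number is $\equiv (-1)(-1) + (-1) = 0 \pmod p$, and the whole sum vanishes in $\bF_q$.

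The hard part will be establishing the explicit dependence $b(\phi) = \kappa\,\mathrm{Nm}_{F/\bF_q}(\alpha_2)^{-1}$ and pinning down the orbit sizes rigorously, because the scaling symmetry of Remark~\ref{rem:iso_c} alone yields vanishing only for $(q-1)\nmid 2t$ and gives no information in the range where $b^t$ is scaling-invariant; there the genuine divisibility by $p$ comes entirely from the explicit count of isomorphism classes together with the congruence $q \equiv 0 \pmod p$. Once these two structural facts are in hand, the remainder is elementary character theory and bookkeeping.
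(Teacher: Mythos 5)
Your proposal is correct in substance, but it takes a genuinely different route from the paper. The paper's proof is a one-liner: set $k=0$ and $l=t+1$ in the trace formula of Theorem~\ref{thm:heckesum} (so that $h_0(\pi,\bar\pi)=1$ and the twist factor becomes $b^t$), and observe that $\S_{2,l}=0$ since there are no cusp forms of weight~$2$; the vanishing of the sum is then immediate. You instead prove the identity by a direct mass count over the parametrisation $\phi_T=\alpha_0+\alpha_1\tau+\alpha_2\tau^2$, which is essentially the method behind Yu's Proposition~3 (of which the paper says this lemma is a partial generalization). What your approach buys is independence from the trace formula: it is an unconditional, elementary verification, and it even exhibits a finer cancellation than you state --- since the orbit sizes $(q^m-1)/(q-1)$ and $(q^m-1)/(q^{\gcd(2,m)}-1)$ are geometric sums of powers of $q$ and hence $\equiv 1 \pmod p$ (this is also why your ``reciprocal orbit size'' weighting is legitimate in characteristic~$p$), the stratum $\alpha_1\neq 0$ contributes $(q^m-1)\equiv -1$ times the norm-character sum and the stratum $\alpha_1=0$ contributes $+1$ times it, so the two strata cancel for every $t$ simultaneously. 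What the paper's approach buys is brevity and avoidance of any structural input beyond machinery it already imports.

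One caution: your claim that the formula $b(\phi)=\kappa\,\mathrm{Nm}_{F/\bF_q}(\alpha_2)^{-1}$ is ``forced, up to $\kappa$, by the scaling of Remark~\ref{rem:iso_c}'' is not correct as a rigorous argument. There exist combinations of $(\alpha_1,\alpha_2)$ invariant under both conjugation and the twist $m_\lambda$ --- for instance $\alpha_1^{q+1}/\alpha_2$ --- so equivariance under scaling alone does not rule out a dependence of $b$ on $\alpha_1$. The norm formula itself is true (for $m=1$ it is visible in Lemma~\ref{lem:isoclasses}, where $b=-\beta^{-1}$; in general $b=(-1)^m\mathrm{Nm}_{F/\bF_q}(\alpha_2)^{-1}$, as in Gekeler's work on Frobenius distributions), and you correctly flag establishing it as the hard part, but it must be imported or proved honestly rather than inferred from the scaling symmetry. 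With that fact in hand, the rest of your argument --- the orbit--stabiliser computation, the character orthogonality for $(q-1)\nmid t$, the cheap vanishing for $(q-1)\nmid 2t$ via Remark~\ref{rem:iso_c}, and the class count $(q^m-1)(q-1)+(q^{\gcd(2,m)}-1)\equiv 0 \pmod p$ in the remaining case --- is sound.
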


\begin{proof}
    Since there are no cusp forms of weight $2$, this follows from the trace formula \eqref{eq:trace_formula} by setting $k = 0$ and $l = t+1$.
\end{proof}

\begin{proposition}[Trace formula]\label{prop:trace_formula}
    For any $k \geq 0$, $n \geq 1$, and $l \in \mathbb Z$, we have
    \begin{equation}\label{eq:trace_formula_2}
    \Tr(\T^n_{\mathfrak p} \hspace{0.2em} | \hspace{0.1em} \S_{k+2,l}) = \sum_{a,b} \# \text{Iso}_{\mathfrak p^n}(a,b)  \sum_{j=0}^{\lceil k/2 \rceil - 1} c_{k,j} a^{k-2j} b^{j+l-k-1}\wp^{nj},
    \end{equation}
    where $c_{k,j} = (-1)^j \binom{k-j}{j}$.
\end{proposition}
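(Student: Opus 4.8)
The plan is to substitute the closed-form expression for $h_k(\pi,\bar\pi)$ from Lemma~\ref{lem:hmexpression} into the geometric trace formula of Theorem~\ref{thm:heckesum}, reorganise the sum over isomorphism classes into a sum over Weil polynomials, and then dispose of the leftover $\epsilon_k$ term using Lemma~\ref{lem:isog_sum}. First I would simplify the normalising factor in~\eqref{eq:trace_formula}: for a Drinfeld module $\phi$ with $c_\phi(X) = X^2 - aX + b\wp^n$ we have $\pi_\phi\bar\pi_\phi = e_2(\pi_\phi,\bar\pi_\phi) = b\wp^n$, so that $(\pi_\phi\bar\pi_\phi/\wp^n)^{l-k-1} = b^{l-k-1}$, depending only on the isogeny class of $\phi$. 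Likewise, by Lemma~\ref{lem:hmexpression}, the value $h_k(\pi_\phi,\bar\pi_\phi)$ depends only on the coefficients $(a,b)$. Since by Proposition~\ref{prop:charpol_properties} the characteristic polynomial, equivalently the pair $(a,b)$, determines the isogeny class, every summand in Theorem~\ref{thm:heckesum} is constant on the set of isomorphism classes sharing a fixed $(a,b)$; hence I may replace the sum over $[\phi]/\bF_{\mathfrak p^n}$ by a weighted sum over pairs $(a,b)$ with weight $\#\text{Iso}_{\mathfrak p^n}(a,b)$. This yields
\[
\Tr(\T_{\mathfrak p}^n \mid \S_{k+2,l}) = \sum_{a,b} \#\text{Iso}_{\mathfrak p^n}(a,b)\, b^{l-k-1}\left(\epsilon_k + \sum_{j=0}^{\lceil k/2\rceil - 1} (-1)^j \binom{k-j}{j} a^{k-2j}(b\wp^n)^j\right).
\]

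Next I would expand $(b\wp^n)^j = b^j\wp^{nj}$ and absorb the factor $b^{l-k-1}$ into the exponent of $b$, so that the inner sum becomes $\sum_{j} c_{k,j}\, a^{k-2j} b^{j+l-k-1}\wp^{nj}$ with $c_{k,j} = (-1)^j\binom{k-j}{j}$. This is exactly the inner sum on the right-hand side of~\eqref{eq:trace_formula_2}, so it remains only to show that the contribution of the $\epsilon_k$ term vanishes.

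The main (and essentially only) obstacle is this $\epsilon_k$ contribution. When $k$ is odd we have $\epsilon_k = 0$ and there is nothing to do. When $k$ is even, $\epsilon_k = (-b\wp^n)^{k/2} = (-\wp^n)^{k/2} b^{k/2}$, so the extra term equals
\[
(-\wp^n)^{k/2} \sum_{a,b} \#\text{Iso}_{\mathfrak p^n}(a,b)\, b^{\,l - k/2 - 1}.
\]
The scalar $(-\wp^n)^{k/2} \in A$ pulls out of the sum, and the remaining sum is precisely the one in Lemma~\ref{lem:isog_sum} with $t = l - k/2 - 1$; since the modular forms live in characteristic $p$, this sum vanishes as an element of $\bF_q$, and hence in $\mathbb C_\infty$. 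Thus the $\epsilon_k$ term drops out and the formula follows. The point requiring care is that this cancellation is genuinely a positive-characteristic phenomenon: it rests on the vanishing in Lemma~\ref{lem:isog_sum} (itself extracted from the absence of weight-$2$ cusp forms) rather than on any integral identity, and one should note that the coefficients $\#\text{Iso}_{\mathfrak p^n}(a,b)$ enter the trace formula only through their reductions modulo $p$.
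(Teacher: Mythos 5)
Your proposal is correct and follows essentially the same route as the paper's proof: substitute Lemma~\ref{lem:hmexpression} into the geometric formula of Theorem~\ref{thm:heckesum}, group isomorphism classes by the pair $(a,b)$ to introduce $\#\text{Iso}_{\mathfrak p^n}(a,b)$, and eliminate the $\epsilon_k$ contribution via Lemma~\ref{lem:isog_sum} (which, as you note, is not circular, since that lemma rests only on Theorem~\ref{thm:heckesum} and the vanishing of weight-$2$ cusp forms). The only difference is one of detail: you track the exact exponent $b^{\,l-k/2-1}$, whereas the paper simply invokes Lemma~\ref{lem:isog_sum} with an arbitrary $t\in\mathbb Z$, which is immaterial.
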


\begin{proof}
    Combining the trace formula~\eqref{eq:trace_formula} with Lemma~\ref{lem:hmexpression} gives the required formula~\eqref{eq:trace_formula_2}, up to an additional term involving~$\epsilon_k$. To see why the $\epsilon_k$-term disappears, suppose $k$ is even. Then
    \[
    \sum_{a,b} \# \text{Iso}_{\mathfrak p^n}(a,b) \epsilon_k = (-\wp^n)^{k/2} \sum_{a,b} \# \text{Iso}_{\mathfrak p^n}(a,b)b^{k/2},
    \]
    which is zero by Lemma~\ref{lem:isog_sum}.
\end{proof}

\begin{remark}
Combining Prop.~\ref{prop:trace_formula} with Remark~\ref{rem:iso_c}, one sees that $\Tr(\T^n_{\mathfrak p} \hspace{0.2em} | \hspace{0.1em} \S_{k+2,l}) = 0$ if $k+2 \not\equiv 2l \pmod{q-1}$, in accordance with Lemma~\ref{lem:cuspdim}. 
As a result, one obtains by induction on~$k$ that for any $\mathfrak p$ and $n$,
\[
\sum_{a,b} \# \text{Iso}_{\mathfrak p^n}(a,b)a^kb^t \neq 0 \implies k + 2t \equiv_{q-1} 0.
\]
\end{remark}

\begin{remark}
    One can prove analogues of Lemma~\ref{lem:isog_sum} by using the trace formula and the fact that there are no cusp forms weight less than~$q+1$. For instance, one obtains by induction on~$k$ that for any $\mathfrak p$, $n$, and~$t$,
    \[
    \sum_{a,b}\#\text{Iso}_{\mathfrak p^n}(a,b)a^kb^t = 0 \quad \text{if } k<q-1.
    \]
    In particular, the sum over $j$ in the trace formula \eqref{eq:trace_formula_2} only goes up to $\lfloor (k-q+1)/2\rfloor$.
    
    One can prove further identities by exploiting the eigenforms of low weight with $A$-expansions, the simplest of which corresponds to $h \in \S_{q+1,1}$:
    \[
    \sum_{a,b} \# \text{Iso}_{\mathfrak p^n}(a,b)a^{q-1}b^t = \begin{cases}
        1 & \text{if } t \equiv 0 \pmod{q-1}; \\ 0 & \text{otherwise.}
    \end{cases}
    \]
\end{remark}

We also have a version of the trace formula for the spaces $\S_{k+2}$.

\begin{proposition}[Trace formula for $\operatorname{SL}_2(A)$]\label{prop:trace_formula_SL2}
    For any $k \geq 0$, $n \geq 1$, and $l \in \mathbb Z$, we have
    \begin{equation}\label{eq:trace_formula_SL2}
    \Tr(\T^n_{\mathfrak p} \hspace{0.2em} | \hspace{0.1em} \S_{k+2}) = - \sum_{a \in A} \# \text{Iso}_{\mathfrak p^n}(a,1)  \sum_{j=0}^{\lceil k/2 \rceil - 1} c_{k,j} a^{k-2j}\wp^{nj}.
    \end{equation}
\end{proposition}

\begin{proof}
    By definition, $\S_{k+2}$ is the sum of the spaces $\S_{k+2,l}$ as $l$ ranges over all types. For $b \in \mathbb F_q^\times$, we have
    \[
    \sum_{l=1}^{q-1}b^l = \begin{cases}
        -1 & \text{if } b = 1; \\ 0 & \text{otherwise}.
    \end{cases}
    \]
    Hence the result follows from Prop.~\ref{prop:trace_formula}.
\end{proof}

\subsection{Symmetry}

The trace formula contains the binomial coefficients $c_{k,j} = (-1)^j \binom{k-j}{j}$. These coefficients exhibit a certain symmetry (Prop.~\ref{prop:lucas_consequences}.1), which extends to a symmetry of traces of Hecke operators around weights of the form $p^m+1$. A likely conceptual explanation will be given in Remark~\ref{rmk:symmetry_hyperder}.

In what follows, we will repeatedly use Lucas's theorem, which we recall here for convenience.

\begin{theorem}[Lucas's theorem]\label{thm:lucas}
    Fix a prime power $q$ and integers $x,y \geq 0$. Suppose $x = \sum x_iq^i$, $k = \sum y_iq^i$ are the $q$-ary expansions of~$x$ and~$y$ respectively. Then as elements of~$\bF_q$, we have
    \[
    \binom{x}{y} = \prod_{i \geq 0} \binom{x_i}{y_i} \in \bF_q.
    \]
\end{theorem}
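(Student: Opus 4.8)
The plan is to prove the identity by comparing coefficients of a suitable polynomial in $\bF_q[X]$, using the Frobenius map in characteristic~$p$. The key input is the \emph{freshman's dream}: since $q$ is a power of the characteristic~$p$, we have $(1+X)^q = 1 + X^q$ in $\bF_q[X]$, and iterating gives $(1+X)^{q^i} = 1 + X^{q^i}$ for every $i \geq 0$.

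First I would expand $(1+X)^x$ along the $q$-ary expansion $x = \sum_i x_i q^i$. Using the identity above,
\[
(1+X)^x = \prod_{i \geq 0} \bigl( (1+X)^{q^i} \bigr)^{x_i} = \prod_{i \geq 0} (1 + X^{q^i})^{x_i} = \prod_{i \geq 0} \left( \sum_{c_i = 0}^{x_i} \binom{x_i}{c_i} X^{c_i q^i} \right),
\]
where the last equality is the ordinary binomial theorem applied to each factor.

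The final step is to read off the coefficient of $X^y$ on both sides, where $y = \sum_i y_i q^i$ is the $q$-ary expansion of~$y$. On the left-hand side this coefficient is $\binom{x}{y}$, viewed in~$\bF_q$. On the right-hand side, a monomial $X^{\sum_i c_i q^i}$ is produced by selecting the summand $\binom{x_i}{c_i} X^{c_i q^i}$ from the $i$-th factor, and each exponent $c_i$ satisfies $0 \leq c_i \leq x_i < q$. Since both the $c_i$ and the digits $y_i$ lie in $[0,q)$, the uniqueness of the $q$-ary representation forces $c_i = y_i$ for all~$i$ in the only way of realising the total exponent~$y$. Hence the coefficient of $X^y$ on the right is $\prod_{i \geq 0} \binom{x_i}{y_i}$, yielding the claimed equality.

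This argument is essentially routine; the only point requiring care is the uniqueness step, where one must use that the digit bound $x_i < q$ (coming from the $q$-ary expansion of~$x$) forces $c_i < q$, so that matching the exponents $\sum_i c_i q^i = \sum_i y_i q^i$ genuinely forces the digits to agree. It is worth emphasising that this is the base-$q$ form of the theorem rather than the usual base-$p$ statement; the two are equivalent, as grouping each $q$-ary digit into its $r$ constituent $p$-ary digits (where $q = p^r$) and applying the classical Lucas theorem digit-by-digit recovers the same product, but the generating-function proof above establishes the base-$q$ version directly.
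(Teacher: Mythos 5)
Your proof is correct, but it takes a different route from the paper. The paper's proof is a two-line reduction: it invokes the classical Lucas theorem for prime modulus as known, and handles a prime power $q = p^r$ by refining each $q$-ary digit $x_i, y_i$ into its $p$-ary digits and applying the prime-case theorem digit by digit (the point being that the $p$-ary expansion of $x$ is exactly the concatenation of the $p$-ary expansions of the digits $x_i$, since each $x_i < q = p^r$). Your generating-function argument instead proves the base-$q$ statement directly and self-containedly: the freshman's dream $(1+X)^{q^i} = 1 + X^{q^i}$ in $\bF_q[X]$, expansion of $(1+X)^x$ along the $q$-ary digits, and coefficient extraction at $X^y$ using uniqueness of base-$q$ representation. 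You correctly flag the one delicate step — that the digit bounds $c_i \leq x_i < q$ are what make the exponent-matching force $c_i = y_i$ — and the degenerate case $y_i > x_i$ is handled automatically, since then no term realises the exponent and $\binom{x_i}{y_i} = 0$ on both sides. What your approach buys is independence from the prime case: taking $q = p$ recovers the classical theorem, so nothing is assumed as ``well-known.'' What the paper's approach buys is brevity, which is appropriate since Lucas's theorem is recalled there purely as a tool.
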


\begin{proof}
    The statement is well-known when $q$ is a prime. If $q$ is a power of a prime~$p$, the statement follows by writing $x_i$ and $y_i$ in $p$-ary form.
\end{proof}

We record some consequences for later use.

\begin{proposition}\label{prop:lucas_consequences}
    Let $m \geq 1$ and $k = p^m - 1$. The following hold:
    \begin{enumerate}
        \item For all $1 \leq N \leq k$ and $j \geq N$, we have $c_{k+N,j} \equiv_p c_{k-N,j-N}$. That is,
        \[
        (-1)^j \binom{k+N-j}{j} \equiv (-1)^{j-N} \binom{k-j}{j-N} \pmod{p}.
        \]
        \item For all $0 \leq y \leq x \leq k$, we have
        \[
        \binom{x}{y} \equiv \binom{p^m + x}{y} \pmod{p}.
        \]
        \item If $p=2$, we have
        \[
        \binom{2^m - 1 - j}{j} \equiv 0 \pmod{2} \quad \text{for all } 0 < j < 2^m.
        \]
    \end{enumerate}
\end{proposition}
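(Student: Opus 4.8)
All three parts will follow from Lucas's theorem (Theorem~\ref{thm:lucas}), the guiding observation being that the base-$p$ expansion of $k = p^m - 1$ has its $m$ lowest digits all equal to $p-1$ and all higher digits equal to~$0$. I would dispatch the two elementary parts first. Part~2 is the most direct: since $0 \le y \le x \le p^m - 1$, neither $x$ nor $y$ has a nonzero base-$p$ digit in any position $\ge m$, so passing from $x$ to $p^m + x$ changes only the digit in position~$m$, turning it from $0$ into $1$, while the corresponding digit of $y$ is~$0$. By Lucas's theorem this introduces exactly one new factor $\binom{1}{0}=1$ and reproduces all factors of $\binom{x}{y}$, giving $\binom{p^m+x}{y} \equiv \binom{x}{y} \pmod p$.

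For part~3 (so $p=2$) I would first discard $j > 2^m-1$, where $\binom{2^m-1-j}{j}$ vanishes for degree reasons, and then treat $0 < j \le 2^m-1$. In this range $2^m-1-j$ is the bitwise complement of $j$, i.e. its $i$-th binary digit is $1 - j_i$ for $0 \le i < m$, so Lucas's theorem gives
\[
\binom{2^m-1-j}{j} \equiv \prod_{i=0}^{m-1} \binom{1-j_i}{j_i} \pmod 2 .
\]
Since $j>0$, at least one $j_i$ equals~$1$, producing a factor $\binom{0}{1}=0$, and the whole product vanishes. (Conceptually this reflects the identity $h_{2^m-1} = (x-y)^{2^m-1} = (x+y)^{2^m-1} = e_1^{2^m-1}$ in characteristic~$2$, but I would present the direct Lucas argument.)

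The heart of the matter is part~1, and the key tool I would isolate is a complement identity. From the Frobenius relation $(1+X)^{p^m} \equiv 1 + X^{p^m} \pmod p$ one gets $(1+X)^{p^m-1-s} \equiv (1+X^{p^m})(1+X)^{-(s+1)} \pmod p$, and comparing coefficients of $X^t$ for $t < p^m$ yields
\[
\binom{p^m-1-s}{t} \equiv (-1)^t \binom{s+t}{t} \pmod p \qquad (0 \le s \le p^m-1,\ 0 \le t < p^m).
\]
Applying this with $(s,t) = (j-N,\,j)$ rewrites $\binom{k+N-j}{j} = \binom{p^m-1-(j-N)}{j}$ as $(-1)^j \binom{2j-N}{j}$, and applying it with $(s,t) = (j,\,j-N)$ rewrites $\binom{k-j}{j-N} = \binom{p^m-1-j}{j-N}$ as $(-1)^{j-N} \binom{2j-N}{j-N}$. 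The elementary symmetry $\binom{2j-N}{j} = \binom{2j-N}{j-N}$ then shows that both $(-1)^j \binom{k+N-j}{j}$ and $(-1)^{j-N}\binom{k-j}{j-N}$ are congruent to $\binom{2j-N}{j-N}$ mod~$p$, which is exactly the asserted identity.

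The main obstacle here is purely bookkeeping of the admissible ranges, and I would organise the write-up around it. For $N \le j \le p^m-1$ the pairs $(s,t)$ above satisfy the hypotheses of the complement identity and the argument runs verbatim; for $j \ge p^m$ one checks that $2j > p^m-1+N$, which forces both $\binom{k+N-j}{j}$ and $\binom{k-j}{j-N}$ to vanish, so the congruence degenerates to the trivial $0 \equiv 0$. I expect the only real care needed is verifying that each index stays within $[0, p^m)$ when invoking the complement identity and that the boundary cases collapse to zero, after which the sign bookkeeping is routine.
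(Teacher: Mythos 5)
Your proof is correct, and parts 2 and 3 track the paper closely: part 2 is the paper's argument essentially verbatim, while for part 3 the paper inducts on $m$ by stripping the lowest binary digit, whereas you observe once and for all that $2^m-1-j$ is the bitwise complement of $j$ and read off a factor $\binom{0}{1}=0$ from Lucas --- a slightly more direct version of the same idea. The genuine divergence is in part 1. The paper quotes the reflection identity $(-1)^x\binom{x}{y} \equiv (-1)^y\binom{k-y}{k-x} \pmod p$ for $0 \le y \le x \le k$ (citing Mattarei; itself a consequence of Lucas) and applies it once with $(x,y)=(k+N-j,\,j)$, splitting off the case $2j > k+N$ where both sides vanish; its sign bookkeeping silently uses that $k=p^m-1$ is even when $p$ is odd. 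You instead derive the negative-binomial complement congruence $\binom{p^m-1-s}{t} \equiv (-1)^t\binom{s+t}{t} \pmod p$ from $(1+X)^{p^m}\equiv 1+X^{p^m}$, apply it to each side separately, and meet in the middle at $\binom{2j-N}{j}=\binom{2j-N}{j-N}$. The two congruences are equivalent (each is the other after a reflection of indices), so the content is the same, but your route buys self-containedness --- no external citation --- and a uniform treatment of the whole range $N \le j \le p^m-1$ with no subdivision at $2j=k+N$; your only case split, $j \ge p^m$, where both binomials vanish under the paper's conventions (and where $j=N$ is impossible since $N \le p^m-1$, so $\binom{k-j}{j-N}$ has strictly positive lower index and negative upper index), matches the paper's degenerate case exactly. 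The cost is two applications of the key congruence plus a symmetry step in place of one application of a quoted identity; all your range checks ($0\le s\le p^m-1$, $0\le t<p^m$ in both substitutions) are valid as written.
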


\begin{proof}\hfill
    \begin{enumerate}
    \item Let $0 \leq y \leq x \leq k$. Then Lucas's theorem implies \cite[p.\ 480]{mattarei},
    \begin{equation}\label{eq:mattarei}
       (-1)^x \binom{x}{y}\equiv_p (-1)^y \binom{k - y}{k - x}.
    \end{equation}
    If $2j \leq k + N$, the claimed identity follows by letting $x = k + N - j$ and $y = j$. If $2j > k+N$, the identity also holds, as both sides of the equation are zero.
    \item This follows because the $p^m$-digits of~$x$ and~$y$ are both zero, so by \hyperref[thm:lucas]{Lucas's theorem},
    \[
    \binom{p^m + x}{y} \equiv_p \binom{1}{0} \binom{x}{y}.
    \]
    \item The statement is obvious if $m = 1$, so let $m > 1$. If $j$ is odd, the statement follows from Lucas's theorem by considering the $2^0$-digits of $2^m-1-j$ and~$j$. If $j = 2i$ is even, the $2^0$-digits give a contribution of $\binom{1}{0}$ modulo~$2$, so we can remove them. Since removing the $2^0$-digit of $n \in \mathbb N$ corresponds to the function $n \mapsto \lfloor n/2 \rfloor$, this gives
    \[
    \binom{2^m-1-2i}{2i} \equiv_2 \binom{2^{m-1}-1-i}{i} \equiv_2 0
    \]
    by induction on~$m$.\qedhere
    \end{enumerate}
\end{proof}

\begin{theorem}[Symmetry]\label{thm:symmetry_allq}
    Let $m \geq 1$. Let $\mathfrak p \trianglelefteq \bF_q[T]$ be a maximal ideal 
    and let $n \geq 1$. Then for any $1 \leq N \leq p^m$ and any $l \in \mathbb Z$, we have
        \[
    \Tr(\T_{\mathfrak p}^n \hspace{0.2em} | \hspace{0.1em} \S_{p^m+1+N,l}) = \wp^{Nn} \Tr(\T_{\mathfrak p}^n \hspace{0.2em} | \hspace{0.1em} \S_{p^m+1-N,l-N}) + \epsilon,
    \]
    where 
    \[
    \epsilon = \sum_{a,b} \# \text{Iso}_{\mathfrak p^n}(a,b) (ab^{-1})^{p^m} \sum_{j=0}^{\lfloor (N-1)/2 \rfloor} c_{N-1,j} a^{N-1-2j}b^{j+l-N}\wp^{nj}.
    \]
\end{theorem}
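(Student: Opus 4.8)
The plan is to apply the trace formula (Proposition~\ref{prop:trace_formula}) to both traces appearing in the statement and then to match the two resulting double sums, using the binomial symmetries recorded in Proposition~\ref{prop:lucas_consequences}. Set $k = p^m - 1$, so that the left-hand side is a trace in weight $k_1 + 2$ with $k_1 = k + N$, while the second trace on the right-hand side is in weight $k_2 + 2$ with $k_2 = k - N$ and type $l - N$. Writing out Proposition~\ref{prop:trace_formula} for each, the whole identity becomes an equality of sums over pairs $(a,b)$ weighted by $\#\text{Iso}_{\mathfrak p^n}(a,b)$. Since $\mathbb C_\infty$ has characteristic $p$, every congruence modulo $p$ among the integer coefficients $c_{k,j}$ becomes a genuine equality, so it suffices to prove the identity of the inner $(a,b)$-sums, which I will in fact establish termwise in $(a,b)$.

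First I would handle the contribution $\wp^{Nn}\Tr(\T_{\mathfrak p}^n \mid \S_{p^m+1-N, l-N})$. After multiplying the trace formula for $k_2$ by $\wp^{Nn}$, I shift the summation index up by $N$. A short computation shows that the exponents transform correctly: the power of $a$ becomes $a^{k_1 - 2j}$, the power of $b$ becomes $b^{j + l - k_1 - 1}$, and the power of $\wp$ becomes $\wp^{nj}$, matching the shape of the left-hand side, while the coefficient $c_{k_2, j-N}$ equals $c_{k_1, j}$ in $\bF_q$ by Proposition~\ref{prop:lucas_consequences}.1 (applied with this $k$ and $N$). The one point requiring care is the range of summation: I must verify the ceiling identity $\lceil k_2/2\rceil - 1 + N = \lceil k_1/2\rceil - 1$, which holds because $\lceil(p^m-1-N)/2\rceil + N = \lceil(p^m-1+N)/2\rceil$. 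Consequently the shifted sum reproduces exactly the terms of the left-hand side with index $j \geq N$, and these cancel.

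What remains is the head of the left-hand sum, namely the terms with $0 \le j \le N-1$, and I must show these assemble into $\epsilon$. Writing $k_1 - j = p^m + (N-1-j)$ and noting that $0 \le N-1-j \le p^m - 1$ and $0 \le j \le p^m - 1$ throughout this range, Lucas's theorem (Theorem~\ref{thm:lucas}, cf.\ Proposition~\ref{prop:lucas_consequences}.2) gives $c_{k_1, j} \equiv_p c_{N-1, j}$ for all $0 \le j \le N-1$; in particular $c_{k_1,j} \equiv_p 0$ once $j > \lfloor (N-1)/2\rfloor$, since then $N-1-j < j$ forces $\binom{N-1-j}{j} = 0$. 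Finally I factor the exponents as $a^{k_1-2j} = a^{p^m} a^{N-1-2j}$ and $b^{j+l-k_1-1} = b^{-p^m} b^{j+l-N}$, pulling out $(ab^{-1})^{p^m}$ to recover precisely the inner sum defining $\epsilon$. Summing over $(a,b)$ against $\#\text{Iso}_{\mathfrak p^n}(a,b)$ then yields the claim. I expect the main obstacle to be the bookkeeping of the indices and exponents — in particular the ceiling identity aligning the two summation ranges, and the correct placement of the $p^m$-shift in the head terms — rather than any conceptual difficulty, since the two parts of Proposition~\ref{prop:lucas_consequences} carry out all of the arithmetic heavy lifting.
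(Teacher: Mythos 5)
Your proposal is correct and is essentially the paper's own proof: both apply Proposition~\ref{prop:trace_formula} with $k = p^m-1$, use Proposition~\ref{prop:lucas_consequences}.1 to identify the shifted coefficients $c_{k-N,j-N}$ with $c_{k+N,j}$ for $j \geq N$ (the paper shifts the left-hand sum down rather than the right-hand sum up, a purely cosmetic difference), and use Proposition~\ref{prop:lucas_consequences}.2 to reduce the head terms $0 \leq j \leq N-1$ to $c_{N-1,j}$, truncating at $\lfloor (N-1)/2 \rfloor$ and factoring out $(ab^{-1})^{p^m}$ to obtain $\epsilon$. Your bookkeeping (the ceiling identity, the exponent transformations for $a$, $b$, $\wp$, and passing from congruences mod $p$ to equalities in characteristic $p$) matches the paper's computation step for step.
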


\begin{proof}
Write $k = p^m - 1$. We compute, using \hyperref[prop:trace_formula]{the trace formula} and Prop.~\ref{prop:lucas_consequences}.1:
    \begin{align*}
    \Tr(\T_{\mathfrak p}^n \hspace{0.2em} &| \hspace{0.1em} \S_{k+2+N,l}) = \sum_{a,b} \# \text{Iso}_{\mathfrak p^n}(a,b)  \sum_{j=0}^{\lceil (k+N)/2 \rceil - 1} c_{k+N,j} a^{k+N-2j} b^{j+l-(k+N)-1}\wp^{nj} \\
    &= \sum_{a,b} \# \text{Iso}_{\mathfrak p^n}(a,b)  \sum_{j=N}^{\lceil (k+N)/2 \rceil - 1} c_{k-N,j-N} a^{k-N-2(j-N)} b^{j-N+(l-N)-(k-N)-1}\wp^{nj} + \epsilon_0 \\
    &=
    \wp^{Nn} \sum_{a,b} \# \text{Iso}_{\mathfrak p^n}(a,b)  \sum_{j=0}^{\lceil (k-N)/2 \rceil - 1} c_{k-N,j} a^{k-N-2j} b^{j+(l-N)-(k-N)-1}\wp^{nj} + \epsilon_0 \\
    &= \wp^{Nn} \Tr(\T_{\mathfrak p}^n \hspace{0.2em}| \hspace{0.1em} \S_{k+2-N,l-N}) + \epsilon_0,
    \end{align*}
    where
    \[
    \epsilon_0 = \sum_{a,b} \# \text{Iso}_{\mathfrak p^n}(a,b)  \sum_{j=0}^{N - 1} c_{k+N,j} a^{k+N-2j} b^{j+l-(k+N)-1}\wp^{nj}.
    \]
    We simplify $\epsilon_0$ as follows. Note that for $0 \leq j \leq N-1$, both $j$ and $N-1-j$ are less than~$p^m$. By Prop.~\ref{prop:lucas_consequences}.2, we obtain
    \[
        c_{k+N,j} = (-1)^j \binom{k+N-j}{j} = (-1)^j \binom{p^m + N - 1 - j}{j} \equiv_p (-1)^j \binom{N-1-j}{j} = c_{N-1,j}.
    \]
    The latter binomial coefficient vanishes for all $j > N-1-j$, which gives the desired $\epsilon_0 = \epsilon$.
\end{proof}


\begin{remark}\label{rmk:symmetry_hyperder}
    A possible explanation for the symmetry in the traces are the hyperderivations from \cite{bosser-pellarin}. These are maps $\mathcal{D}_N: \S_{k,l} \to \widetilde{\S}_{k+2N,l+N}$, where $N \geq 0$ is an integer and $\widetilde{\S}_{k+2N,l+N}$ denotes the space of Drinfeld quasi-modular cusp forms of weight $k+2N$ and type $l+N$. It turns out that the image of $\mathcal{D}_N$ may actually lie inside $\S_{k+2N,l+N}$ (and whether this is the case depends only on $k$ and~$n$; cf.~\cite[Prop.~3.1]{bosser-pellarin}). In this situation, another interpretation of the maps $\mathcal{D}_N$ has been given in~\cite{bockle_maeda}.

    In particular, one can show that $\mathcal{D}_N$ defines a map $\S_{p^m+1-N,l-N} \to \S_{p^m+1+N,l}$. Moreover, it is Hecke-equivariant up to a character twist: if $\T_{\mathfrak p}f = \lambda f$ then $\T_{\mathfrak p}(\mathcal{D}_Nf) = \wp^N \lambda \mathcal{D}_Nf$. However, it may be the case that $\mathcal{D}_Nf = 0$.

    Theorem~\ref{thm:symmetry_allq} suggests that $\mathcal{D}_N$ is injective in weights of the form $p^m+1-N$. If this is true, then $\epsilon$ has a concrete interpretation: namely,
    \[
    \epsilon = \Tr\!\left(\T_{\mathfrak p}^n \hspace{0.2em} | \hspace{0.1em} \S_{p^m+1+N,l} / \mathcal{D}_N \S_{p^m+1-N,l-N}\right)\!.
    \]
\end{remark}

\subsection{Primes of degree 1}\label{sec:deg1}

In this section, we apply the trace formula when $\mathfrak p$ is a prime of degree~1. The main result is the following.

\begin{theorem}\label{thm:deg1}
    Fix $k \geq 0$, $l \in \mathbb Z$, $x \in \bF_q$, and let $\mathfrak p = (T-x)$. If $k+2 \equiv 2l \pmod{q-1}$, we have
    \[
    \Tr(\T_{\mathfrak p} \hspace{0.2em} | \hspace{0.1em} \S_{k+2,l}) = \sum_{\substack{0 \leq j < k/2 \\ j \equiv l-1 \pmod{q-1}}} (-1)^j {\binom{k-j}{j}}(T-x)^j,
    \]
    and $\Tr(\T_{\mathfrak p} \hspace{0.2em} | \hspace{0.1em} \S_{k+2,l}) = 0$ otherwise.
\end{theorem}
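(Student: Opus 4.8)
The plan is to specialize the trace formula of Proposition~\ref{prop:trace_formula} to $n=1$ and $\wp = T-x$, and to exploit the fact that a prime of degree~$1$ trivializes all of the isogeny/isomorphism bookkeeping. Two immediate simplifications occur: since $\bF_{\mathfrak p} = A/\mathfrak p \cong \bF_q$, every Drinfeld module entering the formula is defined over $\bF_q$ itself; and since $\deg\wp = 1$, the coefficient $a$ is forced to be a constant in $\bF_q$ (the bound $\deg a \le n\deg\wp/2 = 1/2$ from Proposition~\ref{prop:yu_pols}). So the sum over $a$ in the trace formula runs over $\bF_q$.

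The key step is to show $\#\text{Iso}_{\mathfrak p}(a,b) = 1$ for every $a \in \bF_q$ and $b \in \bF_q^\times$. I would argue this directly. A rank-$2$ Drinfeld module over $\bF_q$ with characteristic $(T-x)$ has the form $\phi_T = x + \alpha_1\tau + \alpha_2\tau^2$ with $\alpha_1 \in \bF_q$, $\alpha_2 \in \bF_q^\times$; using the rule $\tau c = c^q\tau$, an isomorphism $c \in \bF_q^\times$ sends $(\alpha_1,\alpha_2)$ to $(c^{1-q}\alpha_1, c^{1-q^2}\alpha_2) = (\alpha_1,\alpha_2)$, because $c^q = c$ on $\bF_q$. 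Hence isomorphism classes are in bijection with pairs $(\alpha_1,\alpha_2) \in \bF_q \times \bF_q^\times$. Computing the Frobenius relation from $\alpha_2\tau^2 = \phi_\wp - \alpha_1\tau$ with $\pi_\phi = \tau$ gives $c_\phi(X) = X^2 + \alpha_2^{-1}\alpha_1 X - \alpha_2^{-1}\wp$, i.e.\ $(a,b) = (-\alpha_1/\alpha_2,\, -1/\alpha_2)$, and this is a bijection $\bF_q \times \bF_q^\times \to \bF_q \times \bF_q^\times$. So each admissible pair $(a,b)$ is the invariant of exactly one class, proving $\#\text{Iso}_{\mathfrak p}(a,b) = 1$.

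With this, the trace formula collapses and, after interchanging sums, I would write
\[
\Tr(\T_{\mathfrak p} \mid \S_{k+2,l}) = \sum_{j=0}^{\lceil k/2\rceil - 1} c_{k,j}\,\wp^{j}\Big(\sum_{a \in \bF_q} a^{k-2j}\Big)\Big(\sum_{b \in \bF_q^\times} b^{j+l-k-1}\Big).
\]
Now I finish with the standard finite-field power-sum identities: $\sum_{b \in \bF_q^\times} b^t = -1$ if $(q-1)\mid t$ and $0$ otherwise, and $\sum_{a \in \bF_q} a^s = -1$ if $s \ge 1$ and $(q-1)\mid s$, and $0$ otherwise (note $k-2j \ge 1$ throughout the range, so no $0^0$ arises). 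The $b$-factor survives exactly when $(q-1)\mid(j+l-k-1)$, which under the hypothesis $k \equiv 2l-2 \pmod{q-1}$ is equivalent to $j \equiv l-1 \pmod{q-1}$; this congruence in turn forces $2j \equiv k \pmod{q-1}$, so the $a$-factor is then automatically $-1$ as well. The two factors multiply to $1$, leaving precisely $\sum_{0 \le j < k/2,\ j \equiv l-1} (-1)^j\binom{k-j}{j}\wp^j$, as claimed.

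For the remaining case, a term can survive only if both $(q-1)\mid(j+l-k-1)$ and $(q-1)\mid(k-2j)$; combining these yields $k+2 \equiv 2l \pmod{q-1}$, so if that congruence fails then every term vanishes and the trace is $0$ (this is also the remark following Proposition~\ref{prop:trace_formula}). I expect the main obstacle to be the isomorphism count of the second paragraph — verifying cleanly that degree-$1$ isogeny classes are singletons — after which everything is an elementary power-sum manipulation; the one point requiring genuine care is that the surviving index set is exactly $\{j \equiv l-1\}$ rather than the a~priori weaker $\{2j \equiv k\}$, a distinction that matters precisely when $q$ is odd.
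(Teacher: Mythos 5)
Your proposal is correct and follows essentially the same route as the paper: specialize Proposition~\ref{prop:trace_formula} to $n=1$, establish $\#\text{Iso}_{\mathfrak p}(a,b)=1$ for $a \in \bF_q$ by the explicit computation of $c_\phi(X)$ for $\phi_T = x + \alpha\tau + \beta\tau^2$ (the paper's Lemma~\ref{lem:isoclasses}), and finish with the finite-field power-sum identities and the congruence reduction to $j \equiv l-1 \pmod{q-1}$. Your only addition is making explicit that the $\bF_q^\times$-conjugation action on coefficients is trivial over $\bF_q$ (so coefficient pairs biject with isomorphism classes), a point the paper leaves implicit in the phrase \enquote{determined by}, and your handling of the vanishing case matches the remark following Proposition~\ref{prop:trace_formula}.
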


The proof of Theorem~\ref{thm:deg1} relies on the following lemma.

\begin{lemma}\label{lem:isoclasses}
    Let $\mathfrak p \trianglelefteq A$ be a prime of degree~1. Then for $a \in A$ and $b \in \bF_q^\times$, we have
    \[
    \# \text{Iso}_{\mathfrak p}(a,b) = \begin{cases}
        1 & \text{if }a \in \bF_q; \\
        0 & \text{otherwise.}
    \end{cases}
    \]
\end{lemma}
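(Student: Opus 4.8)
The plan is to reduce everything to a direct computation with the Frobenius, exploiting that for a degree-$1$ prime the residue field is just $\bF_q$. Since $\mathfrak p = (\wp)$ with $\wp = T-x$, we have $\bF_{\mathfrak p} = A/\mathfrak p \cong \bF_q$, so $\#\text{Iso}_{\mathfrak p}(a,b)$ counts isomorphism classes of rank-$2$ Drinfeld modules over $\bF_q$ whose characteristic polynomial of Frobenius is $X^2 - aX + b\wp$. Write such a module as $\phi_T = \alpha_0 + \alpha_1\tau + \alpha_2\tau^2$ with $\alpha_2 \neq 0$. Two immediate constraints: first, since $\phi$ is an $\bF_q$-algebra map, $\alpha_0 = \phi_x = x$ is forced by the requirement that the characteristic be $\mathfrak p = (T-x)$ (the minimal polynomial of $\alpha_0 \in \bF_q$ over $\bF_q$ is $T - \alpha_0$, which must generate $(T-x)$). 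Second, because the base field is $\bF_q$ with $m=1$, the Frobenius is simply $\pi_\phi = \tau^m = \tau$.

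Next I would read off the characteristic polynomial directly from the coefficients. From $\phi_T = x + \alpha_1\tau + \alpha_2\tau^2$ and $x = \phi_x$, we get $\alpha_2\tau^2 = \phi_{T-x} - \alpha_1\tau = \phi_\wp - \alpha_1\tau$, so in $\End^0(\phi)$ the Frobenius $\pi = \tau$ satisfies
\[
\pi^2 + \frac{\alpha_1}{\alpha_2}\pi - \frac{1}{\alpha_2}\wp = 0.
\]
I would then argue that this is the minimal polynomial of $\pi$ over $K$, i.e.\ that $[K(\pi):K]=2$ so that it coincides with the degree-$2$ characteristic polynomial $c_\phi$: indeed $\phi(A) \subseteq \bF_q\{\tau^2\}$ consists of additive polynomials of even $\tau$-degree, so $\tau \notin \phi(K)$ and $\{1,\tau\}$ are $K$-linearly independent, forcing the minimal polynomial to have degree exactly $2 = r$. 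Hence $c_\phi(X) = X^2 + (\alpha_1/\alpha_2)X - (1/\alpha_2)\wp$. Matching this against $X^2 - aX + b\wp$ gives the two equations $\alpha_1/\alpha_2 = -a$ and $-1/\alpha_2 = b$, whose unique solution is $\alpha_2 = -1/b$ and $\alpha_1 = a/b$.

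This uniqueness is the whole point: it shows that sending $\phi \mapsto c_\phi$ is a bijection between rank-$2$ Drinfeld modules over $\bF_q$ of characteristic $\mathfrak p$ and pairs $(a,b)$ with $b = -1/\alpha_2 \in \bF_q^\times$ and $a = -\alpha_1/\alpha_2 \in \bF_q$. In particular $a$ is necessarily a constant (this also follows from the degree bound $\deg a \le nd/r = 1/2$ in Prop.~\ref{prop:charpol_properties}), so for $a \notin \bF_q$ there is no such module and $\#\text{Iso}_{\mathfrak p}(a,b)=0$. For $a \in \bF_q$ and $b \in \bF_q^\times$, the unique solution $(\alpha_1,\alpha_2) = (a/b,\,-1/b)$ produces exactly one Drinfeld module $\phi_T = x + (a/b)\tau - (1/b)\tau^2$, which a fortiori is a single isomorphism class, giving $\#\text{Iso}_{\mathfrak p}(a,b)=1$. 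The one step requiring care — the main obstacle — is justifying $c_\phi$ equals the displayed quadratic rather than a square of a linear polynomial, i.e.\ confirming $[K(\pi):K]=2$; I expect the linear-independence argument above to settle it cleanly, and as a cross-check one can note that $\End^0(\phi) = K(\pi)$ is then the rational imaginary quadratic field $\bF_q(\pi)$ whose maximal order $\bF_q[\pi] = A[\pi]$ is a PID, recovering the count $1$ from the class-number description of isogeny classes.
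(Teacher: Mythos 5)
Your proof is correct and takes essentially the same route as the paper's: write $\phi_T = x + \alpha_1\tau + \alpha_2\tau^2$ over $\bF_q$, compute $c_\phi(X) = X^2 + (\alpha_1/\alpha_2)X - (1/\alpha_2)\wp$ (you fill in the step the paper dismisses with ``one easily checks,'' including the verification that $[K(\pi):K]=2$ so the quadratic really is $c_\phi$), and match coefficients to find a unique Drinfeld module, hence a unique isomorphism class, for each pair $(a,b)$ with $a \in \bF_q$. One small slip worth noting: the literal containment $\phi(A) \subseteq \bF_q\{\tau^2\}$ is false whenever $\alpha_1 \neq 0$; what is true, and what your argument actually uses, is that $\deg_\tau \phi_a = 2\deg(a)$ is even for $a \neq 0$, so $\tau\phi_b = \phi_a$ is impossible by degree parity and $\{1,\tau\}$ is indeed $K$-linearly independent.
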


\begin{proof}
    A Drinfeld module $\phi$ over $\bF_q$ of characteristic $(T-x)$ is determined by
    \[
    \phi_T = x + \alpha \tau + \beta \tau^2 \in \bF_q\{\tau\},
    \]
    with $\beta \neq 0$. One easily checks that its characteristic polynomial is given by
    \[
    c_\phi(X) = X^2 + \beta^{-1}\alpha X - \beta^{-1}(T-x),
    \]
    so the Weil polynomial $X^2 + aX + b(T-x)$ occurs only if $a \in \bF_q$, in which case it occurs precisely once.
\end{proof}

\begin{proof}[Proof of Theorem~\ref{thm:deg1}]
Let $\wp = T-x$ denote the monic generator of $\mathfrak p$. By Prop.~\ref{prop:trace_formula} and Lemma~\ref{lem:isoclasses}, we have
\begin{equation}\label{eq:interhecke}
\Tr(\T_{\mathfrak p} \hspace{0.2em} | \hspace{0.1em} \S_{k+2,l} ) = \sum_{j=0}^{\lceil k/2 \rceil - 1} (-1)^{j}\binom{k-j}{j} \wp^j \sum_{a \in \mathbb F_q} \sum_{b \in \mathbb F_q^\times} a^{k-2j} b^{j+l-k-1}.
\end{equation}
Recall that for $n \geq 1$ and any prime power~$q$, we have
\[
\sum_{x \in \bF_q}x^n = \sum_{x \in \bF_q^\times}x^n = \begin{cases}
    -1 & \text{if } {q-1} \mid n; \\ 0 & \text{otherwise.}
\end{cases}
\]
Applying the above identity to the sums over~$a$ and~$b$, we see that the summand corresponding to a given~$j$ vanishes unless
\begin{equation}
0 \leq j < k/2 \hspace{1em} \text{and} \hspace{1em} k \equiv 2j \pmod {q-1} \hspace{1em} \text{and} \hspace{1em} j \equiv k+1-l \pmod {q-1}.
\end{equation}
Hence the expression \eqref{eq:interhecke} can be rewritten as
\[
\Tr(\T_{\mathfrak p} \hspace{0.2em} | \hspace{0.1em} \S_{k+2,l} ) = \sum_{\substack{0 \leq j < k/2 \\ j \equiv l-1 \pmod {q-1}}} (-1)^{j}\binom{k - j}{j}\wp^j,
\]
as desired.
\end{proof}

\begin{remark}\label{rmk:simple_traces}
    Suppose that for any $b \in \bF_q^\times$, we have
    \[
    \# \text{Iso}_{\mathfrak p^n}(a,b) \equiv_p \begin{cases}
        1 & \text{if }a \in \mathbb F_q; \\ 0 & \text{if } \deg(a) > 0.
    \end{cases}
    \]
    Define
    \[
    f(X) := \sum_{\substack{0 \leq j < k/2 \\ j \equiv l-1 \pmod {q-1}}} (-1)^{j}\binom{k - j}{j}X^j.
    \]
    Then the proof of Theorem~\ref{thm:deg1} shows that $\Tr(\T_{\mathfrak p}^n \hspace{0.2em} | \hspace{0.1em} 
    \S_{k+2,l}) = f(\wp^n)$.
    One example in which the assumptions are satisfied is $q = 3$, $n = 1$, and $\wp = T^3 + 2T + 1$.
\end{remark}

For simplicity, we will from now on focus on the Hecke operator~$\T_T$, taking it to be understood that the results will hold for any prime $T-x$ of degree~1 after substituting $T-x$ for~$T$.

\begin{example} 
    Fix the type $l$ to be~$1$. In Table~\ref{table:type1}, we list some traces of the Hecke operator $\T_{T}$ on~$\S_{k,1}$ for $q=3,5,7,9$, computed using Theorem~\ref{thm:deg1}. Note that the zeroes in Table~\ref{table:type1} appear only for $k \not \equiv 2 \pmod{q-1}$, i.e., when $\S_{k,1} = 0$. 
\begin{table}[hbtp]
\begin{center}
\begin{tabular}{||c c c c c||}
 \hline
 $k$ & $q = 3$ & $q=5$ & $q = 7$ & $q = 9$\\ [0.5ex] 
 \hline\hline
 4 & 1 & 0 & 0 & 0 \\
 \hline
 6 & 1 & 1 & 0 & 0 \\
 \hline
 8 & 1 & 0 & 1 & 0 \\ 
 \hline
 10 & 1 & 1 & 0 & 1 \\
 \hline
 12 & $T^2 + 1$ & 0 & 0 & 0 \\  
 \hline
 14 & $T^4 + 1$ & 1 & 1 & 0\\
 \hline
 16 & $T^6 + 1$ & 0 & 0 & 0 \\
 \hline
 18 & $T^2 + 1$ & 1 & 0 & 1\\ 
 \hline
 20 & $2T^4 + 1$ & 0 & 1 & 0 \\
 \hline
 22 & $2T^4 + 1$ & 1 & 0 & 0\\
 \hline
 24 & $T^6 + T^2 + 1$ & 0 & 0 & 0 \\
 \hline
 26 & $2T^{10} + 1$ & 1 & 1 & 1 \\
 \hline
 28 & $T^{12} + T^{10} + T^4 + 1$ & 0 & 0 & 0 \\
 \hline
 30 & $2T^{12} + T^2 + 1$ & $T^4 + 1$ & 0 & 0 \\
 \hline
 32 & $T^{10} + T^6 + T^4 + 1$ & 0 & 1 & 0 \\
 \hline
 34 & $2T^{10} + T^6 + 1$ & $T^8 + 1$ & 0 & 1 \\
 \hline
 36 & $2T^{12} + T^8 + T^2 + 1$ & 0 & 0 & 0 \\
 \hline
 38 & $T^{12} + T^{10} + 2T^4 + 1$ & $T^{12} + 1$ & 1 & 0 \\
 \hline
 40 & $T^{18} + T^{12} + 2T^4 + 1$ & 0 & 0 & 0 \\
 \hline
 42 & $T^{18}+T^{14}+T^6 +T^2 + 1$ & $T^{16}+ 1$ & 0 & 1 \\
 \hline
 44 & $T^{18} + T^{16} + 1$ & 0 & 1 & 0 \\
 \hline
 46 & $T^{18} + T^{4} + 1$ & $T^{20}+1$ & 0 & 0 \\
 \hline
 48 & $T^{20} + T^{2} + 1$ & 0 & 0 & 0 \\
 \hline
 50 & $T^{22}+2T^{10}+T^6 + T^4 + 1$ & $T^4 + 1$ & 1 & 1 \\
 \hline
 52 & $T^{24}+T^{10}+T^6 + 1$ & 0 & 0 & 0 \\
 \hline
  54 & $T^{12} + T^{8} + T^2 + 1$ & $2T^8 + 1$ & 0 & 0 \\
 \hline
  56 & $2T^{12} + 2T^{10} + 2T^4 + 1$ & 0 & $T^6 + 1$ & 0 \\
 \hline
  58 & $2T^{12} + 2T^{10} + 2T^4 + 1$ & $3T^{12} + 4T^{8} + 1$  & 0 & 1 \\
 \hline
  60 & $2T^{14} + T^{6} + T^2 + 1$ & 0 & 0 & 0 \\
 \hline 
 62 & $2T^{28} + 2T^{16} + 2T^{12} + T^{10} + 1$ & $4T^{16} + 3T^{12} + 1$ & $T^{12} + 1$ & 0 \\
 \hline
\end{tabular}
\end{center}
\caption{Traces of the Hecke operator $\T_T$ acting on $\S_{k,1}$ for varying $k$ and $q \in \{3,5,7,9\}.$}\label{table:type1}
\end{table}
\end{example}

Theorem~\ref{thm:deg1} has several consequences. 

\begin{corollary}\label{cor:rational_fn}
    For every $l \in \mathbb Z$, the power series
    \[
    \sum_{k \geq 0} \Tr(\T_{T} \hspace{0.2em} | \hspace{0.1em} \S_{k+2,l} ) X^k 
    \]
    is a rational function.
\end{corollary}

\begin{proof}
    Consider the generating series
    \[
    g_0(X,Y) := \frac{1}{1-(1+Y)X} = \sum_{n \geq 0}\sum_{j \geq 0} \binom{n}{j} Y^j X^n.
    \]
    Setting $Y = -TX$ gives
    \[
    \frac{1}{1 - X + TX^2} = \sum_{n \geq 0} \sum_{j \geq 0} (-1)^j \binom{n}{j} T^j X^{n+j} = \sum_{k \geq 0} \left( \sum_{j = 0}^{\lfloor k/2 \rfloor} (-1)^j \binom{k-j}{j} T^j \right) X^k,
    \]
    and subtracting $(1+TX^2)^{-1}$ gives
    \[
    g_1(X,T) := \frac{X}{(1-X+TX^2)(1+TX^2)} = \sum_{k \geq 0}\left( \sum_{0 \leq j < k/2} (-1)^j \binom{k-j}{j}T^j \right) X^k.
    \]
    Let $\zeta \in \bF_q^\times$ be a generator. Then 
    \[
    S_N := \sum_{i=1}^{q-1} \zeta^{iN} = \begin{cases} -1 & \text{if } N \equiv 0 \pmod{q-1}; \\ 0 & \text{otherwise}, \end{cases}
    \]
    since $(1-\zeta^N)S_N = 0$. Hence we have
    \[
    g_2(X,T) := -\sum_{i=1}^{q-1} \zeta^{i(l-1)} g_1(X,\zeta^{-i}T) = \sum_{k \geq 0} \Bigg( \sum_{\substack{0 \leq j < k/2 \\ j \equiv l-1 \pmod{q-1}}} (-1)^j \binom{k-j}{j}T^j \Bigg) X^k,
    \]
    and finally, by Theorem~\ref{thm:deg1},
    \[
    \sum_{k \geq 0} \Tr(\T_{T} \hspace{0.2em} | \hspace{0.1em} \S_{k+2,l} ) X^k = -\sum_{i=1}^{q-1} \zeta^{2li} g_2(\zeta^{-i}X,T). \qedhere
    \]
\end{proof}

We note the following improvement of Theorem~\ref{thm:symmetry_allq} for primes of degree~1.

\begin{theorem}[Symmetry for degree~1 primes]\label{thm:symmetry_allq_deg1}
    Let $m \geq 1$. Then for any $1 \leq N \leq p^m$ and any $l \in \mathbb Z$ such that $p^m+1+N \equiv 2l \pmod{q-1}$, we have
        \[
    \Tr(\T_{T} \hspace{0.2em} | \hspace{0.1em} \S_{p^m+1+N,l}) = T^{N} \Tr(\T_{T} \hspace{0.2em} | \hspace{0.1em} \S_{p^m+1-N,l-N}) + \epsilon(T),
    \]
    where 
    \[
    \epsilon(T) = \sum_{\substack{0 \leq 2j \leq N-1 \\ j \equiv l-1 \pmod{q-1}}} (-1)^j {\binom{N-1-j}{j}}T^j.
    \]
    In particular, if $\Tr(\T_{T} \hspace{0.2em} | \hspace{0.1em} \S_{p^m+1-N,l-N}) \neq 0$, we have
    \[
    \deg \Tr(\T_{T} \hspace{0.2em} | \hspace{0.1em} \S_{p^m+1+N,l}) = N + \deg \Tr(\T_{T} \hspace{0.2em} | \hspace{0.1em} \S_{p^m+1-N,l-N}).
    \]
\end{theorem}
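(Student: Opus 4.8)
The plan is to deduce this from the general symmetry result, Theorem~\ref{thm:symmetry_allq}, by specializing to $n = 1$ and $\mathfrak p = (T)$ and then simplifying the error term. Writing $\wp = T$, Theorem~\ref{thm:symmetry_allq} already furnishes the identity
\[
\Tr(\T_T \mid \S_{p^m+1+N,l}) = T^N \Tr(\T_T \mid \S_{p^m+1-N,l-N}) + \epsilon,
\]
so the entire content of the first claim reduces to evaluating
\[
\epsilon = \sum_{a,b} \# \text{Iso}_{\mathfrak p}(a,b)\, (ab^{-1})^{p^m} \sum_{j=0}^{\lfloor (N-1)/2 \rfloor} c_{N-1,j}\, a^{N-1-2j} b^{j+l-N} T^j.
\]

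First I would invoke Lemma~\ref{lem:isoclasses}: for a degree-one prime, $\# \text{Iso}_{\mathfrak p}(a,b)$ equals $1$ when $a \in \bF_q$ and $0$ otherwise. This collapses the outer sum to $a \in \bF_q$ and $b \in \bF_q^\times$, and the summand becomes a monomial in $a,b$ with exponents $E_a := p^m + N - 1 - 2j$ on $a$ and $E_b := -p^m + j + l - N$ on $b$. Swapping the order of summation, I would then apply the character-sum identity $\sum_{x \in \bF_q} x^e = \sum_{x \in \bF_q^\times} x^e = -1$ when $(q-1) \mid e$ and $0$ otherwise. For the $a$-sum this uses $E_a \geq p^m \geq 1$ (which holds since $N-1-2j \geq 0$ over the range of $j$), so we stay in the $e \geq 1$ branch; for the $b$-sum the identity is valid for any integer exponent since $b$ is invertible. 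Hence the $j$-th term survives precisely when $(q-1) \mid E_a$ and $(q-1) \mid E_b$, contributing $(-1)(-1)\, c_{N-1,j} T^j = c_{N-1,j} T^j$.

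The key simplification is that these two divisibility conditions collapse to one under the hypothesis $p^m + 1 + N \equiv 2l \pmod{q-1}$. A direct computation gives $E_a + 2E_b = 2l - p^m - N - 1 \equiv 0 \pmod{q-1}$, so $E_a \equiv -2E_b$, whence $(q-1) \mid E_b$ automatically forces $(q-1) \mid E_a$. Moreover $(q-1)\mid E_b$ is equivalent to $j \equiv p^m + N - l \equiv l-1 \pmod{q-1}$, again using the hypothesis. Thus the surviving terms are exactly those with $0 \leq 2j \leq N-1$ and $j \equiv l-1 \pmod{q-1}$, and recalling $c_{N-1,j} = (-1)^j \binom{N-1-j}{j}$ yields $\epsilon = \epsilon(T)$, as claimed.

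For the degree statement, I would note that $\deg \epsilon(T) \leq \lfloor (N-1)/2 \rfloor < N$, whereas the term $T^N \Tr(\T_T \mid \S_{p^m+1-N,l-N})$ has degree $N + \deg \Tr(\T_T \mid \S_{p^m+1-N,l-N}) \geq N$ whenever that trace is nonzero. Since $\deg \epsilon(T) < N$ strictly, the leading term cannot be cancelled, so the degree of the sum equals $N + \deg \Tr(\T_T \mid \S_{p^m+1-N,l-N})$. I do not foresee a substantive obstacle: the only delicate point is the bookkeeping of exponents together with the systematic use of the congruence hypothesis to merge the two character-sum conditions, and the relation $E_a \equiv -2E_b \pmod{q-1}$ is precisely what makes that merge clean.
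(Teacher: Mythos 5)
Your proof is correct and follows the paper's own route exactly: the paper likewise deduces the identity by specializing Theorem~\ref{thm:symmetry_allq} to $n=1$, $\mathfrak p = (T)$, evaluates $\epsilon$ via Lemma~\ref{lem:isoclasses} and the standard character sums over $\bF_q$ (as in the proof of Thm.~\ref{thm:deg1}), and gets the degree statement from $\deg \epsilon < N$. Your explicit bookkeeping of the exponents $E_a$, $E_b$ and the observation $E_a \equiv -2E_b \pmod{q-1}$ merely fills in details the paper leaves to the reader, and all of it checks out.
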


\begin{proof}
This follows from Theorem~\ref{thm:symmetry_allq}, the expression of $\epsilon = \epsilon(T)$ being due to Lemma~\ref{lem:isoclasses}. The consequence about the degrees follows because $\deg(\epsilon) < N$.
\end{proof}

\begin{remark}
    In Theorem~\ref{thm:symmetry_allq_deg1}, it is tempting to write $\epsilon(T)$ as the trace of~$\T_T$ on~$\S_{N+1,l}$. However, due to a type mismatch, the space $\S_{N+1,l}$ is zero if $\S_{p^m+1+N,l}$ is non-zero. The only case where this is not an issue is for $q=2$ (see Theorem~\ref{thm:symmetry_q2}).
\end{remark}

\begin{corollary}\label{cor:clusters}
    Let $m \geq 1$. Then for any $k+2 \in \mathbb Z \cap [2p^m+2-q,2p^m+1]$, we have
    \[
    \deg \Tr(\T_{T} \hspace{0.2em} | \hspace{0.1em} \S_{k+2,l}) \leq \frac{k-p^m}{2}.
    \]
\end{corollary}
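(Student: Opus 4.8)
The plan is to reduce the statement to the symmetry relation of Theorem~\ref{thm:symmetry_allq_deg1} by writing the weight $k+2$ as a reflection of a much smaller weight. Concretely, I would set $N := k+1-p^m$, so that $k+2 = p^m+1+N$. We may assume $k+2 \equiv 2l \pmod{q-1}$, since otherwise $\S_{k+2,l} = 0$ and the trace vanishes, making the bound trivial. The interval $k+2 \in [2p^m+2-q,\,2p^m+1]$ translates into $p^m+1-q \leq N \leq p^m$; the upper bound $N \leq p^m$ is exactly what is needed to invoke Theorem~\ref{thm:symmetry_allq_deg1}.

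First I would treat the principal case $1 \leq N \leq p^m$. Applying Theorem~\ref{thm:symmetry_allq_deg1} gives
\[
\Tr(\T_T \mid \S_{k+2,l}) = T^N \Tr(\T_T \mid \S_{p^m+1-N,\,l-N}) + \epsilon(T),
\]
where $\epsilon(T) = \sum_{\substack{0 \leq 2j \leq N-1 \\ j \equiv l-1 \pmod{q-1}}} (-1)^j \binom{N-1-j}{j} T^j$ only involves exponents $j \leq (N-1)/2$. The reflected space $\S_{p^m+1-N,\,l-N}$ has weight $w := p^m+1-N = 2p^m-k$, and the two endpoint conditions $k \leq 2p^m-1$ and $k \geq 2p^m-q$ force $1 \leq w \leq q$. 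Since the dimension formula of Lemma~\ref{lem:cuspdim} shows that a non-zero cusp form must have weight at least $q+1$, this space vanishes, so its trace is zero. Hence $\Tr(\T_T \mid \S_{k+2,l}) = \epsilon(T)$, and therefore $\deg \Tr(\T_T \mid \S_{k+2,l}) = \deg \epsilon \leq (N-1)/2 = (k-p^m)/2$, as claimed.

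It remains to dispose of the boundary values $N \leq 0$. These can only occur when $p^m < q$, for otherwise the lower end of the range already forces $N \geq p^m+1-q \geq 1$. In this regime the weight satisfies $k+2 = p^m+1+N \leq p^m+1 \leq q$, using that $p^m < q$ entails $q \geq 2p^m \geq p^m+1$. Again by Lemma~\ref{lem:cuspdim} we get $\S_{k+2,l} = 0$, so the trace is zero and the stated inequality holds vacuously.

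The main obstacle is bookkeeping rather than any deep difficulty: one must translate the interval for $k+2$ simultaneously into the admissible range $1 \leq N \leq p^m$ for the symmetry theorem and into the range $1 \leq w \leq q$ that makes the reflected space vanish, while carefully peeling off the degenerate values $N \leq 0$. Once this is set up, the decisive structural input is that $\epsilon(T)$ contains only powers $T^j$ with $j \leq (N-1)/2$; this is precisely what upgrades the exact symmetry relation into the desired degree bound, since the would-be dominant term $T^N \Tr(\T_T \mid \S_{p^m+1-N,\,l-N})$ is killed by the vanishing of the small-weight space.
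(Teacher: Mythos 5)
Your proof is correct and follows essentially the same route as the paper: set $N = k+1-p^m$, apply the symmetry of Theorem~\ref{thm:symmetry_allq_deg1}, observe that the reflected weight $p^m+1-N = 2p^m-k$ lies in $[1,q]$ so the reflected space vanishes (no cusp forms below weight $q+1$), and conclude $\Tr(\T_T \mid \S_{k+2,l}) = \epsilon(T)$ with $\deg \epsilon \leq (N-1)/2 = (k-p^m)/2$. Your explicit handling of the degenerate values $N \leq 0$ (possible only when $p^m < q$, where the weight itself is too small to support cusp forms) is in fact more careful than the paper's proof, which leaves that case implicit.
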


\begin{proof}
    Setting $p^m + 1 - q \leq N \leq p^m$ in Theorem~\ref{thm:symmetry_allq_deg1} gives
    \[
    \Tr(\T_{T} \hspace{0.2em} | \hspace{0.1em} \S_{k+2,l}) = \epsilon(T),
    \]
    since there are no cusp forms of weight less than~$q+1$. 
    Thus the corollary follows from the explicit description of~$\epsilon(T)$.
\end{proof}

\begin{remark}\label{rmk:improvedclusters}
    Corollary~\ref{cor:clusters} can be improved if there is additional information about the type, because $\S_{k,l} = 0$ if $k < l(q+1)$ for $1 \leq l \leq q-1$.
\end{remark}

\begin{example}
    Theorem~\ref{thm:symmetry_allq_deg1} implies that the distance of $\deg \Tr(\T_{T}\hspace{0.2em} | \hspace{0.1em} \S_{k,l})$ to the Ramanujan bound is symmetric in the weights $p^m+1$ such that $p^m+1 \equiv 2l \pmod{q-1}$. For instance, if $q = 5$ and $l \in \{1,3\}$, the distance is symmetric in the weights $26$, $126$, $626$, etc. Figure~\ref{fig:q5l3sym} shows the quantity 
    \[
    \log_q\left( 1 + \frac{k-(q+1)}{2} - \deg \Tr(\T_{T}\hspace{0.2em} | \hspace{0.1em} \S_{k,l})\right)
    \]
    for varying~$k$ when $q=5$ and $l=3$, so that a point lies on the $k$-axis if and only if the strong Ramanujan bound (see Conj.~\ref{conj:strong_ram_traces}) is attained. 

    Immediately after the visual symmetry in $k = p^m+1$ ends, one observes clusters of data points with large distance to the Ramanujan bound. In Figure~\ref{fig:q5l3sym}, the first four of the data points in these clusters can be explained by Remark~\ref{rmk:improvedclusters}, noting that $\S_{k',3} = 0$ for $0 \leq k' < 18$. However, the clusters tend to consist of six consecutive data points, the last two of which are not explained by Theorem~\ref{thm:symmetry_allq_deg1}.
    
    
    Similar axes of symmetry appear when $\mathfrak p$ is replaced with a prime of higher degree, one example of which is shown in Figure~\ref{fig:q5deg2sym}. Here again $q=5$ and $l=3$, but now $\mathfrak p$ is the degree~2 prime $(T^2+T+2)$ (these traces can be computed using Prop.~\ref{prop:deg2}). The data points represent the quantity
    \[
    \log_q \left( 1 + k-(q+1) - \deg \Tr(\T_{T}\hspace{0.2em} | \hspace{0.1em} \S_{k,l})\right)
    \]
    for varying~$k$. The graph is symmetric in $k=26,376,1876$, although Theorem~\ref{thm:symmetry_allq} alone does not suffice to prove this.
    
    \begin{figure}
        \centering
        \includegraphics[width=\textwidth]{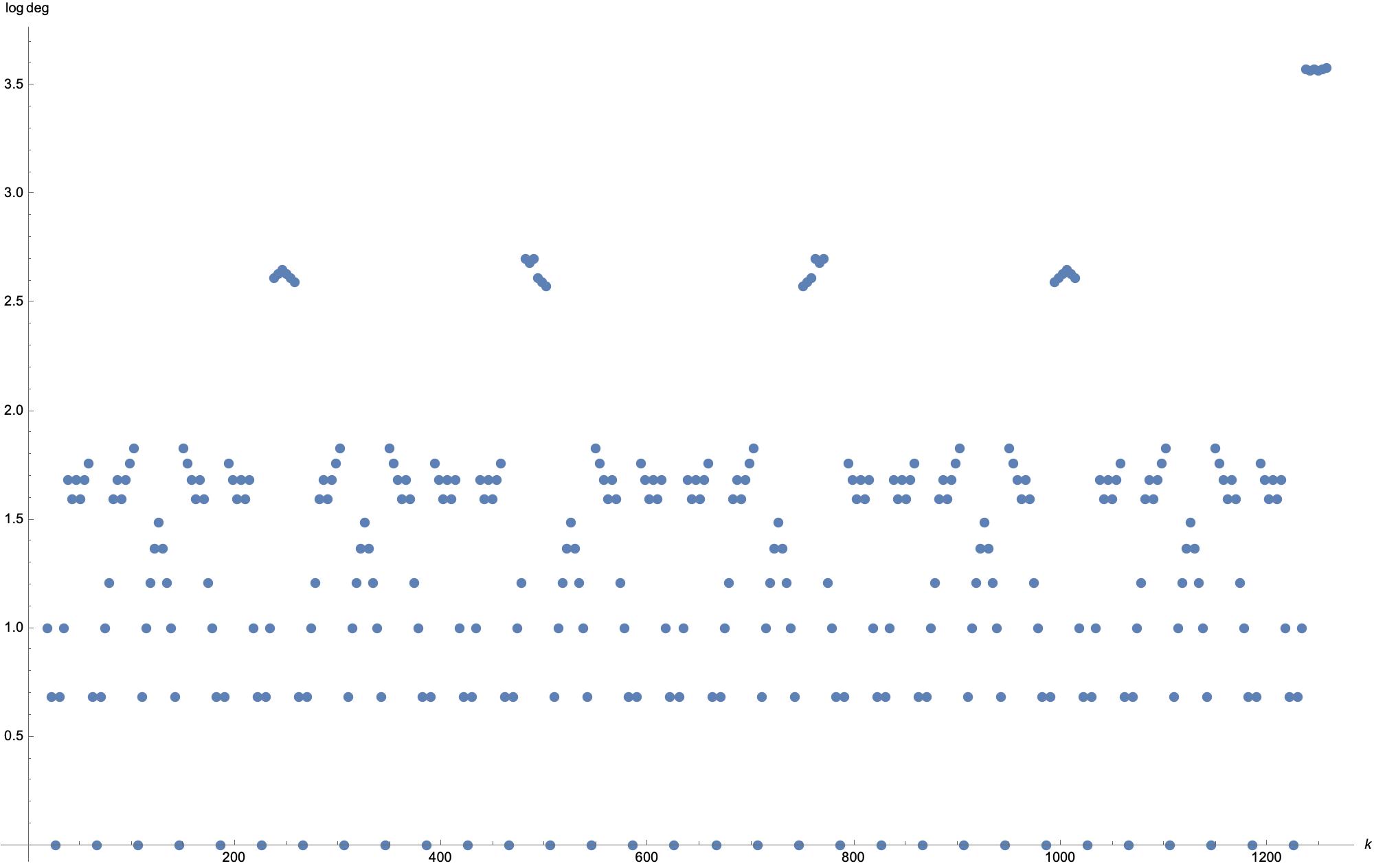}
        \caption{$\log_5( 1 + (k-6)/2 - \deg \Tr(\T_{T}\hspace{0.2em} | \hspace{0.1em} \S_{k,3}))$ for $q=5$ and $18 \leq k \leq 1258$.}
        \label{fig:q5l3sym}
    \end{figure}
    \begin{figure}
        \centering
        \includegraphics[width=\textwidth]{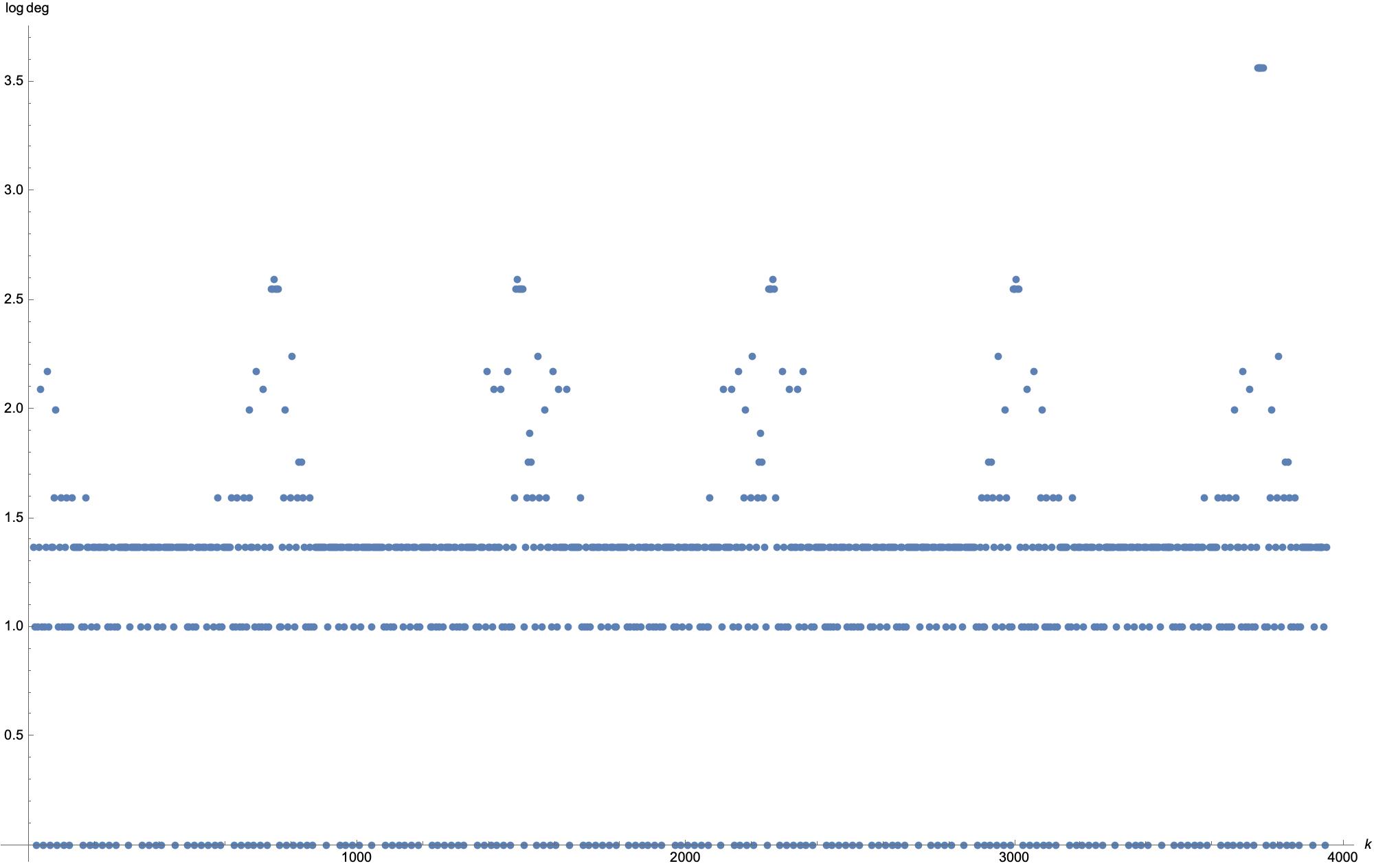}
        \caption{$\log_5 (1 + (k-6) - \deg \Tr(\T_{T^2+T+2} \hspace{0.2em} | \hspace{0.1em} \S_{k,3}))$ for $q=5$ and $18 \leq k \leq 3950$.}
        \label{fig:q5deg2sym}
    \end{figure}
\end{example}

We observe that traces of degree~1 primes are periodic modulo high powers of~$\mathfrak p$.

\begin{proposition}\label{prop:trace_periodic}
    Fix $1 \leq l \leq q-1$. Then for any weight $k > 2l$, the quantity
    \[
    \Tr(\T_{T} \hspace{0.2em} | \hspace{0.1em} \S_{k,l}) \pmod{T^{q+l-2}}
    \]
    depends only on the residue class of~$k$ in $\mathbb Z/q\mathbb Z$.
\end{proposition}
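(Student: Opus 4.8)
The plan is to read the statement off Theorem~\ref{thm:deg1} by reducing the explicit trace polynomial modulo $T^{q+l-2}$. Applying that theorem with monic generator $\wp = T$, and replacing $k$ by $k-2$ in its statement so that $k$ denotes the weight itself, we obtain, whenever $k \equiv 2l \pmod{q-1}$,
\[
\Tr(\T_T \mid \S_{k,l}) = \sum_{\substack{0 \le j < (k-2)/2 \\ j \equiv l-1 \pmod{q-1}}} (-1)^j \binom{k-2-j}{j}\, T^j,
\]
while $\Tr(\T_T \mid \S_{k,l}) = 0$ when $k \not\equiv 2l \pmod{q-1}$, since then $\S_{k,l} = 0$. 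So the content of the proposition lies with the weights satisfying $k \equiv 2l \pmod{q-1}$, and the task is to determine which part of this polynomial survives modulo $T^{q+l-2}$.

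First I would isolate the surviving monomials. The exponents occurring in the sum form the arithmetic progression $j \in \{\, l-1,\ q+l-2,\ 2q+l-3,\ \dots \,\}$ cut out by $j \equiv l-1 \pmod{q-1}$ and $j \ge 0$. Of these, only $j = l-1$ is strictly smaller than $q+l-2$, so every other monomial is annihilated modulo $T^{q+l-2}$. The hypothesis $k > 2l$ is exactly what ensures $l-1 < (k-2)/2$, that is, that the term $j = l-1$ genuinely lies in the range of summation and is not discarded. Consequently
\[
\Tr(\T_T \mid \S_{k,l}) \equiv (-1)^{l-1} \binom{k-l-1}{l-1}\, T^{l-1} \pmod{T^{q+l-2}}.
\]

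It then remains to prove that the coefficient $\binom{k-l-1}{l-1} \in \bF_q$ depends only on the residue of $k$ modulo $q$. For this I would invoke Lucas's theorem (Theorem~\ref{thm:lucas}) in base $q$. Since $0 \le l-1 \le q-2 < q$, the number $l-1$ has a single base-$q$ digit, namely $l-1$ in the units place and $0$ elsewhere. Writing $d_0 := (k-l-1) \bmod q$ for the units digit of $k-l-1$, Lucas's theorem yields $\binom{k-l-1}{l-1} \equiv \binom{d_0}{l-1} \pmod p$, all higher digits contributing trivial factors $\binom{\ast}{0} = 1$. As $l$ is fixed, $d_0$ is a function of $k \bmod q$ alone, hence so is the entire coefficient, which is precisely the assertion.

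The argument is a direct computation and I do not anticipate a deep obstacle. The only delicate point is the bookkeeping in the isolation step: one must verify simultaneously that reduction modulo $T^{q+l-2}$ retains exactly the single exponent $j = l-1$ and that the range bound $j < (k-2)/2$ coming from $k > 2l$ does not accidentally remove it. Once these are aligned, the Lucas reduction completes the proof.
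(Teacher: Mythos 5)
Your proof is correct and follows essentially the same route as the paper's: apply Theorem~\ref{thm:deg1} with $\wp = T$, observe that the congruence $j \equiv l-1 \pmod{q-1}$ forces every term except $j = l-1$ into $(T^{q+l-2})$ (with $k > 2l$ guaranteeing that term is present), and then apply Lucas's theorem to $\binom{k-l-1}{l-1}$ using $l-1 < q$ to see the coefficient depends only on $k \bmod q$. Your bookkeeping is in fact slightly more careful than the paper's, which contains a small typo ($k-l+1$ where $k-l-1$ is meant).
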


\begin{proof}
    By Theorem~\ref{thm:deg1}, all terms in $\Tr(\T_{\mathfrak p} \hspace{0.2em} | \hspace{0.1em} \S_{k,l})$ except the lowest order term lie in $(T^{q+l-2})$. The coefficient of the lowest order term is
    \[
    C := (-1)^{l-1} {\binom{k-l-1}{l-1}} \in \bF_q.
    \]
    Since $l-1 < q$, \hyperref[thm:lucas]{Lucas's theorem} implies that~$C$ only depends on the $q^0$-digit in the $q$-ary expansion of $k-l+1$, which in turn only depends on the residue class of~$k$ in $\mathbb Z/q\mathbb Z$.
\end{proof}

\begin{example}
Let $q > 2$. As a consequence of Prop.~\ref{prop:trace_periodic} and Ex.~\ref{ex:t0t2}.1, we see that for any $n \in \mathbb Z_{\geq 0}$,
    \[
    \Tr(\T_{T} \hspace{0.2em} | \hspace{0.1em} \S_{2(q+1)+n(q-1),2}) \equiv (n+1)T \pmod{T^{q}}.
    \]
\end{example}

\subsection{Traces in characteristic~2}\label{sec:char2}
In characteristic~2, traces of Hecke operators turn out to be much more well-behaved than in odd characteristic: every prime behaves like a prime of degree~1 (see Thm.~\ref{thm:char2traces}, whose proof relies on Appendix~\ref{sec:appendix}). On the other hand, the traces also contain less information: there are infinitely many weights in which the Hecke eigenvalues of a fixed Hecke operator occur with multiplicity divisible by~2 (see Thm.~\ref{thm:char2_repetition}) and so these eigenvalues cannot be studied via trace methods. By contrast, for odd~$q$ the traces are harder to pin down, but there is no known example of repeated eigenvalues at level~1.

\begin{remark}
    In characteristic~2, the type~$l$ of a non-zero space $\S_{k,l}$ is determined by the weight~$k$ by the congruence $k \equiv 2l \pmod{q-1}$. We will therefore without loss of generality work with the spaces $\S_k$ whenever this is more convenient, where we recall that $\S_k := \bigoplus_{l=1}^{q-1} \S_{k,l}$.
\end{remark}

\begin{theorem}\label{thm:char2traces}
    Suppose $2 \mid q$. Let $\mathfrak p \trianglelefteq A$ be a maximal ideal, let $\wp$ be a monic generator of~$\mathfrak p$ and let $n \geq 1$. Then for any $k \geq 0$ and $l \in \mathbb Z$, we have
    \[
    \Tr(\T_{\mathfrak p}^n \hspace{0.2em} | \hspace{0.1em} \S_{k+2,l})  = \sum_{\substack{0 \leq j < k/2 \\ j \equiv l-1 \pmod{q-1}}} {\binom{k-j}{j}}\wp^{nj} \qquad \text{if } k+2 \equiv 2l \pmod{q-1},
    \]
    and $\Tr(\T_{\mathfrak p}^n \hspace{0.2em} | \hspace{0.1em} \S_{k+2,l}) = 0$ otherwise.
\end{theorem}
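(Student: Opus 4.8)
The plan is to obtain the formula directly from the general trace formula of Proposition~\ref{prop:trace_formula}, by verifying that in characteristic~$2$ the hypothesis of Remark~\ref{rmk:simple_traces} is satisfied. First observe that in characteristic~$2$ we have $(-1)^j = 1$, so the coefficients in Proposition~\ref{prop:trace_formula} are simply $c_{k,j} = \binom{k-j}{j}$, and the polynomial $f(X)$ of Remark~\ref{rmk:simple_traces} becomes $\sum_{0 \le j < k/2,\ j \equiv l-1} \binom{k-j}{j} X^j$, whose value at $X = \wp^n$ is exactly the right-hand side asserted in the theorem. The vanishing statement when $k+2 \not\equiv 2l \pmod{q-1}$ is precisely the remark following Proposition~\ref{prop:trace_formula}. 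Thus the whole theorem reduces to the parity statement of Remark~\ref{rmk:simple_traces}: for every $b \in \bF_q^\times$ and every $a \in A$ one has $\# \text{Iso}_{\mathfrak p^n}(a,b) \equiv 1 \pmod 2$ if $a \in \bF_q$, and $\# \text{Iso}_{\mathfrak p^n}(a,b) \equiv 0 \pmod 2$ if $\deg(a) > 0$.

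This parity statement is the heart of the matter, and it is exactly the information packaged by the explicit count of isomorphism classes in terms of Hurwitz class numbers in Appendix~\ref{sec:appendix}. Concretely, fixing a root $\pi$ of $X^2 - aX + b\wp^n$ and writing $L = K(\pi)$, the quantity $\# \text{Iso}_{\mathfrak p^n}(a,b)$ is a sum $\sum_{\mathcal O} h(\mathcal O)$ of class numbers over the $A$-orders $\mathcal O$ with $A[\pi] \subseteq \mathcal O \subseteq \mathcal O_L$. To extract the parity I would use the conjugation $\pi \mapsto a - \pi$ of $L/K$, which preserves each such order and induces inversion $[\mathfrak a] \mapsto [\mathfrak a]^{-1}$ on $\mathrm{Cl}(\mathcal O)$; since the class number $h(\mathcal O)$ is odd precisely when $\mathrm{Cl}(\mathcal O)[2]$ is trivial, the parity of each summand, and hence of the total, is controlled by genus theory, i.e.\ by the ramified places of $L/K$ relative to the conductor of $\mathcal O$.

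It then remains to trace the dichotomy through the cases of Proposition~\ref{prop:yu_pols}. For $a \in \bF_q^\times$ the extension $L/K$ is the separable Artin--Schreier extension $u^2 - u = b\wp^n/a^2$, and since $b\wp^n/a^2$ is a constant multiple of a polynomial its ramification is concentrated at~$\infty$; the resulting genus-theoretic $2$-rank is minimal, forcing the relevant class numbers to be odd. When $\deg(a) > 0$ (as in Prop.~\ref{prop:yu_pols}(1) with $\deg a>0$, and Prop.~\ref{prop:yu_pols}(3)) the same computation spreads ramification to the finite primes dividing~$a$, raising the $2$-rank and forcing even class numbers. The remaining constant cases $a=0$ must be handled separately: the inseparable Frobenius for $n$ odd (Prop.~\ref{prop:yu_pols}(2)) and the rational Frobenius $\pi \in A$ for $n$ even (Prop.~\ref{prop:yu_pols}(4), the supersingular case, where $\End^0(\phi)$ is quaternionic). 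I expect the main obstacle to lie precisely here: organizing the parity uniformly across the Hurwitz sum over non-maximal orders, keeping careful track of the place at infinity, and treating the inseparable and supersingular strata where the naive Artin--Schreier picture does not apply. This is the computation carried out in the appendix, and feeding its conclusion into Remark~\ref{rmk:simple_traces} completes the proof.
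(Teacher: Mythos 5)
Your proposal is correct and follows essentially the same route as the paper: the paper's proof likewise reduces the theorem, via Proposition~\ref{prop:trace_formula} and Remark~\ref{rmk:simple_traces}, to the mod-2 evaluation of $\#\text{Iso}_{\mathfrak p^n}(a,b)$, which it obtains by citing Propositions~\ref{prop:isom} and~\ref{prop:evenclassnumber} from the appendix. Your sketch of the parity argument (class-number parity via 2-torsion in the class group, Artin--Schreier ramification concentrated at $\infty$ for constant $a$, and the separate treatment of the $a=0$ inseparable and supersingular strata) matches how Proposition~\ref{prop:evenclassnumber} is actually proved there, via Lemma~\ref{lem:hurwitz_number} and the Deuring--Shafarevich computation of $J_L[2](\bF_q)$.
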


\begin{proof}
    By Propositions~\ref{prop:isom} and \ref{prop:evenclassnumber}, we have for $a \in A$ and $b \in \bF_q^\times$,
    \[
    \# \text{Iso}_{\mathfrak p^n}(a,b) \equiv_2 \begin{cases}
        1 & \text{if } \deg(a) \leq 0; \\ 0 & \text{otherwise}.
    \end{cases}
    \]
    Thus the result follows from Rmk.~\ref{rmk:simple_traces}.
\end{proof}

\begin{remark}
    Theorem~\ref{thm:char2traces} implies that any result proved for~$\T_T$ extends to the analogous statement for~$\T_{\mathfrak p}^n$ simply by substituting $\wp^n$ for~$T$.
\end{remark}

\begin{corollary}
    Suppose $q = 2$. Then as elements of $\bF_2[T][\![X]\!]$, we have
    \[
    \sum_{k \geq 0} \Tr(\T^n_{\mathfrak p} \hspace{0.2em} | \hspace{0.1em} \S_{k+2} ) X^k = \frac{X}{(1+X+\wp^n X^2)(1 + \wp^n X^2)}.
    \]
\end{corollary}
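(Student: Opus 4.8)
The plan is to evaluate the trace in closed form via Theorem~\ref{thm:char2traces} and then recognise the resulting generating function as the stated rational function by a direct formal power series computation over $\bF_2[T][\![X]\!]$.

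First I would specialise Theorem~\ref{thm:char2traces} to $q = 2$, $n = 1$, and $\mathfrak p = (T)$ (so $\wp = T$, a degree~1 prime). Since $q - 1 = 1$, the congruence condition $j \equiv l-1 \pmod{q-1}$ is vacuous and the type is omitted, so $\S_{k+2,l} = \S_{k+2}$ and
\[
\Tr(\T_T \mid \S_{k+2}) = \sum_{0 \le j < k/2} \binom{k-j}{j} T^j \in \bF_2[T],
\]
with the binomial coefficients read modulo~2. Substituting this into the left-hand side gives a double sum in which I would interchange the order of summation; this is legitimate because for each fixed $k$ the coefficient of $X^k$ is a finite sum. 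The constraint $0 \le j < k/2$ is equivalent to $k \ge 2j+1$, i.e.\ $k - j \ge j+1$, so after setting $m = k-j$ I obtain
\[
\sum_{k \ge 0} \Tr(\T_T \mid \S_{k+2}) X^k = \sum_{j \ge 0} (TX^2)^j \sum_{m \ge j+1} \binom{m}{j} X^m.
\]

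Next I would use the negative-binomial identity $\sum_{m \ge j}\binom{m}{j}X^m = X^j/(1-X)^{j+1}$, which holds over $\bZ$ and descends to $\bF_2[\![X]\!]$ because reduction modulo~2 is a coefficientwise ring homomorphism on formal power series. Removing the excluded $m = j$ term (which contributes $X^j$) turns the inner sum into $X^j\bigl((1-X)^{-(j+1)} - 1\bigr)$, whence
\[
\sum_{k \ge 0} \Tr(\T_T \mid \S_{k+2}) X^k = \sum_{j \ge 0} (TX^2)^j \left( \frac{1}{(1-X)^{j+1}} - 1 \right).
\]
Both resulting geometric series are $X$-adically summable in $\bF_2[T][\![X]\!]$, since $TX^2/(1-X)$ and $TX^2$ have positive $X$-order. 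Summing them gives
\[
\frac{1}{1-X}\cdot\frac{1}{1 - \tfrac{TX^2}{1-X}} - \frac{1}{1-TX^2} = \frac{1}{1 - X - TX^2} - \frac{1}{1 - TX^2},
\]
and placing these over a common denominator yields $X/\bigl((1-X-TX^2)(1-TX^2)\bigr)$, as claimed. The computation is essentially routine; the only points needing care are formal, namely that the interchange of summations and the negative-binomial identity are applied over the coefficient ring $\bF_2[T]$ and that each geometric series converges in the $X$-adic topology (guaranteed by the $X^2$ factor). I expect no substantive obstacle beyond this bookkeeping.
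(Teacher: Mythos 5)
Your proof is correct and essentially the same as the paper's: both rest on Theorem~\ref{thm:char2traces} together with the identity $\sum_{k}\sum_{0\le j\le k/2}\binom{k-j}{j}T^jX^k = (1-X-TX^2)^{-1}$, with the excluded boundary terms $j=k/2$ (equivalently your removed $m=j$ terms) supplying the subtracted $(1-TX^2)^{-1}$ --- the paper merely obtains this identity by substituting $Y=TX$ into $1/(1-(1+Y)X)$ instead of invoking the negative-binomial series. One typographical slip: in your first display the outer factor should read $(TX)^j$ rather than $(TX^2)^j$, since the extra $X^j$ only arises from the negative-binomial identity applied in the following step, where your formula is again correct, so the argument stands.
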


\begin{proof}
    See the proof of Corollary~\ref{cor:rational_fn}.
\end{proof}

\begin{theorem}[Symmetry for $q=2$]\label{thm:symmetry_q2}
Suppose $q = 2$. Let $m \geq 1$ and $1 \leq N \leq 2^m$. Then for any maximal ideal $\mathfrak p \trianglelefteq \bF_2[T]$ and any $n \geq 1$, we have
    \[
    \Tr(\T_{\mathfrak p}^n \hspace{0.2em} | \hspace{0.1em} \S_{2^m+1+N}) = \wp^{Nn} \Tr(\T_{\mathfrak p}^n \hspace{0.2em} | \hspace{0.1em} \S_{2^m+1-N}) + \Tr(\T_{\mathfrak p}^n \hspace{0.2em} | \hspace{0.1em} \S_{N+1}) + N \wp^{n(N-1)/2}.
    \]
\end{theorem}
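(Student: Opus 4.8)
The plan is to deduce this from the general symmetry result, Theorem~\ref{thm:symmetry_allq}, by making its error term $\epsilon$ fully explicit in the case $q=2$. Applying Theorem~\ref{thm:symmetry_allq} with $p=q=2$ produces, for every $1 \leq N \leq 2^m$, the identity $\Tr(\T_{\mathfrak p}^n \mid \S_{2^m+1+N}) = \wp^{Nn}\Tr(\T_{\mathfrak p}^n \mid \S_{2^m+1-N}) + \epsilon$; here types have been suppressed, which is legitimate since $q-1=1$ forces $\S_{k,l}=\S_k$ for all $l$. Because $b$ ranges only over $\bF_2^\times = \{1\}$, the error term simplifies to
\[
\epsilon = \sum_{a} \# \text{Iso}_{\mathfrak p^n}(a,1)\, a^{2^m} \sum_{j=0}^{\lfloor (N-1)/2\rfloor} c_{N-1,j}\, a^{N-1-2j}\wp^{nj},
\]
so the entire task reduces to identifying this $\epsilon$ with $\Tr(\T_{\mathfrak p}^n \mid \S_{N+1}) + N\wp^{n(N-1)/2}$.

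The key input is the characteristic-2 description of the isogeny counts used in the proof of Theorem~\ref{thm:char2traces}: one has $\# \text{Iso}_{\mathfrak p^n}(a,1) \equiv_2 1$ when $\deg(a)\leq 0$ and $\equiv_2 0$ otherwise. Working modulo~2, every sum over~$a$ collapses to a sum over $\bF_2 = \{0,1\}$, and I would use the elementary fact that $\sum_{a \in \bF_2} a^{e}$ equals~$1$ for $e \geq 1$ and equals~$0$ for $e=0$ (with the convention $0^0=1$). In $\epsilon$ the exponent on~$a$ is $N-1-2j+2^m \geq 2^m \geq 1$ for every $j$ in range, so each inner sum is~$1$ and hence $\epsilon \equiv_2 \sum_{j=0}^{\lfloor (N-1)/2\rfloor} c_{N-1,j}\wp^{nj}$.

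Running the same reduction on the trace formula (Prop.~\ref{prop:trace_formula}) with $k=N-1$ gives $\Tr(\T_{\mathfrak p}^n \mid \S_{N+1}) \equiv_2 \sum_{j=0}^{\lceil (N-1)/2\rceil-1} c_{N-1,j}\wp^{nj}$; here one checks that the exponent $N-1-2j$ stays $\geq 1$ on the (shorter) summation range, so the inner $\bF_2$-sums are again~$1$. Subtracting, the two expressions agree term-by-term except possibly at $j=(N-1)/2$: when $N$ is even the two ranges coincide and the difference vanishes, while when $N$ is odd the term $j=(N-1)/2$ appears only in~$\epsilon$, contributing $c_{N-1,(N-1)/2}\wp^{n(N-1)/2}$ with $c_{N-1,(N-1)/2} = (-1)^{(N-1)/2}\binom{(N-1)/2}{(N-1)/2} \equiv_2 1$. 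Since $N \equiv_2 1$ precisely when $N$ is odd, this difference is exactly $N\wp^{n(N-1)/2}$, completing the identification.

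The computation is short, so the only real care needed is bookkeeping: tracking the mismatch between the two summation bounds $\lfloor (N-1)/2\rfloor$ and $\lceil (N-1)/2\rceil-1$, and confirming that in both sums every exponent of~$a$ stays at least~$1$ so that no inner sum secretly vanishes. This is precisely where the extra factor $a^{2^m}$ in~$\epsilon$ and the truncated range in the trace formula each play their role; the half-integer exponent arising when $N$ is even is harmless because its coefficient $N\equiv_2 0$ annihilates the term.
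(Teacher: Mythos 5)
Your proposal is correct and takes essentially the same route as the paper: the paper reduces to $\wp^n = T$ via the substitution principle of Theorem~\ref{thm:char2traces}, applies Theorem~\ref{thm:symmetry_allq_deg1}, and matches its explicit $\epsilon(T)$ against Theorem~\ref{thm:deg1} --- which is exactly the $\lfloor (N-1)/2\rfloor$ versus $\lceil (N-1)/2\rceil - 1$ range bookkeeping you carry out, with the $j=(N-1)/2$ term surviving precisely when $N$ is odd. Your only deviation is cosmetic: you inline the characteristic-2 collapse of $\# \text{Iso}_{\mathfrak p^n}(a,b)$ (from Propositions~\ref{prop:evenclassnumber} and~\ref{prop:isom}) directly into the error term of Theorem~\ref{thm:symmetry_allq} for general $\mathfrak p^n$, rather than passing through the degree-1 specialization first.
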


\begin{proof}
    It suffices to prove the case $\wp^n = T$.
    By Theorem~\ref{thm:symmetry_allq_deg1}, we only need to show that
    \[
    \sum_{0 \leq 2j \leq N-1} {\binom{N-1-j}{j}}T^j = \Tr(\T_{T} \hspace{0.2em} | \hspace{0.1em} \S_{N+1}) + N T^{\frac{N-1}{2}},
    \]
    but this is clear from Theorem~\ref{thm:char2traces}.
\end{proof}

\begin{remark}
    If $q=2$, Theorem~\ref{thm:symmetry_q2} allows for the computation of the trace of any Hecke operator in weight~$k$ in $O(k)$ steps, without having to compute any binomial coefficients.
\end{remark}

The simple version of the trace formula combined with the calculus of binomial coefficients modulo~2 allow us to explicitly compute traces of Hecke operators in special weights.

\begin{proposition}\label{prop:2powerweight}
Let $q = 2^r$. Fix $1 \leq s \leq r$ and $m \geq 0$. Write $\delta_{q,2}$ for the Kronecker delta, i.e., $\delta_{q,2} = 1$ if $q=2$ and $\delta_{q,2} = 0$ otherwise.
\begin{enumerate}
\item If $k+2=2^sq^m+1$, we have
    \[
    \Tr(\T_{\mathfrak p}^n \hspace{0.2em} | \hspace{0.1em} \S_{k+2}) = \begin{cases} 1 & \text{if } s=r; \\ 0 & \text{otherwise.}\end{cases}
    \]
\item If $k + 2 = 2^sq^m$, we have
    \[
    \Tr(\T_{\mathfrak p}^n \hspace{0.2em} | \hspace{0.1em} \S_{k+2}) = \wp^{-n} \sum_{j=0}^{m-1} \wp^{2^{s-1}q^jn}.
    \]
\item
    If $k + 2 = 2^sq^m + 2$, we have
    \[
    \Tr(\T_{\mathfrak p}^n \hspace{0.2em} | \hspace{0.1em} \S_{k+2}) = \delta_{q,2} + \sum_{j=0}^{m-1} \wp^{2^{s-1}q^jn} .
    \]
\item
    If $k + 2 = 2^{s+1}q^m + 2^{s}q^m + 1$, we have
    \[
    \Tr(\T_{\mathfrak p}^n \hspace{0.2em} | \hspace{0.1em} \S_{k+2}) = 
    \begin{cases}
        \delta_{q,2} + \wp^{q^{m+1}n} & \text{if } s = r;\\
        0 & \text{otherwise.}
    \end{cases}
    \]
\end{enumerate}
\end{proposition}

\begin{proof}
    The first assertion follows from Theorem~\ref{thm:char2traces} and Prop.~\ref{prop:lucas_consequences}.3. The proofs of the other identities are left to the reader.
\end{proof}

\begin{example}
    To compute the trace of~$\T_T$ on~$\S_{177}$ when $q=2$, one can proceed as follows. Repeated application of Theorem~\ref{thm:symmetry_q2} gives
    \begin{align*}
        \Tr(\T_T \hspace{0.2em} | \hspace{0.1em} \S_{177}) &= \Tr(\T_T \hspace{0.2em} | \hspace{0.1em} \S_{129+48}) = T^{48} \Tr(\T_{T} \hspace{0.2em} | \hspace{0.1em} \S_{129-48}) + \Tr(\T_{T} \hspace{0.2em} | \hspace{0.1em} \S_{49})\\
        &= T^{48}( T^{16} \Tr(\T_{T} \hspace{0.2em} | \hspace{0.1em} \S_{65 - 16}) + \Tr(\T_{T} \hspace{0.2em} | \hspace{0.1em} \S_{17})) + T^{16}\Tr( \T_{T} \hspace{0.2em} | \hspace{0.1em} \S_{33-16}) + \Tr(\T_{T} \hspace{0.2em} | \hspace{0.1em} \S_{17}) \\
        &= (T^{80} + T^{64} + T^{48} + T^{16} + 1) \Tr(\T_{T} \hspace{0.2em} | \hspace{0.1em} \S_{17}) \\
        &= T^{80} + T^{64} + T^{48} + T^{16} + 1,
    \end{align*}
    using that $\Tr(\T_{T} \hspace{0.2em} | \hspace{0.1em} \S_{17}) = 1$ by Prop.~\ref{prop:2powerweight}.1.
\end{example}

\subsection{Traces in odd characteristic: primes of degree~2}

As the characteristic~2 case is now settled, we may safely assume that $q$ is odd. We turn to the study of traces of powers of Hecke operators $\T_{\mathfrak p}^n$ when $nd > 1$, where $d = \deg(\mathfrak p)$. When $nd = 2$, we obtain the following explicit expression.

\begin{theorem}\label{thm:deg2}
    Suppose $2 \nmid q$ and $\wp$ is a monic irreducible polynomial of degree~$d \in \{1,2\}$. Write $\mathfrak p = (\wp)$ and let $nd = 2$. Suppose $k + 2 \equiv 2l \pmod{q-1}$. Then we have
    \begin{align*}
    &\qquad \qquad \quad \Tr(\T_{\mathfrak p}^n \hspace{0.2em} | \hspace{0.1em} \S_{k+2,l}) = 
    \sum_{\substack{0 \leq j < k/2 \\ j \equiv l-1}} (-1)^{j} \binom{k-j}{j} \wp^{nj} \ + \\
    &+ \sum_{m=(q-1)/2}^{q-2} 4^{-m} \binom{m}{(q-1)/2} \sum_{\substack{0 \leq j < k/2 \\ j \equiv l-1+m}} \sum_{\substack{0 < i < k-2j \\ i \equiv -2m}} (-1)^{j+\frac{q-1}{2}} \binom{k-j}{i,\, j,\, k-2j-i} T^i \wp^{nj},
    \end{align*}
    where the congruences imposed on $i$ and~$j$ are modulo~$q-1$.
\end{theorem}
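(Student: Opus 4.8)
The plan is to evaluate the trace formula of Proposition~\ref{prop:trace_formula} head-on. Since $nd=2$, Proposition~\ref{prop:charpol_properties} (applied with field degree $m = nd = 2$) forces $\deg(a) \le nd/2 = 1$, so the outer sum over $a$ splits into the two regimes $a \in \bF_q$ and $\deg(a) = 1$. The essential input is the explicit evaluation of $\#\text{Iso}_{\mathfrak p^n}(a,b)$ when $nd=2$ in terms of Hurwitz class numbers of the discriminant $a^2 - 4b\wp^n$, as established in Appendix~\ref{sec:appendix}, together with the scaling symmetry $\#\text{Iso}_{\mathfrak p^n}(a,b) = \#\text{Iso}_{\mathfrak p^n}(ca,c^2b)$ of Remark~\ref{rem:iso_c}, which I would use to normalize $a$ in both regimes.

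In both regimes the scaling symmetry lets me factor out a power sum over $\bF_q^\times$. For $a \in \bF_q^\times$ I set $c = a^{-1}$ to get $\#\text{Iso}_{\mathfrak p^n}(a,b) = \#\text{Iso}_{\mathfrak p^n}(1, a^{-2}b)$; substituting $b = a^2 b'$ and collecting the power of $a$ gives exponent $k-2j + 2(j+l-k-1) \equiv 0 \pmod{q-1}$ (using $k \equiv 2l-2$), so the sum over $a \in \bF_q^\times$ yields the scalar $-1$, while the term $a=0$ drops out because $a^{k-2j}=0$ for $j < k/2$. For $\deg(a) = 1$, writing $a = uT + v$ and taking $c = u^{-1}$ reduces to $\#\text{Iso}_{\mathfrak p^n}(T+w, u^{-2}b)$ with $w = v/u$; expanding $a^{k-2j} = \sum_i \binom{k-2j}{i}u^i v^{k-2j-i}T^i$, the same exponent count makes the sum over $u \in \bF_q^\times$ equal $-1$. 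Thus each regime reduces to a moment of the normalized class numbers.

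The $a \in \bF_q$ regime then contributes $-\sum_{b' \in \bF_q^\times}\#\text{Iso}_{\mathfrak p^n}(1,b')\,b'^{\,j+l-k-1}$ per index $j$; by the class-number evaluation of the appendix this moment equals $-1$ exactly for $0 \le j < k/2$ with $j \equiv l-1 \pmod{q-1}$, reproducing the first sum $\sum_j (-1)^j\binom{k-j}{j}\wp^{nj}$, in complete parallel with the proof of Theorem~\ref{thm:deg1}. For $\deg(a)=1$, the coefficient of $T^i\wp^{nj}$ carries the factor $c_{k,j}\binom{k-2j}{i} = (-1)^j\binom{k-j}{j}\binom{k-2j}{i}$, which I rewrite as $(-1)^j\binom{k-j}{i,j,k-2j-i}$ via the elementary identity $\binom{k-j}{j}\binom{k-2j}{i} = \binom{k-j}{i,j,k-2j-i}$; this already matches the multinomial in the statement.

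The remaining and most delicate step is the evaluation of the degree-$1$ moment $\sum_{w \in \bF_q}\sum_{b' \in \bF_q^\times}\#\text{Iso}_{\mathfrak p^n}(T+w,b')\,w^{\,k-2j-i}b'^{\,j+l-k-1}$. Here I would insert the Hurwitz-class-number formula, whose support is controlled by the discriminant $(T+w)^2 - 4b'\wp^n$ having nonsquare leading coefficient $1-4b'$, and carry out the resulting quadratic-character sums over $b'$ and $w$. I expect this to collapse, for each admissible residue of $i$ modulo $q-1$, to a single weight $(-1)^{(q-1)/2}\,4^{-m}\binom{m}{(q-1)/2}$, with the congruences $i \equiv -2m$ and $j \equiv l-1+m \pmod{q-1}$ selecting the unique $m \in [(q-1)/2, q-2]$; the factor $4^{-m}$ traces back to the $2^{-2m}$ produced when the roots $\tfrac{a \pm \sqrt{\Delta}}{2}$ are expanded. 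Assembling the two regimes then gives the stated formula, and pinning down this precise character-sum constant is the quantitative heart of the argument.
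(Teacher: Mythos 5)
Your overall route is the paper's: combine the trace formula (Prop.~\ref{prop:trace_formula}) with a mod-$p$ evaluation of $\#\text{Iso}_{\mathfrak p^n}(a,b)$ for $nd=2$, expand $a^{k-2j}$ to produce the multinomial coefficients, and reduce to character sums over $\bF_q$. Your normalization of $a$ to be monic via Remark~\ref{rem:iso_c} is only a cosmetic variant: the paper instead proves Prop.~\ref{prop:deg2}, namely $\#\text{Iso}_{\mathfrak p^n}(a,b) \equiv 1 - \left(\frac{a^+,b}{q}\right) \pmod p$ with $a^+$ the $T$-coefficient, and then evaluates the two-variable sum of Lemma~\ref{lem:deg2sum}, whose proof carries out exactly the substitution $\gamma=\alpha^{-2}\beta$ that your scaling builds in from the start. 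Note that after your normalization the class number mod~$p$ is independent of $w$ (the relevant symbol for $T+w$ is $(1-4b')^{(q-1)/2}$), so your ``character sums over $b'$ and $w$'' decouple into a pure power sum in $w$ and a one-variable sum in $b'$; the genuinely nontrivial input you defer --- the case analysis behind Prop.~\ref{prop:deg2} (square versus square-free discriminant, ramification at $\infty$, the $\wp \mid a$ cases, all resting on the appendix) --- is the same input the paper uses, and your predicted constant $(-1)^{(q-1)/2}4^{-m}\binom{m}{(q-1)/2}$ is correct.

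There is, however, a concrete error in your bookkeeping. The claim that the constant-$a$ moment ``equals $-1$ exactly for $j \equiv l-1 \pmod{q-1}$'' is false: since $\#\text{Iso}_{\mathfrak p^n}(a,b) \equiv 1 - (-1)^{(q-1)/2}b^{(q-1)/2} \pmod p$ for constant $a$, the moment is also nonzero when $j \equiv l-1+\tfrac{q-1}{2} \pmod{q-1}$, because the $b$-sum then picks up the surviving exponent $b^{(q-1)/2}\cdot b^{(q-1)/2}=b^{q-1}$; the constant regime contributes $(-1)^{(q+1)/2}c_{k,j}\wp^{nj}$ at each such $j$ (for $q=3$, a nonzero $+c_{k,j}\wp^{nj}$ at every $j\equiv l \pmod 2$). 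Symmetrically, your degree-one regime has nonvanishing $i=0$ terms at exactly those same $j$ (there $k-2j\equiv 0 \pmod{q-1}$, so the $w$-power sum survives), and your plan of matching that regime only against the second sum in the statement, which has $i>0$, leaves them orphaned. These two families of spurious terms cancel identically --- this is precisely the paper's computation that $s_{0,j}=0$ for $j \equiv l-1+\tfrac{q-1}{2}$, where the $a^+=0$ and $a^+\neq 0$ contributions are combined \emph{before} evaluating --- but your proposal as written asserts an incorrect intermediate identity, and the argument does not close until the $i=0$ cancellation between the two regimes is restored.
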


The first step towards proving Theorem~\ref{thm:deg2} is to understand the cardinalities $\# \text{Iso}_{\mathfrak p^n}(a,b)$.
We fix the following notation. For $\alpha,\beta \in \mathbb F_q$, define the Legendre symbol via
\[
\left( \frac{\alpha,\beta}{q} \right) := \begin{cases} 1 & \text{if }X^2 - \alpha X + \beta \text{ has two distinct roots in $\mathbb F_q$}; \\ 0 & \text{if }X^2 - \alpha X + \beta \text{ is a square}; \\ -1 & \text{if }X^2 - \alpha X + \beta \text{ is irreducible over $\mathbb F_q$}. \end{cases}
\]
Note that this is the same as the Legendre symbol for $D = \alpha^2 - 4\beta$ in $\bF_q$, i.e., $\left( \frac{\alpha,\beta}{q} \right) = D^{(q-1)/2}$. If $a \in \bF_q[T]$ is a polynomial of degree $\leq 1$, we denote by $a^+ \in \bF_q$ the coefficient of $T$ in $a$.

\begin{proposition}\label{prop:deg2}
    Suppose $2 \nmid q$ and let $nd = 2$. Then for any $a \in \bF_q[T]$ with $\deg(a) \leq 1$ and $b \in \bF_q^\times$, we have
    \[
    \# \text{Iso}_{\mathfrak p^n}(a,b) \equiv  1 - \left( \frac{a^+,b}{q} \right) \pmod{p}.
    \]
\end{proposition}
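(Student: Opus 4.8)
The goal is to count, modulo $p$, the isomorphism classes of rank-2 Drinfeld modules over $\bF_{\mathfrak p^n}$ with characteristic polynomial $X^2 - aX + b\wp^n$, where $nd = 2$ and $\deg(a) \leq 1$. The plan is to reduce to a concrete parametrization of Drinfeld modules $\phi_T = \alpha_0 + \alpha_1\tau + \alpha_2\tau^2$ over the relevant finite field and count those realizing a given Weil polynomial, working up to isomorphism.

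First I would split into the two cases $nd=2$, namely $(n,d)=(1,2)$ and $(n,d)=(2,1)$, since the base field $\bF_{\mathfrak p^n}$ has $q^2$ elements in both. For a Drinfeld module over $\bF_{q^2}$ with $\phi_T = \alpha_0 + \alpha_1 \tau + \alpha_2 \tau^2$ (with $\alpha_2 \neq 0$ and $\alpha_0$ in the correct characteristic), I would compute the coefficients $a$ and $b$ of the characteristic polynomial $c_\phi(X) = X^2 - aX + b\wp^n$ explicitly in terms of the $\alpha_i$, using that the Frobenius is $\tau^m$ with $q^m = q^2$. This expresses $a$ and $b$ as explicit (additive/multiplicative) functions of the $\alpha_i$. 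Isomorphisms are given by conjugation $\phi_T \mapsto u \phi_T u^{-1}$ for $u \in \bF_{q^2}^\times$ acting as $\tau$-twists $\alpha_i \mapsto u^{1-q^i}\alpha_i$ (or the analogous scaling), so I would count the tuples $(\alpha_0,\alpha_1,\alpha_2)$ giving the fixed $(a,b)$ and then divide by the size of the resulting isomorphism orbits, being careful about stabilizers. The congruence modulo $p$ means I only need the count mod $p$, which should let me discard orbits of size divisible by $p$ and simplify the bookkeeping considerably.

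The reformulation $\# \text{Iso}_{\mathfrak p^n}(a,b) \equiv 1 - \left(\frac{a^+,b}{q}\right) \pmod p$ suggests that the answer depends only on whether the discriminant $(a^+)^2 - 4b \in \bF_q$ is a nonzero square, zero, or a non-square, i.e. on the splitting behavior of $X^2 - a^+ X + b$ over $\bF_q$. I expect this to emerge because the count of $(\alpha_0,\alpha_1,\alpha_2)$ realizing a given $(a,b)$ reduces to counting solutions of a norm or trace equation over $\bF_{q^2}/\bF_q$ whose solvability is governed precisely by that quadratic splitting; the reduction of $a$ to its leading coefficient $a^+$ reflects that only the top-degree behavior survives the finite-field counting. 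I would verify the three cases of the Legendre symbol separately, matching $1 - \left(\frac{a^+,b}{q}\right)$ to the values $0$, $1$, $2$ in each case.

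The main obstacle will be handling the isomorphism classes correctly, in particular the stabilizers of the conjugation action and the possibility that some orbits have size divisible by $p$ (which then vanish mod $p$) while others do not. A related subtlety is disentangling the two cases $(n,d)=(1,2)$ and $(2,1)$: although both give base field $\bF_{q^2}$, the characteristic ideal $\mathfrak p$ and the constraint on $\alpha_0$ differ, and I would need to check that the final count is insensitive to this difference, consistent with the uniform statement. I suspect the cleanest route is to invoke the explicit description of isomorphism classes in a fixed isogeny class promised in Appendix~\ref{sec:appendix} (via Hurwitz class numbers and points on Jacobians), specialize it to $nd=2$ where the relevant curves are elliptic or genus-0, and extract the mod-$p$ count; the quadratic splitting condition should then appear directly from the structure of the relevant order $A[\pi]$ and whether the Frobenius generates a split, ramified, or inert situation at the place determined by $a^+$ and $b$.
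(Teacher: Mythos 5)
Your fallback route---invoking the appendix's class-number machinery and specializing to $nd=2$---is exactly the paper's proof: it combines Prop.~\ref{prop:isom} with Lemma~\ref{lem:hurwitz_number} and Lemma~\ref{lem:jac}, using that the discriminant $D=a^2-4b\wp^n$ has degree at most $2$, so the curve $Y^2=D$ has genus $0$ (only genus $0$ arises here, never elliptic curves, contrary to your "elliptic or genus-0") and trivial Jacobian, whence $H(\mathcal O_L)=1-\chi_L(\infty)$; the symbol $\left(\frac{a^+,b}{q}\right)$ enters because ramification at $\infty$ is read off from the homogenization, whose fibre at $s=0$ is $X^2-a^+X+b$---so the relevant place is $\infty$, not some place "determined by $a^+$ and $b$". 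What your sketch omits is the case analysis that carries the real content. First, when $D$ is not square-free, $A[\pi]$ is a non-maximal order $B_f$, and Lemma~\ref{lem:hurwitz_number} is needed: the paper writes $D=\mu Q^2$ and argues that $\mu=(a^+)^2-4b$ must be a nonsquare (a square would force $\wp\mid a$, contradicting case 1), so both $\infty$ and $Q$ are inert and the count is $2$, matching $1-\left(\frac{a^+,b}{q}\right)$. Second, one must check that the imaginary condition in Prop.~\ref{prop:yu_pols} (deciding which $(a,b)$ are Weil polynomials at all) coincides with $\left(\frac{a^+,b}{q}\right)\neq 1$; this is what makes the formula correctly return $0$ in the split case. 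Third, the cases $\wp\mid a$ (cases 2--4 of Prop.~\ref{prop:yu_pols}, including the supersingular double root with $\#\text{Iso}_{\mathfrak p^2}(a,b)=(q^d-1)/(q-1)=1$ for $(n,d)=(2,1)$) require separate matching, which your plan does not mention.

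Your primary route---counting tuples $(\alpha_0,\alpha_1,\alpha_2)$ with $\phi_T=\alpha_0+\alpha_1\tau+\alpha_2\tau^2$ up to twisting---is genuinely different from the paper and close in spirit to Yu's mass-formula computations, but two of its expectations fail. The isomorphism action is by $u\in\bF_{q^2}^\times$, so every orbit has size dividing $q^2-1$, which is coprime to $p$: no orbit "vanishes mod $p$", and instead of discarding orbits you would have to weight isomorphism classes by their automorphism groups ($\bF_q^\times$ generically, $\bF_{q^2}^\times$ in the supersingular case, cf.\ the remark after Prop.~\ref{prop:isom} and \cite{yu_isogs}) and only then reduce modulo $p$. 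Moreover, unlike the degree-$1$ situation of Lemma~\ref{lem:isoclasses}, where $c_\phi(X)=X^2+\beta^{-1}\alpha X-\beta^{-1}(T-x)$ can be written down directly, over $\bF_{q^2}$ expressing the Frobenius trace $a$ in terms of $(\alpha_0,\alpha_1,\alpha_2)$ is a nontrivial computation; that is where most of the work of this route would lie, and it is absent from the sketch. As written, then, the proposal identifies the correct strategy (the appendix route) but proves none of the three case-by-case verifications, and its alternative route rests on a simplification that does not occur.
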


\begin{proof}
    We prove this case by case, comparing each time an explicit computation to the answer provided by Prop.~\ref{prop:isom}. Throughout, we abbreviate $H(D) := H(A[\sqrt{D}])$ and $\chi_D := \chi_{K(\sqrt{D})}$.

    In case~1, let $f(X,T) = X^2 - aX + b\wp^n$, with $\wp \nmid a = a^+T + a^-$ and $b \in \bF_q^\times$. Let $D = a^2 - 4b\wp^n$ be the discriminant of~$f$. In this case, Prop.~\ref{prop:isom} implies that $\# \text{Iso}_{\mathfrak p^n}(a,b) = H(D)$ if the splitting field of~$f$ is imaginary, and $\# \text{Iso}_{\mathfrak p^n}(a,b) = 0$ otherwise. We distinguish three subcases.
    \begin{enumerate}
        \item[i)] $D$ is not square-free. In this case, we have $D = \mu Q^2$ for a monic prime $Q$, necessarily of degree~1, and $\mu = (a^+)^2 - 4b \in \bF_q^\times$. In this case, $\mu$ is a non-square. Indeed, if $\mu = \lambda^2$, then $-4b\wp^n = (\lambda Q + a)(\lambda Q - a)$. Since $\wp$ is prime, this implies $a = 0$, which contradicts $\wp \nmid a$. Thus, $K(\sqrt{D}) = K(\sqrt{\mu}) = K \otimes \bF_{q^2}$, so both $\infty$ and $Q$ are inert and Lemma~\ref{lem:hurwitz_number} gives
        \[
        \# \text{Iso}_{\mathfrak p^n}(a,b) = H(D) \equiv_p 1 - \chi_{D}(Q) = 2,
        \]
        The fact that $\mu$ is a non-square also gives
        \[
        1 - \left( \frac{a^+,b}{q} \right) = 2,
        \]
        which is what we wanted.
        \item[ii)] $D$ is square-free and $\deg(D) > 0$. In this case, the ramification at $\infty$ is determined by the splitting behaviour of $\tilde{f}(X,s) = s^2f(X/s,1/s)$ at $s = 0$; that is,
    \[
    \tilde{f}(X,0) = X^2 - a^+X + b.
    \]
        Hence $f$ is a Weil polynomial if and only if $\left( \frac{a^+,b}{q} \right) \neq 1$. Since $\deg(D) \leq 2$, the curve $Y^2 = D$ has genus 0, so its Jacobian has one point. Thus, Lemma~\ref{lem:jac} gives
    \[
    \# \text{Iso}_{\mathfrak p^n}(a,b) = H(D) = 1 - \chi_{D}(\infty) = 1 - \left( \frac{a^+,b}{q} \right).
    \]
        \item[iii)] $D \in \bF_q$. Then $H(D) = 1$. On the other hand, the coefficient of $T^2$ in $D$ equals $(a^+)^2 - 4b = 0$. Hence $X^2 - a^+X + b$ is a square, so again $\# \text{Iso}_{\mathfrak p^n}(a,b) = 1 - \left( \frac{a^+,b}{q} \right)$.
    \end{enumerate}
    In case 2, let $f(X,T)$ be as before, except now $\wp \mid a$. We again consider subcases.
    \begin{enumerate}
        \item[i)] $d = 2$. Since $\deg(a) \leq 1$, this implies $a = 0$, so let $f(X) = X^2 - b \wp$ with $b \in \bF_q^\times$. Clearly $\left( \frac{0,b}{q} \right) = b^{(q-1)/2} = \chi_D(\infty)$; in particular, if $f$ is a Weil polynomial, then $\infty$ is inert.
    Since $Y^2 = \mu \wp$ is a curve of genus~0, we obtain
    \[
    \# \text{Iso}_{\mathfrak p}(0,b) = H(b \wp) = 1 - \chi_D(\infty) = 1 - \left( \frac{0,b}{q} \right).
    \]
    \item[ii)] $d = 1$. Then $f(X,T) = X^2 - a^+\wp X + b\wp^2$ with $a^+ \in \bF_q$ and $b \in \bF_q^\times$, and the splitting behaviour of $f$ is equivalent to the splitting behaviour of $X^2 - a^+X + b \in \bF_q[X]$. In particular, if $X^2 - a^+X + b$ splits, then $f(X,T)$ is not a Weil polynomial as it does not occur as a case in Prop.~\ref{prop:yu_pols}. Comparing each of the possibilities for $\left( \frac{a^+,b}{q} \right)$ with the corresponding case in Prop.~\ref{prop:isom} gives $\# \text{Iso}_{\mathfrak p^n}(a,b) = 1 - \left( \frac{a^+,b}{q} \right)$ every time.\qedhere
    \end{enumerate} 
\end{proof}

\begin{lemma}\label{lem:deg2sum}
    For $1 \leq m \leq q-1$, we have
\[
    \sum_{\alpha, \beta \in \bF_q^\times} \left(\frac{\alpha, \beta}{q} \right) (\alpha^{-2}\beta)^m \equiv (-1)^{\frac{q-1}{2}} 4^{-m} \binom{m}{(q-1)/2} \pmod{p}.
\]
\end{lemma}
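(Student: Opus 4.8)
The plan is to exploit the description of the Legendre symbol as a power of the discriminant, so that the entire sum becomes a character sum over~$\bF_q$ that can be reduced to a single power-sum by successive linear changes of variable. Writing $\chi$ for the quadratic character on~$\bF_q^\times$ (extended by $\chi(0)=0$), the remark preceding Prop.~\ref{prop:deg2} gives $\left(\frac{\alpha,\beta}{q}\right) = \chi(\alpha^2 - 4\beta)$. The first move is the substitution $\beta = \alpha^2 u$, which is a bijection on $\bF_q^\times$ for each fixed~$\alpha$. Since $\chi(\alpha^2) = 1$, this turns $\chi(\alpha^2-4\beta)$ into $\chi(1-4u)$ and decouples the $\alpha$-dependence entirely, so the sum factors as
\[
\sum_{\alpha,\beta \in \bF_q^\times} \chi(\alpha^2-4\beta)(\alpha^{-2}\beta)^m = (q-1)\sum_{u \in \bF_q^\times} \chi(1-4u)\,u^m \equiv -\sum_{u \in \bF_q} \chi(1-4u)\,u^m \pmod p,
\]
where I have used $q-1 \equiv -1$ and, since $m \geq 1$, harmlessly reinstated the $u=0$ term.

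Next I would perform two further linear substitutions to put the inner sum into binomial form. Setting $v = 4u$ pulls out a factor $4^{-m}$, and then $w = 1-v$ converts the sum into $\sum_{w \in \bF_q}\chi(w)(1-w)^m$. Expanding $(1-w)^m$ by the binomial theorem and interchanging the order of summation reduces everything to the elementary power-sums $\sum_{w \in \bF_q}\chi(w)w^i = \sum_{w \in \bF_q^\times} w^{(q-1)/2 + i}$, which I would evaluate with the standard identity used in the proof of Thm.~\ref{thm:deg1}: the sum is $-1$ when $(q-1) \mid \bigl((q-1)/2 + i\bigr)$ and $0$ otherwise.

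The crux of the argument, and the one step to treat carefully, is the selection of surviving terms. The hypothesis $1 \leq m \leq q-1$ forces the binomial index $i$ to lie in $\{0,\dots,m\} \subseteq \{0,\dots,q-1\}$, and in this range the congruence $(q-1)/2 + i \equiv 0 \pmod{q-1}$ has the unique solution $i = (q-1)/2$, which contributes only when $(q-1)/2 \leq m$. This is precisely where the binomial coefficient $\binom{m}{(q-1)/2}$ appears (vanishing automatically when $m < (q-1)/2$, matching the right-hand side), so no separate case analysis for small~$m$ is needed. Collecting the constants from the three substitutions — the overall sign $-1$ from the $\alpha$-sum, the factor $4^{-m}$, the $(-1)^i = (-1)^{(q-1)/2}$ from the binomial expansion, and the $-1$ from the power-sum — yields
\[
\sum_{\alpha,\beta \in \bF_q^\times} \left(\frac{\alpha,\beta}{q}\right)(\alpha^{-2}\beta)^m \equiv (-1)^{\frac{q-1}{2}} 4^{-m}\binom{m}{(q-1)/2} \pmod p,
\]
which is the claim. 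The only genuine obstacle is bookkeeping of the signs and verifying the single-term collapse; everything else is a routine chain of bijective substitutions.
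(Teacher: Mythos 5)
Your proof is correct, and while it shares the same skeleton as the paper's (write the Legendre symbol as the $\tfrac{q-1}{2}$-th power of the discriminant, decouple the two variables via $\gamma = \alpha^{-2}\beta$ at the cost of a factor $q-1 \equiv -1$, then binomially expand and kill terms with the power-sum identity over $\bF_q$), you diverge at the pivotal step in a way that genuinely changes the argument. The paper expands $(1-4\gamma)^{(q-1)/2}$ directly, so the surviving index is $i = q-1-m$ and the answer first appears as $(-4)^{-m}\binom{(q-1)/2}{q-1-m}$; converting this to the stated $(-1)^{(q-1)/2}4^{-m}\binom{m}{(q-1)/2}$ then requires the Lucas-type reciprocity $(-1)^x\binom{x}{y} \equiv_p (-1)^y\binom{k-y}{k-x}$ (Eq.~\eqref{eq:mattarei}). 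You instead perform the additional substitutions $v = 4u$ and $w = 1-v$ so that the quadratic character sits on $w$ as the pure power $w^{(q-1)/2}$ and the binomial expansion is applied to $(1-w)^m$; the unique surviving index is then $i = (q-1)/2$, which produces $\binom{m}{(q-1)/2}$ on the nose and makes the mod-$p$ reciprocity step unnecessary. Your route costs two extra (trivial) changes of variable but buys a more self-contained argument; you also handle the edge cases cleanly — reinstating $u=0$ is harmless since $m \geq 1$, the hypothesis $m \leq q-1$ confines $i$ to a single residue class representative, and the vanishing of $\binom{m}{(q-1)/2}$ for $m < (q-1)/2$ automatically covers the case where no term survives. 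The sign bookkeeping ($-1$ from the $\alpha$-sum, $(-1)^{(q-1)/2}$ from the expansion, $-1$ from the power sum) checks out.
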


\begin{proof}
    Note that for any $\alpha, \beta \in \bF_q^\times$, we have
    \[
    \left( \frac{\alpha,\beta}{q} \right) = (\alpha^2 - 4\beta)^{\frac{q-1}{2}} = (1 - 4\alpha^{-2}\beta)^{\frac{q-1}{2}},
    \]
    and hence
    \[
    \sum_{\alpha, \beta \in \bF_q^\times} \left( \frac{\alpha,\beta}{q} \right) (\alpha^{-2}\beta)^m = - \sum_{\gamma \in \bF_q^\times} (1 - 4 \gamma)^{\frac{q-1}{2}}\gamma^m = -\sum_{i=0}^{(q-1)/2} \binom{(q-1)/2}{i}(-4)^i \sum_{\gamma \in \bF_q^\times} \gamma^{m+i}.
    \]
    The final sum is zero unless $m + i \equiv  0 \pmod{q-1}$, in which case it contributes a factor of~$-1$. This gives
    \[
    \sum_{\alpha, \beta \in \bF_q^\times} \left(\frac{\alpha, \beta}{q} \right) (\alpha^{-2}\beta)^m = (-4)^{-m} \binom{(q-1)/2}{q-1-m},
    \]
    and the latter equals the claimed expression by Eq.~\eqref{eq:mattarei}.
\end{proof}

\begin{proof}[Proof of Theorem~\ref{thm:deg2}]
    Combining \hyperref[prop:trace_formula]{the trace formula} with Prop.~\ref{prop:deg2} gives
    \[
    \Tr(\T^n_{\mathfrak p} \hspace{0.2em} | \hspace{0.1em} \S_{k+2,l}) = \sum_{j=0}^{\lceil k/2 \rceil - 1} c_{k,j} \sum_{a^+ \in \bF_q} \sum_{a^- \in \bF_q} \sum_{b \in \bF_q^\times} \left( 1 - \left(\frac{a^+,b}{q}\right)\right) (a^+T + a^-)^{k-2j} b^{j-l+1} \wp^{nj}.
    \]
    Expanding $(a^+T + a^-)^{k-2j}$ via the binomial theorem and simplifying gives
    \[
    \Tr(\T^n_{\mathfrak p} \hspace{0.2em} | \hspace{0.1em} \S_{k+2,l}) = \sum_{j=0}^{\lceil k/2 \rceil - 1} \sum_{\substack{i=0 \\ i \equiv k-2j}}^{k-2j-1} (-1)^{j+1} \binom{k-j}{i,j,k-2j-i} s_{i,j} T^i \wp^{nj},
    \]
    where
    \begin{align*}
    s_{i,j} &= \sum_{a^+ \in \bF_q} \sum_{b \in \bF_q^\times} \left( 1 - \left(\frac{a^+,b}{q}\right)\right) (a^+)^{i} b^{j-l+1} \\
    &= \sum_{a^+,b \in \bF_q^\times} \left( 1 - \left(\frac{a^+,b}{q}\right)\right) (a^+)^{i} b^{j-l+1} + \sum_{b \in \bF_q^\times} \left( 1 - \left(\frac{0,b}{q}\right)\right) 0^{i} b^{j-l+1}.
    \end{align*}
    Write $m := j-l+1$. Since $i \equiv k - 2j \equiv -2m \pmod{q-1}$, the first sum can be computed via Lemma~\ref{lem:deg2sum}. In particular, if $i = 0$, we find that
    \[
    s_{0,j} = \begin{cases}
        -1 & \text{if } j \equiv l-1 \pmod{q-1}; \\ 
        0 & \text{if } j \equiv l-1 + (q-1)/2 \pmod{q-1}.
    \end{cases}
    \]
    If $i > 0$, the second sum vanishes. Let $[m]$ denote the representative of~$m$ modulo~$q-1$ satisfying $1 \leq [m] \leq q-1$. Then Lemma~\ref{lem:deg2sum} yields
    \[
    s_{i,j} = \epsilon_{j} + (-1)^{1 + \frac{q-1}{2}} 4^{-m} \binom{[m]}{(q-1)/2},
    \]
    where $\epsilon_{j} = 1$ if $j \equiv l-1 \pmod{q-1}$ and $\epsilon_{j} = 0$ otherwise. In particular, $s_{i,j} = 0$ if $i > 0$ and $j \equiv l-1 \pmod{q-1}$. Note also that the binomial coefficient vanishes whenever $2[m] < q-1$. Combining the above yields the claimed formula.
\end{proof}

It is worth noting that $\# \text{Iso}_{\mathfrak p^n}(a,b)$ is independent of $\mathfrak p$ and~$n$ as long as $nd = 2$.

\begin{corollary}\label{cor:deg2pol}
    Let $k \geq 0$ and $l \in \mathbb Z$. Then there exists a polynomial $f_{k+2,l}(X) \in A[X]$ of degree $\deg (f_{k+2,l}) < k/2$ such that
    \[
    f_{k+2,l}(\wp^n) = \Tr(\T_{\wp}^n \hspace{0.2em} | \hspace{0.1em} \S_{k+2,l})
    \]
    for every pair $(\wp,n)$ such that $\wp \in \bF_q[T]$ is a monic irreducible polynomial of degree $d$ and $nd = 2$.
\end{corollary}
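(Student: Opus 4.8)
The plan is to read the statement directly off the explicit formula of Theorem~\ref{thm:deg2}, the essential point being that its right-hand side depends on the pair $(\wp,n)$ only through the single quantity $\wp^n$, so that one and the same polynomial works simultaneously for both admissible pairs $(d,n)\in\{(1,2),(2,1)\}$.

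First I would isolate why this is so, working from the trace formula of Proposition~\ref{prop:trace_formula} rather than the fully expanded formula. By Proposition~\ref{prop:charpol_properties}.2, any $a$ contributing to \eqref{eq:trace_formula_2} satisfies $\deg(a)\le nd/2 = 1$, so $a$ ranges over the fixed finite set $\{a^+T+a^-:a^+,a^-\in\bF_q\}$, independently of $\wp$ and~$n$. Moreover, by Proposition~\ref{prop:deg2} the count $\#\text{Iso}_{\mathfrak p^n}(a,b)\equiv 1-\left(\frac{a^+,b}{q}\right)\pmod p$ depends only on $a^+$ and~$b$, again independently of $\wp$ and~$n$. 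Hence in \eqref{eq:trace_formula_2} every factor other than $\wp^{nj}$ is a fixed element of $\bF_q[T]=A$ determined by $k,l,q$ alone; expanding $a^{k-2j}=(a^+T+a^-)^{k-2j}$ by the binomial theorem exhibits the trace as an $A$-linear combination of the monomials $\wp^{nj}$.

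Concretely, I would define
\[
f_{k+2,l}(X) := \sum_{j=0}^{\lceil k/2\rceil -1}\Big(\sum_{a,b}\#\text{Iso}_{\mathfrak p^n}(a,b)\, c_{k,j}\, a^{k-2j} b^{\,j+l-k-1}\Big) X^{j},
\]
where the inner coefficient is interpreted via Proposition~\ref{prop:deg2}, so that it lies in~$A$ and is independent of the chosen $(\wp,n)$. Swapping the order of summation in \eqref{eq:trace_formula_2} shows that substituting $X=\wp^n$ recovers exactly $\Tr(\T_\wp^n\mid\S_{k+2,l})$; equivalently, one may simply collect the terms of the formula in Theorem~\ref{thm:deg2} according to the exponent $j$ of $\wp^{nj}$. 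The degree bound is then immediate: every exponent of $\wp^n$ occurring satisfies $j\le \lceil k/2\rceil -1 < k/2$ for both parities of~$k$, so $\deg_X f_{k+2,l} < k/2$, with the empty case $k=0$ giving $f_{k+2,l}=0$.

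There is no genuine obstacle here, as the corollary is essentially a repackaging of Theorem~\ref{thm:deg2}; the only points requiring care are the verification that the summation range of $a$ and the counts $\#\text{Iso}_{\mathfrak p^n}(a,b)$ are truly $(\wp,n)$-independent once $nd=2$ is fixed, and the elementary check that $\lceil k/2\rceil -1<k/2$.
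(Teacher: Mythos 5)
Your treatment of the odd-$q$ case is correct and is essentially identical to the paper's proof: both arguments substitute the $(\wp,n)$-independent congruence $\# \text{Iso}_{\mathfrak p^n}(a,b) \equiv 1 - \left(\frac{a^+,b}{q}\right) \pmod p$ from Prop.~\ref{prop:deg2} into the trace formula of Prop.~\ref{prop:trace_formula}, collect the powers $\wp^{nj}$ to obtain a polynomial $f_{k+2,l} \in A[X]$ depending only on $k,l,q$, and conclude with the elementary check $\lceil k/2 \rceil - 1 < k/2$ (your observation that $\deg(a) \leq nd/2 = 1$ forces $a$ to range over a fixed finite set is exactly the right point to isolate).

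However, there is a genuine gap: the corollary is stated for arbitrary $q$, but every ingredient you invoke --- Theorem~\ref{thm:deg2}, Proposition~\ref{prop:deg2}, and the symbol $\left(\frac{a^+,b}{q}\right)$ itself --- carries the standing hypothesis $2 \nmid q$. In characteristic~$2$ the symbol is not even defined (its definition goes through the discriminant $\alpha^2 - 4\beta$ and the exponent $(q-1)/2$), and the congruence for $\# \text{Iso}_{\mathfrak p^n}(a,b)$ that your coefficient formula depends on fails: by Propositions~\ref{prop:evenclassnumber} and~\ref{prop:isom}, when $2 \mid q$ one has instead $\# \text{Iso}_{\mathfrak p^n}(a,b) \equiv_2 1$ precisely when $\deg(a) \leq 0$. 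So as written, your proof establishes the corollary only for odd $q$. The missing case is easy to close, and the paper dispatches it in one line: for $2 \mid q$, Theorem~\ref{thm:char2traces} gives $\Tr(\T_{\mathfrak p}^n \hspace{0.1em} | \hspace{0.1em} \S_{k+2,l}) = \sum_{j} \binom{k-j}{j}\wp^{nj}$, the sum running over $0 \leq j < k/2$ with $j \equiv l-1 \pmod{q-1}$, valid for \emph{every} $\mathfrak p$ and $n$ and hence in particular when $nd = 2$; this is visibly $f_{k+2,l}(\wp^n)$ for a polynomial of degree less than $k/2$ with coefficients in the prime field. You should therefore split your proof according to the parity of $q$, handling $2 \mid q$ via Theorem~\ref{thm:char2traces} before running your argument for $2 \nmid q$.
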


\begin{proof}
    If $2 \mid q$, this is obvious from Theorem~\ref{thm:char2traces}. If $2 \nmid q$, then the polynomial in question is given by
    \[
    f_{k+2,l}(X) = \sum_{0 \leq j < k/2} c_{k,j} \sum_{\deg(a) \leq 1} \sum_{b \in \bF_q^\times} \left(1 -\left( \frac{a^+,b}{q} \right) \right) a^{k-2j}b^{j+l-k-1}X^j,
    \]
    which equals the expression from Theorem~\ref{thm:deg2} with $\wp^n$ replaced by~$X$.
\end{proof}

\begin{remark}
    Since there are $q(q+1)/2$ pairs $(\wp,n)$ such that $nd=2$, Corollary~\ref{cor:deg2pol} trivially holds when $k \geq q(q+1)$. When $\dim \S_{k+2,l} = 1$, however, the polynomial $f_{k+2,l}$ is unique and its existence is not a priori obvious.
\end{remark}

See Section~\ref{sec:comp_deg2} for an analysis of the polynomials $f_{k+2,l}$ in some special cases.

\subsection{Odd characteristic: primes of higher degree}

Suppose we want to compute traces when $nd > 2$. It gets increasingly harder to obtain explicit formulas, for example because the Hurwitz class number $H(D)$ is now related to the number of points on a Jacobian of positive dimension (see Lem.~\ref{lem:jac}).
Instead, we will combine the results from this paper into an algorithm to compute the trace of~$\T_{\mathfrak p}^n$ on~$\S_{k,l}$ in \textsc{Magma} \cite{magma_system}.

\begin{algorithm}\label{alg}\hfill \\
\textsc{Input:} \begin{itemize} \item An odd prime power $q$;
\item Integers $k,l,n \in \mathbb Z$ with $k \geq 2$ and $n \geq 1$;
\item A monic irreducible polynomial $\wp \in \bF_q[T]$.
\end{itemize}
\textsc{Output:} $\Tr(\T_{\mathfrak p}^n \hspace{0.2em} | \hspace{0.1em} \S_{k,l})$ for $A = \bF_q[T]$ and $\mathfrak p = (\wp)$.
\begin{enumerate}
\item Using Propositions~\ref{prop:charpol_properties} and~\ref{prop:yu_pols}, create a list $L_0$ containing all the characteristic polynomials of Drinfeld modules of rank~2 over~$\bF_{\mathfrak p^n}$. For instance, one could take
\[
L_0 = \left \{ X^2 - aX + b\wp^n \ \bigg{|} \ \deg(a) \leq \frac{n \deg(\wp)}{2}, \ b \in \bF_q^\times \right \}.
\]
\item Since \textsc{Magma} can determine whether or not a field extension is imaginary \cite[Sec.~42.13]{magma_handbook}, one can algorithmically find the sublist $L \subseteq L_0$ of Weil polynomials.
\item Since \textsc{Magma} can compute the class number of the finite maximal order in an imaginary quadratic extension of function fields \cite[Sec.~42.2]{magma_handbook}, as well as compute how a prime ramifies in an extension, one can use Prop.~\ref{prop:isom} and Lem.~\ref{lem:hurwitz_number} to compute $\# \text{Iso}_{\mathfrak p^n}(a,b)$ for all polynomials $f = X^2 - aX + b\wp^n \in L$.
\item Set $k' = k-2$ and compute
\[
H = \sum_{f \in L} \# \text{Iso}_{\mathfrak p^n}(a,b) \sum_{0 \leq j < k'/2} (-1)^j \binom{k'-j}{j} a^{k'-2j} b^{j+l-k'+1} \wp^{nj}.
\]
By Prop.~\ref{prop:trace_formula}, $H$ is the desired Hecke trace.
\end{enumerate}
\end{algorithm}

An implementation of Algorithm~\ref{alg} in \textsc{Magma} can be found at~\url{https://github.com/Sjoerd-deVries/DMF_Trace_Formula.git}. The results in the appendix (especially Rmk.~\ref{rmk:point_counts}) give an alternative way to algorithmically determine the numbers $\# \text{Iso}_{\mathfrak p^n}(a,b)$ in terms of point counts on hyperelliptic curves.

\begin{example}
    Figure~\ref{fig:deg5traces} shows the degrees of the traces of the Hecke operator associated to $\wp = T^5 + 2T + 1 \in \bF_3[T]$ for $l = 0$ and $2 \leq k \leq 200$ obtained using Algorithm~\ref{alg}. The dotted line denotes the strong Ramanujan bound from Conj.~\ref{conj:strong_ram_traces}, in this case given by $\deg = \frac{5(k-4)}{2}$. The computation of these traces in \textsc{Magma} took 4.8~seconds on a standard laptop.
        \begin{figure}
        \centering
        \includegraphics[width=\textwidth]{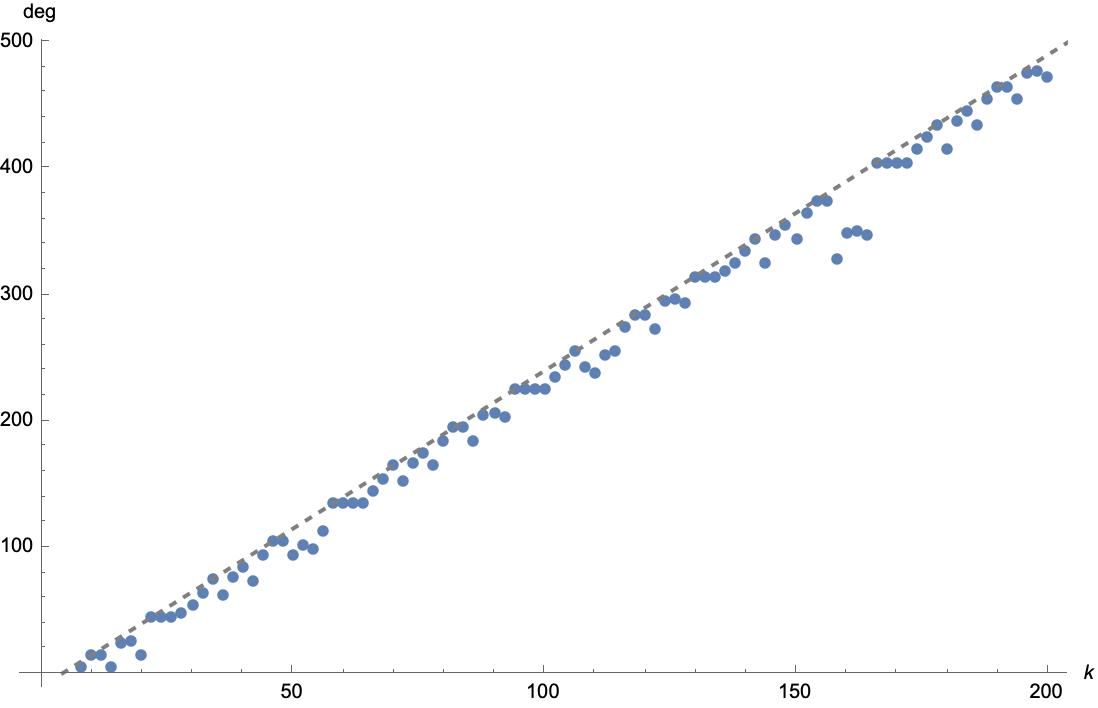}
        \caption{$\deg \Tr(\T_{T^5+2T+1} \hspace{0.1em} | \hspace{0.1em} \S_{k,0})$ for $q=3$ and $2 \leq k \leq 200$ with the strong Ramanujan bound.}
        \label{fig:deg5traces}
        \end{figure}
\end{example}

\subsection{Traces modulo \texorpdfstring{$\mathfrak p$}{P}}
Let $\mathfrak p \trianglelefteq A$ be a maximal ideal with monic generator~$\wp$ of degree~$d$. The trace formula for~$\T_{\mathfrak p}$ becomes much simpler modulo~$\mathfrak p$.

\begin{proposition}\label{prop:modp}
    For any $k \geq 0$, $n \geq 1$, and $l \in \mathbb Z$, we have
    \[
    \Tr(\T^n_{\mathfrak p} \hspace{0.2em} | \hspace{0.1em} \S_{k+2,l}) \equiv \sum_{a,b} \# \text{Iso}_{\mathfrak p^n}(a,b) a^{k} b^{l-k-1} \pmod{\mathfrak p^n}.
    \]
\end{proposition}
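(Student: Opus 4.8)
The plan is to derive the congruence directly from the explicit trace formula of Proposition~\ref{prop:trace_formula}, reducing it modulo~$\mathfrak p^n$. Recall that this formula reads
\[
\Tr(\T^n_{\mathfrak p} \hspace{0.2em} | \hspace{0.1em} \S_{k+2,l}) = \sum_{a,b} \# \text{Iso}_{\mathfrak p^n}(a,b)  \sum_{j=0}^{\lceil k/2 \rceil - 1} c_{k,j} a^{k-2j} b^{j+l-k-1}\wp^{nj},
\]
with $c_{k,j} = (-1)^j \binom{k-j}{j}$. The trace is an element of $A = \bF_q[T]$, and the congruence is to be read as an equality in $A/\mathfrak p^n = A/(\wp^n)$.

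The key observation is that each inner summand carries a factor $\wp^{nj}$. For $j \geq 1$ we have $\wp^{nj} = \wp^n \cdot \wp^{n(j-1)} \in \mathfrak p^n$, so every such term vanishes modulo~$\mathfrak p^n$. Hence only the $j = 0$ term of the inner sum survives. Substituting $j = 0$ gives $c_{k,0} = (-1)^0\binom{k}{0} = 1$, the monomial $a^{k-2\cdot 0} = a^k$, the power $b^{0 + l - k - 1} = b^{l-k-1}$, and $\wp^{n\cdot 0} = 1$. Collecting these yields precisely $\sum_{a,b} \# \text{Iso}_{\mathfrak p^n}(a,b)\, a^{k} b^{l-k-1}$, as claimed.

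The only point requiring a word of care is that the reduction modulo~$\mathfrak p^n$ may be carried out term by term, which presupposes that the outer sum over $(a,b)$ is finite. This is indeed the case: by Proposition~\ref{prop:charpol_properties} the quantity $\# \text{Iso}_{\mathfrak p^n}(a,b)$ vanishes unless $\deg(a) \leq nd/2$ and $b \in \bF_q^\times$, so only finitely many pairs contribute. I expect no substantial obstacle here; the content of the proposition is essentially the observation that, modulo a high power of~$\wp$, all but the leading ($j=0$) contribution to the trace formula is annihilated, leaving a strikingly simple expression.
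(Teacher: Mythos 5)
Your argument is correct and is exactly the paper's proof: the paper's one-line justification is that modulo $\mathfrak p^n$ all terms with $j > 0$ in the trace formula of Proposition~\ref{prop:trace_formula} vanish, which is precisely your observation about the factor $\wp^{nj}$. Your additional remarks about $c_{k,0}=1$ and the finiteness of the sum over $(a,b)$ are fine but merely spell out details the paper leaves implicit.
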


\begin{proof}
    Modulo $\mathfrak p^n$, all terms with $j > 0$ in Equation~\eqref{eq:trace_formula_2} are zero.
\end{proof}

\begin{lemma}\label{lem:gcdmodp}
    If $k \geq 2$, we have
    \[
    \Tr(\T^n_{\mathfrak p} \hspace{0.2em} | \hspace{0.1em} \S_{k+2,l}) \equiv \sum_{(a,\wp) = 1} \sum_{b \in \bF_q^\times} \# \text{Iso}_{\mathfrak p^n}(a,b) a^{k} b^{l-k-1} \pmod{\mathfrak p^n},
    \]
    i.e., only the ordinary Drinfeld modules contribute.
\end{lemma}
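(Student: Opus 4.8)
The plan is to start from Proposition~\ref{prop:modp}, which already reduces the trace modulo $\mathfrak p^n$ to the single sum $\sum_{a,b}\# \text{Iso}_{\mathfrak p^n}(a,b)\,a^k b^{l-k-1}$. Splitting this sum according to whether $\wp \mid a$, it suffices to show that the supersingular part $\Sigma := \sum_{\wp \mid a}\sum_{b}\# \text{Iso}_{\mathfrak p^n}(a,b)\,a^k b^{l-k-1}$ vanishes modulo $\mathfrak p^n$. First I would record, via the classification in Proposition~\ref{prop:yu_pols}, that a Weil polynomial $X^2 - aX + b\wp^n$ with $\wp \mid a$ can only arise from cases (2)--(4), so that either $a = 0$ (when $n$ is odd) or $\wp^{n/2}\mid a$ (when $n$ is even).

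For $k \geq 2$ this structural fact finishes the argument by a divisibility estimate: if $a = 0$ then $a^k = 0$, while if $\wp^{n/2}\mid a$ then $\wp^{nk/2}\mid a^k$ with $nk/2 \geq n$, so in either case $a^k \equiv 0 \pmod{\mathfrak p^n}$ and every supersingular term drops out, giving $\Sigma \equiv 0$.

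The hard part will be the borderline weight $k = 1$, where $a^k = a$ is divisible only by $\wp^{n/2}$ and the divisibility estimate just fails. Here I would exploit the reflection symmetry of the isogeny counts: by Remark~\ref{rem:iso_c} with $c = -1$ we have $\# \text{Iso}_{\mathfrak p^n}(a,b) = \# \text{Iso}_{\mathfrak p^n}(-a,b)$, and $a \mapsto -a$ permutes the index set $\{a : \wp\mid a\}$. Relabelling yields $\Sigma = \sum_{\wp\mid a}\sum_b \# \text{Iso}_{\mathfrak p^n}(-a,b)(-a)^k b^{l-k-1} = (-1)^k\Sigma$, and since $q$ is odd this forces $\Sigma = 0$. (In even characteristic the reflection trick degenerates, but there one may instead invoke Theorem~\ref{thm:char2traces}, by which every supersingular $a\neq 0$ satisfies $\# \text{Iso}_{\mathfrak p^n}(a,b)\equiv_2 0$, while $a=0$ again contributes $a^k=0$.) Note that this involution in fact annihilates $\Sigma$ for every odd $k$, so the two mechanisms overlap and together cover all $k \geq 1$.

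Finally I would flag that the weight-two case $k = 0$ must be excluded: there $a^k = 1$, both the divisibility estimate and the $a\mapsto -a$ symmetry give no information, and the ordinary sum need not vanish even though $\S_{2,l} = 0$ (one already sees this for $q = 3$, $\wp = T$, $l = 1$ using Lemma~\ref{lem:isoclasses}, where the ordinary sum equals $1$ while the trace is $0$). Thus the statement should be read for $k \geq 1$, which is harmless since $\S_{k+2,l}$ is zero in every weight below $q+1$ anyway.
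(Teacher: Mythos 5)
Your core mechanism is the same as the paper's: reduce via Proposition~\ref{prop:modp} to the full sum, observe from Proposition~\ref{prop:yu_pols} that every non-ordinary Weil polynomial has $a=0$ or $\wp^{n/2}\mid a$, and kill those terms by the divisibility $\wp^{nk/2}\mid a^k$. The difference is at the boundary, and there your treatment is actually more careful than the paper's. The paper disposes of all remaining cases with the single line ``we may assume $k\geq 2$ is even, as otherwise $\S_{k+2,l}=0$,'' which is doubly incomplete: vanishing of the space only forces the \emph{left-hand} side of the congruence to vanish (the right-hand side still needs an argument, which your $a\mapsto -a$ reflection via Remark~\ref{rem:iso_c} supplies for odd $k$ and odd $q$), and for even $q$ the claim $\S_{k+2,l}=0$ for odd $k$ is simply false (e.g.\ $\S_3\neq 0$ for $q=2$), so $k=1$ is a genuine case that the paper's proof does not cover and that your parity-of-class-numbers argument handles correctly --- though the statement $\#\text{Iso}_{\mathfrak p^n}(a,b)\equiv_2 0$ for $\deg(a)>0$ is really the content of Propositions~\ref{prop:evenclassnumber} and~\ref{prop:isom} (used inside the proof of Theorem~\ref{thm:char2traces}) rather than of that theorem's statement itself. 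Your $k=0$ counterexample ($q=3$, $\wp=T$, $l=1$, where Lemma~\ref{lem:isoclasses} gives ordinary sum $4\equiv 1\pmod{3}$ against trace $0$) is valid and shows the lemma must indeed be read for $k\geq 1$; this is harmless for the paper, since its only uses (e.g.\ Proposition~\ref{prop:congruence}) assume $k\geq 1$, but it is a correct observation that the lemma as literally stated, inheriting ``$k\geq 0$'' from Proposition~\ref{prop:modp}, fails at $k=0$. In short: same route for the main case, with correct and worthwhile patches where the paper's proof is glossed.
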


\begin{proof}
    In each of the cases 2, 3 and 4 of Prop.~\ref{prop:yu_pols}, we have $a=0$ if $n$ is odd or $\wp^{n/2} \mid a$ if $n$ is even; in either case, $\wp^n \mid a^k$. Hence only the Weil polynomials from case 1 contribute, which are precisely the ones for which $(a,\wp) = 1$.
\end{proof}

\begin{proposition}\label{prop:congruence}
For any $k \geq 2$, $n \geq 1$, and $l \in \mathbb Z$, we have
\[
    \Tr(\T^n_{\mathfrak p} \hspace{0.2em} | \hspace{0.1em} \S_{k+2,l}) \equiv \Tr(\T^n_{\mathfrak p} \hspace{0.2em} | \hspace{0.1em} \S_{k+(q^d-1) + 2,l}) \pmod{\mathfrak p}.
\]
Moreover, let $e \in \mathbb Z_{\geq 0}$ and let $m := \min(q^e,n)$.
Then we have
\[
    \Tr(\T^n_{\mathfrak p} \hspace{0.2em} | \hspace{0.1em} \S_{k+2,l}) \equiv \Tr(\T^n_{\mathfrak p} \hspace{0.2em} | \hspace{0.1em} \S_{k+q^e(q^d-1) + 2,l}) \pmod{\mathfrak{p}^{m}}.
\]
\end{proposition}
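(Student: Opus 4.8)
The plan is to derive both congruences from the ordinary-locus form of the trace formula in Lemma~\ref{lem:gcdmodp} and then track how a single summand transforms when the weight increases. First I would note that the first congruence is precisely the case $e=0$ of the second: there $m = \min(q^0,n) = 1$ and $q^0(q^d-1) = q^d-1$, so it suffices to prove the general statement. By Lemma~\ref{lem:gcdmodp} we have, modulo $\mathfrak p^n$ and hence modulo $\mathfrak p^m$ (as $m \le n$),
\[
\Tr(\T^n_{\mathfrak p} \mid \S_{k+2,l}) \equiv \sum_{(a,\wp)=1}\sum_{b\in\bF_q^\times} \#\text{Iso}_{\mathfrak p^n}(a,b)\, a^{k}b^{l-k-1},
\]
and the analogous expression for weight $k+q^e(q^d-1)+2$ has the same index set and the same multiplicities $\#\text{Iso}_{\mathfrak p^n}(a,b)$, differing only in that $a^k b^{l-k-1}$ is replaced by $a^{k+q^e(q^d-1)} b^{l-k-q^e(q^d-1)-1}$. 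Thus it is enough to show that each summand is unchanged modulo $\mathfrak p^m$ under $k \mapsto k+q^e(q^d-1)$.

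The factor involving $b$ is easy: since $b\in\bF_q^\times$ and $(q-1)\mid q^d-1$, we get $b^{q^e(q^d-1)}=1$, so the $b$-part of each term is literally unchanged. The main work lies in the factor $a^k$. Since $(a,\wp)=1$ and $A/\wp\cong\bF_{q^d}$, Fermat's little theorem in the residue field gives $a^{q^d-1}\equiv 1\pmod{\wp}$, so I can write $a^{q^d-1}=1+\wp c$ for some $c\in A$. Writing $q=p^f$ so that $q^e=p^{ef}$, I would raise to the $q^e$-th power and apply the identity $(x+y)^p=x^p+y^p$ in characteristic~$p$ repeatedly to obtain the exact polynomial identity $a^{q^e(q^d-1)}=(1+\wp c)^{q^e}=1+\wp^{q^e}c^{q^e}$ in $A$. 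Hence $a^{q^e(q^d-1)}\equiv 1\pmod{\wp^{q^e}}$, and a fortiori modulo $\wp^{\min(q^e,n)}=\wp^m$, so that $a^{k+q^e(q^d-1)}\equiv a^k\pmod{\mathfrak p^m}$. Combining this with the invariance of the $b$-factor shows every summand is fixed modulo $\mathfrak p^m$, which gives the claim.

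The only point requiring care is the bookkeeping of moduli: the ambient trace-formula congruence is valid modulo $\mathfrak p^n$, while the crucial congruence for $a$ improves to modulus $\wp^{q^e}$, and the stated result is only asserted modulo the minimum of the two. I do not expect any difficulty from the exponent being a polynomial power rather than a residue, since the identity $a^{q^e(q^d-1)}=1+\wp^{q^e}c^{q^e}$ holds as an exact equality in $A$; this is really just a characteristic-$p$ Frobenius computation, and is precisely where the power $q^e$ (rather than merely $e$) in the improvement of the modulus comes from.
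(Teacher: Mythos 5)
Your proof is correct and follows essentially the same route as the paper: both reduce the first congruence to the case $e=0$, use Prop.~\ref{prop:modp}/Lemma~\ref{lem:gcdmodp} to restrict to the ordinary terms with $(a,\wp)=1$ and trivialize the $b$-factor, and then verify $a^{q^e(q^d-1)}\equiv 1 \pmod{\mathfrak p^m}$ by a characteristic-$p$ Frobenius computation. The only cosmetic difference is that you expand $(1+\wp c)^{q^e}=1+\wp^{q^e}c^{q^e}$ directly, whereas the paper works in $A/\mathfrak p^m \cong \bF_{q^d}[\wp]/(\wp^m)$ and uses $f^{q^e}=f_0^{q^e}$; both give the congruence modulo $\wp^{q^e}$ and hence the stated modulus $\mathfrak p^{\min(q^e,n)}$.
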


\begin{proof}
The first congruence follows from the second by setting $e = 0$. By Prop.~\ref{prop:modp}, it suffices to show that $a^k \equiv a^{k+q^e(q^d-1)} \pmod{\mathfrak p^m}$ for any $a \in A$, as it is clear that $b^{q^e(q^d-1)} = 1$. This follows because $A/\mathfrak p^m \cong \bF_{q^d}[\wp]/(\wp^m)$ and in this ring, we have
\[
f = \sum_{i=0}^{m-1} f_i\wp^i \implies f^{q^e} = \sum_{i=0}^{m-1} f_i^{q^e}\wp^{iq^e} = f_0^{q^e} \in \bF_{q^d},
\]
since $q^e \geq m$. Hence, if $f_0 \neq 0$, we have $f^{q^e(q^d-1)} = 1$. But for $a \in A$ with image $f \in A/{\mathfrak p^m}$, we have $f_0 \neq 0 \iff (a,\wp) = 1$, so we are done by Lemma~\ref{lem:gcdmodp}. 
\end{proof}

\begin{proposition}\label{prop:frob_modp}
    For any $k \geq 1$, $n \geq 1$, and $l \in \mathbb Z$, we have
    \[
    \Tr(\T^n_{\mathfrak p} \hspace{0.2em} | \hspace{0.1em} \S_{kq+2,l}) \equiv \Tr(\T^n_{\mathfrak p} \hspace{0.2em} | \hspace{0.1em} \S_{k+2,l})^q \pmod{\mathfrak p^n}.
    \]
\end{proposition}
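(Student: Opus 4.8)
The plan is to reduce both sides to the mod-$\mathfrak p^n$ expressions furnished by Proposition~\ref{prop:modp} and then match them term by term using the Frobenius endomorphism of~$A$. First I would apply Proposition~\ref{prop:modp} in the two relevant weights. Replacing $k$ by $kq$ gives
\[
\Tr(\T^n_{\mathfrak p} \hspace{0.2em} | \hspace{0.1em} \S_{kq+2,l}) \equiv \sum_{a,b} \# \text{Iso}_{\mathfrak p^n}(a,b)\, a^{kq} b^{l-kq-1} \pmod{\mathfrak p^n},
\]
while the weight $k+2$ case reads $\Tr(\T^n_{\mathfrak p} \hspace{0.2em} | \hspace{0.1em} \S_{k+2,l}) \equiv \sum_{a,b} \# \text{Iso}_{\mathfrak p^n}(a,b)\, a^{k} b^{l-k-1} \pmod{\mathfrak p^n}$, both sums running over $a \in A$ and $b \in \bF_q^\times$.

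Next I would raise the second congruence to the $q$-th power. Since $A = \bF_q[T]$ has characteristic~$p$ and $q$ is a power of~$p$, the map $x \mapsto x^q$ is a ring endomorphism of~$A$, so the $q$-th power of a sum equals the sum of the $q$-th powers. Moreover, if $\Tr \equiv S \pmod{\mathfrak p^n}$ then $\Tr^q - S^q = (\Tr - S)^q \in \mathfrak p^{nq} \subseteq \mathfrak p^n$, so the congruence is preserved under $q$-th powering. As everything takes place in characteristic~$p$, the coefficients $\# \text{Iso}_{\mathfrak p^n}(a,b)$ are cardinalities reduced into $\bF_p$, hence fixed by $x \mapsto x^q$. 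This yields
\[
\Tr(\T^n_{\mathfrak p} \hspace{0.2em} | \hspace{0.1em} \S_{k+2,l})^q \equiv \sum_{a,b} \# \text{Iso}_{\mathfrak p^n}(a,b)\, a^{kq} b^{q(l-k-1)} \pmod{\mathfrak p^n}.
\]

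Finally I would compare the two displayed expressions, which differ only in the exponent of~$b$. The difference of exponents is $q(l-k-1) - (l-kq-1) = (q-1)(l-1)$, a multiple of $q-1$; since every $b \in \bF_q^\times$ satisfies $b^{q-1} = 1$, the two $b$-powers agree. The $a$-exponents already coincide ($a^{kq}$ on both sides), as do the coefficients, so the sums match term by term and the proposition follows. The only point requiring care is the bookkeeping, namely confirming that the coefficients genuinely live in $\bF_p$ so that $q$-th powering fixes them, and that the mod-$\mathfrak p^n$ congruence survives the Frobenius; there is no serious obstacle, as the statement amounts to the Frobenius on~$A$ intertwining the mod-$\mathfrak p^n$ trace formula in weight $k+2$ with the one in weight $kq+2$.
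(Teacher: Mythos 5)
Your proposal is correct and follows essentially the same route as the paper's proof: both apply Proposition~\ref{prop:modp} in the two weights, use that the coefficients $\# \text{Iso}_{\mathfrak p^n}(a,b)$ lie in $\bF_p$ and are fixed by Frobenius, and identify $b^{l-kq-1}$ with $b^{q(l-k-1)}$ via $b^{q-1}=1$ (the exponents differing by $(q-1)(l-1)$). Your explicit check that the congruence mod $\mathfrak p^n$ survives $q$-th powering, since $\Tr^q - S^q = (\Tr-S)^q \in \mathfrak p^{nq} \subseteq \mathfrak p^n$, is a point the paper leaves implicit but adds nothing structurally new.
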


\begin{proof}
By Prop.~\ref{prop:modp}, we obtain
    \begin{align*}
    \Tr(\T^n_{\mathfrak p} \hspace{0.2em} | \hspace{0.1em} \S_{kq+2,l}) &\equiv \sum_{a,b} \# \text{Iso}_{\mathfrak p^n}(a,b) a^{kq} b^{l-kq-1}\\
    &\equiv \sum_{a,b} \# \text{Iso}_{\mathfrak p^n}(a,b)^q a^{kq} b^{(l-k-1)q} \equiv \Tr(\T^n_{\mathfrak p} \hspace{0.2em} | \hspace{0.1em} \S_{k+2,l})^q \pmod{\mathfrak p^n},
    \end{align*}
as claimed.
\end{proof}

Recall that there is a Frobenius map on modular forms. It is given by
\[
F_q\: \S_{k,l} \to \S_{kq,l}, \qquad F_q(f) = f^q.
\]
Frobenius sends Hecke eigenforms to Hecke eigenforms: more precisely, if $\T_{\mathfrak p}f = \lambda f$, then $\T_{\mathfrak p}f^q = \wp^{q-1}\lambda^qf^q$. This leads to the following congruences.

\begin{proposition}\label{prop:frob2_modp}
    If $k \geq 4$ is such that $q^d-1 \mid k(q-1)$, then for any $n \geq 1$ and $l \in \mathbb Z$, we have
    \[
    \Tr(\T^n_{\mathfrak p} \hspace{0.2em} | \hspace{0.1em} \S_{kq,l}) \equiv \Tr(\T^n_{\mathfrak p} \hspace{0.2em} | \hspace{0.1em} \S_{kq,l} / F_q(\S_{k,l})) \equiv \Tr(\T^n_{\mathfrak p} \hspace{0.2em} | \hspace{0.1em} \S_{k,l}) \pmod{\mathfrak p}.
    \]
\end{proposition}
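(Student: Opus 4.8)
The plan is to exploit the Frobenius map $F_q\colon \S_{k,l} \to \S_{kq,l}$, $f \mapsto f^q$, together with the weight-periodicity of traces modulo $\mathfrak p$ from Prop.~\ref{prop:congruence}. The two congruences in the chain will come from genuinely different inputs: the leftmost from the semilinear structure of $F_q$, and the equality with $\Tr(\T_{\mathfrak p}^n \mid \S_{k,l})$ from the periodicity, after which the middle $\equiv$ right congruence is obtained by combining.

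First I would record the operator identity $\T_{\mathfrak p}(f^q) = \wp^{q-1}(\T_{\mathfrak p}f)^q$ for \emph{all} $f \in \S_{k,l}$, not merely for eigenforms. This follows directly from the definition of $\T_{\mathfrak p}$ by a one-line computation using the exponent identity $kq-1 = (k-1)q + (q-1)$ (to factor out $\wp^{q-1}$ from both the $\wp^{kq-1}$ and the $\wp^{-1}$ terms) and the additivity of $x \mapsto x^q$ in characteristic $p$; the eigenform statement preceding the proposition is exactly its special case. Since $\mathbb{C}_\infty$ is perfect, the image $V := F_q(\S_{k,l})$ is a $\mathbb{C}_\infty$-subspace of the finite-dimensional space $\S_{kq,l}$ with $\dim V = \dim \S_{k,l}$, and the operator identity shows $V$ is $\T_{\mathfrak p}$-stable. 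Hence the trace is additive along $0 \to V \to \S_{kq,l} \to \S_{kq,l}/V \to 0$, so the leftmost congruence will follow once the trace on $V$ is shown to vanish modulo $\mathfrak p$.

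To compute the trace on $V$, I would fix a basis $v_1,\dots,v_m$ of $\S_{k,l}$ and let $M$ be the matrix of $\T_{\mathfrak p}$. Then $\{v_i^q\}$ is a basis of $V$, and the operator identity gives the matrix $\wp^{q-1}M^{(q)}$, where $M^{(q)}$ denotes the entrywise $q$-th power. Because $x \mapsto x^q$ is a ring homomorphism, $(\wp^{q-1}M^{(q)})^n = \wp^{n(q-1)}(M^n)^{(q)}$, whence $\Tr(\T_{\mathfrak p}^n \mid V) = \wp^{n(q-1)}\,\Tr(\T_{\mathfrak p}^n \mid \S_{k,l})^q$. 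Since $n(q-1) \geq 1$ and $\wp \in \mathfrak p$, this is $\equiv 0 \pmod{\mathfrak p}$, giving $\Tr(\T_{\mathfrak p}^n \mid \S_{kq,l}) \equiv \Tr(\T_{\mathfrak p}^n \mid \S_{kq,l}/V) \pmod{\mathfrak p}$.

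For the remaining step I would invoke Prop.~\ref{prop:congruence}: iterating its first assertion shows $\Tr(\T_{\mathfrak p}^n \mid \S_{W,l})$ and $\Tr(\T_{\mathfrak p}^n \mid \S_{W+j(q^d-1),l})$ agree modulo $\mathfrak p$ for all weights $W \geq 3$. Writing the weights $k$ and $kq$ in the form $(\cdot)+2$ (legitimate since $k \geq 3$), their difference is $kq - k = k(q-1)$, which is a multiple of $q^d-1$ precisely by the hypothesis $q^d-1 \mid k(q-1)$. Thus $\Tr(\T_{\mathfrak p}^n \mid \S_{kq,l}) \equiv \Tr(\T_{\mathfrak p}^n \mid \S_{k,l}) \pmod{\mathfrak p}$, and combining this with the previous paragraph yields $\Tr(\T_{\mathfrak p}^n \mid \S_{kq,l}/V) \equiv \Tr(\T_{\mathfrak p}^n \mid \S_{k,l})$, completing the chain. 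The one point demanding care — and the main obstacle — is the $\T_{\mathfrak p}$-stability of $V$ in the absence of diagonalizability of $\T_{\mathfrak p}$, which is exactly why I establish the operator identity for all $f$ rather than relying on an eigenbasis; everything else reduces to bookkeeping with the semilinear twist and the periodicity.
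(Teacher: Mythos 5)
Your proof is correct and follows essentially the same route as the paper's: split the trace along the Hecke-stable subspace $F_q(\S_{k,l})$, show the trace on the Frobenius image vanishes modulo $\mathfrak p$ via the relation $\T_{\mathfrak p}f^q = \wp^{q-1}(\T_{\mathfrak p}f)^q$, and finish with the weight-periodicity of Prop.~\ref{prop:congruence} applied to $kq - k = k(q-1)$. Your two refinements — establishing the operator identity for all $f$ rather than only eigenforms (which is indeed what makes stability of $F_q(\S_{k,l})$ immediate without diagonalizability), and the exponent $\wp^{n(q-1)}$ in $\Tr(\T_{\mathfrak p}^n \mid F_q(\S_{k,l})) = \wp^{n(q-1)}\Tr(\T_{\mathfrak p}^n \mid \S_{k,l})^q$ where the paper writes $\wp^{q-1}$ — are careful touches that do not change the argument, since either exponent gives vanishing modulo $\mathfrak p$.
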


\begin{proof}
    Since $F_q(\S_{k,l}) \subseteq \S_{kq,l}$ is Hecke-stable, we have
    \[
    \Tr(\T^n_{\mathfrak p} \hspace{0.2em} | \hspace{0.1em} \S_{kq,l}) = \Tr(\T^n_{\mathfrak p} \hspace{0.2em} | \hspace{0.1em} F_q(\S_{k,l})) + \Tr(\T^n_{\mathfrak p} \hspace{0.2em} | \hspace{0.1em} \S_{kq,l} / F_q(\S_{k,l})).
    \]
    Since $\T_{\mathfrak p}f^q = \wp^{q-1}\lambda^qf^q$ if $\T_{\mathfrak p}f = \lambda f$, we have
    \[
    \Tr(\T^n_{\mathfrak p} \hspace{0.2em} | \hspace{0.1em} F_q(\S_{k,l})) = \wp^{q-1} \Tr(\T^n_{\mathfrak p} \hspace{0.2em} | \hspace{0.1em} \S_{k,l})^q \equiv 0 \pmod{\mathfrak p}.
    \]
    This gives the first congruence. On the other hand, by Prop.~\ref{prop:congruence} and the assumption on~$k$, we have
    \[
    \Tr(\T^n_{\mathfrak p} \hspace{0.2em} | \hspace{0.1em} \S_{kq,l}) = \Tr(\T^n_{\mathfrak p} \hspace{0.2em} | \hspace{0.1em} \S_{k+k(q-1),l}) \equiv \Tr(\T^n_{\mathfrak p} \hspace{0.2em} | \hspace{0.1em} \S_{k,l}) \pmod{\mathfrak p}.\qedhere
    \]
\end{proof}
\section{The strong Ramanujan bound}\label{sec:ram}

The Ramanujan bound from~Def.~\ref{def:ram_bd} states that for any maximal ideal $\mathfrak p$ and integers $n \geq 1$, $k \geq 0$, and $l \in \mathbb Z$,
\begin{equation}\label{eq:ram_bound}
\deg \Tr \left( \T_{\mathfrak p}^n \hspace{0.2em} | \hspace{0.1em} \S_{k+2,l} \right) \leq \frac{ndk}{2},
\end{equation}
where $d = \deg(\mathfrak p)$. In level 1, this bound is not sharp (see Prop.~\ref{prop:ram_bd_strict}).
Based on much computational evidence, we propose an improved bound (the strong Ramanujan bound) in the form of Conjecture~\ref{conj:strong_ram_traces}, which we prove in some special cases (see Thm.~\ref{thm:strong_ram_known}).

\begin{proposition}\label{prop:ram_bd_strict}
    For any $k,l,n$ and $\mathfrak p$, the Ramanujan bound \eqref{eq:ram_bound} is a strict inequality.
\end{proposition}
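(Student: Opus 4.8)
The plan is to compute the coefficient of $T^{ndk/2}$ in the trace formula of Prop.~\ref{prop:trace_formula} and show it vanishes. By Prop.~\ref{prop:charpol_properties} every contributing Weil polynomial has $\deg(a)\le nd/2$, so the summand $c_{k,j}\,a^{k-2j}b^{j+l-k-1}\wp^{nj}$ has $T$-degree $(k-2j)\deg(a)+ndj\le ndk/2$; since $k-2j\ge 1$ throughout the range $0\le j<k/2$, equality forces $\deg(a)=nd/2$. If $nd$ is odd this is impossible, and in fact each term has degree at most $\tfrac{ndk}{2}-\tfrac{k-2j}{2}<\tfrac{ndk}{2}$, so the bound is strict with margin. (For $k\le 1$ the space is zero and there is nothing to prove.) Hence I may assume $nd$ is even.

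For even $nd$, write $\alpha$ for the coefficient of $T^{nd/2}$ in $a$ (so $\alpha=0$ when $\deg a<nd/2$). As $\wp$ is monic, the coefficient of $T^{ndk/2}$ in $c_{k,j}a^{k-2j}\wp^{nj}$ is $c_{k,j}\alpha^{k-2j}$, which sees $a$ only through $\alpha$. Grouping Weil polynomials by $(\alpha,b)$, the leading coefficient of the trace is
\[
S=\sum_{\alpha\in\bF_q}\sum_{b\in\bF_q^\times}M(\alpha,b)\,Q(\alpha,b),\qquad M(\alpha,b):=\sum_{\substack{a\\ \text{coeff of }T^{nd/2}\text{ is }\alpha}}\#\text{Iso}_{\mathfrak p^n}(a,b),
\]
where $Q(\alpha,b)=\sum_{0\le j<k/2}c_{k,j}\alpha^{k-2j}b^{j+l-k-1}$; the terms with $\alpha=0$ drop out since $Q(0,b)=0$. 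Applying Lemma~\ref{lem:hmexpression} with $(a,b\wp^n)$ replaced by $(\alpha,b)$ identifies $Q(\alpha,b)=b^{l-k-1}(h_k(\rho,\bar\rho)-\eta)$, where $\rho,\bar\rho$ are the roots of $X^2-\alpha X+b$ and $\eta=(-b)^{k/2}$ for even $k$, $\eta=0$ for odd $k$. The contribution of $\eta$ to $S$ is $\alpha$-free and collapses, via $\sum_\alpha M(\alpha,b)=\sum_a\#\text{Iso}_{\mathfrak p^n}(a,b)$, to $\pm\sum_{a,b}\#\text{Iso}_{\mathfrak p^n}(a,b)b^{l-k/2-1}$, which is $0$ by Lemma~\ref{lem:isog_sum}. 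This already kills part of $S$ for free.

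The clean way to finish is to prove the fibrewise divisibility
\[
M(\alpha,b)\equiv 0\pmod p\qquad\text{for all }\alpha\in\bF_q^\times,\ b\in\bF_q^\times,
\]
which forces $S\equiv 0$ outright. For $nd\le 2$ this is explicit: Prop.~\ref{prop:deg2} shows that $\#\text{Iso}_{\mathfrak p^n}(a,b)\bmod p$ depends only on $\alpha$ and $b$, not on the sub-leading coefficient of $a$, so $M(\alpha,b)$ is a sum of $q$ equal terms and hence $\equiv 0\pmod p$; in characteristic $2$ the analogous statement follows from Thm.~\ref{thm:char2traces}. For general even $nd$ I would invoke the appendix: by Prop.~\ref{prop:isom} and Rmk.~\ref{rmk:point_counts}, $\#\text{Iso}_{\mathfrak p^n}(a,b)$ is a Hurwitz class number $H(D)$ with $D=a^2-4b\wp^n$, whose local factor at $\infty$ depends only on $(\alpha,b)$ and whose remaining factor is the number of $\bF_q$-points on the Jacobian of $Y^2=D$. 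Summing over the $q^{nd/2}$ polynomials $a$ with fixed $\alpha$ then presents $M(\alpha,b)$, up to that constant local factor, as the $\bF_q$-point count of the relative Jacobian of the family $Y^2=(\alpha T^{nd/2}+\tilde a)^2-4b\wp^n$ over the affine space $\mathbb A^{nd/2}$ of sub-leading coefficients $\tilde a$.

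The main obstacle is exactly this last divisibility once $nd\ge 4$, where the fibres are abelian varieties of positive dimension. The zero section contributes $q^{nd/2}\equiv 0\pmod p$ points, so the real content is that the non-identity part of the relative Jacobian is also $p$-divisible; since a single Jacobian point count is not $p$-divisible in general, I expect this to require the explicit class-number formulas of the appendix rather than a soft dimension argument. Granting it, $S=0$, and therefore $\deg\Tr(\T_{\mathfrak p}^n\mid\S_{k+2,l})<ndk/2$ in every case, which is the assertion.
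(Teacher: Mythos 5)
Your reduction to the vanishing of the coefficient of $T^{ndk/2}$, the observation that only $\deg(a)=nd/2$ contributes (so odd $nd$ is immediate), and the grouping of Weil polynomials by $(\alpha,b)=(a^+,b)$ all match the paper's proof. But the argument then has a genuine gap: the fibrewise divisibility $M(\alpha,b)\equiv 0 \pmod p$ for all $\alpha,b\in\bF_q^\times$ is only verified for $nd\le 2$ and in characteristic~$2$, and for even $nd\ge 4$ with odd $q$ you explicitly write ``Granting it.'' The geometric route you sketch is not viable as stated: $\#\text{Iso}_{\mathfrak p^n}(a,b)$ is $H(A[\pi])$, the class number of the order $A[\pi]=B_f$, not of the maximal order, so by Lemma~\ref{lem:hurwitz_number} it differs from $\#J_L(\bF_q)$ by conductor factors $\prod_{\wp\in\mathbb P(f)}(1-\chi_L(\wp))$ and the $\infty$-factor, all of which vary with the sub-leading coefficients of~$a$; summing over the fibre therefore does not present $M(\alpha,b)$ as a point count of a relative Jacobian, and as you yourself note, $p$-divisibility of such sums is not a soft statement. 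Nothing in the paper (or its appendix) supplies this divisibility directly.

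For what it is worth, the claim you need is true, and it is equivalent --- by Vandermonde/character duality on $\bF_q^\times\times\bF_q^\times$ --- to the family of identities
\[
\sum_{\deg(a)=nd/2}\ \sum_{b\in\bF_q^\times} \#\text{Iso}_{\mathfrak p^n}(a,b)\,(a^+)^{m}\,b^{t}=0,\qquad 1\le m\le q-1,\ t\in\mathbb Z,
\]
which is exactly what the paper proves, with no input about class numbers at all. The mechanism is a bootstrap you are missing: apply the leading-coefficient computation to the trace formula itself in the low weights $m+2\le q+1$, where $\S_{m+2,t}$ contains no double cusp forms, so the trace is a sum of eigenvalues of single cusp forms (eigenvalue $1$, via $A$-expansions with exponent $1$) and has degree $0<ndm/2$; hence $\Tr(\T_{\mathfrak p}^n\,|\,\S_{m+2,t})^+=0$ for all $1\le m\le q-1$ and all $t$. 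The cases $m=1,2$ give the identities directly, and induction on $m$ (the terms with $j\ge 1$ involve exponents $m-2j\le m-2$ and vanish by the inductive hypothesis) yields them for all $m\le q-1$, which closes your gap without any geometry. Replacing your final step with this self-referential use of the trace formula would make the proof complete and essentially identical to the paper's.
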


\begin{proof}
    Denote by $\Tr(\T_{\mathfrak p}^n \hspace{0.2em} | \hspace{0.1em} \S_{k+2,l})^+$ the term of degree $ndk/2$ in $\Tr(\T_{\mathfrak p}^n \hspace{0.2em} | \hspace{0.1em} \S_{k+2,l})$; we will show that it is zero. Given a Weil polynomial $X^2 - aX + b\wp^n \in \mathbb F_q[T][X]$, write 
    \[
    a = a^+T^{nd/2} + \text{lower order terms}.
    \]
    Then we have by \hyperref[prop:trace_formula]{the trace formula} and the fact that $\wp$ is monic,
    \[
    \Tr(\T_{\mathfrak p}^n \hspace{0.2em} | \hspace{0.1em} \S_{k+2,l})^+ = \sum_{\deg(a) = nd/2} \sum_{b \in \bF_q^\times} \# \text{Iso}_{\mathfrak p^n}(a,b) \sum_{j=0}^{\lceil k/2 \rceil - 1} c_{k,j} ( a^+T^{nd/2} )^{k-2j} b^{j+l-k-1} T^{ndj}.
    \]
    The summand corresponding to a given $j$ is thus
    \[
    c_{k,j} T^{ndk/2} \sum_{\deg(a) = nd/2} \sum_{b \in \bF_q^\times} \# \text{Iso}_{\mathfrak p^n}(a,b) (a^+)^{k-2j} b^{j+l-k-1}.
    \]
    Note that $a^+ \in \bF_q^\times$ for all $a$ appearing in the sum above, so it suffices to show that
    \begin{equation}\label{eq:sum0}
    \sum_{\deg(a) = nd/2} \sum_{b \in \bF_q^\times} \# \text{Iso}_{\mathfrak p^n}(a,b) (a^+)^{m} b^{t} = 0
    \end{equation}
    for all integers $1 \leq m \leq q-1$ and all $t \in \mathbb Z$. We do this by induction. Note that $\S_{m+2,t}$ contains no double cusp forms for $m \leq q-1$ and any $t \in \mathbb Z$, so these spaces have trivial Hecke action by Prop.~\ref{prop:a-exp_examples}. In particular, we have $\Tr(\T_{\mathfrak p}^n)^+ = 0$ in the entire range of $m$ and~$t$ we consider.
    
    The base cases are $m=1$ and $m=2$. Then
    \begin{align*}
    0 &= \Tr(\T_{\mathfrak p}^n \hspace{0.2em} | \hspace{0.1em} \S_{3,t})^+ = T^{nd} \sum_{\deg(a) = nd/2} \sum_{b \in \bF_q^\times} \# \text{Iso}_{\mathfrak p^n}(a,b) a^+ b^{t-2};\\
    0 &= \Tr(\T_{\mathfrak p}^n \hspace{0.2em} | \hspace{0.1em} \S_{4,t})^+ = T^{nd} \sum_{\deg(a) = nd/2} \sum_{b \in \bF_q^\times} \# \text{Iso}_{\mathfrak p^n}(a,b) (a^+)^{2} b^{t-3},
    \end{align*}
    which gives \eqref{eq:sum0} for $m \in \{1,2\}$ and any $t$.
    
    Next, let $m' \leq q-1$ and suppose~\eqref{eq:sum0} holds for all $m = 1,2,\ldots, m'-2$ and any $t$. Then
    \begin{align*}
    0 &= \Tr(\T_{\mathfrak p}^n \hspace{0.2em} | \hspace{0.1em} \S_{m'+2,t})^+ \\
    &= T^{ndm'/2} \sum_{\deg(a) = nd/2} \sum_{b \in \bF_q^\times} \# \text{Iso}_{\mathfrak p^n}(a,b) \left( (a^+)^{m'} b^{t-m'-1} + \sum_{j=1}^{\lceil m'/2 \rceil -1} c_{m',j} (a^+)^{m'-2j}b^{j+t-m'-1} \right) \\
    &= T^{ndm'/2} \sum_{\deg(a) = nd/2} \sum_{b \in \bF_q^\times} \# \text{Iso}_{\mathfrak p^n}(a,b) (a^+)^{m'} b^{t-m'-1},
    \end{align*}
    which gives \eqref{eq:sum0} for $m = m'$ and any $t$.
\end{proof}

\begin{remark}
    Assuming the Tate conjecture, Coleman and Edixhoven have shown that the Ramanujan bound on elliptic modular forms is likewise never attained \cite[Thm.~4.1]{coleman-edixhoven}.
\end{remark}

\begin{conjecture}[Strong Ramanujan bound]\label{conj:strong_ram_traces}
    Let $\mathfrak p \trianglelefteq A$ be a maximal ideal of degree~$d$ and let $n \geq 1$. Then for any $k,l \in \mathbb Z$,
    \begin{equation}\label{eq:strong_ram}
    \deg \Tr(\T_{\mathfrak p}^n \hspace{0.2em} | \hspace{0.1em} \S_{k,l}) \leq \frac{nd(k-(q+1))}{2}.
    \end{equation}
\end{conjecture}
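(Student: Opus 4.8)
The plan is to work in the equivalent $\S_{k+2,l}$ formulation, where \eqref{eq:strong_ram} reads $\deg \Tr(\T_{\mathfrak p}^n \mid \S_{k+2,l}) \leq \tfrac{nd(k-(q-1))}{2}$, and to prove it by showing that the top $\tfrac{nd(q-1)}{2}$ coefficients (in $T$) of the trace polynomial vanish. This directly extends the strategy of Prop.~\ref{prop:ram_bd_strict}, which disposes of only the single leading coefficient, namely the one of degree $\tfrac{ndk}{2}$. Concretely, I would start from the trace formula~\eqref{eq:trace_formula_2}, expand each $a$ with $\deg(a) \leq nd/2$ in its coefficients, and extract the coefficient of $T^{ndk/2 - s}$ for $0 \leq s < \tfrac{nd(q-1)}{2}$. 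Each such coefficient is an $\bF_q$-linear combination of sums of the shape $\sum_{a,b} \# \text{Iso}_{\mathfrak p^n}(a,b)\, P(a)\, b^t$, where $P$ is a monomial in the coefficients of $a$ and $t \in \mathbb Z$; the target is to show every sum occurring this way vanishes.

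I would then set up an induction on the weight, mirroring Prop.~\ref{prop:ram_bd_strict}. The base cases are the small weights, where $\S_{k+2,l}$ is either zero or spanned by single cusp forms. By Thm.~\ref{thm:a-exp_eigs} such forms carry a power eigensystem, so $\Tr(\T_{\mathfrak p}^n \mid \S_{k+2,l}) = \wp^{n(n_0-1)}$ for an $A$-exponent $n_0$ which, by Prop.~\ref{prop:a-exp_examples}, is small enough that the degree $nd(n_0-1)$ already respects the strong bound (tightly so for $h \in \S_{q+1,1}$). Feeding these vanishing statements back into the coefficient sums via Lemma~\ref{lem:isog_sum} would, in the optimistic scenario, let me peel the coefficients off one residue class of $b$ at a time, exactly as the leading coefficient was removed in Prop.~\ref{prop:ram_bd_strict}.

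The main obstacle is evaluating the coefficient sums $\sum_{a,b} \# \text{Iso}_{\mathfrak p^n}(a,b)\, P(a)\, b^t$ once $P$ involves the \emph{sub}-leading coefficients of $a$. In Prop.~\ref{prop:ram_bd_strict} only the leading coefficient $a^+$ enters, so the sum over the remaining coefficients of $a$ is unconstrained and collapses to a lower-weight trace, which induction controls. As soon as $P$ sees the lower coefficients of $a$, the sum no longer factors through a trace in a smaller weight, and one is forced to sum $\# \text{Iso}_{\mathfrak p^n}(a,b)$ against genuine polynomial weights in $a$. For $nd > 2$ these class numbers are governed by point counts on the hyperelliptic Jacobians of Appendix~\ref{sec:appendix} (Lemma~\ref{lem:jac}), which have no closed form, so there is no visible cancellation mechanism. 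Removing this obstruction would seem to require either a new symmetry among the $\# \text{Iso}_{\mathfrak p^n}(a,b)$ as $a$ ranges over a fixed degree, or an averaging identity for the corresponding point counts; absent such input the general case is out of reach, which is precisely why the statement is only conjectural.

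Finally, I would record the regimes in which the plan does succeed, recovering Thm.~\ref{thm:strong_ram_known}. Whenever $\# \text{Iso}_{\mathfrak p^n}(a,b)$ is explicit — degree-$1$ primes (Lemma~\ref{lem:isoclasses}), even $q$ (Thm.~\ref{thm:char2traces}), and $nd = 2$ (Prop.~\ref{prop:deg2}) — the coefficient sums can be computed head-on, typically reducing to character-sum identities over $\bF_q$ together with Lucas's theorem (Thm.~\ref{thm:lucas}) to control the surviving binomials $\binom{k-j}{j}$ for $j$ near $k/2$. In these cases the relevant top coefficients genuinely vanish and the bound follows; the symmetry of Thm.~\ref{thm:symmetry_allq} can moreover transport the bound from low weights to weights just above $p^m+1$, in line with the empirical picture in Figures~\ref{fig:q5l3sym} and~\ref{fig:q5deg2sym}.
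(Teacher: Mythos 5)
You were asked about Conjecture~\ref{conj:strong_ram_traces}, which the paper does not prove: it only reduces it to coefficient-sum identities (Prop.~\ref{prop:eq_cond}), extracts a finite sufficient condition for each fixed $\mathfrak p^n$ (Prop.~\ref{prop:suff_cond}), and verifies special cases (Thm.~\ref{thm:strong_ram_known}). Your proposal correctly treats the statement as conjectural, and its scaffolding is essentially the paper's own: your extraction of the top $nd(q-1)/2$ coefficients is the equivalence of Prop.~\ref{prop:eq_cond}, namely that the bound holds iff $\deg \sum_{a,b} \# \text{Iso}_{\mathfrak p^n}(a,b)\, a^k b^t \leq nd(k-(q-1))/2$ for all $k,t$, iff the multinomial sums in the coefficients $a_0,\ldots,a_N$ vanish whenever $2\sum i v_i > nd(k-(q-1))$; your induction on weight is the $1 \implies 2$ direction there (base cases $k \in \{0,1\}$, coming from the absence of \emph{double} cusp forms in weights $2$ and $3$ --- note your base cases must be keyed to double cusp forms, not cusp forms, since single cusp forms do contribute, albeit with power eigensystems as you observe for $h$ and $h^2$); and your diagnosis of the obstruction --- no visible cancellation in $\sum_{a,b}\# \text{Iso}_{\mathfrak p^n}(a,b)\, P(a)\, b^t$ once $P$ sees subleading coefficients of $a$ and the class numbers become Jacobian point counts --- is exactly the paper's stated reason for stopping at the sufficient condition of Prop.~\ref{prop:suff_cond} (which you could have sharpened by noting that $a_i^s = a_i^{s-(q-1)}$ reduces the check to exponents $0 \leq v_i \leq q-1$, making it a finite computation for each $\mathfrak p^n$).

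The one concrete gap is in your final paragraph: the regimes where $\# \text{Iso}_{\mathfrak p^n}(a,b)$ is explicit ($nd=1$, $nd=2$, $2 \mid q$) do not recover all of Thm.~\ref{thm:strong_ram_known}, which also proves the case $nd=3$. That case rests on a mechanism absent from your plan: when $nd$ is odd one has $\deg(a) \leq (nd-1)/2$ rather than $nd/2$, so $\deg(a^k b^t) \leq k(nd-1)/2 \leq nd(k-(q-1))/2$ as soon as $k \geq nd(q-1)$ (Cor.~\ref{cor:nd_odd}); the supercritical coefficient range is then empty for all large $k$ with no information about $\# \text{Iso}_{\mathfrak p^n}(a,b)$ whatsoever, and only finitely many small weights remain, where the only double cusp form (for $q>5$) is $h^2$, dispatched by its $A$-expansion exactly as in your base-case discussion. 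So your blanket claim that $nd>2$ is out of reach absent new identities among the class numbers is too pessimistic for odd $nd$: the parity slack defers the need for such identities to a finite list of weights. A smaller caveat: your closing suggestion that Thm.~\ref{thm:symmetry_allq} transports the bound upward is only justified for $nd=1$, where Thm.~\ref{thm:symmetry_allq_deg1} gives $\deg \epsilon < N$; for general $\mathfrak p^n$ the error term $\epsilon$ carries no a priori degree control, so that step is heuristic.
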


Throughout the remainder of this paper, we will say that \emph{the strong Ramanujan bound holds for $\mathfrak p^n$} if the inequality~\eqref{eq:strong_ram} holds for all $k,l \in \mathbb Z$.

\begin{remark}\label{rem:strongram}\hfill
    \begin{enumerate}
        \item If $nd = 1$, the strong Ramanujan bound is sharp infinitely often for any type. To see this, fix a type~$1 \leq l \leq q-1$ and put $k^{(l)}_{n} := q-1+2q(l-1+n(q-1))$ for $n \geq 0$. Then $k_n^{(l)} + 2 \equiv 2l \pmod{q-1}$, and the trace of~$\T_{T}$ on the space $\S_{k^{(l)}_{n}+2,l}$ can be computed using Theorem~\ref{thm:deg1}. In particular, by \hyperref[thm:lucas]{Lucas's theorem}, the trace has leading term $T^{q(l-1+n(q-1))}$ with coefficient
    \begin{align*}
    c_{k^{(l)}_{n},q(l-1+n(q-1))} &= (-1)^{q(l-1+n(q-1))} \binom{q-1 + q(l - 1 + n(q-1))}{q(l-1 + n(q-1))} \\ &\equiv_p (-1)^{l-1} \binom{l-1 + n(q-1)}{l-1 + n(q-1)} \binom{q-1}{0} = (-1)^{l-1} \neq 0.
    \end{align*}
        \item With notation as above, we have $\S_{k_0^{(l)}+2,l} = \langle g^{l-1}h^l \rangle$. The eigenforms $g^{l-1}h^l$ are the first forms to attain the strong Ramanujan bound for~$\T_{T}$ in each type. In other words, the strong Ramanujan bound is not attained for $\T_T$ acting on $\S_{k'+2,l}$ for any $k' < k^{(l)}_{0}$. 
    \item In the other direction, Corollary~\ref{cor:clusters} shows that the distance of a trace to the strong Ramanujan bound is unbounded as the weight goes to infinity.
    \end{enumerate}
\end{remark}

Recall that $\# \text{Iso}_{\mathfrak p^n}(a,b) = 0$ whenever $\deg(a) > \lfloor \frac{nd}{2} \rfloor =: N$. If $X^2 - aX + b\wp^n$ is the Weil polynomial of a Drinfeld module over $\bF_{\mathfrak p^n}$, write
\[
a = a_0 + a_1T + \ldots + a_NT^N, \qquad a_i \in \bF_q.
\]

\begin{proposition}\label{prop:eq_cond}
    Fix $\mathfrak p$ and $n$, and let $N = \lfloor nd/2 \rfloor$. The following are equivalent:
    \begin{enumerate}
        \item The strong Ramanujan bound holds for $\mathfrak p^n$.
        \item For all $k \geq 0$ and $t \in \mathbb Z$, we have
        \begin{equation}\label{eq:equiv_strong}
        \deg \sum_{a,b}\# \text{Iso}_{\mathfrak p^n}(a,b) a^kb^t \leq \frac{nd(k-(q-1))}{2}.
        \end{equation}
        \item For all $k \geq 0$ and $t \in \mathbb Z$, we have
        \[
        \sum_{a,b} \# \text{Iso}_{\mathfrak p^n}(a,b)b^t \sum_{\substack{v_0 + \ldots + v_N = k \\ v_1 + 2v_2 + \ldots + Nv_N = m}} \binom{k}{v_0,v_1,\ldots,v_N} a_0^{v_0}a_1^{v_1} \cdots a_N^{v_N} = 0
        \]
        for all $m$ such that $2m > nd(k-(q-1))$.
    \end{enumerate}
\end{proposition}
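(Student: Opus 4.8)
The plan is to establish $(2)\Leftrightarrow(3)$ formally and then $(1)\Leftrightarrow(2)$ via the trace formula (Prop.~\ref{prop:trace_formula}) and an induction on the exponent of~$a$. Throughout I abbreviate
\[
\Sigma_{k,t} := \sum_{a,b} \# \text{Iso}_{\mathfrak p^n}(a,b)\, a^k b^t \in \bF_q[T],
\]
where the sum runs over $b \in \bF_q^\times$ and $a \in A$; since $\# \text{Iso}_{\mathfrak p^n}(a,b) = 0$ once $\deg(a) > N$, only $a = a_0 + \dots + a_N T^N$ contribute. For $(2)\Leftrightarrow(3)$ I would expand $a^k=(a_0+a_1T+\dots+a_NT^N)^k$ by the multinomial theorem: the coefficient of $T^m$ in $a^k$ is exactly $\sum \binom{k}{v_0,\dots,v_N}a_0^{v_0}\cdots a_N^{v_N}$ taken over $v_0+\dots+v_N=k$ and $v_1+2v_2+\dots+Nv_N=m$. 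Multiplying by $\# \text{Iso}_{\mathfrak p^n}(a,b)\,b^t$ and summing shows that the expression displayed in~$(3)$ is precisely the coefficient of $T^m$ in~$\Sigma_{k,t}$. Hence its vanishing for all $m$ with $2m>nd(k-(q-1))$ is equivalent to $\deg\Sigma_{k,t}\le \tfrac{nd(k-(q-1))}{2}$, which is~$(2)$; this holds whatever the parity of $nd(k-(q-1))$, since degrees are integers. This step is routine bookkeeping.

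Next I would prove $(2)\Rightarrow(1)$. Weights with $\S_{k,l}=0$ give zero trace, so it suffices to bound $\deg\Tr(\T_{\mathfrak p}^n\mid\S_{k+2,l})$ for $k\ge0$, the target being $\tfrac{nd((k+2)-(q+1))}{2}=\tfrac{nd(k-(q-1))}{2}$. The trace formula writes this trace as $\sum_{j=0}^{\lceil k/2\rceil-1}c_{k,j}\,\wp^{nj}\,\Sigma_{k-2j,\,j+l-k-1}$, and since $\deg\wp=d$ and $c_{k,j}\in\bF_q$, assumption~$(2)$ bounds the $j$-th summand by $ndj+\tfrac{nd((k-2j)-(q-1))}{2}=\tfrac{nd(k-(q-1))}{2}$; summing preserves the bound.

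For the converse $(1)\Rightarrow(2)$ — the substantive direction — the key observation is that $\Sigma_{k,t}$ occurs as the \emph{leading} term of the same trace formula. Taking weight $k+2$ and type $l=t+k+1$, the $j=0$ summand has $c_{k,0}=1$ and trivial power of~$\wp$, so
\[
\Sigma_{k,t} = \Tr(\T_{\mathfrak p}^n\mid\S_{k+2,\,t+k+1}) - \sum_{j=1}^{\lceil k/2\rceil-1} c_{k,j}\,\wp^{nj}\,\Sigma_{k-2j,\,t+j}.
\]
I would then prove~$(2)$ by strong induction on $k\ge0$. The base case $k=0$ is Lemma~\ref{lem:isog_sum}, which gives $\Sigma_{0,t}=0$. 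For $k\ge1$, assumption~$(1)$ bounds the trace term by $\tfrac{nd(k-(q-1))}{2}$, while every correction index satisfies $1\le k-2j<k$, so the inductive hypothesis bounds the $j$-th correction term by $ndj+\tfrac{nd((k-2j)-(q-1))}{2}=\tfrac{nd(k-(q-1))}{2}$; hence $\deg\Sigma_{k,t}\le\tfrac{nd(k-(q-1))}{2}$. Note that the identity above is a pure rearrangement of Prop.~\ref{prop:trace_formula}, valid for every $l\in\mathbb Z$, so no congruence condition on the type is required.

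The main obstacle is that the trace in a single weight bundles together all the sums $\Sigma_{k-2j,\,\cdot}$, and one must isolate the one of interest. What makes this possible — and the induction well-founded — is the triangular structure of the trace formula: the top index $\Sigma_k$ occurs with coefficient $1$ and no power of $\wp$, while every other index is strictly smaller and is weighted by a positive power of $\wp$. This structure is exactly what forces the degree arithmetic to close up to the uniform shift $\tfrac{nd(q-1)}{2}$; once it is recognized, the remaining computations are immediate.
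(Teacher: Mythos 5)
Your proposal is correct and follows essentially the same route as the paper: $(2)\Rightarrow(1)$ by the termwise degree bound $ndj+\tfrac{nd((k-2j)-(q-1))}{2}=\tfrac{nd(k-(q-1))}{2}$ in Prop.~\ref{prop:trace_formula}, $(1)\Rightarrow(2)$ by strong induction on~$k$ isolating $\Sigma_{k,t}$ as the $j=0$ term of the trace in weight $k+2$, and $(2)\Leftrightarrow(3)$ by multinomial expansion of $a^k$. The only (immaterial) difference is in the base cases: the paper grounds $k\in\{0,1\}$ unconditionally via the absence of double cusp forms of weights~2 and~3, while you ground $k=0$ via Lemma~\ref{lem:isog_sum} (correctly noting the $j$-sum is empty there) and absorb $k=1$ into the inductive step using assumption~(1), which is equally valid.
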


\begin{proof}
    We use the formula
    \begin{equation}\label{eq:trform_proof}
    \Tr(\T^n_{\mathfrak p} \hspace{0.2em} | \hspace{0.1em} \S_{k+2,l}) = \sum_{a,b} \# \text{Iso}_{\mathfrak p^n}(a,b)  \sum_{j=0}^{\lceil k/2 \rceil - 1} c_{k,j} a^{k-2j} b^{j+l-k-1}\wp^{nj}
    \end{equation}
    from Prop.~\ref{prop:trace_formula}. This immediately yields $2 \implies 1$, since the summand corresponding to a fixed~$j$ then has degree bounded by
    \[
    \frac{nd(k-2j-(q-1))}{2} + ndj = \frac{nd(k+2-(q+1))}{2},
    \]
    as desired.
    
    For $1 \implies 2$, note that~\eqref{eq:equiv_strong} holds for $k \in \{0,1\}$ and any~$t$, since there are no double cusp forms of weight~$2$ or~$3$ for any~$q$. Suppose by induction that \eqref{eq:equiv_strong} holds for any $k' < k$ and any $t$. Then~\eqref{eq:trform_proof} implies that, for any $l \in \mathbb Z$,
    \[
    \sum_{a,b} \# \text{Iso}_{\mathfrak p^n}(a,b) a^k b^{l-k-1} =  \Tr(\T^n_{\mathfrak p} \hspace{0.2em} | \hspace{0.1em} \S_{k+2,l}) + O \left( T^{\frac{1}{2}nd(k-(q-1))} \right),
    \]
    which by the strong Ramanujan bound implies \eqref{eq:equiv_strong} for $k$ and any $t$.

    The equivalence $2 \iff 3$ follows from the computation
\begin{align*}
\sum_{a,b}\# \text{Iso}_{\mathfrak p^n}(a,b) a^kb^t &= \sum_{a,b} \# \text{Iso}_{\mathfrak p^n}(a,b) \left( \sum_{i=0}^N a_iT^i \right)^k b^t \\
&= \sum_{m = 0}^{kN} T^m \sum_{a,b} \# \text{Iso}_{\mathfrak p^n}(a,b) b^t \sum \binom{k}{v_0,v_1,\ldots,v_N} a_0^{v_0} a_1^{v_1} \cdots a_N^{v_N},
\end{align*}
where the last sum is over $(N+1)$-tuples $(v_0,\ldots,v_N) \in \mathbb Z_{\geq 0}^{N+1}$ such that $\sum v_i = k$ and $\sum iv_i = m$.
\end{proof}

\begin{corollary}\label{cor:nd_odd}
    Suppose $nd$ is odd. Then the strong Ramanujan bound holds for $\mathfrak p^n$ if it holds for all pairs $(k,t)$ with $k < nd(q-1)$ and $1 \leq t \leq q-1$.
\end{corollary}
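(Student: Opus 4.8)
The plan is to invoke the equivalence $1 \iff 2$ from Proposition~\ref{prop:eq_cond}, which reduces the strong Ramanujan bound for $\mathfrak p^n$ to the family of degree estimates
\[
\deg \sum_{a,b}\# \text{Iso}_{\mathfrak p^n}(a,b)\, a^k b^t \leq \frac{nd(k-(q-1))}{2}
\]
ranging over all $k \geq 0$ and $t \in \mathbb Z$. Writing $S_{k,t} := \sum_{a,b}\# \text{Iso}_{\mathfrak p^n}(a,b)\, a^k b^t$, I would first dispose of the dependence on $t$: since $b$ runs over $\bF_q^\times$, the power $b^t$ depends only on the residue of $t$ modulo $q-1$, so $S_{k,t}$ depends only on $t \bmod (q-1)$. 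Hence it suffices to check the bound for $t$ in a complete residue system, for instance $1 \leq t \leq q-1$, as in the statement.

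The heart of the argument is a crude degree estimate that becomes sharp enough once $k$ is large, and this is where the hypothesis that $nd$ is odd enters. Recall that $\# \text{Iso}_{\mathfrak p^n}(a,b) = 0$ unless $\deg(a) \leq N := \lfloor nd/2 \rfloor$, so every term of $S_{k,t}$ has degree at most $kN$, giving the trivial bound $\deg S_{k,t} \leq kN$. When $nd$ is odd we have $N = (nd-1)/2$, whence $\deg S_{k,t} \leq k(nd-1)/2$. A direct comparison then gives
\[
\frac{k(nd-1)}{2} \leq \frac{nd(k-(q-1))}{2} \iff k \geq nd(q-1),
\]
so for every $k \geq nd(q-1)$ the trivial degree bound already implies the required inequality, and this holds for all $t$ with no further hypothesis.

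Combining the two reductions finishes the proof: for $k \geq nd(q-1)$ the estimate holds automatically by the degree count, while for $k < nd(q-1)$ it holds for all $t$ as soon as it holds for the finitely many representatives $1 \leq t \leq q-1$, by periodicity in $t$. Thus the full list of conditions in Proposition~\ref{prop:eq_cond}.2 follows from the stated finite set, and the strong Ramanujan bound for $\mathfrak p^n$ follows from $2 \implies 1$. I do not expect a genuine obstacle here; the only point demanding care is that the saving of $k/2$ in the trivial bound — which is precisely what the oddness of $nd$ furnishes through $\lfloor nd/2 \rfloor = (nd-1)/2$ — must overtake the fixed deficit $nd(q-1)/2$ of the strong bound, and this crossover occurs exactly at the threshold $k = nd(q-1)$.
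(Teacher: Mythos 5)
Your proof is correct and follows essentially the same route as the paper: both reduce via the equivalence $1 \iff 2$ of Proposition~\ref{prop:eq_cond}, use that $\# \text{Iso}_{\mathfrak p^n}(a,b) = 0$ forces $\deg(a) \leq N = (nd-1)/2$ when $nd$ is odd, and observe that the resulting trivial bound $k(nd-1)/2$ falls below $nd(k-(q-1))/2$ exactly when $k \geq nd(q-1)$. Your explicit remark that $b^t$ depends only on $t \bmod (q-1)$ is left implicit in the paper's proof but is the correct justification for restricting to $1 \leq t \leq q-1$.
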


\begin{proof}
    If $nd$ is odd, then $N = (nd-1)/2$. Hence $\deg a^kb^t \leq k(nd-1)/2$ for any pair $(a,b)$ such that $\# \text{Iso}_{\mathfrak p^n}(a,b) \neq 0$. Now if $k \geq nd(q-1)$, we have
    \[
    \deg \sum_{a,b}\# \text{Iso}_{\mathfrak p^n}(a,b) a^kb^t \leq \frac{k(nd-1)}{2} \leq \frac{nd(k-(q-1))}{2},
    \]
    so by Prop.~\ref{prop:eq_cond}, it only remains to check $k < nd(q-1)$.
\end{proof}


More generally, we have the following sufficient condition for the Ramanujan bound to hold, which for each fixed $\mathfrak p^n$ is a finite computation. It is worth mentioning that we have not found any examples where this sufficient condition is not satisfied.

\begin{proposition}\label{prop:suff_cond}
    Fix $\mathfrak p \trianglelefteq A$ and $n \geq 1$. Suppose that
    \begin{equation}\label{eq:sumcond}
    \sum_{a,b} \# \text{Iso}_{\mathfrak p^n}(a,b) b^t a_0^{v_0} a_1^{v_1} \cdots a_N^{v_N} = 0
    \end{equation}
    under the following conditions:
    \begin{enumerate}
        \item $0 \leq v_i \leq q-1$ for all $i = 0,\ldots, N$;
        \item $2 \sum iv_i > nd(k-(q-1))$, where $k := \sum v_i$;
        \item $1 \leq t \leq q-1$ and $2t \equiv -k \pmod{q-1}$.
    \end{enumerate}
Then the strong Ramanujan bound holds for $\mathfrak p^n$.
\end{proposition}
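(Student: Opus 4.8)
The plan is to prove the strong Ramanujan bound for $\mathfrak p^n$ by verifying criterion~(3) of Proposition~\ref{prop:eq_cond}. That is, having fixed $k \geq 0$ and $t \in \mathbb Z$, I must show that
\[
\Sigma_m := \sum_{a,b} \# \text{Iso}_{\mathfrak p^n}(a,b)\, b^t \sum_{\substack{v_0 + \cdots + v_N = k \\ v_1 + 2v_2 + \cdots + Nv_N = m}} \binom{k}{v_0,\ldots,v_N} a_0^{v_0}\cdots a_N^{v_N} = 0
\]
whenever $2m > nd(k-(q-1))$. The hypothesis~\eqref{eq:sumcond} only supplies the vanishing of the analogous $(a,b)$-sums for tuples whose exponents already lie in $[0,q-1]$. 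So the two ingredients I need are (i) a reduction of the exponents $v_i$ into that range, legitimate because the $a_i$ range over $\bF_q$, and (ii) a bookkeeping argument showing this reduction preserves the degree inequality in condition~(2) of the hypothesis.

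First I would dispose of the congruence on $t$. Substituting $(a,b)\mapsto(ca,c^2b)$ in a single inner $(a,b)$-sum and invoking $\# \text{Iso}_{\mathfrak p^n}(a,b) = \# \text{Iso}_{\mathfrak p^n}(ca, c^2 b)$ from Remark~\ref{rem:iso_c}, each such sum (which always has $\sum_i v_i = k$) is seen to equal $c^{\,2t+k}$ times itself for every $c \in \bF_q^\times$. Hence every inner sum vanishes unless $2t \equiv -k \pmod{q-1}$; in the excluded case $\Sigma_m = 0$ trivially, so I may assume $2t \equiv -k \pmod{q-1}$ and replace $t$ by its representative in $\{1,\ldots,q-1\}$.

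The core step is the exponent reduction. For $x \in \bF_q$, the map $x \mapsto x^v$ depends only on a reduced exponent: set $r = 0$ if $v = 0$, and let $r \in \{1,\ldots,q-1\}$ be the representative of $v$ modulo $q-1$ if $v \geq 1$; then $x^v = x^r$ as functions on $\bF_q$ (with $0^0 = 1$). Applying this coordinatewise gives $a_0^{v_0}\cdots a_N^{v_N} = a_0^{r_0}\cdots a_N^{r_N}$, so each inner sum depends only on the reduced tuple $\rho = (r_0,\ldots,r_N)$ and equals the left-hand side of~\eqref{eq:sumcond} with exponents $r_i$; call it $R(\rho)$. Grouping the multi-index sum defining $\Sigma_m$ by $\rho$, I obtain $\Sigma_m = \sum_\rho M_m(\rho)\, R(\rho)$, where $M_m(\rho)$ collects the multinomial coefficients. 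It then suffices to show $R(\rho) = 0$ for every $\rho$ that occurs, so the coefficients $M_m(\rho)$ never need to be evaluated.

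Finally I would check that each occurring $\rho$ meets condition~(2). Writing $v_i = r_i + (q-1)s_i$ with $s_i \geq 0$ (and $s_i = 0$ when $v_i = 0$) and setting $k_\rho = \sum_i r_i$, one has $\sum_i i r_i = m - (q-1)\sum_i i s_i$ and $k_\rho = k - (q-1)\sum_i s_i$, whence
\[
2\sum_i i r_i - nd\big(k_\rho - (q-1)\big) = \big(2m - nd(k-(q-1))\big) + (q-1)\big(nd\sum_i s_i - 2\sum_i i s_i\big).
\]
The first bracket is positive by assumption, and the second is nonnegative because $i \leq N = \lfloor nd/2\rfloor \leq nd/2$ forces $2\sum_i i s_i \leq nd\sum_i s_i$. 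Thus $2\sum_i i r_i > nd(k_\rho - (q-1))$; since moreover $0 \leq r_i \leq q-1$ and $2t \equiv -k \equiv -k_\rho \pmod{q-1}$, hypothesis~\eqref{eq:sumcond} gives $R(\rho) = 0$, so $\Sigma_m = 0$ and criterion~(3) holds. The main obstacle is precisely this last inequality: reduction shrinks the weighted sum $\sum_i i v_i$ by multiples of $(q-1)i$ while shrinking $k$ only by multiples of $(q-1)$, and the delicate point is that these two effects stay compatible with the degree bound — which is exactly what the sharp constraint $N \leq nd/2$ guarantees.
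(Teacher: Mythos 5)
Your proof is correct and follows essentially the same route as the paper's: reduce to the pointwise vanishing criterion of Prop.~\ref{prop:eq_cond}(3), eliminate the congruence restriction on $t$ via the substitution $(a,b)\mapsto(ca,c^2b)$ and Remark~\ref{rem:iso_c}, and reduce exponents modulo $q-1$ while checking that $2j \leq nd$ (equivalently $N \leq nd/2$) preserves the degree inequality. The only cosmetic difference is that you perform the reduction in one step via $v_i = r_i + (q-1)s_i$ with an explicit identity, where the paper iterates one-step reductions.
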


\begin{proof}
    By the third statement in Prop.~\ref{prop:eq_cond}, it suffices to show that Equation~\eqref{eq:sumcond} holds for all $k := \sum v_i \geq 0$, $2\sum iv_i > nd(k-(q-1))$ and $t \in \mathbb Z$. Since $b \in \bF_q^\times$, we may assume $1 \leq t \leq q-1$. Since $\# \text{Iso}_{\mathfrak p^n}(a,b) = \# \text{Iso}_{\mathfrak p^n}(ca,c^2b)$ for all $c \in \bF_q^\times$, the sum is automatically zero if $2t \not \equiv -k \pmod{q-1}$. Note also that all~$a_i$ lie in~$\bF_q$ and hence $a_i^{s} = a_i^{s-(q-1)}$ for all $s \geq q$. Hence if some $v_j \geq q$, we can apply the above identity to reduce to the case $k' = k - (q-1)$, where we note that
    \[
    2 \sum_{i=1}^N iv_i > nd(k-(q-1)) \implies 2 \sum_{i=1}^N iv_i - 2j(q-1) > nd(k'-(q-1)),
    \]
    since $2j \leq nd$. One may keep doing this until $0 \leq v_i \leq q-1$ for all $i$, in which case we are in the setting of the proposition.
\end{proof}

\begin{remark}\label{rmk:rambd_case0}
    The condition $2 \sum iv_i > nd(k-(q-1))$ implies that $v_0 < q-1$. Considering the term of lowest degree in $\Tr(\T_{\mathfrak p}^n \hspace{0.2em} | \hspace{0.1em} \S_{k,l})$ for all $k \leq q+1$, we find that for any $k, t \in \mathbb Z$,
    \[
    \sum_{a,b} \# \text{Iso}_{\mathfrak p^n}(a,b) a_0^k b^t = \begin{cases}
        1 & \text{if } k \neq 0,\ k \equiv t \equiv 0 \pmod{q-1}; \\ 0 & \text{otherwise.}
    \end{cases}
    \]
    This settles the cases of Prop.~\ref{prop:suff_cond} in which $v_i = 0$ for all $i > 0$.
\end{remark}

\begin{theorem}\label{thm:strong_ram_known}
    The strong Ramanujan bound holds for all $\mathfrak p^n$ such that $nd \leq 3$. If $2 \mid q$, the strong Ramanujan bound holds for any~$\mathfrak p^n$.
\end{theorem}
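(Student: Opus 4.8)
The unifying tool is Proposition~\ref{prop:suff_cond}: it suffices to verify, for each relevant $\mathfrak p^n$, that the sums
\[
\sum_{a,b} \# \text{Iso}_{\mathfrak p^n}(a,b)\, b^t a_0^{v_0}a_1^{v_1}\cdots a_N^{v_N} = 0
\]
vanish under its conditions~(1)--(3), where $N = \lfloor nd/2 \rfloor$ and $a = a_0 + a_1 T + \cdots + a_N T^N$. The key structural observation is that $N \leq 1$ whenever $nd \leq 3$, so that in those cases $a = a_0 + a_1 T$ has at most two coefficients; moreover the case $v_1 = \cdots = v_N = 0$ is in every instance disposed of by Remark~\ref{rmk:rambd_case0}.

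I would first dispatch the characteristic-$2$ statement, which holds for all $\mathfrak p^n$. By the proof of Theorem~\ref{thm:char2traces}, when $2 \mid q$ the only Drinfeld modules contributing to $\# \text{Iso}_{\mathfrak p^n}(a,b) \pmod 2$ are those with $a \in \bF_q$, i.e.\ with $a_1 = \cdots = a_N = 0$. Hence in the sum above, any index $v_i > 0$ with $i \geq 1$ forces $a_i^{v_i} = 0$ on every contributing term, so the sum vanishes; and if all such $v_i$ are zero, condition~(2) forces $k = v_0 < q-1$, which is not a positive multiple of $q-1$, so Remark~\ref{rmk:rambd_case0} gives~$0$. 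This settles $2 \mid q$ completely.

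Next come the two easy odd cases. For $nd = 1$ we have $N = 0$, so only $v_0$ occurs; condition~(2) reads $v_0 < q-1$, and Remark~\ref{rmk:rambd_case0} again gives the vanishing. For $nd = 2$ we have $N = 1$, and Proposition~\ref{prop:deg2} shows that $\# \text{Iso}_{\mathfrak p^n}(a,b)$ depends only on $a^+ = a_1$ and $b$, not on $a_0$. Here condition~(2), namely $2v_1 > 2(v_0+v_1-(q-1))$, simplifies to $v_0 < q-1$, and then the sum factors as
\[
\left(\sum_{a_0 \in \bF_q} a_0^{v_0}\right) \sum_{a_1 \in \bF_q,\ b \in \bF_q^\times} \left(1 - \left(\frac{a_1,b}{q}\right)\right) b^t a_1^{v_1};
\]
the first factor vanishes in $\bF_q$ because $0 \leq v_0 < q-1$ (it equals $q = 0$ if $v_0 = 0$, and $0$ otherwise). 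This settles $nd = 2$.

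The remaining, and by far hardest, case is $nd = 3$ in odd characteristic. Here $N = 1$ still, but now $\# \text{Iso}_{\mathfrak p^n}(a,b)$ is essentially a genuine Hurwitz class number $H(a^2 - 4b\wp^n)$: since $\deg(a) \leq 1$ and $\deg(\wp^n) = 3$, the discriminant $D = a^2 - 4b\wp^n$ has degree~$3$, so the associated curve $Y^2 = D$ has genus~$1$ and the naive factorisation used for $nd \leq 2$ is no longer available. My plan is to first invoke Corollary~\ref{cor:nd_odd} to reduce to the finite range $k = v_0 + v_1 < 3(q-1)$ with $1 \leq t \leq q-1$, and then to evaluate the sums $\sum_{a,b} H(a^2-4b\wp^n)\, b^t a_0^{v_0} a_1^{v_1}$ using the description of $\# \text{Iso}_{\mathfrak p^n}(a,b)$ via point counts on these genus-$1$ curves together with the symmetry $\# \text{Iso}_{\mathfrak p^n}(a,b) = \# \text{Iso}_{\mathfrak p^n}(ca,c^2b)$ from Remark~\ref{rem:iso_c} (which already forces the congruence $2t \equiv -k \pmod{q-1}$). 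The main obstacle is precisely this class-number computation: unlike the genus-$0$ situation there is no one-line formula for $H(D)$, and one must show that the weighted class-number sums cancel across the fibres $a_0 \mapsto a_0 + a_1 T$. I expect to carry this out with the point-count machinery relating $\# \text{Iso}_{\mathfrak p^n}(a,b)$ to Jacobians of hyperelliptic curves, exploiting that the relevant curves vary in families over $\bF_q$ whose point counts are controlled uniformly enough to force the cancellations dictated by conditions~(1)--(3).
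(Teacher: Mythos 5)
Your treatment of $2 \mid q$, $nd=1$, and $nd=2$ is correct and essentially the paper's own argument (the paper handles $2 \mid q$ by noting via Thm.~\ref{thm:char2traces} that every Hecke trace behaves like a $\T_T$-trace, reducing to the $nd=1$ case; your direct verification of the sums in Prop.~\ref{prop:suff_cond} using the mod-$2$ description of $\# \text{Iso}_{\mathfrak p^n}(a,b)$ amounts to the same thing). The genuine gap is the case $nd=3$, which you yourself flag as unresolved: your plan is to evaluate the weighted class-number sums $\sum_{a,b} H(a^2-4b\wp^n)\, b^t a_0^{v_0}a_1^{v_1}$ via point counts on the genus-$1$ curves $Y^2 = a^2-4b\wp^n$, but there is no uniform control of $\# J_L(\bF_q)$ as $(a,b)$ varies that would force these cancellations directly --- the cancellation is true, but its source is modular, and chasing it through Jacobians would in effect re-derive the trace formula. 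As written, the hardest case of the theorem is a research programme, not a proof.

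The paper closes this case with a short argument you missed: it uses the equivalence of Prop.~\ref{prop:eq_cond} in the \emph{opposite} direction. Cor.~\ref{cor:nd_odd} reduces the claim to the identities \eqref{eq:equiv_strong} for $k < 3(q-1)$, and by the implication $1 \implies 2$ of Prop.~\ref{prop:eq_cond} (run inductively in $k$) these follow from the strong Ramanujan bound on the traces $\Tr(\T^n_{\mathfrak p} \hspace{0.2em} | \hspace{0.1em} \S_{k+2,l})$ in that finite range of weights. But in this range the relevant spaces contain no double cusp forms when $q \leq 5$, so the traces come from single cusp forms, whose eigenvalues are trivial (degree~$0$); and for $q > 5$ the only double cusp form in range is $h^2 \in \S_{2(q+1),2}$, which has an $A$-expansion with $A$-exponent~$2$ and hence $\T^n_{\mathfrak p}$-eigenvalue $\wp^n$ of degree $nd = 3 < nd(q+1)/2$. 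No class-number computation is needed; the low-weight modular-forms input (Prop.~\ref{prop:a-exp_examples} and Thm.~\ref{thm:a-exp_eigs}) does all the work, and this is the key idea your proposal lacks.
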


\begin{proof}
    If $nd = 1$ we have $N = 0$, so the claim follows from Rmk.~\ref{rmk:rambd_case0}. If $nd = 2$, then by Prop.~\ref{prop:deg2} we can write each sum in Prop.~\ref{prop:suff_cond} as
    \[
    \sum_{a,b} \# \text{Iso}_{\mathfrak p^n}(a,b)b^ta_0^{v_0}a_1^{v_1} = \sum_{a_0 \in \bF_q} a_0^{v_0} \sum_{a_1 \in \bF_q} \sum_{b \in \bF_q^\times} \left( 1 - \left(\frac{a_1,b}{q}\right)\right)b^ta_1^{v_1} = 0,
    \]
    since the condition $2 \sum iv_i > nd(k-(q-1))$ implies that $v_0 < q-1$.

    If $nd = 3$ we apply Corollary~\ref{cor:nd_odd}. Note that the spaces $\S_{k,l}$ for $k < 3(q-1)$ contain no double cusp forms if $q \leq 5$, so in this case we are done. If $q > 5$, there is only one double cusp form of weight $k < 3(q-1)$, namely $h^2 \in \S_{2(q+1),2}$. This cusp form has an $A$-expansion with $A$-exponent~2, which implies that
    \[
    \deg \Tr(\T_{\mathfrak p}^n \hspace{0.2em} | \hspace{0.1em} \S_{2(q+1),2}) = nd < \frac{nd(q+1)}{2}.
    \]
    Finally, suppose $2 \mid q$. By Theorem~\ref{thm:char2traces}, the trace of any Hecke operator behaves like the trace of~$\T_{T}$, so the result follows from the case $nd=1$.
\end{proof}

\section{Eigenvalues}\label{sec:slopes}

In positive characteristic~$p$, it is not obvious that trace methods are sufficiently powerful to make deductions about eigenvalues. The problem is the following: if the linear operator~$\T$ has an eigenvalue~$\alpha$ with algebraic multiplicity~$p$, then there is no way to recover~$\alpha$ from the sequence $(\Tr(\T^n))_{n \geq 1}$: the contribution of these $p$ eigenvalues to $\Tr(\T^n)$ is $p \alpha^n = 0$. 

This problem is studied in more detail in~\cite{devries_newton}. In many situations, one can recover information about the eigenvalues of~$\T$ if one assumes that no eigenvalue is repeated $p$ times, by which we mean that the algebraic multiplicity of any eigenvalue is not a positive multiple of~$p$.

Throughout this section, we fix the following notation.
    Let $V$ be a $K$-vector space and let $\T \: V \to V$ be a linear map. We denote by $\text{Sp}(\T) \subset \bar{K}$ the set of eigenvalues of $\T$. For any $\alpha \in \text{Sp}(\T)$, we write $m_\alpha \in \mathbb N$ for the algebraic multiplicity of $\alpha$, i.e.,
    \[
    m_\alpha = \max \{ n \in \mathbb N \ | \ (X-\alpha)^n \text{ divides } \det(\mathds{1}X - \T)\}.
    \]

\subsection{Detecting repeated eigenvalues}

We noted that any eigenvalue which is repeated $p$ times is impossible to recover from the sequence of traces. What the traces do detect is whether or not there is a repeated eigenvalue, assuming that the dimension of the space is known.

\begin{proposition}\label{prop:vandermonde_rep}
    Let $d = \dim \S_{k,l}$. The action of $\T_{\mathfrak p}$ on $\S_{k,l}$ has a repeated eigenvalue if and only if the matrix
    \[
    M = \begin{pmatrix}
        \dim \S_{k,l} & \Tr(\T_{\mathfrak p}) & \Tr(\T_{\mathfrak p}^2) & \cdots & \Tr(\T_{\mathfrak p}^{d-1}) \\
        \Tr(\T_{\mathfrak p}) & \Tr(\T_{\mathfrak p}^2) & \Tr(\T_{\mathfrak p}^3) & \cdots & \Tr(\T_{\mathfrak p}^{d}) \\
        \vdots & \vdots & & \ddots & \vdots \\
        \Tr(\T_{\mathfrak p}^{d-1}) & \Tr(\T_{\mathfrak p}^{d}) & \Tr(\T_{\mathfrak p}^{d+1}) & \cdots & \Tr(\T_{\mathfrak p}^{2d-2})
        \end{pmatrix}
    \]
    has zero determinant.
\end{proposition}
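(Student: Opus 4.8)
The plan is to recognise $M$ as the Hankel matrix of power sums of the eigenvalues and to factor it through a Vandermonde matrix. Let $\alpha_1,\dots,\alpha_d \in \bar K$ denote the eigenvalues of $\T_{\mathfrak p}$ acting on $\S_{k,l}$, listed with algebraic multiplicity, so that $d = \dim \S_{k,l}$ and $\det(\mathds 1 X - \T_{\mathfrak p}) = \prod_{i=1}^d (X - \alpha_i)$. Triangularising $\T_{\mathfrak p}$ over $\bar K$, the matrix of $\T_{\mathfrak p}^n$ becomes upper triangular with diagonal entries $\alpha_i^n$, so that
\[
\Tr(\T_{\mathfrak p}^n) = \sum_{i=1}^d \alpha_i^n \qquad (n \geq 0),
\]
where $n = 0$ recovers $\Tr(\T_{\mathfrak p}^0) = d = \dim \S_{k,l}$ in the top-left entry of $M$.

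Next I would introduce the $d \times d$ Vandermonde matrix $V = (\alpha_i^{j})$, with rows indexed by $1 \leq i \leq d$ and columns by $0 \leq j \leq d-1$. Computing the $(j,k)$-entry of $V^\top V$ gives $\sum_{i=1}^d \alpha_i^{j}\alpha_i^{k} = \sum_{i=1}^d \alpha_i^{j+k} = \Tr(\T_{\mathfrak p}^{j+k})$, which is exactly the $(j,k)$-entry of $M$ under the indexing $0 \leq j,k \leq d-1$. Hence $M = V^\top V$, and therefore
\[
\det M = (\det V)^2 = \prod_{1 \leq i < j \leq d} (\alpha_j - \alpha_i)^2,
\]
using the Vandermonde determinant formula. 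To conclude, I would observe that $\bar K$ is a field, hence a domain, so $\det M = 0$ if and only if $\alpha_j = \alpha_i$ for some pair $i \neq j$; by our listing with algebraic multiplicity this happens precisely when some eigenvalue has algebraic multiplicity at least $2$, i.e. when $\T_{\mathfrak p}$ has a repeated eigenvalue. Since $\det M$ lies in $K$, its vanishing is insensitive to whether it is tested over $K$ or over $\bar K$, so no difficulty arises from the eigenvalues themselves lying only in $\bar K$.

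The argument is short and essentially standard; the only points needing care are the positive-characteristic ones. First, I would make sure the power-sum identity uses \emph{algebraic} (not merely geometric) multiplicities, which is why the triangularisation step is stated explicitly rather than assuming diagonalisability—recall that $\T_{\mathfrak p}$ need not be diagonalisable in the Drinfeld setting. Second, passing from $(\det V)^2 = 0$ to $\det V = 0$ must be justified in characteristic $p$; this is where one might worry, but it is immediate because a field has no nonzero nilpotents. This last observation is what I expect to be the most delicate (though ultimately harmless) step, since it is the only place where the characteristic could a priori interfere.
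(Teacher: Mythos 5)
Your proof is correct and is essentially the paper's own argument: the paper likewise writes $M = A^tA$ for the Vandermonde matrix $A$ on the eigenvalues (listed with algebraic multiplicity) and concludes $\det M = \prod_{i<j}(\alpha_i-\alpha_j)^2$. Your added remarks on triangularisation over $\bar K$ and on passing from $(\det V)^2=0$ to $\det V=0$ in characteristic $p$ are sound and merely make explicit what the paper leaves implicit.
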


\begin{proof}
      Denote the eigenvalues of $\T_{\mathfrak p}$ by $\alpha_1,\ldots,\alpha_d \in \bar{K}$. For any $n \geq 0$, we have $\Tr(\T_{\mathfrak p}^n \hspace{0.2em} | \hspace{0.1em} \S_{k,l}) = \alpha_1^n + \ldots + \alpha_d^n$. Hence, if we let
     \[
        A = \begin{pmatrix}
        1 & \alpha_1 & \alpha_1^2 & \cdots & \alpha_1^{d-1} \\
        1 & \alpha_2 & \alpha_2^2 & \cdots & \alpha_2^{d-1} \\
        \vdots & \vdots & & \ddots & \vdots \\
        1 & \alpha_d & \alpha_d^2 & \cdots & \alpha_d^{d-1}
    \end{pmatrix},
     \]
     then $A^t A = M$. But $A$ is a Vandermonde matrix, so
     \[
    \det M = \det A^tA = \prod_{i<j} (\alpha_i - \alpha_j)^2.
     \]
     Thus $\det M = 0$ if and only if $\alpha_i = \alpha_j$ for some $i \neq j$.
\end{proof}

\begin{remark}\label{rmk:repeated_eigs}
    If $p=2$, the action of~$\T_{\mathfrak p}$ on~$\S_{k,l}$ almost always has repeated eigenvalues by Theorem~\ref{thm:char2_repetition}. If $p > 2$, we suspect that the action of~$\T_{\mathfrak p}$ on~$\S_{k,l}$ never has~$p$ repeated eigenvalues at level~1. At higher levels, though, there can be repeated eigenvalues for any~$q$ \cite{li-meemark,hattori_ordinary}. This is analogous to the classical setting, where Buzzard's conjecture \cite[Conj.~2.1]{maeda} predicts that each Hecke operator acts irreducibly on $\S_k(\operatorname{SL}_2(\mathbb Z))$ for any~$k$. This would in particular imply that eigenvalues are not repeated at level~1.
\end{remark}

\subsection{Injectivity}
In their study of oldforms and newforms, Bandini and Valentino conjectured that the Hecke operator $\T_T$ acting on $\S_{k,l}$ is always injective \cite[Conj.~1.1]{band-val_slopes}. This is related to the diagonalisability of the Atkin operator~$\mathbf{U}_T$ at level~$\Gamma_0(T)$. Since then, the conjecture has been proven in the case that the space of cusp forms has dimension~1 \cite{band-val_hecke,dalal-kumar}.

It turns out that a result by Joshi and Petrov \cite{joshi-petrov} on the structure of Hecke operators modulo a prime of degree~1 immediately implies the following stronger version of the conjecture.

\begin{theorem}\label{thm:injectivity}
    Let $\mathfrak p \trianglelefteq A$ be any maximal ideal and let $k, l \in \mathbb Z$. Then the Hecke operator $\T_{\mathfrak p}$ is injective on~$\S_{k,l}$.
\end{theorem}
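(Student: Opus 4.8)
The plan is to reduce injectivity to the non-vanishing of a determinant, and then to detect that non-vanishing after reduction modulo a prime of degree~1, where the cited structural result of Joshi and Petrov applies directly.

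First I would pass to an arithmetic model. By Theorem~\ref{thm:modforms} the graded algebra is $\bC_\infty[E_{q-1},h]$, and the $t$-expansions of $E_{q-1}$ and $h$ have coefficients in $K$ (indeed, after suitable normalisation, in $A$; see \cite{gekeler_coeffs}). Hence $\S_{k,l}$ has a $K$-basis of forms whose $t$-expansions lie in $K[[t]]$, and since the coefficients appearing in Definition~\ref{def:hecke_operators} lie in $K$, the operator $\T_{\mathfrak p}$ restricts to a $K$-linear endomorphism of this $K$-structure. Therefore $\T_{\mathfrak p}$ is injective on the $\bC_\infty$-vector space $\S_{k,l}$ if and only if $\det(\T_{\mathfrak p}\mid\S_{k,l})\neq 0$ in $K$.

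Next I would clear the denominator coming from the normalisation. Since $\T_{\mathfrak p}=\wp^{-1}\T^{\bF_q[T]}_{\mathfrak p}$ with $\wp\in K^\times$, it is equivalent to show that the Goss-normalised operator $\T^{\bF_q[T]}_{\mathfrak p}=\wp\,\T_{\mathfrak p}$ is injective. The advantage is that $\T^{\bF_q[T]}_{\mathfrak p}$ carries no denominator and preserves an $A$-integral $t$-expansion lattice in $\S_{k,l}$, so it admits a well-defined reduction modulo any prime of $A$. Fixing a prime $\mathfrak p_0$ of degree~1, it then suffices to prove that the reduction $\overline{\T^{\bF_q[T]}_{\mathfrak p}}$ on mod-$\mathfrak p_0$ cusp forms is injective: for then $\det(\T^{\bF_q[T]}_{\mathfrak p})\not\equiv 0\pmod{\mathfrak p_0}$, whence $\det\neq 0$ and we are done.

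The decisive step is to invoke the theorem of Joshi and Petrov \cite{joshi-petrov}, which gives an explicit description, at the level of $t$-expansions, of the action of a Hecke operator on Drinfeld modular forms modulo a prime of degree~1. Their description exhibits $\overline{\T^{\bF_q[T]}_{\mathfrak p}}$ as an operator with trivial kernel, so injectivity follows at once; this is the sense in which the implication is immediate, and because it holds for every $\mathfrak p$ it yields the statement for all maximal ideals rather than only for $\mathfrak p=(T)$ as conjectured in \cite{band-val_slopes}. I expect the main obstacle to be bookkeeping rather than conceptual: one must set up the $A$-integral model carefully so that reduction modulo $\mathfrak p_0$ is legitimate despite the $\wp^{-1}$ in our normalisation, and then align the present normalisation and indexing conventions with those of \cite{joshi-petrov}, verifying in particular that the degree-1 prime $\mathfrak p_0$ of reduction can always be chosen so that their structural statement applies to $\T_{\mathfrak p}$ independently of the degree of $\mathfrak p$.
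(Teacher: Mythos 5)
Your proposal is correct and takes essentially the same route as the paper: reduce modulo a suitable degree-1 prime and invoke the Joshi--Petrov structure theorem, which (in the paper's one-line proof) says that for any $x \in \bF_q$ with $\wp(x) \neq 0$ --- such an $x$ always exists since $\wp$ is irreducible, which settles the prime-choice verification you flag at the end --- the reduction $\Tmod_{\mathfrak p}$ of $\T_{\mathfrak p}$ modulo $(T-x)$ is well-defined and has the single nonzero eigenvalue $\wp(x)^{l-1}$ with multiplicity $\dim \S_{k,l}$, so $0$ cannot be an eigenvalue of $\T_{\mathfrak p}$. The only differences are cosmetic: you phrase the conclusion via non-vanishing of the determinant of the Goss-normalised operator $\wp\,\T_{\mathfrak p}$, while the paper argues directly with the eigenvalues of the rescaled $\T_{\mathfrak p}$, and the two formulations coincide since the determinant is the product of the eigenvalues.
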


\begin{proof}
    Let $\wp$ denote the monic generator of~$\mathfrak p$ and pick $x \in \bF_q$ such that $\wp(x) \neq 0$.
    By \cite[Cor.~3.3]{joshi-petrov}, the reduction $\Tmod_{\mathfrak p}$ of $\T_{\mathfrak p}$ modulo the prime $(T-x)$ is well-defined and has eigenvalue $\wp(x)^{l-1}$ with multiplicity~$\dim \S_{k,l}$. But if $\T_{\mathfrak p}$ were not injective, then 0 would be among the eigenvalues of~$\Tmod_{\mathfrak p}$.
\end{proof}

 \begin{remark}
     For elliptic modular forms, we believe the injectivity of Hecke operators at level~1 to be an interesting question. Despite a wealth of computational data, there are no known examples of Hecke operators~$\T_p$ acting on $\S_k(\operatorname{SL}_2(\mathbb Z))$ with a zero eigenvalue. Lehmer's conjecture on the Ramanujan $\tau$-function is equivalent to the statement that for $k=12$, there is no such Hecke operator. Buzzard's conjecture (mentioned in Rmk.~\ref{rmk:repeated_eigs}) would imply that Hecke operators are injective for all~$k$.
 \end{remark}

\subsection{Eigenvalues in characteristic 2}

In this section we assume that $q$ is a power of~2. We have seen in Theorem~\ref{thm:char2traces} that the traces of Hecke operators are exceptionally simple. We use this to determine many eigenvalues of the Hecke operators explicitly (Thm.~\ref{thm:char2eigs}). On the other hand, the Hecke action almost always has repeated eigenvalues (Thm.~\ref{thm:char2_repetition}). The simple expression of the traces is therefore partly due to the fact that not all eigenvalues contribute to the traces.

\begin{definition}
    Suppose $2 \mid q$. For any $k \geq 0$ and $l \in \mathbb Z$, we define the sets
    \[
    P(k+2,l,q) := \left\{ 0 \leq j < k/2 \ \bigg{|} \ j \equiv_{q-1} l-1 \text{ and } \binom{k-j}{j} \equiv_2 1 \right\},
    \]
    and write $N(k+2,l,q) := \# P(k+2,l,q)$.
\end{definition}

\begin{remark}\label{rmk:stern}
    If $q = 2$, the sequence $N(k+2,1,q)$ is closely related to the \emph{Stern--Brocot sequence} $(a_k)_{k \geq 0}$ defined as follows: $a_0 = a_1 = 1$ and for $k \geq 2$,
    \[
    a_k = \begin{cases} a_{k/2} + a_{k/2 - 1} & \text{if }k \text{ is even}; \\ a_{(k-1)/2} & \text{if } k \text{ is odd}.
    \end{cases}
    \]
    Then we have
    \[
    N(k+2,1,2) = a_k - \frac{1}{2}\left(1 + (-1)^k\right).
    \]
\end{remark}

\begin{lemma}\label{lem:dim_congruence}
    Suppose $2 \mid q$. If $k+2 \equiv 2l \pmod{q-1}$, then
    \[
    N(k+2,l,q) \equiv \dim \S_{k+2,l} \pmod{2}.
    \]
\end{lemma}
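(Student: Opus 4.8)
The plan is to read off $\dim \S_{k+2,l} \pmod 2$ directly from the sequence of Hecke traces, which are completely controlled by Theorem~\ref{thm:char2traces}. Write $J_1 := \{\, 0 \le j < k/2 : j \equiv_{q-1} l-1 \text{ and } \binom{k-j}{j} \equiv_2 1 \,\}$, so that $N(k+2,l,q) = \# J_1$ by definition. Since the trace is an element of $\bF_q[T]$, a field of characteristic~$2$, the integer coefficients $\binom{k-j}{j}$ appearing in Theorem~\ref{thm:char2traces} are reduced modulo~$2$, and under the hypothesis $k+2 \equiv 2l \pmod{q-1}$ the formula collapses to
\[
\Tr(\T_{\mathfrak p}^n \mid \S_{k+2,l}) = \sum_{j \in J_1} \wp^{nj} \qquad (n \ge 1).
\]
On the other hand, if $\alpha_1,\dots,\alpha_d \in \bar K$ denote the eigenvalues of $\T_{\mathfrak p}$ on $\S_{k+2,l}$ counted with algebraic multiplicity, so that $d = \dim \S_{k+2,l}$, then $\Tr(\T_{\mathfrak p}^n \mid \S_{k+2,l}) = \sum_{i=1}^d \alpha_i^n$ for every $n \ge 1$ (via upper-triangularization over~$\bar K$).

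Comparing the two expressions yields the identity of power sums $\sum_{i=1}^d \alpha_i^n = \sum_{j \in J_1} \wp^{nj}$ for all $n \ge 1$, which I would repackage as an identity of generating functions over $\bar K$:
\[
\sum_{i=1}^d \frac{\alpha_i t}{1 - \alpha_i t} = \sum_{j \in J_1} \frac{\wp^j t}{1 - \wp^j t}.
\]
Both sides are rational functions in~$t$ whose Taylor expansions at $t=0$ agree to all orders, hence they are equal in $\bar K(t)$. The key step is then to evaluate at $t = \infty$. Each summand $\tfrac{\beta t}{1-\beta t} = -1 + \tfrac{1}{1-\beta t}$ has its only pole at $t = 1/\beta$ and takes the value $-1$ at infinity whenever $\beta \neq 0$. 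The factors $\wp^j$ are clearly nonzero, and the $\alpha_i$ are nonzero precisely because $\T_{\mathfrak p}$ is injective on $\S_{k+2,l}$ by Theorem~\ref{thm:injectivity}. Thus the left-hand side has value $-d$ at infinity and the right-hand side has value $-\#J_1$, giving $-d = -\#J_1$ in $\bar K$. Since $d$ and $\#J_1$ are images of integers in the prime field $\bF_2$, this is exactly $\dim \S_{k+2,l} \equiv N(k+2,l,q) \pmod 2$.

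The point to be careful about — and the \emph{reason} one cannot simply substitute $n=0$ into the trace formula to read off the dimension — is that in characteristic~$2$ the multiset of eigenvalues $\{\alpha_i\}$ and the multiset $\{\wp^j\}_{j \in J_1}$ need not coincide: repeated eigenvalues are in fact generic in this setting by Theorem~\ref{thm:char2_repetition}, so the power sums for $n \ge 1$ determine neither multiset. The rational-function-at-infinity argument is designed precisely to circumvent this, isolating the cardinalities modulo~$p$ without reconstructing the multisets. It is here that injectivity is indispensable: a single zero eigenvalue would contribute $0$ rather than $-1$ to the value at infinity, shifting the count on the left by one and breaking the congruence.
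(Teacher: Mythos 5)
Your proof is correct, but it takes a genuinely different route from the paper's. The paper proves the lemma by reducing modulo a degree-one prime: by Joshi--Petrov, the mod-$(T-x)$ Hecke operator $\Tmod_{\mathfrak q}$ has the \emph{single} eigenvalue $Q(x)^{l-1}$ with multiplicity $\dim \S_{k,l}$, so $\dim \S_{k,l} \bmod p$ is literally $\Tr(\T_T \mid \S_{k,l}) \bmod (T+1)$; comparing with the explicit trace from Theorem~\ref{thm:deg1} evaluated at $T \equiv 1$ gives $N(k+2,l,q) \bmod 2$ on the other side. You instead work with the full sequence of traces $\Tr(\T_{\mathfrak p}^n)$ for $n \geq 1$ from Theorem~\ref{thm:char2traces}, package the two power-sum expressions as rational functions $\sum_i \frac{\alpha_i t}{1-\alpha_i t} = \sum_{j \in J_1} \frac{\wp^j t}{1-\wp^j t}$ in $\bar K(t)$, and evaluate at $t = \infty$ to extract the cardinalities modulo~$2$ --- a clean way to count eigenvalues mod~$p$ without reconstructing the multisets, with Theorem~\ref{thm:injectivity} supplying the indispensable fact that no $\alpha_i$ vanishes (a zero eigenvalue would contribute $0$ rather than $-1$ at infinity, exactly as you note). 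Two observations on the comparison. First, your argument is not independent of the Joshi--Petrov input, since Theorem~\ref{thm:injectivity} itself rests on their Corollary~3.3; both proofs use that external result, but yours funnels it entirely through injectivity, while the paper uses the finer structural statement that the reduced Hecke operator has a unique unit eigenvalue. Second, your approach essentially makes rigorous the paper's own remark following Theorem~\ref{thm:char2eigs}, that the lemma is equivalent to the eigenvalue $0$ occurring with even multiplicity --- and since you use only Theorems~\ref{thm:char2traces} and~\ref{thm:injectivity}, both proved before and independently of the lemma, there is no circularity with Theorem~\ref{thm:char2eigs}, whose proof consumes the lemma. What your route buys is uniformity (it works verbatim for every $\mathfrak p$, making the $\mathfrak p$-independence of the congruence transparent) and a conceptual isolation of injectivity as the one essential input; what the paper's route buys is brevity given the mod-$\mathfrak p$ machinery, and a direct identification of the dimension mod~$p$ with a single specialised trace rather than an argument over all powers $n$.
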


\begin{proof}
Let $x \in \bF_q$ and let $\mathfrak p = (T-x)$ be a prime of degree~1. In \cite{joshi-petrov}, the authors define, for any $\mathfrak q \neq \mathfrak p$, Hecke operators $\Tmod_{\mathfrak q}$ on Drinfeld modular forms mod~$\mathfrak p$ in such a way that
\[
\Tr(\T_{\mathfrak q} \hspace{0.2em} | \hspace{0.1em} \S_{k,l}) \equiv \Tr(\Tmod_{\mathfrak q} \hspace{0.2em} | \hspace{0.2em} \Smod_{k,l}) \pmod{\mathfrak p}.
\]
Moreover, if $Q$ is the monic generator of~$\mathfrak q$, they show that the only eigenvalue of $\Tmod_{\mathfrak q}$ on $\Smod_{k,l}$ is $Q(x)^{l-1} \in \bF^\times_q$. Since $\dim \S_{k,l} = \dim_{\bF_{\mathfrak p}} \Smod_{k,l}$, this implies that
\begin{equation}\label{eq:dimmod}
\dim \S_{k,l} \ (\text{mod }p) = \Tr(\T_{T} \hspace{0.2em} | \hspace{0.1em} \S_{k,l})  \ (\text{mod }T-1)
\end{equation}
as elements of $\bF_p \subseteq \bF_{\mathfrak p}$.

Now suppose $k+2 \equiv 2l \pmod{q-1}$. Then by Theorem~\ref{thm:deg1},
\begin{equation}\label{eq:dimmod2}
\Tr(\T_{T} \hspace{0.2em} | \hspace{0.1em} \S_{k+2,l}) \equiv \sum_{\substack{0 \leq j < k/2 \\ j \equiv l-1 \pmod{q-1}}} (-1)^j\binom{k-j}{j} \pmod{T-1}.
\end{equation}
Setting $p = 2$ and combining \eqref{eq:dimmod} and \eqref{eq:dimmod2} gives the result.
\end{proof}

\begin{remark}
    If $q=2$, Lemma~\ref{lem:dim_congruence} can also be proved combinatorially using the recursions satisfied by $\dim \S_{k+2}$ and by the Stern--Brocot sequence. If $q > 2$, the analogous recursion satisfied by the numbers $N(k+2,l,q)$ gets more involved as it involves a change in the type~$l$, which makes a direct calculation difficult.
\end{remark}

The following theorem completely describes the eigenvalues of Hecke operators in characteristic~2 which occur with algebraic multiplicity not divisible by~2.

\begin{theorem}\label{thm:char2eigs}
    Suppose $2 \mid q$. Let $k \geq 0$ and $l \in \mathbb Z$ satisfy $k+2 \equiv 2l \pmod{q-1}$. Let $\mathfrak p \trianglelefteq A$ be any maximal ideal. Then the set of eigenvalues with odd multiplicity of~$\T_{\mathfrak p}$ acting on~$\S_{k+2,l}$ is given by
    \[
    \{ \alpha \in \text{Sp}(\T_{\mathfrak p}) \ | \ m_\alpha \equiv_2 1 \} =  \{ \wp^j \ | \ j \in P(k+2,l,q) \}.
    \]
\end{theorem}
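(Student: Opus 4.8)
The plan is to extract the odd-multiplicity eigenvalues directly from the power sums $\Tr(\T_{\mathfrak p}^n \mid \S_{k+2,l})$, which Theorem~\ref{thm:char2traces} makes completely explicit. Since everything takes place in a field of characteristic~$2$, for each $n \geq 1$ the identity $\Tr(\T_{\mathfrak p}^n) = \sum_{\alpha \in \text{Sp}(\T_{\mathfrak p})} m_\alpha \alpha^n$ collapses to
\[
\Tr(\T_{\mathfrak p}^n \mid \S_{k+2,l}) = \sum_{\substack{\alpha \in \text{Sp}(\T_{\mathfrak p}) \\ m_\alpha \equiv_2 1}} \alpha^n,
\]
because each eigenvalue of even multiplicity contributes $0$. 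On the other hand, reducing the binomial coefficients $\binom{k-j}{j}$ modulo~$2$ in Theorem~\ref{thm:char2traces} kills exactly the indices with $\binom{k-j}{j} \equiv_2 0$, leaving
\[
\Tr(\T_{\mathfrak p}^n \mid \S_{k+2,l}) = \sum_{j \in P(k+2,l,q)} (\wp^j)^n .
\]
Thus I would have two descriptions of the same power-sum sequence, one indexed by the odd-multiplicity eigenvalues and one by $\{\wp^j : j \in P(k+2,l,q)\}$, with all surviving coefficients equal to $1$ in $\bF_2$.

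The second step is to record that each index set consists of pairwise distinct \emph{nonzero} scalars. The powers $\wp^j$ are distinct because $\deg(\wp^j) = j \deg(\wp)$ separates them (as $\deg \wp \geq 1$), and they are visibly nonzero; the eigenvalues are distinct since they range over the set $\text{Sp}(\T_{\mathfrak p})$, and none of them vanishes by Theorem~\ref{thm:injectivity}, which asserts that $\T_{\mathfrak p}$ is injective on $\S_{k+2,l}$. This nonvanishing is not a formality: an eigenvalue $0$ of odd multiplicity would contribute $0^n = 0$ to every trace with $n \geq 1$ and so would be undetectable by this method, so the statement would genuinely fail without injectivity.

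The final step is a uniqueness argument for power sums. Let $S \subset \bar K^\times$ be the symmetric difference of the two sets; subtracting the two expressions above and using that common elements cancel in characteristic~$2$ yields $\sum_{\gamma \in S} \gamma^n = 0$ for every $n \geq 1$. If $S = \{\gamma_1, \ldots, \gamma_r\}$ were nonempty, then specializing to $n = 1, \ldots, r$ gives a homogeneous system whose coefficient matrix $(\gamma_i^{\,t})_{1 \leq t,i \leq r}$ has determinant $\bigl(\prod_i \gamma_i\bigr)\prod_{i<j}(\gamma_j - \gamma_i) \neq 0$; hence the all-ones solution vector would have to vanish, contradicting $1 \neq 0$ in characteristic~$2$. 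Therefore $S = \emptyset$ and the two sets coincide, which is the claim. I expect the only delicate bookkeeping to be confirming that the surviving coefficients on both sides are exactly $1$ in $\bF_2$ (so that matching elements cancel cleanly) and that $0$ is excluded from both sets; once these are in place, the Vandermonde/linear-independence conclusion is routine.
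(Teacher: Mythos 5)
Your proof is correct, and it takes a genuinely different route from the paper's. The paper starts from the same two power-sum descriptions supplied by Theorem~\ref{thm:char2traces}, but then forms the Hankel matrices $M_c = (\Tr(\T_{\mathfrak p}^{i+j-2}))_{i,j=1}^c$ and factors them as $A_c^tA_c$ and $B_c^tB_c$ through Vandermonde matrices on the odd-multiplicity eigenvalues and on $\{\wp^{j_1},\ldots,\wp^{j_N}\}$ respectively; because the $(1,1)$ entry is the $n=0$ trace, this forces the paper to prove the parity congruence $N(k+2,l,q) \equiv \dim \S_{k+2,l} \pmod{2}$ (Lemma~\ref{lem:dim_congruence}, via the Joshi--Petrov mod-$\mathfrak p$ Hecke theory), after which a rank comparison gives $N = \#E$ and the final identification of the two sets is delegated to the cited uniqueness of power sums of distinct elements \cite{newton}. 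You avoid the $n=0$ term entirely: your symmetric-difference-plus-Vandermonde argument uses only $n \geq 1$, so you never need Lemma~\ref{lem:dim_congruence}; the price is that a zero element of the symmetric difference would be invisible to all traces with $n \geq 1$, and you correctly discharge this with Theorem~\ref{thm:injectivity} (the nonvanishing of the determinant $\bigl(\prod_i \gamma_i\bigr)\prod_{i<j}(\gamma_j - \gamma_i)$ needs both distinctness and nonvanishing, and you supply both). This is precisely the trade-off the paper itself flags in the remark following the theorem, where Lemma~\ref{lem:dim_congruence} is observed to be equivalent to $0$ occurring with even multiplicity, which in turn follows from injectivity --- your proof in effect promotes that remark to the operative step. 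What each approach buys: yours is more self-contained (one explicit determinant, no external citation for power-sum uniqueness in positive characteristic, no dimension-parity lemma), while the paper's route produces the cardinality equality $N = \#E$ and the parity statement of Lemma~\ref{lem:dim_congruence} as independent, reusable facts; note that both ultimately rest on Joshi--Petrov, yours through Theorem~\ref{thm:injectivity} and the paper's through Lemma~\ref{lem:dim_congruence}.
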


\begin{proof}
    Theorem~\ref{thm:char2traces} implies that for all $n \geq 1$,
    \[
    \Tr(\T_{\mathfrak p}^{n} \hspace{0.2em} | \hspace{0.1em} \S_{k+2,l}) = \sum_{\substack{0 \leq j < k/2 \\ j \equiv l-1 \pmod{q-1}}} {\binom{k-j}{j}}\wp^{nj} = \sum_{j \in P(k+2,l,q)} \wp^{nj}.
    \]
    Write $P(k+2,l,q) = \{j_1,\ldots,j_N\}$ with $N = N(k+2,l,q)$. It follows that for all $n \geq 1$,
    \[
    \Tr(\T_{\mathfrak p}^{n} \hspace{0.2em} | \hspace{0.1em} \S_{k+2,l}) = p_n(\wp^{j_1},\ldots,\wp^{j_N}),
    \]
    where $p_n$ denotes the $n$-th power sum symmetric polynomial in $N$ variables. 

    On the other hand, let $E = \{\alpha_1,\ldots,\alpha_e\}$ denote the set of eigenvalues with odd algebraic multiplicity, with $e = \#E$. Then for all $n \geq 1$, we also have
    \[
    \Tr(\T_{\mathfrak p}^{n} \hspace{0.2em} | \hspace{0.1em} \S_{k+2,l}) = p_n(\alpha_1,\ldots,\alpha_e).
    \]
    For $c \geq e$, write $A_c$ for the $e \times c$ Vandermonde matrix on $\{\alpha_1,\ldots,\alpha_e\}$, i.e.,
    \[
    A_c = \begin{pmatrix}
        1 & \alpha_{1} & \alpha_1^2 & \cdots & \alpha_1^{c-1} \\
        1 & \alpha_2 & \alpha_2^{2} & \cdots & \alpha_2^{c-1} \\
        \vdots & \vdots & \vdots & \ddots & \vdots \\
        1 & \alpha_e & \alpha_e^{2} & \cdots & \alpha_e^{c-1}
        \end{pmatrix}.
    \]
    Then $A_c^t A_c$ is the matrix $M_c = (\Tr(\T_{\mathfrak p}^{i+j-2}))_{i,j=1}^{c}$; the equality for $i=j=1$ follows because $\dim \S_{k+2,l} \equiv_2 e$. Hence the rank of $M_c$ is at most $e$. Since the $\alpha_i$ are distinct, the upper leftmost $e \times e$ minor $A_e^tA_e$ has non-zero determinant, so in fact $\rk(M_c) = e$.

    For $c \geq N$, write $B_c$ for the $N \times c$ Vandermonde matrix on $\{\wp^{j_1},\ldots,\wp^{j_N}\}$. By Lemma~\ref{lem:dim_congruence}, we know that $\dim \S_{k+2,l} \equiv_2 N$. Therefore it follows in the same way that $B_c^t B_c = M_c$ and that $\rk(M_c) = N$. Comparing the ranks of $M_c$ for $c \geq \max \{N,e\}$ gives $N = \# E$. But knowing this, we can in fact deduce that $E = \{ \wp^j \ | \ j \in P(k+2,l,q)\}$; in words, the elements $\wp^{j_1},\ldots,\wp^{j_N}$ are precisely the eigenvalues of~$\T_{\mathfrak p}$ with odd multiplicity. This follows because the sequence of all power sums of distinct elements $x_1,\ldots,x_m \in \bar{K}$ uniquely determines these elements \cite[Prop.~3.16]{devries_newton}.
\end{proof}

\begin{remark}
    If $\dim \S_{k+2,l} \equiv_2 N + 1$, then the matrix $B_c$ in the proof of Theorem~\ref{thm:char2eigs} would have to be replaced by the $(N+1) \times c$ Vandermonde matrix on $\{ 0, \wp^{j_1},\ldots,\wp^{j_N} \}$. Hence, Lemma~\ref{lem:dim_congruence} is equivalent to the statement that 0 occurs with even multiplicity as an eigenvalue of~$\T_{\mathfrak p}$, which we know to be true by Theorem~\ref{thm:injectivity}.
\end{remark}

\begin{remark}
    It follows immediately from Theorem~\ref{thm:char2eigs} that $N(k+2,l,q) \leq \dim \S_{k+2,l}$ if $k+2 \equiv 2l \pmod{q-1}$. The dimension of the space of cusp forms is given by Lemma~\ref{lem:cuspdim}, and implies the weaker inequality
    \[
    N(k+2,l,q) \leq \left\lceil \frac{k}{q^2-1} \right\rceil.
    \]
    This inequality is not obvious for small values of~$k$, and there are in fact plenty of counterexamples if one considers $k$ and $l$ such that $k+2 \not \equiv 2l \pmod{q-1}$. For instance, if $(k+2,l,q) = (12,1,4)$, we have $P(k+2,l,q) = \{ 0, 3 \}$ whereas $\lceil 10/15 \rceil = 1$.
\end{remark}

\begin{example}\label{ex:rep_eigs}
    Let $m \geq 1$ be an integer. By Prop.~\ref{prop:2powerweight}.1 and Thm.~\ref{thm:char2eigs}, the action of $\T_{\mathfrak p}$ on~$\S_{2^m+1,l}$ has at most one non-repeated eigenvalue, namely~1. All other eigenvalues are repeated.
\end{example}

Theorem~\ref{thm:char2eigs} implies that $\T_{\mathfrak p}$ has a repeated eigenvalue as soon as $N(k+2,l,q) < \dim \S_{k+2,l}$. We now show that this almost always happens.

\begin{theorem}\label{thm:char2_repetition}
    Suppose $2 \mid q$. Then there are only finitely many weights in which the action of~$\T_{\mathfrak p}$ has no repeated eigenvalues.
\end{theorem}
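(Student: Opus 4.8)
The plan is to reduce the statement to a comparison between $N(k+2,l,q)$ and $\dim\S_{k+2,l}$, and then to exploit the fact that the former grows far more slowly than the latter. Recall first that when $q=2^r$ with $r>1$ every weight $k\ge 2$ admits a unique type $l$ modulo $q-1$ with $k\equiv 2l \pmod{q-1}$, while for $q=2$ there is only one type; thus a weight determines a single space $\S_{k+2,l}$, and for all other $(k,l)$ the space vanishes. By Theorem~\ref{thm:char2eigs}, the eigenvalues of $\T_{\mathfrak p}$ on $\S_{k+2,l}$ having odd multiplicity are exactly the $N:=N(k+2,l,q)$ \emph{distinct} values $\{\wp^j : j\in P(k+2,l,q)\}$ (distinct because $\wp$ has positive degree). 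Writing $d:=\dim\S_{k+2,l}$ for the total algebraic multiplicity, these $N$ eigenvalues already account for total multiplicity at least $N$, while any eigenvalue of even, hence positive, multiplicity contributes a further $\ge 2$; so $d\ge N$, with equality if and only if every eigenvalue has multiplicity exactly~$1$. In other words, $\T_{\mathfrak p}$ has no repeated eigenvalue on $\S_{k+2,l}$ precisely when $N(k+2,l,q)=\dim\S_{k+2,l}$, so it suffices to show that $N(k+2,l,q)<\dim\S_{k+2,l}$ for all but finitely many weights.

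Next I would bound $N$ independently of the type. Dropping the congruence condition $j\equiv l-1 \pmod{q-1}$ only enlarges the count, so
\[
N(k+2,l,q)\le \#\{\,0\le j<k/2 : \tbinom{k-j}{j}\equiv 1 \pmod 2\,\}.
\]
The right-hand side does not involve $q$; in the notation of Remark~\ref{rmk:stern} (which treats $q=2$, where the congruence modulo $q-1=1$ is vacuous) it equals $a_k-\tfrac12(1+(-1)^k)\le a_k$, where $(a_k)$ is the Stern--Brocot sequence. Hence $N(k+2,l,q)\le a_k$ for every $q=2^r$.

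Finally I would invoke the sublinear growth of $(a_k)$: the maximum of $a_k$ over each dyadic block $2^{m}\le k<2^{m+1}$ is the Fibonacci number $F_{m+2}$, so $a_k=O\!\left(k^{\log_2\varphi}\right)$ with $\varphi=(1+\sqrt5)/2$, and in particular $a_k=o(k)$. On the other hand Lemma~\ref{lem:cuspdim} gives $\dim\S_{k+2,l}=k/(q^2-1)+O(1)$, which grows linearly in $k$. Combining, $N(k+2,l,q)\le a_k=o(k)<\dim\S_{k+2,l}$ for all sufficiently large $k$, so the equality $N=\dim$ can hold for only finitely many weights. The main obstacle is precisely this growth estimate for $(a_k)$: the conceptual reduction is immediate, but one must pin down that the Stern--Brocot sequence is genuinely sublinear. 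I would either cite the classical fact that the dyadic-block maximum equals $F_{m+2}$, or prove it quickly via the continuant expression of $a_k$ in terms of the binary run-lengths of $k$, using that a continuant with digit-sum $n$ is maximised, with value $F_{n+1}$, when all its entries equal~$1$.
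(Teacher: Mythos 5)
Your proposal is correct and follows essentially the same route as the paper: reduce via Theorem~\ref{thm:char2eigs} to comparing $N(k+2,l,q)$ with $\dim\S_{k+2,l}$, bound $N(k+2,l,q)$ by the Stern--Brocot sequence $(a_k)$, and use the Fibonacci bound (Lehmer's result, Eq.~\eqref{eq:lehmer} in the paper) to get $a_k = O\bigl(k^{\log_2\phi}\bigr) = o(k)$ against the linear growth of the dimension from Lemma~\ref{lem:cuspdim}. Your only departures are cosmetic: you spell out the reduction (no repeated eigenvalue iff $N=\dim$) more explicitly than the paper does, and you sketch a self-contained continuant proof of the Fibonacci bound where the paper simply cites \cite{lehmer_stern}.
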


\begin{proof}
    By Lemma~\ref{lem:cuspdim}, we have $\dim \S_{k+2,l} = \Theta(k)$, i.e., the dimension grows linearly with~$k$. We will show that $N(k+2,l,q) = O(k^{\log_2(\phi)})$, where $\phi$ is the golden ratio. Since $2 > \phi$, Theorem~\ref{thm:char2eigs} then implies that some eigenvalue must be repeated if $k$ is large enough.

    Let $(a_k)_{k \geq 0}$ be the Stern--Brocot sequence, given by
    \[
    a_k = \# \left\{ 0 \leq j \leq k/2 \ \bigg{|} \ \binom{k-j}{k} \equiv_2 1 \right\};
    \]
    then certainly $N(k+2,l,q) \leq a_{k}$ for all $k$. In \cite{lehmer_stern}, it is proven that for $N \in \mathbb N_0$,
    \begin{equation}\label{eq:lehmer}
    \max \left\{ a_k \ | \ k \leq 2^N - 1 \right\} = F_{N+1},
    \end{equation}
    where $F_i$ is the $i$-th Fibonacci number (with $F_0 = 0, F_1 = 1$). Since the Fibonacci sequence is $O(\phi^k)$ with $\phi$ the golden ratio, we have
    \[
    N(k+2,l,q) \leq a_k =  O\left(\phi^{\log_2(k)}\right) = O\left(k^{\log_2(\phi)}\right),
    \]
    which is what we wanted. 
\end{proof}

\begin{remark}
    If the action of $\T_{\mathfrak p}$ on $\S_{k,l}$ has repeated eigenvalues for some~$\mathfrak p$, then it has repeated eigenvalues for all~$\mathfrak p$. However, it should be noted that the repeated eigenvalues may well depend on~$\mathfrak p$ in some non-trivial way (unlike the eigenvalues with odd multiplicity).
\end{remark}

For any given value of~$q$, the constants can be made explicit. This is demonstrated by the following theorem, which sheds light on the computations done for primes of degree $\leq 5$ in \cite[Ex.~4.6]{joshi-petrov}.

\begin{theorem}\label{thm:q2mult}
    For $q=2$, the only values of $k$ such that the Hecke action on $\S_{k}$ has no repeated eigenvalues are $3 \leq k \leq 8$ and $k \in \{10,11,12,14,20,22\}$.
\end{theorem}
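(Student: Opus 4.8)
The plan is to translate the eigenvalue statement into a purely numerical comparison between the count $N(k,1,2)$ of odd-multiplicity eigenvalues and the dimension $\dim \S_k$, then to split into an asymptotic range controlled by the growth of the Stern--Brocot sequence and a finite range settled by direct computation. The key reduction is the following equivalence: \emph{for $q=2$ and a weight $k \geq 3$, the action of $\T_{\mathfrak p}$ on $\S_k$ has no repeated eigenvalue if and only if $N(k,1,2) = \dim \S_k$.} Indeed, by Theorem~\ref{thm:char2eigs} the operator $\T_{\mathfrak p}$ has exactly $N(k,1,2)$ eigenvalues of odd multiplicity; since the eigenvalues counted with multiplicity total $\dim \S_k$ and each odd-multiplicity eigenvalue already contributes at least $1$, the equality $N(k,1,2) = \dim \S_k$ forces every eigenvalue to be simple, while the converse is immediate. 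The remark following Theorem~\ref{thm:char2_repetition} guarantees that this criterion does not depend on the choice of $\mathfrak p$, so the statement is well posed.

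Next I would make both sides explicit. Specialising Lemma~\ref{lem:cuspdim} to $q=2$ gives $\dim \S_k = \lfloor k/3 \rfloor$ for all $k \geq 3$. For the left-hand side I would use Remark~\ref{rmk:stern}, which expresses $N(k,1,2) = a_{k-2} - \tfrac12\bigl(1+(-1)^{k}\bigr) \leq a_{k-2}$ in terms of the Stern--Brocot sequence $(a_j)$. This reduces the whole problem to comparing the oscillatory quantity $a_{k-2}$ with the linearly growing $\lfloor k/3 \rfloor$.

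The asymptotic step is the main obstacle, because $a_j$ must be controlled across an entire dyadic block rather than at isolated points; this is exactly what the sharp maximum \eqref{eq:lehmer} provides. If $2^{M} \leq k-2 \leq 2^{M+1}-1$, then $N(k,1,2) \leq a_{k-2} \leq F_{M+2}$, whereas $\dim \S_k = \lfloor k/3\rfloor \geq \lfloor (2^{M}+2)/3\rfloor = \lfloor 2^{M}/3\rfloor + 1$ (the last equality holding for both parities of $M$). A short induction shows $F_{M+2} \leq 2^{M}/3$ for all $M \geq 6$: the base case is $F_8 = 21 \leq 64/3$, and the step is $F_{M+3} = F_{M+2}+F_{M+1} \leq 2F_{M+2} \leq 2^{M+1}/3$. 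Since $F_{M+2}$ is an integer while $2^{M}/3$ is not, this yields $N(k,1,2) \leq \lfloor 2^{M}/3\rfloor < \dim \S_k$. Hence the action has a repeated eigenvalue for every $k \geq 66$, leaving only the finitely many weights $3 \leq k \leq 65$.

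Finally I would dispatch the range $3 \leq k \leq 65$ directly, computing $N(k,1,2)$ by counting the odd binomial coefficients $\binom{k-2-j}{j}$ for $0 \leq j < (k-2)/2$ (using Lucas's theorem, Theorem~\ref{thm:lucas}) and comparing with $\lfloor k/3\rfloor$. This elementary finite verification isolates precisely the weights $3 \leq k \leq 8$ together with $k \in \{10,11,12,14,20,22\}$, which is the asserted list. The only genuinely delicate point is the block bound in the asymptotic step; everything else is bookkeeping built on Theorem~\ref{thm:char2eigs} and Lemma~\ref{lem:cuspdim}.
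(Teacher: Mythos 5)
Your proposal is correct and follows essentially the same route as the paper: both reduce the statement via Theorem~\ref{thm:char2eigs} to the comparison $N(k,1,2) = \dim \S_k$ versus strict inequality, bound $N(k,1,2) \leq a_{k-2}$ through Remark~\ref{rmk:stern}, control the Stern--Brocot sequence on dyadic blocks via Lehmer's maximum~\eqref{eq:lehmer} against the linear dimension growth, and finish with a finite verification. The only difference is quantitative: your explicit induction $F_{M+3} \leq 2F_{M+2} \leq 2^{M+1}/3$ yields the threshold $k \geq 66$, slightly sharper than the paper's $k \geq 128$, so your hand check covers $3 \leq k \leq 65$ instead of $k \leq 127$.
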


\begin{proof}
    We need to show that for all $k \geq 1$, the inequality
    \[
    N(k,1,2) \leq \dim \S_{k} = \left\lceil \frac{k-2}{3} \right\rceil
    \]
    is strict for all weights $k$ except those listed in the theorem.

    By Rmk.~\ref{rmk:stern}, we have 
    \[
    N(k,1,2) = a_{k-2} - \frac{1}{2} \left(1 + (-1)^k\right) \leq a_{k-2},
    \]
    with $(a_k)_{k \geq 0}$ the Stern--Brocot sequence. By Lehmer's bound~\eqref{eq:lehmer}, if $k_0 = 2^{N-1}$ satisfies
    \[
    F_{N+1} <  \frac{2^{N-1} - 2}{3} \leq \dim \S_{k_0},
    \]
    then $a_k < \dim \S_k$ for all $k \geq k_0$.
    This inequality is achieved for $N = 8$, as $34 < 42 = 126/3$. We conclude that there is a strict inequality
    \[
    N(k,1,2) < \dim \S_{k} \text{ for all } k \geq 128,
    \]
    and checking the cases $1 \leq k \leq 127$ by hand gives the theorem.
\end{proof}

\subsection{Slopes}

A popular way to study the Hecke action is via slopes, due in part to the relationship (in the classical setting) between slopes and $p$-adic families of modular forms. We use a slightly more general definition than the one usually found in the literature by allowing different valuations~$v$.

\begin{definition}\label{def:slopes}
    Let $V$ be a $K$-vector space and let~$v$ be a valuation on $K$. Let $\T \: V \to V$ be a linear map. An element $\alpha \in \mathbb Q \cup \{\infty\}$ is a \emph{$v$-adic slope of $\T$} if there exists an eigenvalue $\lambda \in \text{Sp}(\T)$ and a valuation $w$ on~$\bar{K}$ such that $w|_K = v$ and $w(\lambda) = \alpha$.
    A slope $\alpha$ is called \emph{finite} if $\alpha \neq \infty$. The \emph{multiplicity} of a slope $\alpha$ is defined to be the integer
    \[
    d_v(\alpha) = \sum_{\substack{\lambda \in \text{Sp}(\T) \\ w(\lambda) = \alpha}} m_\lambda,
    \]
    where $w$ is a fixed valuation on $\bar{K}$ extending~$v$.
    A \emph{$v$-adic slope of weight~$k$ and type~$l$} is a $v$-adic slope of $\T_{\mathfrak p}\: \S_{k,l} \to \S_{k,l}$. In this case, we also denote the multiplicity by $d_{v}(k,l,\mathfrak p,\alpha) := d_{v}(\alpha)$.
\end{definition}

Equivalently, $\alpha$ is a slope of multiplicity $d$ if and only if the $v$-adic Newton polygon of the characteristic polynomial $\det(\mathds{1}X - \T) \in K[X]$ of $\T$ has a line segment of slope~$\alpha$ and projected length~$d$.

\begin{remark}
    Classically, the eigenvalues of the Hecke operator $\T_p$ acting on elliptic modular forms of weight~$k$ are algebraic integers of complex absolute value $p^{(k-1)/2}$. In particular, if $\ell \neq p$ is a prime number, then the $\ell$-adic slopes of~$\T_p$ are all zero. For this reason, a slope of~$\T_p$ (or the Atkin operator $\mathbf{U}_p$, if $p$ divides the level) is defined to be a $p$-adic slope. 
    
    For Hecke operators on Drinfeld modular forms, it appears that non-zero $v$-adic slopes of~$\T_{\mathfrak p}$ may occur for any place~$v$. In particular, we believe the case $v = v_\infty$ to be of great interest. One reason for this is that bounds on $\infty$-adic slopes imply bounds on $v$-adic slopes for all other places~$v$, since the Hecke operators are defined over~$A$ (see Cor.~\ref{cor:slopes_bounded}).
\end{remark}

\begin{proposition}({\cite[Prop.~4.2]{devries_newton}}) \label{prop:trace_vals}
    Suppose $\T \: V \to V$ is a $K$-linear map such that no eigenvalue of~$\T$ is repeated $p$ times. Then for any valuation $v$ on~$K$ and any $c \in \mathbb R \cup \{ \infty \}$, the following are equivalent:
    \begin{enumerate}
        \item $v(\lambda) \geq c$ for all $\lambda \in \text{Sp}(\T)$;
        \item $v(\Tr(\T^n \hspace{0.2em} | \hspace{0.2em} V) ) \geq cn$ for all $n \geq 1$.
    \end{enumerate}
\end{proposition}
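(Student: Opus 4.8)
The plan is to reduce the stated equivalence, which must hold for every $c \in \mathbb R \cup\{\infty\}$, to a single numerical identity relating eigenvalues to traces. Fix an extension $w$ of $v$ to $\overline K$. Since the multiset $\{w(\lambda): \lambda \in \Sp(\T)\}$ consists precisely of the $v$-adic Newton polygon slopes of the characteristic polynomial $\det(\mathds 1 X - \T) \in K[X]$, it is independent of the choice of $w$, so condition~(1) reads $\min_{\lambda}w(\lambda) \geq c$, while condition~(2) reads $\inf_{n\geq1}\tfrac1n v(\Tr(\T^n)) \geq c$. Hence~(1)$\iff$(2) for all $c$ is equivalent to the single identity
\[
\inf_{n\geq1}\frac{v(\Tr(\T^n))}{n} = \min_{\lambda\in\Sp(\T)} w(\lambda),
\]
and this is what I would prove. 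Writing $\Tr(\T^n) = \sum_{\lambda} m_\lambda \lambda^n$ and reducing the integer multiplicities modulo $p$ (recall $\char K = p$), we have $\Tr(\T^n) = \sum_\lambda \overline{m_\lambda}\,\lambda^n$ with $\overline{m_\lambda} \in \bF_p$; the hypothesis that no eigenvalue is repeated $p$ times says exactly that $\overline{m_\lambda}\neq 0$ for every $\lambda\in\Sp(\T)$.

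The inequality $\geq$ is the easy direction, namely (1)$\Rightarrow$(2), and does not use the hypothesis: by the ultrametric inequality $w(\Tr(\T^n)) \geq \min_{\lambda} n\,w(\lambda) = n\min_\lambda w(\lambda)$, and $w$ agrees with $v$ on $\Tr(\T^n)\in K$.

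For the reverse inequality $\leq$, I would set $m = \min_\lambda w(\lambda)$ and let $S = \{\lambda : w(\lambda)=m\}$ collect the eigenvalues of minimal valuation. Splitting $\Tr(\T^n) = \sum_{\lambda\in S}\overline{m_\lambda}\lambda^n + R_n$, the remainder satisfies $w(R_n) \geq n m'$ for some $m' > m$. Factoring out $\theta^n$ for a fixed $\theta$ with $w(\theta)=m$, it then suffices to show that the unit-valued sequence $h(n) = \sum_{\lambda\in S}\overline{m_\lambda}(\lambda/\theta)^n$ satisfies $\liminf_n \tfrac1n w(h(n)) = 0$; equivalently, that $w(h(n))$ stays bounded along an infinite set of $n$. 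Here the $u_\lambda := \lambda/\theta$ are distinct units and the coefficients $\overline{m_\lambda}$ are nonzero in $\bF_p$ by the hypothesis, so $h$ is a genuinely nonzero $\bF_p$-combination of the pairwise-distinct characters $n\mapsto u_\lambda^n$.

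The main obstacle is precisely this non-cancellation statement, which is subtle in characteristic $p$: the Frobenius identity $\sum_\lambda \overline{m_\lambda}u_\lambda^{p^N} = \big(\sum_\lambda \overline{m_\lambda}u_\lambda\big)^{p^N}$ shows that $w(h(p^N))$ can grow linearly in $p^N$, so one cannot simply reduce modulo the maximal ideal and conclude. The resolution I would pursue is to restrict to exponents $n$ coprime to $p$ and induct on the depth to which the residues of the $u_\lambda$ agree. The residue field of $w$ is $\overline{\bF}_p$, so each reduction $\overline{u_\lambda}$ is a root of unity and the reduced sequence $n\mapsto \sum_\lambda \overline{m_\lambda}\,\overline{u_\lambda}^{\,n}$ is periodic with period prime to $p$; if the $\overline{u_\lambda}$ are not all equal, linear independence of characters (Vandermonde) produces an $n$ coprime to $p$ at which this periodic sum is nonzero, giving $w(h(n))=0$. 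When the residues coincide, I would peel off the common residue by writing $u_\lambda = v(1+\delta_\lambda)$ with $w(\delta_\lambda)>0$ and expanding $(1+\delta_\lambda)^n$, reducing to the same problem for the strictly refined data $(\delta_\lambda)$ while keeping the coefficients nonzero — this is where $p\nmid m_\lambda$ is used at each stage. Carrying out this induction cleanly, in particular controlling the binomial coefficients $\binom nk$ modulo $p$ so that a nonzero contribution of bounded valuation survives for some $n$ coprime to $p$, is the technical heart of the argument.
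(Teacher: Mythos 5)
The paper itself contains no proof of this proposition: it is imported verbatim from \cite[Prop.~4.2]{newton}, so your proposal can only be measured against the standard argument underlying that citation. Your framework is correct as far as it goes: the reduction of the all-$c$ equivalence to the single identity $\inf_{n}v(\Tr(\T^n))/n=\min_{\lambda}w(\lambda)$ is legitimate (the multiset of $w(\lambda)$ is the multiset of Newton-polygon slopes of the characteristic polynomial, hence independent of the extension $w$), and $(1)\Rightarrow(2)$ by the ultrametric inequality is fine. The genuine gap is in the lemma you yourself flag as the technical heart — that $h(n)=\sum_{\lambda\in S}\overline{m_\lambda}\,u_\lambda^n$ has bounded valuation along infinitely many $n$ — and your sketched induction contains a false step. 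In the mixed case where the residues $\overline{u_\lambda}$ coincide in groups without all being equal, the reduced sequence is $n\mapsto\sum_{\text{classes}}\bigl(\sum_{\lambda\in\text{class}}\overline{m_\lambda}\bigr)\overline{u}^{\,n}$, and the grouped coefficients can vanish in $\bF_p$ even though every $\overline{m_\lambda}\neq0$: for $p=2$ take four units of multiplicity $1$ with $u_1\equiv u_2$ and $u_3\equiv u_4$ modulo the maximal ideal, $\overline{u_1}\neq\overline{u_3}$; the reduction is then identically zero, no exponent gives $w(h(n))=0$, and ``linear independence of characters'' yields nothing. Your peeling step $u_\lambda=v(1+\delta_\lambda)$ is formulated only for a single residue class, so the induction as described does not close, and you concede the binomial bookkeeping is not carried out. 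As written, the proof of the key lemma fails.

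The lemma is true, and the fix is far simpler than any residue analysis: invert a Vandermonde matrix over $\bar K$ rather than over the residue field. With $s=\#S$ and $V=(u_i^{\,j})_{0\le j<s}$ one has $(\overline{m_i}\,u_i^{\,n})_i=V^{-1}\bigl(h(n+j)\bigr)_j$ for every $n$; since $w(\overline{m_i}\,u_i^{\,n})=0$ and the entries of $V^{-1}$ have valuation bounded below by $-C$ with $C$ depending only on the $u_i$, the ultrametric inequality forces $\min_{0\le j<s}w(h(n+j))\le C$ for every $n$, so every window of $s$ consecutive exponents contains one with $w(h(n))\le C$ — exactly the boundedness you need, with no restriction to $n$ prime to $p$ and no control of $\binom{n}{k}$ modulo~$p$. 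In fact the same inversion applied to the full trace sequence proves $(2)\Rightarrow(1)$ directly and makes your minimal-valuation splitting unnecessary: writing $\Tr(\T^{n+j})=\sum_\lambda(\overline{m_\lambda}\lambda^n)\lambda^j$ over the distinct eigenvalues and solving, condition $(2)$ gives $n\,w(\lambda)=w(\overline{m_\lambda}\lambda^n)\ge cn-C'$ with $C'$ independent of $n$, whence $w(\lambda)\ge c$ on letting $n\to\infty$; the case $c=\infty$ also drops out, since vanishing of all traces forces $\overline{m_\lambda}\lambda^n=0$ and hence $\lambda=0$. The hypothesis $p\nmid m_\lambda$ enters exactly once, through $w(\overline{m_\lambda})=0$.
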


In particular, under the condition that the Hecke eigenvalues have multiplicity less than~$p$, Prop.~\ref{prop:ram_bd_strict} implies bounds on the $\infty$-adic slopes. Recall that the $\infty$-adic valuation on $K$ is given by $v_\infty(f/g) = \deg(g) - \deg(f)$ for $f,g \in \bF_q[T]$, $g \neq 0$.

\begin{theorem}[Ramanujan bound for slopes]\label{thm:ram_bound_slopes} 
    Suppose that the action of~$\T_{\mathfrak p}$ on~$\S_{k,l}$ does not have $p$ repeated eigenvalues. Then any $\infty$-adic slope~$\alpha$ of~$\T_{\mathfrak p}$ acting on~$\S_{k,l}$ satisfies 
    \[
    0 \leq -\alpha < \deg(\mathfrak p) \cdot \frac{k-2}{2}.
    \]
\end{theorem}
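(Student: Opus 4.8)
The plan is to combine Proposition~\ref{prop:trace_vals} with the strict Ramanujan bound of Proposition~\ref{prop:ram_bd_strict}, treating the two inequalities separately. Fix $v = v_\infty$, write $d = \deg(\mathfrak p)$, and let $\lambda_1,\ldots,\lambda_D$ be the eigenvalues of $\T_{\mathfrak p}$ on $\S_{k,l}$, where $D = \dim \S_{k,l}$. Since no eigenvalue is repeated $p$ times, Proposition~\ref{prop:trace_vals} is available, and the set of finite slopes is exactly the set of values $w(\lambda_i)$ over eigenvalues $\lambda_i$ and extensions $w$ of $v_\infty$ to $\bar K$. The inequality $-\alpha < d\cdot\frac{k-2}{2}$ for every slope is equivalent to the lower bound $\alpha_{\min} > -\frac{d(k-2)}{2}$ on the minimal slope $\alpha_{\min} := \min_{i,w} w(\lambda_i)$, while $0 \le -\alpha$ for every slope is equivalent to $\alpha_{\max} := \max_{i,w} w(\lambda_i) \le 0$.

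For the upper bound on $-\alpha$, I would first note that Proposition~\ref{prop:trace_vals} identifies $\alpha_{\min}$ as the largest $c$ for which $v_\infty(\Tr(\T_{\mathfrak p}^n)) \ge cn$ holds for all $n \ge 1$. With the normalisation of~\eqref{eq:ram_bound} (our weight $k$ corresponds to $k-2$ there), Proposition~\ref{prop:ram_bd_strict} gives $v_\infty(\Tr(\T_{\mathfrak p}^n \mid \S_{k,l})) = -\deg \Tr(\T_{\mathfrak p}^n \mid \S_{k,l}) > -\frac{nd(k-2)}{2}$ for every $n$, whence $\alpha_{\min} \ge -\frac{d(k-2)}{2}$. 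To upgrade this to a strict inequality I would use that $\alpha_{\min}$ is actually attained by a single trace: because the multiplicities of the eigenvalues of minimal valuation are prime to $p$, their leading contributions to $\Tr(\T_{\mathfrak p}^n)$ cannot all cancel (this non-vanishing of power sums of distinct leading terms is precisely the mechanism underlying Proposition~\ref{prop:trace_vals}), so there is an $n_0$ with $v_\infty(\Tr(\T_{\mathfrak p}^{n_0})) = n_0 \alpha_{\min}$. Feeding $n_0$ into the strict bound yields $\alpha_{\min} > -\frac{d(k-2)}{2}$, and since every slope $\alpha$ satisfies $\alpha \ge \alpha_{\min}$, we conclude $-\alpha < \frac{d(k-2)}{2}$.

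For the lower bound $0 \le -\alpha$, the key point is that the eigenvalues are integral over $A$: the coefficients of the characteristic polynomial of $\T_{\mathfrak p}$ are polynomial expressions in the traces $\Tr(\T_{\mathfrak p}^m) \in A$, so this polynomial lies in $A[X]$ and is monic. Hence $v_{\mathfrak q}(\lambda_i) \ge 0$ at every finite place $\mathfrak q$ of $K(\lambda_i)$. The plan is then to invoke the product formula on $K(\lambda_i)$: the finite places contribute non-negatively, so the total contribution of the places above $\infty$ is $\le 0$. When $\infty$ is non-split in $K(\lambda_i)$ there is a single such place, which forces $w(\lambda_i) \le 0$, that is $-\alpha \ge 0$ for the corresponding slope.

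The main obstacle is precisely this last step: one must show that $\infty$ does not split in the extension generated by a Hecke eigenvalue, i.e. that every eigenvalue generates an imaginary extension of $K$. This is the two-sided counterpart of the Ramanujan estimate and morally reflects the Weil-number structure already present in Proposition~\ref{prop:yu_pols} (and the one-dimensionality of the associated Galois representation). Equivalently, passing to the inverse operator $\T_{\mathfrak p}^{-1}$ — which exists by the injectivity of Theorem~\ref{thm:injectivity} and again has no $p$-fold eigenvalue — the bound $\alpha_{\max} \le 0$ is equivalent, via Proposition~\ref{prop:trace_vals}, to $v_\infty(\Tr(\T_{\mathfrak p}^{-n})) \ge 0$ for all $n$. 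I would stress that this does \emph{not} follow from the traces $\Tr(\T_{\mathfrak p}^n)$ lying in $A$, which only ever produce lower bounds on slopes; the genuine content lies in the imaginary-ness of the eigenvalue fields, and I expect the cleanest route to run through the integrality argument above combined with that fact.
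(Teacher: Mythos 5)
Your strategy for the upper bound $-\alpha < \deg(\mathfrak p)\,\frac{k-2}{2}$ --- combining Proposition~\ref{prop:trace_vals} with the strict trace bound of Proposition~\ref{prop:ram_bd_strict} --- is exactly the paper's route: the theorem is presented there without a separate proof, as an immediate consequence of these two results. But your ``attainment'' step is false. The hypothesis only forbids a \emph{single} eigenvalue from having multiplicity divisible by~$p$; it does not prevent several \emph{distinct} minimal-slope eigenvalues whose leading residues coincide from having multiplicities summing to a multiple of~$p$, in which case their leading contributions to $\Tr(\T^n)$ cancel for \emph{every}~$n$. Concretely, take $p=2$ and an operator over $K$ with eigenvalues $T$ and $T+1$, each of multiplicity one: then $\Tr(\T^n)=T^n+(T+1)^n$ has degree at most $n-1$ for all~$n$ (and $\Tr(\T^{2^m})=1$), so $v_\infty(\Tr(\T^n)) > n\alpha_{\min}$ for every~$n$ even though $\alpha_{\min}=-1$; the infimum $\inf_n v_\infty(\Tr(\T^n))/n$ equals the minimal slope but is never attained. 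This is precisely why Proposition~\ref{prop:trace_vals} is an equivalence of inequalities rather than an attainment statement, and it shows that your $n_0$ need not exist, so your upgrade from $\alpha_{\min} \ge -\frac{d(k-2)}{2}$ to a strict inequality does not go through as argued.

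For the half $0 \le -\alpha$ your proposal is openly incomplete: you reduce it to the assertion that every Hecke eigenvalue generates an imaginary extension of~$K$ and then concede that you cannot prove this, so the lower bound is simply not established. Moreover, the integrality argument you do give is broken in characteristic~$p$: the coefficients of the characteristic polynomial are \emph{not} polynomial expressions in the traces --- Newton's identities require dividing by integers up to $\dim \S_{k,l}$, and the traces only see multiplicities modulo~$p$. For instance, with $p=3$, two eigenvalues with multiplicity patterns $(1,4)$ and $(4,1)$ give identical trace sequences but different characteristic polynomials, without violating the no-$p$-repetition hypothesis. Integrality of each individual eigenvalue can be rescued differently, by applying Proposition~\ref{prop:trace_vals} at every finite place $v_{\mathfrak q}$, where $v_{\mathfrak q}(\Tr(\T_{\mathfrak p}^n)) \ge 0$ since the traces lie in~$A$; but, as you yourself observe, integrality plus the product formula only controls the \emph{sum} of the slopes of the conjugates of an eigenvalue, so without the imaginariness input your route cannot exclude a single positive slope (a monic polynomial in $A[X]$ such as $X^2-TX+1$ has $\infty$-adic slopes $-1$ and $+1$). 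In short: your upper-bound argument follows the paper's derivation but the strictness patch fails, and the lower bound remains a genuine, acknowledged gap.
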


\begin{proof}
    $\infty$-adic slopes are non-positive because the Hecke operators are defined over~$A$. The other inequality follows from Prop.~\ref{prop:trace_vals} by setting $v = v_\infty$ and applying Prop.~\ref{prop:ram_bd_strict}.
\end{proof}


\begin{corollary}\label{cor:slopes_bounded}
    Let $v \neq v_\infty$ be a valuation on~$K$. Consider the action of $\T_{\mathfrak p}$ on $\S_{k,l}$.
    Let $\lambda \in \Sp(\T_{\mathfrak p})$ be an eigenvalue which is not repeated~$p$ times, i.e., $p \nmid m_\lambda$. Then we have
    \[
    \frac{v(N_{K(\lambda)/K}(\lambda))}{[K(\lambda):K]} < \frac{\deg(\mathfrak p)}{\deg(v)} \frac{k-2}{2}.
    \]
\end{corollary}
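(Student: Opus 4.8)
The plan is to deduce the bound at the finite place $v$ from the already-established $\infty$-adic Ramanujan bound for slopes (Theorem~\ref{thm:ram_bound_slopes}), using integrality of $\lambda$ together with the product formula on $K$. Write $L = K(\lambda)$ and $N = N_{L/K}(\lambda)$. The key structural input is the remark preceding the statement: in a suitable basis the matrix of $\T_{\mathfrak p}$ on $\S_{k,l}$ has entries in $A$, so $\det(\mathds{1}X - \T_{\mathfrak p}) \in A[X]$ is monic and $\lambda$ is integral over $A$. Since $A$ is a PID, hence integrally closed, the minimal polynomial of $\lambda$ over $K$ lies in $A[X]$; as $L = K(\lambda)$, the norm $N$ is (up to sign) its constant term, so $N \in A$. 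Consequently $u(N) \ge 0$ for every finite place $u$ of $K$.

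First I would apply the product formula $\sum_u \deg(u)\, u(N) = 0$ over all places of $K$ (with $\deg(v_\infty)=1$). Since $\sum_{\text{finite } u} \deg(u)\, u(N) = -v_\infty(N)$ and every finite term is nonnegative, retaining only the term at $v$ gives $\deg(v)\, v(N) \le -v_\infty(N)$. Thus the problem reduces to bounding $-v_\infty(N) = \deg(N)$ from above by $[L:K]\,\deg(\mathfrak p)\tfrac{k-2}{2}$.

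Next I would expand $v_\infty(N)$ over the places $w$ of $L$ above $v_\infty$, with ramification indices $e_w$ and residue degrees $f_w$. For $\mathbb Z$-valued normalized valuations one has $v_\infty(N_{L/K}(\lambda)) = \sum_{w \mid v_\infty} f_w\, w(\lambda)$. Writing $\tilde w = w/e_w$ for the valuation on $L$ restricting to $v_\infty$ on $K$, so that each $\tilde w(\lambda)$ is by Definition~\ref{def:slopes} an $\infty$-adic slope of $\T_{\mathfrak p}$ on $\S_{k,l}$, this becomes $v_\infty(N) = \sum_{w \mid v_\infty} e_w f_w\, \tilde w(\lambda)$. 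Theorem~\ref{thm:ram_bound_slopes} bounds each slope by $-\tilde w(\lambda) < \deg(\mathfrak p)\tfrac{k-2}{2}$; summing and using the fundamental identity $\sum_{w \mid v_\infty} e_w f_w = [L:K]$ yields $-v_\infty(N) < [L:K]\,\deg(\mathfrak p)\tfrac{k-2}{2}$. Combining with $\deg(v)\,v(N) \le -v_\infty(N)$ and dividing by $\deg(v)[L:K] > 0$ gives exactly the claimed inequality, the strictness being inherited from that of Theorem~\ref{thm:ram_bound_slopes}.

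The computation is routine; the two delicate points are the normalization bookkeeping in the local norm formula (keeping the $\mathbb Z$-valued $w$ distinct from the slope valuation $\tilde w = w/e_w$) and, more seriously, the invocation of Theorem~\ref{thm:ram_bound_slopes}. That theorem is phrased under the global hypothesis that \emph{no} eigenvalue of $\T_{\mathfrak p}$ on $\S_{k,l}$ is repeated $p$ times, whereas here we assume only $p \nmid m_\lambda$. The main obstacle is therefore to verify that the $\infty$-adic bound genuinely applies to the slopes of our fixed $\lambda$: the eigenvalues of multiplicity divisible by $p$ are invisible to the traces, so the argument underlying Proposition~\ref{prop:trace_vals} controls precisely the valuations of the eigenvalues $\mu$ with $p \nmid m_\mu$, and in particular those of $\lambda$. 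Making this reduction to the $p$-visible part of the spectrum precise is where the real work lies; the rest is the product-formula manipulation sketched above.
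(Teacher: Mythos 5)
Your proposal is correct and takes essentially the same route as the paper: integrality of $\lambda$ gives $N_{K(\lambda)/K}(\lambda) \in A$, the product formula yields $\deg(v)\,v(N_{K(\lambda)/K}(\lambda)) \leq -v_\infty(N_{K(\lambda)/K}(\lambda))$, and Theorem~\ref{thm:ram_bound_slopes} bounds the $\infty$-adic part --- the paper merely writes your decomposition $\sum_{w \mid v_\infty} e_w f_w\, \tilde w(\lambda)$ in the equivalent form $\sum_{\lambda' \sim \lambda} w_\infty(\lambda')$, summing over the Galois conjugates of $\lambda$ under one fixed extension $w_\infty$ of $v_\infty$ to a normal closure. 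Your closing concern about the global hypothesis of Theorem~\ref{thm:ram_bound_slopes} versus the local assumption $p \nmid m_\lambda$ is well taken, but the paper's proof invokes the theorem exactly as you do without comment, and your sketched justification --- that the traces see only the eigenvalues $\mu$ with $p \nmid m_\mu$, so the Newton-identity argument behind Proposition~\ref{prop:trace_vals} applies to that part of the spectrum --- is precisely the right way to close that gap.
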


\begin{proof}
    Let $f := N_{K(\lambda)/K}(\lambda) \in A$ be the constant term of the minimal polynomial $m(X)$ of~$\lambda$. Let $w_\infty$ be a valuation on a normal closure of~$K(\lambda)$ such that $w_\infty|_K = v_\infty$. By Theorem~\ref{thm:ram_bound_slopes}, we have
    \[
    -v_\infty(f) = -w_\infty(f) = \sum_{\lambda' \sim \lambda} -w_\infty(\lambda') < [K(\lambda):K] \deg(\mathfrak p) \frac{k-2}{2},
    \]
    where the sum runs over all roots $\lambda'$ of~$m(X)$, counted with multiplicity.
    By the product formula for valuations, we have $\deg(v)v(f) \leq -v_\infty(f)$, which concludes the proof.
\end{proof}

\subsection{Oldforms and newforms}

So far, we have focused on Drinfeld cusp forms of level 1, but there is no particular reason to restrict to this case; many results in this paper can be extended to higher level as well. There are some differences, however. For instance, to apply B\"ockle's Eichler--Shimura theory at level~$\Gamma$, one should work with Hecke operators $\T_{\mathfrak p}$ for $\mathfrak p$ not dividing the level. In exchange, one has an interesting new operator available at level~$\mathfrak p$: the \emph{Atkin operator} $\mathbf{U}_{\mathfrak p}$, which is closely related to the Hecke operator~$\T_{\mathfrak p}$ at level~1 \cite{band-val_atkin}.

Another interesting phenomenon at higher level is the decomposition of cusp forms into oldforms and newforms. Classically, there is a natural way to decompose the space of elliptic cusp forms $\S_k(\Gamma_1(Np))$ into $p$-oldforms (cusp forms which come from $\S_k(\Gamma_1(N))$) and $p$-newforms (those which do not). The Petersson inner product plays a key role in proving this decomposition, but in the Drinfeld setting, we have no such inner product at our disposal.

In~\cite{band-val_oldnew}, Bandini and Valentino propose a decomposition of $\S_{k,l}(\Gamma_0(\mathfrak p))$ into oldforms and newforms, where the oldforms are spanned by the image of certain degeneracy maps $\delta_1, \delta_{\mathfrak p} \: \S_{k,l} \to \S_{k,l}(\Gamma_0(\mathfrak p))$. They conjecture that this always yields a direct sum decomposition and prove it in some special cases. They moreover prove \cite[Cor.\ 2.10]{band-val_oldnew} that the direct sum decomposition holds if and only if $\pm \wp^{(k-2)/2}$ does not occur as an eigenvalue for the action of~$\T_{\mathfrak p}$ on~$\S_{k,l}$. Combined with our results, this can be strengthened.

\begin{theorem}\label{thm:oldnew}
    Let $k \geq 0$ and $l \in \mathbb Z$, and suppose that
    \[
    \S_{k,l}(\Gamma_0(\mathfrak p)) \neq \S^\text{new}_{k,l}(\Gamma_0(\mathfrak p)) \oplus \S^\text{old}_{k,l}(\Gamma_0(\mathfrak p)).
    \]
    Then the action of $\T_{\mathfrak p}$ on~$\S_{k,l}$ has eigenvalue $\wp^{(k-2)/2}$ or $-\wp^{(k-2)/2}$ with multiplicity a positive multiple of~$p$.
\end{theorem}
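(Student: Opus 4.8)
The plan is to combine the Bandini--Valentino criterion with the Ramanujan bounds of Section~\ref{sec:ram}. By \cite[Cor.~2.10]{band-val_oldnew}, the failure of the direct sum decomposition is equivalent to $\rho := \wp^{(k-2)/2}$ or $-\rho$ being an eigenvalue of $\T_{\mathfrak p}$ on $\S_{k,l}$; fix such an eigenvalue $\lambda \in \{\rho,-\rho\}$ and set $d = \deg(\mathfrak p)$. Then $\lambda$ has $\infty$-adic degree $\tfrac{d(k-2)}{2}$, which is precisely the Ramanujan degree: by the weak bound~\eqref{eq:ram_bound} every eigenvalue $\mu$ of $\T_{\mathfrak p}$ on $\S_{k,l}$ satisfies $\deg(\mu)\le\tfrac{d(k-2)}{2}$, so $\pm\rho$ are extremal. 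Writing $\bar m_\mu := m_\mu \bmod p$, the goal is to upgrade ``eigenvalue'' to ``multiplicity a positive multiple of $p$'', i.e.\ to show $\bar m_\lambda = 0$; this is natural because only the visible eigenvalues (those with $\bar m_\mu\neq0$) contribute to the power sums $\Tr(\T_{\mathfrak p}^n \mid \S_{k,l}) = \sum_\mu \bar m_\mu\, \mu^n$.

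In characteristic~$2$, where $\lambda = -\lambda = \wp^{(k-2)/2}$, this is immediate from Theorem~\ref{thm:char2eigs}: the visible eigenvalues of $\T_{\mathfrak p}$ on $\S_{k,l}$ are exactly $\{\wp^{j} : j \in P(k,l,q)\}$, and every such exponent satisfies $j < \tfrac{k-2}{2}$, whereas $\lambda$ has exponent $\tfrac{k-2}{2}$. Hence $\lambda$ is never visible, so $2\mid m_\lambda$, and since $\lambda$ is an eigenvalue its multiplicity is a positive multiple of~$2$.

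For $q$ odd I would argue by contradiction, assuming the occurring eigenvalue $\lambda\in\{\rho,-\rho\}$ is visible, $\bar m_\lambda\neq0$. Granting the rigidity statement below --- that $\rho$ and $-\rho$ are the only eigenvalues of $\T_{\mathfrak p}$ on $\S_{k,l}$ of degree $\tfrac{d(k-2)}{2}$ --- the coefficient of $T^{nd(k-2)/2}$ in $\sum_\mu \bar m_\mu\,\mu^n$ equals $\bar m_\rho + (-1)^n \bar m_{-\rho}$, since $\rho^n = \wp^{n(k-2)/2}$ is monic while $(-\rho)^n$ has leading coefficient $(-1)^n$, and all lower-degree eigenvalues contribute only to strictly smaller degrees. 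By the strict bound of Proposition~\ref{prop:ram_bd_strict} this coefficient vanishes for every $n\ge1$; taking $n$ even and odd and using $2\neq0$ gives $\bar m_\rho = \bar m_{-\rho} = 0$, contradicting $\bar m_\lambda\neq0$. Hence $p\mid m_\lambda$ with $m_\lambda>0$. (If one prefers, the per-leading-coefficient vanishing can be obtained from $\Tr(\T_{\mathfrak p}^n)^+ = 0$ by the Vandermonde/power-sum argument of Theorem~\ref{thm:char2eigs} via \cite[Prop.~3.17]{newton}, but once rigidity is in hand the direct computation suffices.)

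The crux, and the step I expect to be the main obstacle, is the rigidity just invoked: that the only eigenvalues attaining the Ramanujan degree $\tfrac{d(k-2)}{2}$ are $\pm\wp^{(k-2)/2}$. The weak bound~\eqref{eq:ram_bound} already caps all eigenvalue degrees; what is missing is the \emph{equality case}, and without it a visible eigenvalue $\mu\neq\pm\rho$ of degree $\tfrac{d(k-2)}{2}$ with leading coefficient $1$ could cancel the contribution of $\rho$ and block the conclusion. In characteristic~$2$ this rigidity is supplied for free by Theorem~\ref{thm:char2eigs}. In general I would approach it through the one-dimensionality of the Galois representation attached to a cuspidal Hecke eigenform (recorded in the remarks after Definition~\ref{def:hecke_operators}): the $\T_{\mathfrak p}$-eigenvalue is then a value $\chi_f(\mathrm{Frob}_{\mathfrak p})$ of an associated character, and forcing $|\chi_f(\mathrm{Frob}_{\mathfrak p})|_\infty = q^{d(k-2)/2}$ should pin it to $\pm\wp^{(k-2)/2}$. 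Making this precise is the heart of the matter. I note finally that the strictly weaker conclusion ``\emph{some} eigenvalue of $\T_{\mathfrak p}$ on $\S_{k,l}$ has multiplicity a positive multiple of~$p$'' --- which already implies Corollary~\ref{cor:oldnew} --- needs no rigidity at all: it follows by contraposition from Theorem~\ref{thm:ram_bound_slopes}, since $\pm\wp^{(k-2)/2}$ is an $\infty$-adic slope with $-\alpha = \tfrac{d(k-2)}{2} \not< \deg(\mathfrak p)\tfrac{k-2}{2}$, so the no-$p$-repetition hypothesis of that theorem must fail.
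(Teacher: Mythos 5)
Your characteristic-$2$ case is complete and runs on the same machinery as the paper (Theorem~\ref{thm:char2eigs}: the visible eigenvalues are $\wp^j$ with $j<(k-2)/2$, so $\wp^{(k-2)/2}$ can never be visible). For odd $q$, however, your argument is conditional on the ``rigidity'' claim that $\pm\wp^{(k-2)/2}$ are the \emph{only} eigenvalues of $\T_{\mathfrak p}$ on $\S_{k,l}$ of degree $d(k-2)/2$, and you explicitly leave that claim unproved (the sketch via one-dimensional Galois representations is not carried out). That is a genuine gap --- and, more to the point, it is a detour the paper never takes: no equality-case analysis of the Ramanujan bound appears anywhere in the paper, and none is needed.

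The paper's proof is exactly the contrapositive you relegate to your final sentence. By \cite[Cor.~2.10]{band-val_oldnew}, failure of the decomposition forces one of $\pm\wp^{(k-2)/2}$ to occur in $\Sp(\T_{\mathfrak p})$ at level~$1$; this eigenvalue has degree exactly $d(k-2)/2$, so if it were \emph{not} repeated $p$ times, Prop.~\ref{prop:trace_vals} (i.e.\ \cite[Prop.~4.2]{newton}, used per eigenvalue just as in the corollary following Theorem~\ref{thm:ram_bound_slopes}, whose hypothesis concerns only the single eigenvalue $\lambda$ with $p\nmid m_\lambda$) would transfer the strict trace inequality of Prop.~\ref{prop:ram_bd_strict} into a contradiction. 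The cancellation scenario you worry about --- a visible companion eigenvalue of the same top degree masking $\pm\wp^{(k-2)/2}$ in \emph{every} power trace $\Tr(\T_{\mathfrak p}^n)$ --- is precisely the phenomenon that the no-$p$-repetition hypothesis in Prop.~\ref{prop:trace_vals} is designed to exclude; it packages the same power-sum uniqueness \cite[Prop.~3.17]{newton} that you yourself deploy in the even case. So the correct repair of your odd-$q$ argument is not to prove rigidity but to delete it: your closing observation, upgraded by noting that the trace-to-eigenvalue transfer applies to the specific eigenvalue $\pm\wp^{(k-2)/2}$ rather than merely detecting ``some'' $p$-fold repetition, \emph{is} the paper's entire proof, valid uniformly in $p$ (including $p=2$, where your separate argument via Theorem~\ref{thm:char2eigs} is a correct but unnecessary alternative).
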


\begin{proof}
    By assumption, one of the eigenvalues $\pm \wp^{(k-2)/2}$ must occur at level~1. But if this eigenvalue is not repeated~$p$ times, this would (by Prop.~\ref{prop:trace_vals}) contradict Prop.~\ref{prop:ram_bd_strict}.
\end{proof}

\begin{corollary}\label{cor:oldnew}
If $\dim \S_{k,l} < p$, then $\S_{k,l}(\Gamma_0(\mathfrak p)) = \S^\text{new}_{k,l}(\Gamma_0(\mathfrak p)) \oplus \S^\text{old}_{k,l}(\Gamma_0(\mathfrak p))$. If $l \equiv 1 \pmod{q-1}$, then the same holds if $\dim \S_{k,l} \leq p$.
\end{corollary}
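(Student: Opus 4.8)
The plan is to obtain both statements as contrapositives of Theorem~\ref{thm:oldnew}, which converts the failure of the direct-sum decomposition into a statement about multiplicities of a specific eigenvalue. Concretely, if $\S_{k,l}(\Gamma_0(\mathfrak p)) \neq \S^\text{new}_{k,l}(\Gamma_0(\mathfrak p)) \oplus \S^\text{old}_{k,l}(\Gamma_0(\mathfrak p))$, then Theorem~\ref{thm:oldnew} guarantees that one of $\pm\wp^{(k-2)/2}$ is an eigenvalue of $\T_{\mathfrak p}$ on $\S_{k,l}$ whose algebraic multiplicity is a positive multiple of~$p$, hence at least~$p$. For the first assertion I would argue purely by counting: the sum of the algebraic multiplicities of all eigenvalues equals $\dim \S_{k,l}$, so if $\dim \S_{k,l} < p$ then no eigenvalue can have multiplicity $\geq p$. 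This contradiction establishes the decomposition whenever $\dim \S_{k,l} < p$, including the vacuous case $\dim \S_{k,l} = 0$.

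The second assertion is the delicate one, since when $\dim \S_{k,l} = p$ an eigenvalue of multiplicity exactly~$p$ is no longer ruled out on size grounds alone. Here I would use the hypothesis $l \equiv 1 \pmod{q-1}$, which (after normalizing $1 \le l \le q-1$, so that $\S_{k,l} = \S_{k,1}$) lets me invoke the last clause of Proposition~\ref{prop:a-exp_examples}. Suppose the decomposition fails. Then one of $\pm\wp^{(k-2)/2}$ has multiplicity a positive multiple of~$p$; since $\dim \S_{k,l} = p$, this multiplicity is exactly~$p$, so $\pm\wp^{(k-2)/2}$ is the \emph{only} eigenvalue of $\T_{\mathfrak p}$ on $\S_{k,l}$. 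On the other hand $\dim \S_{k,l} = p \geq 1$ forces $\S_{k,l} \neq 0$, so by Proposition~\ref{prop:a-exp_examples} the quotient $\S_{k,l}/\S^2_{k,l}$ is spanned by an eigenform $f$ with $A$-expansion and $A$-exponent~$1$; by Theorem~\ref{thm:a-exp_eigs} this $f$ has $\T_{\mathfrak p}$-eigenvalue $\wp^{1-1} = 1$. Thus $1 \in \Sp(\T_{\mathfrak p})$.

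The crux is then to verify that $1$ genuinely differs from $\pm\wp^{(k-2)/2}$, which contradicts the uniqueness of the eigenvalue and closes the argument. I would settle this by a degree (equivalently, $\infty$-adic valuation) computation: since $\S_{k,l} \neq 0$, Lemma~\ref{lem:cuspdim} gives $k \geq q+1 > 2$, and as $k$ is even the exponent $(k-2)/2 \geq 1$, so $\wp^{(k-2)/2}$ is a polynomial of degree $\deg(\wp)\cdot(k-2)/2 \geq 1$. Hence neither $\wp^{(k-2)/2}$ nor $-\wp^{(k-2)/2}$ can equal the constant~$1$. The main obstacle is precisely this boundary case $\dim \S_{k,l} = p$: the whole improvement over the first part rests on producing the ``extra'' eigenvalue~$1$ from the $A$-expansion eigenform of exponent~$1$ supplied by the type congruence $l \equiv 1 \pmod{q-1}$, and on confirming that it is distinct from $\pm\wp^{(k-2)/2}$ so that the two eigenvalue constraints are incompatible.
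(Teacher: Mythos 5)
Your proposal is correct, and for the first clause it coincides with the paper's proof: the paper simply observes that Theorem~\ref{thm:oldnew} produces an eigenvalue of algebraic multiplicity a positive multiple of~$p$, which is impossible when $\dim \S_{k,l} < p$ --- exactly your counting argument. For the boundary case $\dim \S_{k,l} = p$ with $l \equiv 1 \pmod{q-1}$, however, you take a genuinely different route. The paper's proof is a one-line appeal to Hattori \cite[Lem.~2.4]{hattori_gouv}, which asserts that $\T_{\mathfrak p}$ acting on $\S_{k,1}$ has eigenvalue~$1$ with multiplicity \emph{exactly}~$1$; since $1$ is not a positive multiple of~$p$, that eigenvalue cannot be the multiplicity-$p$ eigenvalue $\pm\wp^{(k-2)/2}$ supplied by Theorem~\ref{thm:oldnew}, and the total multiplicity $p+1 > p$ gives the contradiction --- note that the ``multiplicity exactly~$1$'' information even makes the distinctness check $1 \neq \pm\wp^{(k-2)/2}$ unnecessary. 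You instead manufacture the eigenvalue~$1$ from within the paper: Proposition~\ref{prop:a-exp_examples} gives a single-cuspidal eigenform with $A$-expansion and $A$-exponent~$1$ spanning $\S_{k,1}/\S^2_{k,1}$, Theorem~\ref{thm:a-exp_eigs} gives it $\T_{\mathfrak p}$-eigenvalue $\wp^{0}=1$, and you then verify distinctness by a degree computation. This buys self-containedness (no external citation, only multiplicity $\geq 1$ needed) at the cost of the extra distinctness step. One small repair to that step: your parenthetical ``as $k$ is even'' is unjustified when $q$ is even (e.g.\ for $q=4$ the congruence $k \equiv 2 \pmod 3$ permits odd~$k$), but it is also unnecessary --- since $\S_{k,1} \neq 0$ forces $k \geq q+1 > 2$, the exponent $(k-2)/2$ is positive, so for any valuation~$w$ on $\bar{K}$ extending $v_\infty$ one has $w(\pm\wp^{(k-2)/2}) = -\deg(\wp)\cdot(k-2)/2 < 0 = w(1)$, and $1 \neq \pm\wp^{(k-2)/2}$ follows regardless of the parity of~$k$.
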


\begin{proof}
    The first part is immediate from Theorem~\ref{thm:oldnew}, and the second part follows because the action of $\T_{\mathfrak p}$ on~$\S_{k,1}$ has Hecke eigenvalue~1 with multiplicity~1 \cite[Lem.~2.4]{hattori_gouv}.
\end{proof}

\subsection{Conjectures}\label{sec:conjectures}
It is possible to compute the characteristic polynomial of an operator from its traces under the condition that no eigenvalue is repeated~$p$ times. For~$\T_{\mathfrak p}$, this condition has in practice always been fulfilled, so we have been able to compute slopes from the traces obtained via the trace formula. The resulting data contained some notable patterns. We state the most convincing of these here in the form of conjectures.

Our first conjecture is the strong Ramanujan bound, which we have already stated for traces. The variant for $\infty$-adic slopes is a priori stronger. We in fact believe that the bound holds for $v$-adic slopes, where $v$ is arbitrary.

\begin{conjecture}[Strong Ramanujan bound]\label{conj:strong_ram}
    Let $\mathfrak p \trianglelefteq \bF_q[T]$ be a non-zero prime ideal. Let $\alpha \in \mathbb Q$ be a $v$-adic slope of weight $k$ for $\T_{\mathfrak p}$. Then
    \[
    |\alpha| \leq \frac{\deg(\mathfrak p)}{\deg(v)} \cdot \frac{k - (q+1)}{2}.
    \]
\end{conjecture}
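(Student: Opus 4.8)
The plan is to derive this slope bound from the trace bound of Conjecture~\ref{conj:strong_ram_traces} in exactly the way Theorem~\ref{thm:ram_bound_slopes} was derived from Proposition~\ref{prop:ram_bd_strict}, merely replacing the Ramanujan bound by its strong (conjectural) form. The argument will therefore be conditional on Conjecture~\ref{conj:strong_ram_traces}, and—as in Theorem~\ref{thm:ram_bound_slopes}—on the hypothesis that no eigenvalue of~$\T_{\mathfrak p}$ on~$\S_{k,l}$ is repeated $p$~times (conjecturally vacuous at level~1 by the discussion after Proposition~\ref{prop:vandermonde_rep}, but unproven). I would first reduce to the case $v = v_\infty$, since the remaining places are controlled, as in the corollary to Theorem~\ref{thm:ram_bound_slopes}, by the product formula: the characteristic polynomial of~$\T_{\mathfrak p}$ lies in $A[X]$, so each eigenvalue~$\lambda$ is integral over~$A$ and its norm $f = N_{K(\lambda)/K}(\lambda)$ is a polynomial in~$A$, with no poles away from~$\infty$; the product formula then converts an $\infty$-adic bound on the conjugates~$\lambda'$ into the stated bound carrying the factor $\deg(\mathfrak p)/\deg(v)$.

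For $v = v_\infty$ the heart of the matter is the following. Each trace $\Tr(\T_{\mathfrak p}^n \mid \S_{k,l})$ lies in~$A$, so $v_\infty(\Tr(\T_{\mathfrak p}^n \mid \S_{k,l})) = -\deg \Tr(\T_{\mathfrak p}^n \mid \S_{k,l})$. Granting Conjecture~\ref{conj:strong_ram_traces}, this gives
\[
v_\infty\big(\Tr(\T_{\mathfrak p}^n \mid \S_{k,l})\big) \ge -\frac{n\deg(\mathfrak p)(k-(q+1))}{2} = cn, \qquad c := -\frac{\deg(\mathfrak p)(k-(q+1))}{2},
\]
for every $n \ge 1$. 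Under the assumption that no eigenvalue is repeated $p$~times, Proposition~\ref{prop:trace_vals} applies with this value of~$c$ and yields $v_\infty(\lambda) \ge c$ for all $\lambda \in \Sp(\T_{\mathfrak p})$. Since the weak bound of Theorem~\ref{thm:ram_bound_slopes} already furnishes $v_\infty(\lambda) \le 0$, every $\infty$-adic slope $\alpha = v_\infty(\lambda)$ satisfies $|\alpha| = -\alpha \le \deg(\mathfrak p)(k-(q+1))/2$, which (as $\deg(v_\infty) = 1$) is the claim for $v = v_\infty$.

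The main obstacle is that both inputs are only conditionally available, and each conceals a genuine difficulty. Conjecture~\ref{conj:strong_ram_traces} is presently known only for $nd \le 3$ and for $2 \mid q$ (Theorem~\ref{thm:strong_ram_known}); establishing it in general amounts to showing that the sums $\sum_{a,b} \#\,\text{Iso}_{\mathfrak p^n}(a,b)\, a_0^{v_0}\cdots a_N^{v_N} b^t$ of Proposition~\ref{prop:suff_cond} vanish under the stated degree condition, uniformly as $N = \lfloor nd/2 \rfloor$ grows, and this is precisely where the cancellation among Hurwitz class numbers becomes hard to control. More seriously, the passage from traces to slopes through Proposition~\ref{prop:trace_vals} is blind to eigenvalues whose multiplicity is divisible by~$p$, since their contribution $p\alpha^n$ vanishes identically; the reduction of the preceding paragraph, moreover, transfers the $\infty$-adic estimate to finite places only in the averaged form of the corollary to Theorem~\ref{thm:ram_bound_slopes}, so that the sharp \emph{individual}-slope bound at $v \neq v_\infty$ is itself a further subtlety. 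I expect the elimination of the ``no $p$-fold eigenvalue'' hypothesis to be the principal obstruction to any unconditional proof, and I would not expect the trace formula alone to resolve it, since such eigenvalues are by design invisible to it.
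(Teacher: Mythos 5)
The statement you were asked to prove is an open conjecture: the paper offers no proof of Conjecture~\ref{conj:strong_ram}, and its only comment on it is the remark immediately following, which records exactly the reduction you propose --- namely that, assuming no eigenvalue of~$\T_{\mathfrak p}$ is repeated $p$~times, Proposition~\ref{prop:trace_vals} makes the slope bound equivalent to the trace bound of Conjecture~\ref{conj:strong_ram_traces}, with Theorem~\ref{thm:strong_ram_known} ($nd \leq 3$, or $2 \mid q$) as the only known evidence. Your conditional argument at $v = v_\infty$ is correct as stated: $v_\infty(\Tr(\T_{\mathfrak p}^n)) = -\deg \Tr(\T_{\mathfrak p}^n) \geq cn$ with $c = -\deg(\mathfrak p)(k-(q+1))/2$ feeds into Proposition~\ref{prop:trace_vals}, and combining with $\alpha \leq 0$ from Theorem~\ref{thm:ram_bound_slopes} gives $|\alpha| \leq -c$. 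So you have reproduced, honestly flagged as conditional, the one piece of reasoning the paper itself supplies.

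One caution about your opening paragraph: the claimed ``reduction to the case $v = v_\infty$'' via the product formula does not actually yield the conjecture at finite places, since --- as in the corollary to Theorem~\ref{thm:ram_bound_slopes} --- that argument only bounds $v(N_{K(\lambda)/K}(\lambda))/[K(\lambda):K]$, an average over conjugates, whereas the conjecture asserts a bound on each individual $v$-adic slope. You concede precisely this in your final paragraph, so your self-assessment is accurate, but the two passages are in tension and the first should not be phrased as a completed reduction. Your identification of the two genuine obstructions --- the uncontrolled cancellation in the sums of Proposition~\ref{prop:suff_cond} as $N = \lfloor nd/2 \rfloor$ grows, and the invisibility to the trace formula of eigenvalues with multiplicity divisible by~$p$ --- matches the paper's own stated reasons for leaving both conjectures open. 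In short: no proof exists, yours included, but your conditional reduction coincides with the paper's remark and your account of what is missing is correct.
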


\begin{remark}
Let $v = v_\infty$ and assume there are no repeated eigenvalues. By Prop.~\ref{prop:trace_vals}, it is then equivalent to ask that for all $n \geq 1$,
\[
\deg \Tr(\T^n_{\mathfrak p} \hspace{0.2em} | \hspace{0.1em} \S_{k,l}) \leq \frac{n d (k-(q+1))}{2},
\]
where $d = \deg(\mathfrak p)$; hence Theorem~\ref{thm:strong_ram_known} provides some evidence towards the conjecture.
\end{remark}

Going further, our data suggests that the $\infty$-adic slopes attain the strong Ramanujan bound periodically, and with predictable multiplicity.

\begin{conjecture}\label{conj:slope_attainment}
    Fix $1 \leq l \leq q-1$ and let $\mathfrak p \trianglelefteq \bF_q[T]$ be a prime of degree~$d$. For $n \geq 1$, we set $k_n = (n-1)q^2 + (2l-n)q + 1$. Then the $\infty$-adic slopes of weight~$k$ and type~$l$ for~$\T_{\mathfrak p}$ attain the strong Ramanujan bound if and only if $k = k_n$ for some $n$ and $(n,l) \neq (2,1)$. Moreover, the multiplicity with which the bound is attained is given by
    \[
    d_l(n) := d_{\infty}\left(k_n,l,\mathfrak p, -\frac{d(k_n - (q+1))}{2} \right) = n - 2 \left\lceil \frac{n-l}{q+1} \right\rceil.
    \]
\end{conjecture}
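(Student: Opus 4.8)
The plan is to convert the statement about $\infty$-adic slopes into a statement about degrees of traces, and then to pin the latter down with the explicit trace formula and Lucas's theorem. As throughout this section, one must assume that no eigenvalue of $\T_{\mathfrak p}$ on $\S_{k,l}$ is repeated $p$ times; under this hypothesis Proposition~\ref{prop:trace_vals} (with $v = v_\infty$) shows that the largest degree $-\alpha$ occurring among eigenvalues equals $\sup_{n \geq 1} \tfrac1n\deg\Tr(\T_{\mathfrak p}^n \mid \S_{k,l})$, and, more strongly, that the entire $\infty$-adic Newton polygon --- hence the multiplicity of each slope --- is recoverable from the sequence $\bigl(\deg\Tr(\T_{\mathfrak p}^n \mid \S_{k,l})\bigr)_{n}$. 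Granting the strong Ramanujan bound for traces (Conjecture~\ref{conj:strong_ram_traces}; a theorem for $nd \leq 3$ and for even $q$ by Theorem~\ref{thm:strong_ram_known}), every term obeys $\tfrac1n\deg\Tr(\T_{\mathfrak p}^n) \leq \tfrac{d(k-(q+1))}{2}$, so the strong Ramanujan bound for slopes is attained in weight $k$ exactly when $\deg\Tr(\T_{\mathfrak p}^n \mid \S_{k,l}) = \tfrac{nd(k-(q+1))}{2}$ for some $n$, with $d_l(n)$ being the projected length of the resulting top segment of the Newton polygon.

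To locate the extremal weights I would read off the dimension and then test the leading coefficient of the trace. A direct computation from Lemma~\ref{lem:cuspdim} gives $k_n - l(q+1) = (q-1)\bigl[(n-1)q + (l-1)\bigr]$, so that
\[
\dim \S_{k_n,l} = 1 + \left\lfloor \frac{(n-1)q + (l-1)}{q+1} \right\rfloor = n - \left\lceil \frac{n-l}{q+1} \right\rceil,
\]
and hence $d_l(n) = \dim \S_{k_n,l} - \lceil (n-l)/(q+1)\rceil$. This is the arithmetic heart of the matter: it predicts that of the $\dim \S_{k_n,l}$ eigenvalues, exactly $\lceil (n-l)/(q+1)\rceil$ lie strictly below the Ramanujan line, and one checks that $d_l(n) = 0$ happens in the admissible range precisely for $(n,l) = (2,1)$, matching the exclusion in the statement. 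For degree-one primes Theorem~\ref{thm:deg1} makes the top coefficient explicit --- the coefficient of $T^{(k-q-1)/2}$ in $\Tr(\T_T \mid \S_{k,l})$ is $\pm\binom{k-2-j}{j}$ with $j = (k-q-1)/2$ --- and Lucas's theorem decides exactly when it is nonzero modulo $p$, extending the computation already carried out for a subfamily of weights in Remark~\ref{rem:strongram}. Pushing this over all admissible $j$ should single out $k \in \{k_n\}$. The fact that $k_n$ is independent of $d$ is consistent with the heuristic that the slopes scale by $d$, a phenomenon realized exactly in even characteristic through Theorem~\ref{thm:char2traces}.

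For the multiplicity I would try to exhibit the deficit $\lceil (n-l)/(q+1)\rceil$ structurally. The natural source is the Frobenius map $F_q\colon \S_{k',l} \to \S_{k'q,l}$, under which $\T_{\mathfrak p}f^q = \wp^{q-1}\lambda^q f^q$, so an eigenvalue of degree $\delta$ in weight $k'$ produces one of degree $(q-1)d + q\delta$ in weight $k'q$; a short comparison shows that the image of a top-slope form lands a fixed distance $\tfrac12 d(q-1)^2$ below the Ramanujan line at the new weight. Iterating this and combining with multiplication by $E_{q-1}$ should produce a subspace of $\S_{k_n,l}$ whose eigenvalues lie strictly below the line. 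Showing that this subspace has dimension exactly $\lceil (n-l)/(q+1)\rceil$, together with the Newton-polygon reconstruction from the first step, would give the multiplicity $d_l(n)$.

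The principal obstacle lies beyond degree one. For primes of degree $d \geq 2$, and equally for higher powers of degree-one primes (whose traces are sums over $\bF_{\mathfrak p^n}$), there is no closed form for the counts $\#\text{Iso}_{\mathfrak p^n}(a,b)$: these are Hurwitz class numbers, equivalently point counts on Jacobians of hyperelliptic curves of growing genus (Appendix~\ref{sec:appendix}). The explicit leading-coefficient analysis that succeeds in the degree-one case is therefore unavailable, and one cannot a~priori exclude cancellations in $\deg\Tr(\T_{\mathfrak p}^n)$ that would postpone attainment to some larger $n$. Compounding this, the entire reduction rests on the unproven assumption that no eigenvalue is repeated $p$ times at level one; without it Proposition~\ref{prop:trace_vals} does not apply and the Newton polygon --- and with it the multiplicity $d_l(n)$ --- is simply not determined by the traces. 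These two gaps are exactly what keep the statement a conjecture rather than a theorem.
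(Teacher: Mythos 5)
First, a point of order: the statement you were asked to prove is Conjecture~\ref{conj:slope_attainment}, and the paper contains no proof of it --- only computational evidence and the consistency checks in the remark that follows it (the exclusion of $(n,l)=(2,1)$ via $\S_{q^2+1,1}=\langle E^qh\rangle$, the case $n=1$ via Remark~\ref{rem:strongram}, and a recursive description of $d_l(n)$). So there is no argument in the paper to compare yours against, and your proposal, correctly, does not claim to be a proof. Its checkable pieces are sound: the identity $k_n-l(q+1)=(q-1)\bigl[(n-1)q+(l-1)\bigr]$ and hence, via Lemma~\ref{lem:cuspdim}, $\dim\S_{k_n,l}=n-\lceil(n-l)/(q+1)\rceil$ and $d_l(n)=\dim\S_{k_n,l}-\lceil(n-l)/(q+1)\rceil$ (equivalent to the paper's recursion); the verification that $d_l(n)=0$ forces $(n,l)=(2,1)$ for odd $q$; and the Frobenius computation showing that a bound-attaining eigenvalue in weight $k'$ maps to one sitting exactly $\tfrac12 d(q-1)^2$ below the bound in weight $k'q$. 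Your two declared obstacles --- no closed form for $\#\text{Iso}_{\mathfrak p^n}(a,b)$ once $nd\geq 2$, and the unproven hypothesis that no eigenvalue is repeated $p$ times --- are indeed exactly why the statement remains a conjecture (cf.\ Theorem~\ref{thm:strong_ram_known} covering only $nd\leq 3$).

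One step of your reduction, however, is wrong as stated: you claim Proposition~\ref{prop:trace_vals} shows that the entire $\infty$-adic Newton polygon, hence each multiplicity $d_\infty(\alpha)$, is recoverable from the sequence $\bigl(\deg\Tr(\T_{\mathfrak p}^n\mid\S_{k,l})\bigr)_n$. It is not. The degrees of the power sums do not determine the valuations of the elementary symmetric functions: cancellation among the leading coefficients of top-degree eigenvalues can depress every $\deg\Tr(\T_{\mathfrak p}^n)$ while the top slope and its multiplicity are unchanged. What the paper's framework actually provides (Section~\ref{sec:conjectures}, via \cite{newton}) is recovery of the characteristic polynomial from the traces \emph{themselves}, under the no-$p$-fold-repetition hypothesis; the Newton polygon is then read off from the characteristic polynomial, not from degree data, while Proposition~\ref{prop:trace_vals} alone gives only the two-sided comparison between eigenvalue valuations and trace valuations --- enough for attainment questions (and the direction ``trace attains $\Rightarrow$ some eigenvalue attains'' is even unconditional), but not for multiplicities. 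A smaller gloss: in your Lucas analysis via Theorem~\ref{thm:deg1}, the top coefficient of the trace sits at $j=(k-q-1)/2$ only when $j\equiv l-1\pmod{q-1}$; since $k\equiv 2l\pmod{q-1}$ only pins $j$ down modulo $(q-1)/2$ for odd $q$, the top admissible $j$ may be shifted, so ``pushing over all admissible $j$'' needs more care than the remark's subfamily (which corresponds to odd $n$ in the conjecture's indexing) suggests.
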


\begin{remark}\hfill
\begin{enumerate}
    \item In Conj.~\ref{conj:slope_attainment}, the exception for $(n,l) = (2,1)$ is necessary: one checks that $\S_{q^2+1,1}$ is a one-dimensional space spanned by $g^qh$, which is not a double cusp form and thus has eigenvalue~$1$, which does not attain the strong Ramanujan bound. This is consistent with the fact that $d_l(n) = 0$ in this (and only this) case.
    \item Note that for $1 \leq l \leq q-1$, the space $\S_{k_1,l}$ is spanned by $g^{l-1}h^l$. According to the conjecture, this is the first eigenform to attain the strong Ramanujan bound in type~$l$, which is true for primes of degree~1 by Rmk.~\ref{rem:strongram}.
    \item A more intuitive description of~$d_l(n)$ is the following: we have $d_l(1) = 1$ for any~$l$, and for $n \geq 1$ we have
    \[
    d_l(n+1) = 
    \begin{cases} 
    d_l(n) + 1 & \text{ if } \dim \S_{k_{n+1},l} = \dim \S_{k_{n},l} + 1; \\
    d_l(n) - 1 & \text{ if } \dim \S_{k_{n+1},l} = \dim \S_{k_{n},l}.
    \end{cases}
    \]
\end{enumerate}
\end{remark}

Finally, we consider the following question: which $\alpha \in \mathbb Q$ can appear as a $v$-adic slope of~$\T_{\mathfrak p}$? For primes of degree~1 and $v \in \{v_{\mathfrak p},v_\infty\}$, the answer seems to depend on the type in a precise way.

\begin{conjecture}[The slopes which appear]
     Suppose $2 \nmid q$. Let $\mathfrak p \trianglelefteq \bF_q[T]$ be a prime of degree~1. Let $v \in \{v_{\mathfrak p},-v_{\infty}\}$, and let $\alpha \in \mathbb Q$ be a $v$-adic slope of type~$l$. Then $\alpha \in \mathbb N$ is an integer, and moreover
     \[
     \alpha \equiv l-1 \pmod{q-1} \qquad \text{or} \qquad \alpha \equiv l - 1 + \frac{q-1}{2} \pmod{q-1}.
     \]
\end{conjecture}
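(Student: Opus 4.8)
The plan is to first collapse the two target residue classes into a single congruence. Since $\S_{k,l}\neq 0$ forces $k\equiv 2l\pmod{q-1}$, and $q$ is odd so $q-1$ is even and hence $k$ is even, the two classes $l-1$ and $l-1+\tfrac{q-1}{2}$ are precisely the two solutions $r$ of $2r\equiv k-2\pmod{q-1}$. Thus the conjecture is equivalent to the assertion that every finite $v$-adic slope $\alpha$ is a nonnegative integer with $2\alpha\equiv k-2\pmod{q-1}$. For both valuations in play one has $v(\wp)=1$ because $\deg\mathfrak p=1$, a fact I would use throughout. Nonnegativity and finiteness are largely available: they are expected from integrality of the Hecke action, for $v=-v_\infty$ the bound $0\le-\alpha$ is already part of Thm~\ref{thm:ram_bound_slopes}, and the absence of a zero eigenvalue is Thm~\ref{thm:injectivity}. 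So the genuine content is integrality of the slopes together with the congruence $2\alpha\equiv k-2$.

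Next I would recover the slopes from the power sums $s_n:=\Tr(\T_{\mathfrak p}^n\mid\S_{k,l})=\sum_\lambda\lambda^n$. Assuming, as holds in every computed example at level~$1$, that no eigenvalue of $\T_{\mathfrak p}$ is repeated $p$ times, the multiset of eigenvalues, and hence the multiset of slopes, is determined by $\{s_n\}_{n\ge 1}$, exactly as in Prop~\ref{prop:trace_vals} and \cite[Prop.~3.17]{newton}. Passing to the completion $K_v$, fixing an extension $w$ of $v$ to $\bar K_v$, and studying the $v$-adic Newton polygon of the characteristic polynomial of $\T_{\mathfrak p}$ converts the problem into one about the $w$-valuations of the elementary symmetric functions of the $\lambda$, which are in turn governed by the $s_n$.

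The congruence is where the type must enter, and here I would exploit the scaling symmetry $\#\mathrm{Iso}_{\mathfrak p^n}(a,b)=\#\mathrm{Iso}_{\mathfrak p^n}(ca,c^2b)$ of Rmk~\ref{rem:iso_c}. Feeding this into the trace formula of Prop~\ref{prop:trace_formula} shows that a monomial $a^{k-2j}b^{\,j+l-k-1}\wp^{nj}$ survives the average over $c\in\bF_q^\times$ only when its total $\bF_q^\times$-weight $2l-k-2$ vanishes modulo $q-1$; the aim would be to upgrade this global invariance into a grading of the eigenvalues by the residue of their slope mod $q-1$, and to show that only the graded piece with $2r\equiv k-2$ is non-empty. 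The explicit $n=1$ formula of Thm~\ref{thm:deg1}, all of whose exponents satisfy $j\equiv l-1$, together with the power eigensystems of Thm~\ref{thm:a-exp_eigs} (which have slope $n-1\equiv l-1$), fix the surviving class and serve as base cases; the symmetry of Thm~\ref{thm:symmetry_allq} could then propagate the statement to larger weights.

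The main obstacle is making the graded Newton polygon step rigorous: the trace formula yields only a global $\bF_q^\times$-invariance, whereas a per-slope residue constraint is needed, and separating the contributions of distinct slope-residues inside the single sequence $\{s_n\}$ is delicate — worse, an entire residue class can be invisible to the traces when its eigenvalues cancel in $p$-power packets. A second difficulty, specific to $v=v_{\mathfrak p}$, is integrality: a priori an eigenvalue may generate an extension of $K$ ramified at $\mathfrak p$, so that $v_{\mathfrak p}(\lambda)\notin\mathbb Z$, and excluding this seems to require a direct analysis of the $\mathfrak p$-adic Newton polygon via Prop~\ref{prop:modp} and the ordinary/supersingular splitting of Lemma~\ref{lem:gcdmodp}. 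Finally, the whole argument is conditional on the no-$p$-repetition hypothesis, itself only conjectural at level~$1$; an unconditional proof would presumably have to replace the power-sum reconstruction by an argument controlling even the $p$-divisible part of the eigenvalue multiset.
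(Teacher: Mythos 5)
The statement you set out to prove is, in the paper, precisely that: a \emph{conjecture}. The paper offers no proof of it --- it is stated at the end of Section~\ref{sec:conjectures} as a pattern extracted from trace-formula computations --- so there is no argument of the authors to compare yours against, and any complete proof would be new. Judged on its own terms, your proposal is a sensible research plan but not a proof, and to your credit you say so yourself. Two of the gaps you flag are fatal as things stand. First, the pivotal step --- upgrading the global $\bF_q^\times$-invariance visible in the trace formula (Prop.~\ref{prop:trace_formula} combined with Rmk.~\ref{rem:iso_c}, which only tells you that the \emph{total} weight $2l-k-2$ must vanish mod $q-1$ for a trace to be non-zero) into a per-eigenvalue constraint $2\,v(\lambda)\equiv k-2 \pmod{q-1}$ --- has no mechanism behind it. The traces $s_n$ see the eigenvalue multiset only through its power sums, and there is no way to isolate the contribution of a hypothetical eigenvalue whose slope lies in a forbidden residue class; your own remark that an entire residue class can cancel in $p$-power packets shows the sequence $\{s_n\}$ simply does not carry the information the conjecture asserts. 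Second, the reconstruction of slopes from traces (Prop.~\ref{prop:trace_vals}, \cite[Prop.~3.17]{newton}) requires that no eigenvalue be repeated $p$ times, which at level~1 for odd $q$ is itself only an unproved expectation in the paper; so even if the graded step worked, you would have proved one conjecture conditionally on another.

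Some smaller points. Your reduction of the two residue classes to the single congruence $2\alpha\equiv k-2 \pmod{q-1}$ (using $k\equiv 2l \pmod{q-1}$) is correct and is a clean reformulation. But note that Thm.~\ref{thm:ram_bound_slopes}, which you invoke for $0\leq -\alpha$ when $v=-v_\infty$, is itself conditional on the same no-$p$-repetition hypothesis, so it cannot be cited unconditionally; only the injectivity statement Thm.~\ref{thm:injectivity} (hence finiteness of slopes) is unconditional. And for $v=v_{\mathfrak p}$ you have, as you concede, no argument at all for integrality: Prop.~\ref{prop:modp} and Lemma~\ref{lem:gcdmodp} control traces modulo $\mathfrak p$, not the ramification at $\mathfrak p$ of the splitting field of the characteristic polynomial of $\T_{\mathfrak p}$, and excluding $v_{\mathfrak p}(\lambda)\notin\mathbb Z$ would require showing that the $\mathfrak p$-adic Newton polygon of that characteristic polynomial has integer slopes --- something established nowhere in the paper. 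In short: the statement remains open; your sketch correctly identifies the genuine difficulties but does not close any of them.
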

\section{Computations for low weights}\label{sec:comp}

In this section, we collect some consequences of \hyperref[prop:trace_formula]{the trace formula} when applied to spaces of cusp forms $\S_{k,l}$ of small dimension.

\subsection{Power eigensystems and \texorpdfstring{$A$}{A}-expansions}

Theorem~\ref{thm:a-exp_eigs} shows that the Hecke eigensystem of an eigenform with $A$-expansion is of a very particular form. It is natural to ask whether the converse is true: if $f$ is an eigenform such that $\T_{\mathfrak p}f = \wp^{n-1}f$ for all~$\mathfrak p$, does $f$ admit an $A$-expansion? 

In \cite[Ex.~2.7]{petrov_a-exp}, Petrov considers the eigenform $g^2h^2 \in \S_{12,0}$ for $q=3$. By considering its $t$-expansion, one sees that it cannot have an $A$-expansion, but Petrov's computations indicate that its Hecke eigensystem looks like that of a form with $A$-expansion. This suggests that the answer to the above question is negative. We confirm Petrov's suspicion here.

\begin{theorem}\label{thm:trivial_eigs}
    Let $\mathfrak p \trianglelefteq \bF_3[T]$ be a non-zero prime ideal with monic generator~$\wp$. Then the $\T_{\mathfrak p}$-eigenvalue of $g^2h^2$ equals~$\wp^3$.
\end{theorem}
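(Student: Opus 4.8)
The plan is to exploit that $\S_{12,0}$ is one-dimensional, so that the $\T_{\mathfrak p}$-eigenvalue of $E^2h^2$ is simply $\Tr(\T_{\mathfrak p}\mid\S_{12,0})$, and then evaluate this trace. First I would record, via Lemma~\ref{lem:cuspdim} (here $q=3$ and the type is $l\equiv 2\pmod{q-1}$), that $\dim\S_{12,0}=1$; since $E^2h^2$ has weight~$12$ and type~$0$ and is non-zero, it spans the space and is automatically an eigenform. Next I would invoke the symmetry theorem: applying Theorem~\ref{thm:symmetry_allq} with $p^m=9$ (so $m=2$) and $N=2$ gives
\[
\Tr(\T_{\mathfrak p}\mid\S_{12,0}) = \wp^{2}\,\Tr(\T_{\mathfrak p}\mid\S_{8,0}) + \epsilon,
\]
where the error term simplifies (using $c_{1,0}=1$ and $b^{-9}=b$ in $\bF_3^{\times}$) to $\epsilon=\sum_{a,b}\#\text{Iso}_{\mathfrak p}(a,b)\,a^{10}b$. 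The main term is clean: $\S_{8,0}$ is spanned by $h^{2}=-\Delta$, which has an $A$-expansion with $A$-exponent~$2$ by Proposition~\ref{prop:a-exp_examples}, so Theorem~\ref{thm:a-exp_eigs} yields $\Tr(\T_{\mathfrak p}\mid\S_{8,0})=\wp$ and hence a main term of $\wp^{3}$. Thus the theorem reduces to the single assertion $\epsilon=0$.

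To attack $\epsilon$, I would write $a=\sum_{i}a_iT^{i}$ (with $a_i\in\bF_3$ and $\deg a\le\tfrac12\deg\mathfrak p$) and apply Lucas's theorem (Theorem~\ref{thm:lucas}) to the multinomial coefficients appearing in $a^{10}$. Since $10=101_3$, the only monomials $\prod a_i^{v_i}$ whose coefficient $\binom{10}{v_0,\dots}$ is non-zero modulo~$3$ are $a_i^{10}$ and $a_i^{9}a_{i'}$ with $i\ne i'$; reducing exponents via $a_i^{10}=a_i^{2}$ and $a_i^{9}=a_i$ in $\bF_3$, this expresses $\epsilon$ as an $\bF_3$-linear combination of the purely quadratic sums $Q_{i,i'}:=\sum_{a,b}\#\text{Iso}_{\mathfrak p}(a,b)\,b\,a_ia_{i'}$. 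Everything therefore comes down to the vanishing of these $Q_{i,i'}$.

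Several of the $Q_{i,i'}$ are easy: their antidiagonal sums $\sum_{i+i'=s}Q_{i,i'}$ are exactly the $T^{s}$-coefficients of $\Tr(\T_{\mathfrak p}\mid\S_{4,0})=\sum_{a,b}\#\text{Iso}_{\mathfrak p}(a,b)\,a^{2}b$ (Proposition~\ref{prop:trace_formula} with $k=2$), which vanishes since $\S_{4,0}=0$; and $Q_{0,0}=0$ by Remark~\ref{rmk:rambd_case0}. This already gives $\epsilon=0$ when $\deg\mathfrak p\le 2$: for degree~$1$ one has $a\in\bF_3$ and the factor $\sum_{b}b=0$ kills everything by Lemma~\ref{lem:isoclasses}, while for degree~$2$ Proposition~\ref{prop:deg2} makes $\#\text{Iso}_{\mathfrak p}(a,1)-\#\text{Iso}_{\mathfrak p}(a,2)$ equal to the constant~$2$, so that $\epsilon$ becomes a scalar multiple of $\sum_{a}a^{10}$, which is $0$ because every $\binom{10}{i}$ with $0<i<10$ even vanishes modulo~$3$.

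The main obstacle is the vanishing of the individual off-antidiagonal sums $Q_{i,i'}$ for primes of degree $\ge 3$: the weight-$4$ relation only pins down their antidiagonal sums and Remark~\ref{rmk:rambd_case0} only controls the $a_0$-coefficient, so together they do not separate, say, $Q_{0,2}$ from $Q_{1,1}$. Here I would feed in the explicit description of $\#\text{Iso}_{\mathfrak p}(a,b)$ from the appendix (Hurwitz class numbers, together with the point-count reformulation of Remark~\ref{rmk:point_counts}): each $Q_{i,i'}$ becomes a weighted class-number sum, which I expect to vanish modulo~$3$ by the same Lucas-type cancellation that settled the degree-$2$ case. Making this final reduction uniform in $\deg\mathfrak p$ — rather than a prime-by-prime check of the kind afforded by Proposition~\ref{prop:suff_cond} — is the delicate point.
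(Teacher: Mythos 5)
Your reduction is sound and matches the paper's up to the decisive step: one-dimensionality of $\S_{12,0}$, the identification of the eigenvalue with the trace, the main term $\wp^{2}\Tr(\T_{\mathfrak p}\mid\S_{8,0})=\wp^{3}$ via the $A$-expansion of $h^{2}$, and the identification $\epsilon=\sum_{a,b}\#\text{Iso}_{\mathfrak p}(a,b)\,a^{10}b$ are all correct and are exactly what the paper does. Your verifications for $\deg\mathfrak p\le 2$ are also correct. But the theorem is a statement about \emph{every} prime of $\bF_3[T]$, and for $\deg\mathfrak p\ge 3$ your argument stops at an expectation: you reduce to the vanishing of the individual sums $Q_{i,i'}=\sum_{a,b}\#\text{Iso}_{\mathfrak p}(a,b)\,b\,a_ia_{i'}$ and then say you ``expect'' these to vanish by a Lucas-type cancellation. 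No such cancellation is available: the degree-$2$ case worked only because $\#\text{Iso}_{\mathfrak p}(a,1)-\#\text{Iso}_{\mathfrak p}(a,2)$ is a \emph{constant} by Prop.~\ref{prop:deg2}, whereas for $\deg\mathfrak p\ge 3$ the weights $\#\text{Iso}_{\mathfrak p}(a,b)$ are Hurwitz class numbers, i.e.\ point counts on Jacobians of positive genus (Lemma~\ref{lem:jac}), which genuinely vary with $(a,b)$. Moreover, the identities you need are precisely instances of the hypothesis~\eqref{eq:sumcond} of Prop.~\ref{prop:suff_cond} (with $k=\sum v_i=2$), which the paper explicitly does \emph{not} prove in general --- even the known cases of the strong Ramanujan bound (Thm.~\ref{thm:strong_ram_known}) only control the antidiagonal combinations $\sum_{i+i'=s}Q_{i,i'}$ that you already extracted from $\S_{4,0}=0$, never the individual $Q_{i,i'}$. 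So the heart of the theorem is missing.

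The idea your proposal lacks is the paper's: instead of dissecting $\epsilon$ coefficient-by-coefficient, trade it against another weight in which the full trace is \emph{known uniformly in $\mathfrak p$} from $A$-expansions. Concretely, the trace formula gives
\[
\Tr(\T_{\mathfrak p}\mid\S_{14,1})=\sum_{a,b}\#\text{Iso}_{\mathfrak p}(a,b)\bigl(a^{12}+a^{10}b\,\wp+a^{4}\wp^{4}\bigr),
\]
and since $\S_{14,1}$ is spanned by eigenforms with $A$-expansions (exponents $1$ and $5$), the left side equals $1+\wp^{4}$ for every $\mathfrak p$. The one-dimensional spaces $\S_{6,1}=\langle Eh\rangle$ and $\S_{8,1}=\langle E^{2}h\rangle$, both with $A$-exponent~$1$, give $\sum_{a,b}\#\text{Iso}_{\mathfrak p}(a,b)\,a^{4}=1$ and (cubing by Frobenius in characteristic~$3$) $\sum_{a,b}\#\text{Iso}_{\mathfrak p}(a,b)\,a^{12}=1$. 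Subtracting forces $\wp\sum_{a,b}\#\text{Iso}_{\mathfrak p}(a,b)\,a^{10}b=0$, i.e.\ $\epsilon=0$, with no case analysis on $\deg\mathfrak p$ and no control of any individual $Q_{i,i'}$ needed. (Your target claims $Q_{i,i'}=0$ are in fact essentially equivalent to $\epsilon=0$ for moderate degrees, since the exponents $9i+i'$ are then pairwise distinct --- so they are true, but your route gives no proof of them, while the paper's linear-algebra-in-weights argument sidesteps them entirely.)
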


\begin{proof}
    Since $\S_{12,0} = \langle g^2h^2 \rangle$ is one-dimensional, the $\T_{\mathfrak p}$-eigenvalue of $g^2h^2$ equals the trace of~$\T_{\mathfrak p}$ on~$\S_{12,0}$. Using \hyperref[prop:trace_formula]{the trace formula} twice, we have
    \[
    \Tr(\T_{\mathfrak p} \hspace{0.2em} | \hspace{0.1em} \S_{12,0})
    = \sum_{a,b} \# \text{Iso}_{\mathfrak p}(a,b) a^{10}b + \wp^2 \Tr(\T_{\mathfrak p} \hspace{0.2em} | \hspace{0.1em} \S_{8,0})
    = \wp^3 + \sum_{a,b} \# \text{Iso}_{\mathfrak p}(a,b) a^{10}b,
    \]
    using that $\S_{8,0} = \langle h^2 \rangle$ and $h^2$ has an $A$-expansion by Prop.~\ref{prop:a-exp_examples}.
    On the other hand, consider
    \begin{equation}\label{eq:S_14}
    \Tr(\T_{\mathfrak p} \hspace{0.2em} | \hspace{0.1em} \S_{14,1}) 
    = \sum_{a,b} \# \text{Iso}_{\mathfrak p}(a,b) \left(a^{12} + a^{10}b\wp + a^6\wp^4 \right) .
    \end{equation}
By Prop.~\ref{prop:a-exp_examples} again, $\S_{14,1} = \langle g^5h, gh^3\rangle$ is spanned by eigenforms with $A$-expansions and we have $\Tr(\T_{\mathfrak p} \hspace{0.2em} | \hspace{0.1em} \S_{14,1}) = 1 + \wp^4$ for all~$\mathfrak p$. Since $\S_{6,1} = \langle gh \rangle$ and $\S_{8,1} = \langle g^2h \rangle$, we can also deduce that
\[
\sum_{a,b} \#\text{Iso}_{\mathfrak p}(a,b)a^{12} = \bigg{(} \sum_{a,b} \# \text{Iso}_{\mathfrak p}(a,b)a^4 \bigg{)}^3 = 1, \qquad \sum_{a,b}\#\text{Iso}_{\mathfrak p}(a,b) a^6 = 1.
\]
Plugging this into \eqref{eq:S_14} gives the desired
\[
\sum_{a,b} \# \text{Iso}_{\mathfrak p}(a,b) a^{10}b = 0.\qedhere
\]
\end{proof}

\begin{remark}
    If $q=3$, the eigensystem of $gh^2 \in \S_{10,0}$ remains elusive. As already observed by Petrov in his thesis, computations suggest that 
    \[
    \Tr(\T_{\mathfrak p} \hspace{0.2em} | \hspace{0.1em} \S_{10,0}) = \wp(-T-T^3) \quad \text{for all }\mathfrak{p} = (\wp).
    \]
    Computing the trace by mimicking the proof of Theorem~\ref{thm:trivial_eigs} does not work.
    Indeed, \hyperref[prop:trace_formula]{the trace formula} gives
    \[
    \Tr(\T_{\mathfrak p} \hspace{0.2em} | \hspace{0.1em} \S_{10,0}) = \sum_{a,b} \# \text{Iso}_{\mathfrak p}(a,b)(a^8b - a^6\wp - a^2\wp^3) = -\wp - \wp^3 + \sum_{a,b} \# \text{Iso}_{\mathfrak p}(a,b)a^8b,
    \]
    but the term $\# \text{Iso}_{\mathfrak p}(a,b)a^8b$ does not appear in the trace formula for the spaces spanned by eigenforms with $A$-expansion because the corresponding binomial coefficient vanishes in each case.
\end{remark}

\subsection{Hecke eigenvalues for primes of degree~1}

In this section, we explicitly compute some traces of the Hecke operator~$\T_T$. If $\wp$ is a monic generator of $\mathfrak p$ of degree~1 or if $2 \mid q$, the results are also valid for $\T_{\mathfrak p}$ after substituting $\wp$ for~$T$.

\hyperref[prop:trace_formula]{The trace formula} leads to explicit formulae for the $\T_T$-eigenvalues on one-dimensional spaces of cusp forms. We omit the discussion of the eigenforms $g^qh^l$, since these have $A$-expansions with $A$-exponent~$l$ by Proposition~\ref{prop:a-exp_examples}.

\begin{theorem}\label{thm:deg1dim1}
Fix $1 \leq l \leq q-1$ and $0 \leq n \leq q-1$. Let $j_0 := \max\{0,1 + \lfloor (l+n-q)/2 \rfloor\}$ and $j_1 := \min \{l-1,n\}$. Then the eigenform $g^nh^l \in \S_{(n+l)q+l-n,l}$ has $\T_T$-eigenvalue
\[
\sum_{j=j_0}^{j_1} (-1)^j \binom{n+l-j-1}{j,\,n-j,\,l-j-1} T^{l-1+j(q-1)}.
\]
\end{theorem}

\begin{proof}
    Write $k+2 = (n+l)q + l - n$. Since the spaces $\S_{k+2,l}$ are 1-dimensional for the given values of $n$ and~$l$, all modular forms in the statement of the proposition are eigenforms and the trace of~$\T_{T}$ is equal to the corresponding eigenvalue. Hence we can apply Theorem~\ref{thm:deg1} to compute the eigenvalues. We obtain
    \[
    \Tr( \T_T \hspace{0.2em} | \hspace{0.1em} \S_{k+2,l} ) = \sum_{2j < n+l} c_{k,l-1+j(q-1)} T^{l-1+j(q-1)},
    \]
    where
    \[
    c_{k,l-1+j(q-1)} = (-1)^{l-1} \binom{l-1+(n+l-j)(q-1)}{l-1+j(q-1)}.
    \]
    We study the numbers $c_{k,l-1+j(q-1)}$ via \hyperref[thm:lucas]{Lucas's theorem}. The $q$-ary expansions depend on $j$:
    \begin{align*}
    l-1+(n+l-j)(q-1) &= \begin{cases} (n+l-j-1)q + (q+j-n-1) & \text{if } j \leq n; \\ (n+l-j)q + (j-n-1) & \text{if } j > n;
    \end{cases} \\
    l-1+j(q-1) &= \begin{cases} jq + l-1-j & \text{if } j<l;\\ 
    (j-1)q + q+l-1-j & \text{if } j \geq l.
    \end{cases}
    \end{align*}
Note that if $j \geq l$, then $j \leq n$ and the binomial coefficient $\binom{q+j-n-1}{q+l-1-j}$ is zero modulo~$p$. Similarly one sees that $c_{k,l-1+j(q-1)} \equiv_p 0$ when $j > n$ or when $j \leq n$ and $2j < l+n-q$. The only remaining case is $l+n-q \leq 2j \leq 2\min\{l-1,n\}$. Then we have
\begin{align*}
(-1)^{l-1} \binom{n+l-j-1}{j} \binom{q+j-n-1}{l-1-j} &=
(-1)^{l-1} \binom{n+l-j-1}{n+l-2j-1} \binom{q+j-n-1}{q+2j-n-l} \\
&\equiv_p (-1)^j \binom{n+l-j-1}{n+l-2j-1} \binom{n+l-2j-1}{n-j} \\
&= (-1)^{j} \binom{n+l-j-1}{j,\,n-j,\,l-j-1},
\end{align*}
where the congruence mod~$p$ comes from Eq.~\eqref{eq:mattarei}.
\end{proof}

\begin{example}\label{ex:t0t2}\hfill
    \begin{enumerate}
        \item Suppose $l=2$. Then we find that for $0 \leq n \leq q-1$,
        \[
        \Tr(\T_T \hspace{0.2em} | \hspace{0.1em} g^nh^2) = (n+1)T - nT^{q}.
        \]
        \item If $l = q-1$, we find that
        \[
        \Tr(\T_T \hspace{0.2em} | \hspace{0.1em} g^n\Delta) = (n+1)T^{(n+1)(q-1)-1} - nT^{n(q-1)-1}.
        \]
        \item More generally, for $2 \leq l \leq q-1$ and $0 \leq n \leq q-1$, we have
        \[
        \Tr(\T_T \hspace{0.2em} | \hspace{0.2em} g^{n}h^l ) = T^{q(l+n-q)+l-n-1} \Tr(\T_T \hspace{0.2em} | \hspace{0.2em} g^{q-1-n}h^{q+1-l}).
        \]
        This is a specific instance of symmetry, which can be proved by applying Theorem~\ref{thm:symmetry_allq_deg1} and showing that $\epsilon(T) = 0$ in these cases.
    \end{enumerate}
\end{example}

Consider now the case of type $l = 1$. It follows from Prop.~\ref{prop:a-exp_examples} that the spaces $\S_{k,1}$ are spanned by eigenforms with $A$-expansion whenever $\dim \S_{k,1} \leq 2$. In particular, the Hecke action on these spaces is completely understood. The next proposition takes us one step beyond the forms with $A$-expansion.

\begin{proposition}
    Let $k = (2q+3)(q-1)$. Then we have
    \[
    \Tr(\T_T \hspace{0.2em} | \hspace{0.1em} \S_{k+2,1}) = 1 + 2T^{2(q-1)}.
    \]
\end{proposition}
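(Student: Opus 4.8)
The plan is to apply Theorem~\ref{thm:deg1} and then extract the surviving terms by a base-$q$ digit computation via Lucas's theorem. First I would verify the congruence hypothesis: with $l=1$ and $k=(2q+3)(q-1)\equiv 0\pmod{q-1}$ we have $k+2\equiv 2=2l\pmod{q-1}$, so Theorem~\ref{thm:deg1} yields
\[
\Tr(\T_T \mid \S_{k+2,1}) = \sum_{\substack{0 \le j < k/2 \\ j \equiv 0 \pmod{q-1}}} (-1)^j \binom{k-j}{j} T^j.
\]
Writing $j=i(q-1)$, the bound $j<k/2=(2q+3)(q-1)/2$ becomes $i<q+\tfrac32$, i.e.\ $0\le i\le q+1$, and $k-j=(2q+3-i)(q-1)$. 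Since $(-1)^{i(q-1)}=1$ whenever $q$ is odd (as $i(q-1)$ is then even) and signs are irrelevant in characteristic $2$, the coefficient of $T^{i(q-1)}$ is exactly $\binom{(2q+3-i)(q-1)}{i(q-1)}\in\bF_q$.

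The core of the proof is to evaluate these $q+2$ binomial coefficients modulo $p$ using Lucas's theorem (Thm.~\ref{thm:lucas}), which requires the base-$q$ digit expansions of $i(q-1)$ and of $(2q+3-i)(q-1)$. I would record: for $1\le i\le q$ the number $i(q-1)$ has digits $(i-1,\,q-i)$ in the $(q^1,q^0)$ positions, while $(q+1)(q-1)=q^2-1$ has digits $(q-1,\,q-1)$; and for $3\le i\le q+1$ the number $(2q+3-i)(q-1)$ has digits $(1,\,q+1-i,\,i-3)$ in positions $(q^2,q^1,q^0)$, the remaining cases $i=1,2$ giving $(1,\,q-1,\,q-2)$ and $(1,\,q-2,\,q-1)$ respectively (the discrepancy coming from the extra borrow present when $2q+3-i>2q$). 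Pinning down these expansions cleanly is the main bookkeeping obstacle; once they are in place the rest is a short finite case check.

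With the digits in hand I would conclude as follows. For $i=0$ the coefficient is $\binom{k}{0}=1$, the constant term. For $i=2$ Lucas gives $\binom{1}{0}\binom{q-2}{1}\binom{q-1}{q-2}\equiv(-2)(-1)=2\pmod p$, producing the term $2T^{2(q-1)}$. It then remains to show all other coefficients vanish: for $i=1$ the $q^0$-comparison yields the factor $\binom{q-2}{q-1}=0$, and for $i=q+1$ the $q^1$-comparison yields $\binom{0}{q-1}=0$. For the bulk range $3\le i\le q$, Lucas produces $\binom{1}{0}\binom{q+1-i}{i-1}\binom{i-3}{q-i}$, and the key observation is that non-vanishing of the second factor forces $2i\le q+2$ whereas non-vanishing of the third forces $2i\ge q+3$, which are incompatible, so the product is $0$. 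Hence only $i=0$ and $i=2$ contribute, giving $\Tr(\T_T\mid\S_{k+2,1})=1+2T^{2(q-1)}$. Finally I would sanity-check the degenerate cases $q\in\{2,3\}$, where several digits collapse, to confirm the formula persists (for $q=2$ the coefficient $2$ becomes $0$, consistent with $1+2T^{2}=1$ in characteristic $2$).
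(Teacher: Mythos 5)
Your proposal is correct and follows essentially the same route as the paper's own proof: both apply Theorem~\ref{thm:deg1}, reduce to the coefficients $\binom{(2q+3-i)(q-1)}{i(q-1)}$ for $0 \leq i \leq q+1$, and use the same base-$q$ digit expansions with Lucas's theorem, including the identical incompatibility argument ($2i \leq q+2$ versus $2i \geq q+3$) killing the range $3 \leq i \leq q$ and the same case checks for $i \in \{1,2,q+1\}$. Your extra remarks on signs and on the degenerate cases $q \in \{2,3\}$ are accurate but not needed beyond what the paper records.
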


\begin{proof}
    We compute the trace using \hyperref[prop:trace_formula]{the trace formula} and \hyperref[thm:lucas]{Lucas's theorem}. The coefficient of $T^{i(q-1)}$ in $\Tr(\T_T \hspace{0.2em} | \hspace{0.1em} \S_{k+2,1})$ is
    \[
    c_i := c_{(2q+3)(q-1),i(q-1)} = \binom{(2q+3-i)(q-1)}{i(q-1)}, \qquad 0 \leq i \leq q+1.
    \]
    Thus $c_0 = 1$. Note that we have $(2q+3-i)(q-1) = q^2 + (q+1-i)q + (i-3)$ and $i(q-1) = (i-1)q + (q-i)$. Hence if $3 \leq i \leq q$, we have
    \[
    c_i = \binom{1}{0} \binom{q-(i-1)}{i-1} \binom{i-3}{q-i} = 0,
    \]
    since either $q-(i-1) < i-1$ or $i-3 < q-i$. It remains to check $i = 1,2,$ and $q+1$:
    \begin{align*}
        c_1 &= \binom{1}{0} \binom{q-1}{0} \binom{q-2}{q-1} = 0;\\
        c_2 &= \binom{1}{0} \binom{q-2}{1} \binom{q-1}{q-2} = 2;\\
        c_{q+1} &= \binom{1}{0} \binom{0}{q-1} \binom{q-2}{q-1} = 0.\qedhere
    \end{align*}
\end{proof}

It is also possible to calculate $\T_T$-eigenvalues on spaces of higher dimension if there exist enough eigenforms with $A$-expansion.

\begin{proposition}\label{prop:dim2_eig}
\hfill
\begin{enumerate}
    \item Suppose $q \neq 2$. 
    Let $k = (2q+2)(q-1)+2$ and $l = 2$, so that $\dim \S_{k+2,l} = 2$. Then the $\T_T$-action on~$\S_{k+2,l}$ has eigenvalues $T$ and~$T^q$.
    \item Let $k + 2 = 3q^2-q$ and $l=1$, so that $\dim \S_{k+2,l} = 3$. Then the $\T_T$-action on~$\S_{k+2,l}$ has eigenvalues $1$, $T^{q-1}$, and~$T^{q(q-1)}$.
\end{enumerate}
\end{proposition}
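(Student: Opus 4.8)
The plan is to exploit the fact that both spaces are low-dimensional, so the full list of $\T_T$-eigenvalues is pinned down by a small amount of data: either the first few power sums $\Tr(\T_T^n)$ computed from the trace formula, or a combination of explicit eigenforms together with the single trace $\Tr(\T_T)$. Throughout I would first record the dimensions from Lemma~\ref{lem:cuspdim} ($2$ and $3$ respectively) and note that, by Theorem~\ref{thm:modforms}, each space is spanned by the two, resp.\ three, monomials $E^ah^b$ of the correct weight and type.

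For part~1, I would begin by computing $\Tr(\T_T \mid \S_{2q^2+2,2}) = T + T^q$ directly from Theorem~\ref{thm:deg1}: writing $k = 2q^2$, $l=2$, the sum runs over $j \equiv 1 \pmod{q-1}$ with $0 \le j < q^2$, and Lucas's theorem (Theorem~\ref{thm:lucas}) shows that only $j=1$ and $j=q$ survive, contributing $T$ and $T^q$. For odd $q$ I would then compute the second power sum $\Tr(\T_T^2 \mid \S_{2q^2+2,2}) = T^2 + T^{2q}$ using Theorem~\ref{thm:deg2} (the case $nd=2$ with $\wp = T$, $n=2$). Newton's identities then give $e_2 = \tfrac12\big((T+T^q)^2 - (T^2+T^{2q})\big) = T^{q+1}$, so the characteristic polynomial of $\T_T$ is $(X-T)(X-T^q)$. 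Since this argument uses division by $2$, for even $q$ (with $q \neq 2$) I would instead invoke Theorem~\ref{thm:char2eigs}: a Lucas computation shows $P(2q^2+2,2,q) = \{1,q\}$, so $T$ and $T^q$ are exactly the eigenvalues of odd multiplicity, and since $\dim \S_{2q^2+2,2} = 2$ these two distinct values each occur with multiplicity~$1$ and there are no others.

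For part~2, I would determine two of the three eigenvalues by exhibiting eigenforms and then read off the third from the trace. The quotient $\S_{3q^2-q,1}/\S^2_{3q^2-q,1}$ is spanned by an eigenform with $A$-expansion of $A$-exponent~$1$ (Proposition~\ref{prop:a-exp_examples}), which by Theorem~\ref{thm:a-exp_eigs} has $\T_T$-eigenvalue $T^0 = 1$. Next, $\S_{3q-1,1} = \langle E^2h\rangle$ is one-dimensional by Lemma~\ref{lem:cuspdim}, so $E^2h$ is an eigenform of $A$-exponent~$1$ and eigenvalue~$1$; applying the Frobenius map $F_q\colon f \mapsto f^q$ together with the relation $\T_T f^q = T^{q-1}\lambda^q f^q$, the form $(E^2h)^q = E^{2q}h^q \in \S_{3q^2-q,1}$ is an eigenform with eigenvalue $T^{q-1}$. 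As $1 \neq T^{q-1}$, the factor $(X-1)(X-T^{q-1})$ divides the degree-$3$ characteristic polynomial, so it equals $(X-1)(X-T^{q-1})(X-\lambda)$ with $\lambda = \Tr(\T_T \mid \S_{3q^2-q,1}) - 1 - T^{q-1}$. A final application of Theorem~\ref{thm:deg1} (again via Lucas's theorem, which isolates $j = 0,\, q-1,\, q(q-1)$) gives $\Tr(\T_T \mid \S_{3q^2-q,1}) = 1 + T^{q-1} + T^{q(q-1)}$, whence $\lambda = T^{q(q-1)}$.

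The main obstacle in both parts is the family of binomial-coefficient computations via Lucas's theorem needed to show that each trace reduces to exactly the claimed monomials — that is, that every index $j \equiv l-1 \pmod{q-1}$ other than the distinguished ones contributes a vanishing coefficient. This is most delicate for the second power sum in part~1, where the two-variable sum of Theorem~\ref{thm:deg2} (with its Legendre-symbol weights and the factor $4^{-m}\binom{m}{(q-1)/2}$) must be shown to collapse to $T^2 + T^{2q}$; the even-$q$ bookkeeping for part~1 and the verification that $E^2h$ indeed spans $\S_{3q-1,1}$ are comparatively routine.
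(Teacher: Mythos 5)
Your proposal is correct, but it takes a genuinely different route from the paper's in part~1 and a partially different one in part~2. The paper handles part~1 uniformly in $q$ by citing \cite[Thm.~1.3]{petrov_a-exp} to produce an eigenform in $\S_{2q^2+2,2}$ with $A$-expansion and $A$-exponent~2, hence $\T_T$-eigenvalue~$T$ by Thm.~\ref{thm:a-exp_eigs}; the single trace computation $\Tr(\T_T) = T + T^q$ (the same Lucas argument you give) then yields the other eigenvalue by subtraction. Your replacement — computing the second power sum via Thm.~\ref{thm:deg2} and using Newton's identities for odd~$q$, and Thm.~\ref{thm:char2eigs} for even~$q$ — is sound in principle, and your even-$q$ branch is arguably cleaner than the paper's argument in that case (the odd-multiplicity eigenvalue set $\{T, T^q\}$ together with $\dim = 2$ forces both multiplicities to equal~1). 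But note what the trade costs you: the odd-$q$ branch hinges on the collapse of the two-variable sum in Thm.~\ref{thm:deg2} to $T^2 + T^{2q}$, which you flag but do not execute; this is a genuinely laborious verification (compare the analogous computation in the proof of the $2q^2-2$ proposition in Section~\ref{sec:comp}), whereas the paper's route needs no power sums beyond $n=1$ and no case split on the parity of~$q$. In part~2 you and the paper agree on the skeleton — two explicit eigenvalues plus the trace $1 + T^{q-1} + T^{q(q-1)}$ from Thm.~\ref{thm:deg1} — but where the paper cites Petrov's theorem a second time for an eigenform with $A$-exponent~$q$ (eigenvalue $T^{q-1}$), you instead apply Frobenius to $E^2h \in \S_{3q-1,1}$, using $\T_T f^q = T^{q-1}\lambda^q f^q$. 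This is a nice self-contained substitute that avoids checking the hypotheses of \cite[Thm.~1.3]{petrov_a-exp} in that weight, and it is consistent with the paper's remark following the proposition that the third eigenvalue $T^{q(q-1)}$ likely does \emph{not} come from an $A$-expansion eigenform (your argument, like the paper's, correctly obtains it only from the trace). All the supporting facts you use check out: $\dim \S_{3q-1,1} = 1$ so $E^2h$ is the $A$-exponent-1 eigenform there (via the last clause of Prop.~\ref{prop:a-exp_examples}, since $\S^2_{3q-1,1} = 0$), the two exhibited eigenvalues are distinct so $(X-1)(X-T^{q-1})$ divides the cubic characteristic polynomial, and the Lucas computations isolating $j \in \{1,q\}$ and $j \in \{0, q-1, q(q-1)\}$ are exactly the ones the paper performs.
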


\begin{proof}
    We prove the first statement. By \cite[Thm.~1.3]{petrov_a-exp}, the space $\S_{k+2,l}$ has an eigenform with $A$-expansion and $A$-exponent~2. By Theorem~\ref{thm:a-exp_eigs}, this eigenform has $\T_T$-eigenvalue~$T$. If $f \in A$ denotes the other eigenvalue, we have
    \[
    \Tr(\T_T \hspace{0.2em} | \hspace{0.1em} \S_{k+2,l}) = T + f = \sum_{\substack{0 \leq j < k/2 \\ j \equiv 1 \pmod{q-1}}} (-1)^j \binom{k-j}{j}T^j.
    \]
    The claim follows because
    \[
    -\binom{1 + (2q+2-m)(q-1)}{1+m(q-1)} \equiv_p \begin{cases} 1 & \text{if } m \in \{ 0,1 \}; \\ 0 & \text{if } m \in \{ 2, \ldots, q \}, \end{cases}
    \]
    as can be seen by writing $1 + (2q+2-m)(q-1) = q^2 + (q-1-m)q + (q-1+m)$ and applying \hyperref[thm:lucas]{Lucas's theorem}.

    The second statement is proved similarly, noting that in this case $\S_{k+2,l}$ has two eigenforms with $A$-expansions with $A$-exponents~1, resp.~$q$.
\end{proof}

\begin{remark}
    The pairs $(k,n) = ((2q+2)(q-1)+4,q+1)$ and $(k,n) = (3q^2-q,q^2-q+1)$ do not satisfy the hypotheses of \cite[Thm.~1.3]{petrov_a-exp} for any~$q$. In other words, it is unlikely that the eigenvalues $T^q$, resp.~$T^{q(q-1)}$ computed in Prop.~\ref{prop:dim2_eig} come from eigenforms with $A$-expansions.
\end{remark}

So far, all Hecke eigenvalues we computed have been elements of~$A$. The next proposition gives quadratic $\T_T$-eigenvalues for any odd~$q$.

\begin{proposition}
    Suppose $2 \nmid q$ and let $k+2 = 2q^2-2$. Then the characteristic polynomial of $\T_T$ acting on the two-dimensional space $\S_{k+2,0}$ is irreducible and given by
    \[
    \det(\mathds{1}X-\T_T \hspace{0.2em} | \hspace{0.1em} \S_{k+2,0}) = X^2 - (3T^{2q-3} - T^{q-2})X + 2T^{q^2+3q-6} - 2T^{q^2+2q-5} + T^{3q-5}.
    \]
\end{proposition}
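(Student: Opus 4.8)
The plan is to pin down the two-dimensional operator $\T_T$ on $\S_{k+2,0}$ from its first two trace values. First I would record that $\dim \S_{k+2,0}=2$: since $k+2=2(q^2-1)\equiv 0\pmod{q-1}$, type $0$ corresponds to $l=q-1$ in Lemma~\ref{lem:cuspdim}, and the formula gives $1+\lfloor (2(q^2-1)-(q^2-1))/(q^2-1)\rfloor = 2$. Writing $\lambda_1,\lambda_2$ for the eigenvalues, the characteristic polynomial is $X^2-\Tr(\T_T)X+\lambda_1\lambda_2$, and since $q$ is odd the Newton identity $\lambda_1\lambda_2=\tfrac12\bigl(\Tr(\T_T)^2-\Tr(\T_T^2)\bigr)$ lets me recover the constant term. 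Thus it suffices to compute $\Tr(\T_T\mid\S_{k+2,0})$ and $\Tr(\T_T^2\mid\S_{k+2,0})$ and then verify both coefficients of the claimed polynomial.

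For the linear coefficient I would apply Theorem~\ref{thm:deg1} with $k=2q^2-4$ and $l=0$: the summation index runs over $j=i(q-1)-1$ with $1\le i\le q$, and since $q$ is odd each such $j$ is odd, so every sign $(-1)^j$ equals $-1$. Applying \hyperref[thm:lucas]{Lucas's theorem} to the $q$-ary digits of $2q^2-3-i(q-1)$ and $i(q-1)-1$ shows that $\binom{k-j}{j}$ vanishes modulo $p$ for all $i\ge 3$, equals $1$ for $i=1$, and equals $-3$ for $i=2$. This yields $\Tr(\T_T\mid\S_{k+2,0})=3T^{2q-3}-T^{q-2}$, matching the coefficient of $X$.

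The computation of $\Tr(\T_T^2\mid\S_{k+2,0})$ is the main obstacle. Here $\wp=T$, $n=2$ and $d=1$, so $nd=2$ and Theorem~\ref{thm:deg2} applies, expressing the trace as a double sum over the multinomial coefficients $\binom{k-j}{i,j,k-2j-i}$ with the Legendre-symbol data of Proposition~\ref{prop:deg2}. The work is to evaluate this sum modulo $p$ using Lucas's theorem, collapsing it to the few surviving terms, and then to check that $\tfrac12\bigl(\Tr(\T_T)^2-\Tr(\T_T^2)\bigr)$ equals $2T^{q^2+3q-6}-2T^{q^2+2q-5}+T^{3q-5}$. This is where the bulk of the bookkeeping lies, and I expect it to be the most error-prone step.

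Finally, for irreducibility I would show that the discriminant $\Delta=\Tr(\T_T)^2-4\lambda_1\lambda_2$ is not a square in $K=\bF_q(T)$. Setting $u=T^{q-1}$ and using $T^{q^2+q-2}=u^{q+2}$, $T^{q^2-1}=u^{q+1}$, $T^{3q-5}=T^{2q-4}u$, a direct manipulation gives $\Delta=T^{2q-4}\bigl[9u^2-10u+1-8u^{q+2}+8u^{q+1}\bigr]$, and factoring $9u^2-10u+1=(9u-1)(u-1)$ together with $9u-1-8u^{q+1}=-8(u^{q+1}-1)+9(u-1)=(u-1)\bigl(1-8\sum_{i=1}^{q}u^i\bigr)$ collapses this to
\[
\Delta=\bigl(T^{q-2}(T^{q-1}-1)\bigr)^2\Bigl(1-8\sum_{i=1}^{q}T^{i(q-1)}\Bigr).
\]
It then remains to prove that $h(T):=1-8\sum_{i=1}^{q}T^{i(q-1)}=H(T^{q-1})$, with $H(u)=1-8\sum_{i=1}^{q}u^i$, is squarefree of positive degree, since the other factor is a square. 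The polynomial $(u-1)H(u)=-8u^{q+1}+9u-1$ has derivative $-8u^q+9$ (as $q+1\equiv 1\pmod p$), and substituting $u^q=9/8$ into $-8u^{q+1}+9u-1$ gives $-1\neq 0$, so $(u-1)H(u)$ is separable and $H(0)=1$, $H(1)=1-8q=1$ show that $H$ has only nonzero roots $\beta\neq 1$. Hence $h(T)=-8\prod_\beta\bigl(T^{q-1}-\beta\bigr)$ is a product of pairwise coprime separable factors (distinct $\beta$ give coprime $T^{q-1}-\beta$, each separable since $\beta\neq 0$), so $h$ is squarefree of degree $q(q-1)>0$. Every prime factor of $h$ is coprime to $T^{q-2}(T^{q-1}-1)$, so it occurs to odd order in $\Delta$; thus $\Delta$ is not a square and the characteristic polynomial is irreducible over $K$.
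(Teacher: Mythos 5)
Your overall skeleton is the same as the paper's: compute $\Tr(\T_T \hspace{0.1em}|\hspace{0.1em} \S_{k+2,0})$ and $\Tr(\T_T^2 \hspace{0.1em}|\hspace{0.1em} \S_{k+2,0})$, recover the constant term as $\tfrac12\bigl(\Tr(\T_T)^2-\Tr(\T_T^2)\bigr)$, and then prove irreducibility. Your evaluation of the linear coefficient via Theorem~\ref{thm:deg1} and Lucas's theorem is correct and agrees with the paper's computation (only $i=1,2$ survive, giving $3T^{2q-3}-T^{q-2}$). But there is a genuine gap at the centre: you never actually evaluate $\Tr(\T_T^2 \hspace{0.1em}|\hspace{0.1em} \S_{k+2,0})$. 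You name the right tool (Theorem~\ref{thm:deg2} with the Legendre-symbol data of Prop.~\ref{prop:deg2}) and then defer the evaluation as ``bookkeeping,'' asserting that the result will match the claimed constant term. That evaluation \emph{is} the bulk of the paper's proof: after the Lucas-theorem collapse it finds $\Tr(\T_T^2 \hspace{0.1em}|\hspace{0.1em} \S_{k+2,0})=-4T^{q^2+3q-6}+4T^{q^2+2q-5}+9T^{4q-6}-8T^{3q-5}+T^{2q-4}$, and only then does the constant term $2T^{q^2+3q-6}-2T^{q^2+2q-5}+T^{3q-5}$ follow. Since the whole content of the proposition in this weight is that evaluation, and since your irreducibility argument consumes the (unverified) constant term through the discriminant, the proposal as written does not prove the statement.

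Where you do complete the work --- irreducibility --- your route is correct and genuinely different from the paper's. The paper substitutes $Y=X-T^{2q-3}$, supposes a factorization $c(Y)=(Y-f)(Y-g)$ over $\bF_q[T]$, reads off the possible shapes of $f$ and $g$ from the Newton polygon, and derives the parity contradiction $2N=q$, impossible for odd~$q$. You instead show the discriminant is a nonsquare: $\Delta=\bigl(T^{q-2}(T^{q-1}-1)\bigr)^2H(T^{q-1})$ with $H(u)=1-8\sum_{i=1}^{q}u^i$, and $H(T^{q-1})$ is squarefree and coprime to the square factor. Indeed $(u-1)H(u)=-8u^{q+1}+9u-1$ has derivative $-8u^q+9$ (as $q+1\equiv 1\pmod p$), and the formal substitution $u^q\mapsto 9/8$ yields the value $-1\neq 0$ in every odd characteristic (including $p=3$, where the computation degenerates harmlessly), so $(u-1)H(u)$ is separable; together with $H(0)=H(1)=1$ and the separability of each $T^{q-1}-\beta$ for the nonzero roots $\beta$ of~$H$, this gives squarefreeness of $H(T^{q-1})$ and coprimality with $T^{q-2}(T^{q-1}-1)$. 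I checked the algebra, including the identity $9u-1-8u^{q+1}=(u-1)\bigl(1-8\sum_{i=1}^{q}u^i\bigr)$, and the reduction via Gauss's lemma to ruling out roots in $A$ (with $\operatorname{char}\neq 2$ justifying the discriminant criterion) is sound; this is arguably cleaner than the paper's Newton-polygon case analysis. Supply the missing trace computation and your write-up becomes a complete proof.
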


\begin{proof}
    By Theorem~\ref{thm:deg1} and \hyperref[thm:lucas]{Lucas's theorem}, we obtain
    \[
    \Tr(\T_T \hspace{0.2em} | \hspace{0.1em} \S_{k+2,0}) = - \sum_{i = 0}^{q-1} \binom{2q^2-q-2-i(q-1)}{q-2+i(q-1)} T^{q-2+i(q-1)} = 3T^{2q-3} - T^{q-2}.
    \]
    By Theorem~\ref{thm:deg2} and the above computation, we have
    \begin{align*}
    \Tr &(\T_T^2 \hspace{0.2em} | \hspace{0.1em} \S_{k+2,0}) = -T^{2q-4} + 3T^{4q-6} + \sum_{m=(q-1)/2}^{q-2} (-1)^{m-1+\frac{q-1}{2}} 4^{-m} \binom{m}{(q-1)/2} S_m, \\
    S_m &= \sum_{N = 0}^{2q-1} \sum_{0 \leq 2a \leq N} \binom{2q^2-3-m-a(q-1)}{m-1+a(q-1)} \binom{2(q^2-1-m-a(q-1))}{2(q-1-m)+(N-2a)(q-1)} T^{2q-4 + N(q-1)}.
    \end{align*}
    By \hyperref[thm:lucas]{Lucas's theorem}, we see that most of the binomial coefficients vanish,
    and after computing the non-zero terms we are left with
    \[
    \Tr(\T_T^2 \hspace{0.2em} | \hspace{0.1em} \S_{k+2,0}) = -4T^{q^2+3q-6} + 4T^{q^2+2q-5} + 9T^{4q-6} - 8T^{3q-5} + T^{2q-4}.
    \]
    The constant term of the characteristic polynomial is $\left(\Tr(\T_T)^2 - \Tr(\T_T)^2 \right)/2$ and the linear term is $-\Tr(\T_T)X$, which gives the claimed expression. To see that the characteristic polynomial is irreducible, we make the change of variables $Y = X - T^{2q-3}$ to obtain
    \[
    c(Y) = Y^2 - T^{q-2}(T^{q-1}-1)X + 2T^{3q-5}(T^{q-1}-1)^{q+1}.
    \]
    Suppose for contradiction that $c(Y) = (Y-f)(Y-g)$ for some $f,g \in  \bF_q[T]$. The Newton polygons of~$c$ then show that there exists a sequence of elements $(\epsilon_x)_{x \in \bF_q^\times} \in \{0,1\}^{q-1}$ such that
    \[
    f = \alpha T^{q-2} \prod_{x \in \bF_q^\times} (T-x)^{\epsilon_x + (1-\epsilon_x)q}, \qquad g = \beta T^{2q-3} \prod_{x \in \bF_q^\times} (T-x)^{(1-\epsilon_x) +\epsilon_x q},
    \]
    where $\alpha, \beta \in \bF_q^\times$. This gives
    \[
    f + g = T^{q-2}(T^{q-1}-1) \left( \alpha \prod_{\epsilon_x = 0} (T-x)^{q-1} + \beta T^{q-1} \prod_{\epsilon_x = 1} (T-x)^{q-1} \right).
    \]
    On the other hand, we have $f + g = T^{q-2}(T^{q-1}-1)$, and hence
    \[
    Q := \alpha \prod_{\epsilon_x = 0} (T-x)^{q-1} + \beta T^{q-1} \prod_{\epsilon_x = 1} (T-x)^{q-1} = 1.
    \]
    Let $N = \# \{ x \in \bF_q^\times \ | \ \epsilon_x = 0\}$. Since $\deg(Q) = 0$, we must have $N(q-1) = q-1 + (q-1-N)(q-1)$, which happens if and only if $2N = q$. Since $q$ is odd, this is a contradiction.
\end{proof}

\subsection{Hecke eigenvalues for primes of degree~2}\label{sec:comp_deg2}

Suppose that the space $\S_{k,l}$ is one-dimensional. Then Corollary~\ref{cor:deg2pol} suggests that the trace of~$\T_T^2$ behaves like the trace of~$\T_{\mathfrak p}$ where $\mathfrak p$ is a prime of degree~2. This is not exactly true, but it is up to an error term. More precisely, we have the following statement.

\begin{proposition}\label{prop:deg2pol}
    Suppose $2 \nmid q$. Let $0 \leq n \leq q-1$ and $1 \leq l \leq q-1$. Write $k = n(q-1)+l(q+1)$. Consider the polynomials (with $j_0, j_1$ as defined in Theorem~\ref{thm:deg1dim1})
    \[
    G_q(X) := \prod_{x \in \bF_q} (X - (T-x)^2), \quad h_{k,l}(X) := \left( \sum_{j=j_0}^{j_1} (-1)^j \binom{n+l-j-1}{j,\,n-j,\,l-j-1} X^{\frac{1}{2}(l-1+j(q-1))} \right)^2,
    \]
    and let $f_{k,l} \in A[X]$ be as in Corollary~\ref{cor:deg2pol}.
    Then there exists some $e_{k,l} \in A[X]$ such that
    \[
    f_{k,l}(X) = h_{k,l}(X) + G_q(X) e_{k,l}(X).
    \]
\end{proposition}

\begin{proof}
    In each case, the space $\S_{k,l}$ is one-dimensional, so for any prime~$\mathfrak p$ we have
    \[
    \Tr(\T_{\mathfrak p}^2 \hspace{0.2em} | \hspace{0.1em} \S_{k,l}) = \Tr(\T_{\mathfrak p} \hspace{0.2em} | \hspace{0.1em} \S_{k,l})^2.
    \]
    This implies that for any $Q = T-x$, $x \in \bF_q$, we have
    \[
    f_{k,l}(Q^2) = \Tr(\T_{Q} \hspace{0.2em} | \hspace{0.1em} \S_{k,l})^2.
    \]
    By Theorem~\ref{thm:deg1dim1}, $h_{k,l}(X)$ satisfies $f_{k,l}(Q^2) = h_{k,l}(Q^2)$ for all monic polynomials $Q$ of degree~1. Hence $X - Q^2$ divides $f_{k,l}(X) - h_{k,l}(X)$ for all such~$Q$, so $G_q \mid f_{k,l} - h_{k,l}$.
\end{proof}

\begin{example}
    We keep the notation of Prop.~\ref{prop:deg2pol}. When $l =1$, we have $h_{k,l} = 1$ and $e_{k,1} = 0$ for all~$k$ by the theory of $A$-expansions.
    
    For types 0 and~2, the $e_{k,l}$ seem to follow a simple pattern, as can be seen in Table~\ref{table:errors}. Note that the zeroes for $n = 0$ can be explained because $h^l$ has an $A$-expansion for $1 \leq l \leq q$. The same goes for the zeroes for $(n,l) \in \{ (3,2),(6,2)\}$ when $q=9$. For $q=3$, the zero for $n = 2$ is explained by Theorem~\ref{thm:trivial_eigs}.

    In general, it is not the case that $e_{k,l}(X)$ is a monomial: for $(q,l,n) = (5,3,2)$, we have
    \[
    e_{k,l} = 2X^3 + 4T^{10} + 2T^6 + 4T^2 = 2X^3 + T^6 - \Tr(\T_T \hspace{0.2em} | \hspace{0.1em} \S_{k,l}).
    \]

    In principle one should be able to obtain a general expression for $e_{k,l}$ since $f_{k,l}$ and $h_{k,l}$ are known, but the necessary calculations quickly get out of hand.
\begin{table}[hbt]
    \begin{center}
\begin{tabular}{||c | c ||} 
 \hline
 \rule{0pt}{12pt}
 $n$ & $e_{k,0}$ \\ [0.5ex] 
 \hline\hline
 0 & 0  \\ 
 \hline
 1 & 1 \\
 \hline
 2 & 0 \\
 \hline
\end{tabular}
\qquad 
\begin{tabular}{||c | c | c ||} 
 \hline
  \rule{0pt}{12pt}
 $n$ & $e_{k,2}$ & $e_{k,4}$ \\ [0.5ex] 
 \hline\hline
 0 & 0 & 0 \\
 \hline 
 1 & $3$ & $3X^2$  \\
 \hline
 2 & $4$ & $4X^6$ \\
 \hline
 3 & $3$ & $3X^{10}$ \\
 \hline
 4 & 0 & 0 \\
 \hline
 \end{tabular}
 \qquad 
\begin{tabular}{||c | c | c ||} 
 \hline
  \rule{0pt}{12pt}
 $n$ & $e_{k,2}$ & $e_{k,6}$ \\ [0.5ex] 
 \hline\hline
 0 & 0 & 0 \\
 \hline 
 1 & $5$ & $5X^4$  \\
 \hline
 2 & $1$ & $X^{10}$ \\
 \hline
 3 & $2$ & $2X^{16}$ \\
 \hline
 4 & $1$ & $X^{22}$ \\
 \hline
 5 & $5$ & $5X^{28}$ \\
 \hline
 6 & 0 & 0 \\
 \hline
 \end{tabular}
  \qquad 
\begin{tabular}{||c | c | c ||} 
 \hline
  \rule{0pt}{12pt}
 $n$ & $e_{k,2}$ & $e_{k,8}$ \\ [0.5ex] 
 \hline\hline
 0 & 0 & 0 \\
 \hline 
 1 & $1$ & $X^6$  \\
 \hline
 2 & 0 & 0 \\
 \hline
 3 & 0 & 0 \\
 \hline
 4 & $1$ & $X^{30}$ \\
 \hline
 5 & 0 & 0 \\
 \hline
 6 & 0 & 0 \\
 \hline
 7 & $1$ & $X^{54}$ \\
 \hline
 8 & 0 & 0 \\
 \hline
 \end{tabular}
\end{center}
\caption{The error terms $e_{k,l}$ for $q \in \{3,5,7,9\}$ and $l \in \{2,q-1\}$.}\label{table:errors}
\end{table}



\end{example}

\newpage
\appendix
\section{The number of isomorphism classes of Drinfeld modules in a fixed isogeny class}

\label{sec:appendix}

\begin{center}
\large{Jonas Bergstr\"om and Sjoerd de Vries}
\end{center}

\subsection{Hurwitz class numbers}
We begin by defining Hurwitz class numbers (alternatively called Gau{\ss} class numbers) following \cite[Sec.~6]{gekeler_frob_dist_dm}.

Let $L$ be an imaginary extension of degree two of the quotient field $K$ of $A=\mathbb F_q[T]$. Let $\mathcal O_L$ denote the integral closure of $A$ in $L$. An \emph{$A$-order} in~$L$ is a subring~$B$ of~$\mathcal O_L$ which contains~$A$ and is free of rank~2 as an $A$-module. Any $A$-order is of the form $A+f \mathcal O_L$ for some monic $f \in A$. 

A \emph{fractional ideal} of~$B$ is a non-zero finitely generated $B$-submodule of~$L$. Two fractional ideals $I,J$ are said to be \emph{equivalent} if they are related by $I=gJ$ for some $g \in L^\times$. Let $H(B)$ denote the number of equivalence classes of fractional ideals of~$B$; call $H(B)$ the \emph{Hurwitz class number} of~$B$.

Define the Kronecker symbol~$\chi_L$ via
\begin{align*}
    \chi_L(\mathfrak p) = \begin{cases}
        1 & \text{if } \mathfrak p \text{ splits in } L; \\
        0 & \text{if }\mathfrak p \text{ ramifies in } L;\\
        -1 & \text{if }\mathfrak p \text{ is inert in } L,
    \end{cases}
\end{align*}
for any prime $\mathfrak p$ of~$K$. If $\mathfrak p = (\wp) \subset A$, we also write $\chi_L(\wp)$ for the Kronecker symbol of~$\mathfrak p$.

For any monic $f\in A$, let $\mathbb P(f)$ denote the set of monic prime divisors of~$f$ and put $B_f=A+f\mathcal O_L$. 
For any $f \in A$, we then have
\begin{equation} \label{eq:hurwitz}
H(B_f)=\sum_{g |f} \frac{H(\mathcal O_L)\cdot q^{\deg(g)}}{[\mathcal O_L^\times:B^\times_g]}\prod_{\wp \in \mathbb P(g)} (1 - \chi_L( \wp )q^{-\deg(\wp)}), 
\end{equation}
where the sum is over monic $g$ in~$A$, see \cite[Sec.~6]{gekeler_frob_dist_dm}. 

Equation~\eqref{eq:hurwitz} simplifies modulo~$p$ to the following.

\begin{lemma}\label{lem:hurwitz_number}
    For any monic $f \in A$,
    \[
    H(B_f) \equiv_p H(\mathcal O_L)\prod_{\wp \in \mathbb P(f)} (1 - \chi_L (\wp)).
    \]
\end{lemma}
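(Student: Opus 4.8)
The plan is to reduce the exact formula~\eqref{eq:hurwitz} modulo $p$ term by term. Since $q$ is a power of~$p$, we have $q \equiv 0 \pmod p$, and the main point will be to determine which summands over $g \mid f$ survive this reduction, together with what happens to the unit indices.

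First I would combine the factor $q^{\deg(g)}$ with the Euler product. Writing $g = \prod_{\wp} \wp^{v_\wp}$ with $\wp$ ranging over $\mathbb P(g)$, and using $\deg(g) = \sum_{\wp} v_\wp \deg(\wp)$, one rewrites
\[
q^{\deg(g)}\prod_{\wp \in \mathbb P(g)}\bigl(1 - \chi_L(\wp)q^{-\deg(\wp)}\bigr) = \prod_{\wp \in \mathbb P(g)}\bigl(q^{v_\wp \deg(\wp)} - \chi_L(\wp)q^{(v_\wp - 1)\deg(\wp)}\bigr).
\]
Reducing modulo $p$, each factor vanishes whenever $v_\wp \geq 2$ and equals $-\chi_L(\wp)$ when $v_\wp = 1$. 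Hence the only divisors $g \mid f$ contributing mod~$p$ are the squarefree ones, and for such $g$ the displayed product is $\equiv \prod_{\wp \in \mathbb P(g)}(-\chi_L(\wp)) \pmod p$.

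Next I would handle the unit index $[\mathcal O_L^\times : B_g^\times]$, which is the main obstacle. Since $L/K$ is imaginary, there is a unique place of~$L$ above~$\infty$, so by the function field unit theorem $\mathcal O_L^\times$ equals the group $\mathbb{F}_{q_L}^\times$ of constants, where $\mathbb{F}_{q_L}$ is the exact constant field of~$L$ (so $\mathbb{F}_{q_L} = \mathbb{F}_q$ or $\mathbb{F}_{q^2}$). As $B_g \subseteq \mathcal O_L$, every unit of $B_g$ is a constant, and one checks that $B_g^\times = (\mathbb{F}_{q_L} \cap B_g)^\times$, the unit group of a subfield of $\mathbb{F}_{q_L}$ containing $\mathbb{F}_q$. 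Thus $[\mathcal O_L^\times : B_g^\times]$ is either $1$ (when $\mathbb{F}_{q_L} \cap B_g = \mathbb{F}_{q_L}$) or $[\mathbb{F}_{q^2}^\times : \mathbb{F}_q^\times] = q+1$; in both cases it is $\equiv 1 \pmod p$, so its reciprocal modulo~$p$ is $1$ and the index may simply be dropped.

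Finally I would carry out the sum. Squarefree divisors $g \mid f$ correspond bijectively to subsets $S \subseteq \mathbb P(f)$ via $g = \prod_{\wp \in S}\wp$, so combining the previous two steps gives
\[
H(B_f) \equiv H(\mathcal O_L)\sum_{S \subseteq \mathbb P(f)}\ \prod_{\wp \in S}\bigl(-\chi_L(\wp)\bigr) = H(\mathcal O_L)\prod_{\wp \in \mathbb P(f)}\bigl(1 - \chi_L(\wp)\bigr) \pmod p,
\]
where the last equality is the standard expansion $\sum_{S \subseteq T}\prod_{i \in S}x_i = \prod_{i \in T}(1 + x_i)$ applied with $x_\wp = -\chi_L(\wp)$. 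This is the claimed congruence; apart from the unit-index computation, everything is a direct mod-$p$ reduction of~\eqref{eq:hurwitz}.
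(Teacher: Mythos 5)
Your proof is correct and follows essentially the same route as the paper's: a term-by-term reduction of Equation~\eqref{eq:hurwitz} modulo~$p$, observing that the unit index lies in $\{1,q+1\}$ and is therefore $\equiv_p 1$, that divisors $g$ which are not square-free contribute~$0$, and that the surviving sum over square-free divisors (equivalently, subsets of~$\mathbb P(f)$) expands to the claimed product. Your derivation of the unit-index fact from the imaginary hypothesis and the unit theorem is just a more detailed version of the paper's one-line observation that any $A$-order in the quadratic extension~$L$ has unit group $\bF_q^\times$ or $\bF_{q^2}^\times$.
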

\begin{proof} Since $L$ is a quadratic extension, any $A$-order has unit group $\bF_q^\times$ or $\bF_{q^2}^\times$. This tells us that $[\mathcal O_L^\times : B_g^\times] \in \{1,q+1\}$ and so $[\mathcal O_L^\times : B_g^\times] \equiv_p 1$. 
If a monic $g \in A$ is not square-free, then 
\[
\frac{H(\mathcal O_L)\cdot q^{\deg(g)}}{[\mathcal O_L^\times :B^\times_g]}\prod_{\wp \in \mathbb P(g)} (1 - \chi_L(\wp)q^{-\deg(\wp)}) \equiv_p 0. 
\]  
Since there is a $1$-to-$1$ correspondence between subsets of~$\mathbb P(f)$ and monic square-free divisors of~$f$, we find from Equation~\eqref{eq:hurwitz} that
\[
H(B_f) \equiv_p H(\mathcal O_L) \sum_{S \subseteq \mathbb P(f)} \prod_{\wp \in S} -\chi_L(\wp) = H(\mathcal O_L) \prod_{\wp \in \mathbb P(f)} (1 - \chi_L(\wp)),
\]
as claimed. 
\end{proof}

\subsection{Isomorphism classes}
The following result gives a formula for the number of isomorphism classes of Drinfeld modules over a finite field in a fixed isogeny class. In the cases of a commutative endomorphism algebra, this number can be expressed in terms of Hurwitz class numbers. 

\begin{proposition}\label{prop:isom}
    Given a characteristic polynomial as in Prop.~\ref{prop:yu_pols}, we have
    \[
    \# \text{Iso}_{\mathfrak p^n}(a,b) = 
    \begin{cases}
        H(A[\pi]) & \text{in case 1;} \\
        H(A[\sqrt{-b \wp}]) & \text{in case 2;}\\
        2 & \text{in case 3;}\\
        (q^{\deg (\mathfrak p)}-1)/(q-1) & \text{in case 4.}
    \end{cases}
    \]
\end{proposition}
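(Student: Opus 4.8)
The plan is to run the Drinfeld-module analogue of the Deuring correspondence together with Honda--Tate theory, as developed by Yu \cite{yu_isogs} and Gekeler \cite{gekeler_frob_dist_dm}, and then to analyse the four cases of Proposition~\ref{prop:yu_pols} separately. By Proposition~\ref{prop:endalg} the algebra $D := \End^0(\phi)$ depends only on the isogeny class, i.e.\ only on $c_\phi(X) = X^2 - aX + b\wp^n$, with centre $K(\pi)$ and reduced degree~$2$. First I would record the dictionary sending an isomorphism class $[\phi']$ to its endomorphism order $\End(\phi')$, which satisfies $A[\pi] \subseteq \End(\phi') \subseteq \mathcal O_{\max}$ for $\pi$ a root of $c_\phi$. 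In the commutative cases this converts the count into a count of fractional-ideal classes: for a fixed admissible order $\mathcal O$ the number of $[\phi']$ with $\End(\phi') = \mathcal O$ equals the number $h^\ast(\mathcal O)$ of invertible $\mathcal O$-ideal classes, so $\#\text{Iso}_{\mathfrak p^n}(a,b) = \sum_{\mathcal O} h^\ast(\mathcal O)$, the sum running over the orders that occur. The whole problem then reduces to deciding which orders are admissible, a question governed locally at $\mathfrak p$ by the $\wp$-divisible module and at the remaining primes by the Tate modules.

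For case~1 the hypothesis $(a,\wp)=1$ is ordinariness: $\mathfrak p$ splits in $L = K(\pi)$, so $A_{\mathfrak p}[\pi]$ is already maximal and imposes no condition, while away from $\mathfrak p$ every order containing $A[\pi]$ is realised. Summing $h^\ast$ over all of these is precisely the number of fractional-ideal classes of $A[\pi]$, that is $H(A[\pi])$, since the summands of \eqref{eq:hurwitz} are exactly the proper class numbers $h^\ast(B_g)$ and $H(B_f) = \sum_{g \mid f} h^\ast(B_g)$. In case~2 the module is non-ordinary, and the key local input is that the structure at the ramified prime $\mathfrak p$ forces $\End(\phi')$ to be maximal there; as the conductor of $A[\pi]$ is a power of $\wp$, this forces $\End(\phi') = \mathcal O_L$ globally. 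In odd characteristic $-b\wp$ is squarefree, so $A[\sqrt{-b\wp}] = \mathcal O_L$ and $\#\text{Iso}_{\mathfrak p^n}(a,b) = h(\mathcal O_L) = H(A[\sqrt{-b\wp}])$.

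Case~3 is the subtle commutative one: here $L = \mathbb F_{q^2}(T)$ is the constant-field extension, $\mathfrak p$ is inert (using that $d$ is odd), and $\mathcal O_L = \mathbb F_{q^2}[T]$ is a PID, so $h(\mathcal O_L)=1$. The answer~$2$ reflects a local multiplicity $1 - \chi_L(\wp) = 2$ at the inert prime; concretely, the two embeddings $\mathbb F_{q^2} \hookrightarrow \mathbb F_{\mathfrak p^n}$, which differ by a $q$-power Frobenius and are both available since $2 \mid nd$, equip the unique ideal class with two non-isomorphic $\mathbb F_{q^2}$-structures, and I would make this precise to extract the clean constant. Case~4 is the genuinely noncommutative case: $\pi = \mu\wp^{n/2} \in K$, so $Z(D)=K$ and $D$ is the quaternion $K$-algebra ramified exactly at $\{\mathfrak p,\infty\}$; then $\text{Iso}_{\mathfrak p^n}(a,b)$ is the set of left-ideal classes of a maximal order, and I would evaluate this class number through the Eichler mass formula over $\mathbb F_q(T)$ together with the automorphism count (all unit groups being $\mathbb F_q^\times$), which yields $(q^d-1)/(q-1)$.

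The main obstacle I expect is twofold. First, the exact—not merely mod-$p$—quaternion class-number computation in case~4, which requires the function-field mass formula and careful bookkeeping of the $\mathbb F_q^\times$-automorphisms; and second, the local analysis at $\mathfrak p$ in the non-ordinary cases~2 and~3 that pins down the admissible orders and isolates the local factor $1-\chi_L(\wp)$. A secondary but real difficulty is extending the arguments of Yu and Gekeler, originally given in odd characteristic, to even~$q$: the discriminant and squarefreeness computations (such as $A[\sqrt{-b\wp}] = \mathcal O_L$) must be redone when $2$ is not a unit, since in characteristic~$2$ the relevant quadratic extensions become inseparable or Artin--Schreier and the identification of $\mathcal O_L$ and of $\chi_L(\wp)$ needs a separate verification.
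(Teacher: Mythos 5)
Your cases 1 and 2 follow essentially the same route as the paper: the admissible endomorphism rings are the orders $B$ with $A[\pi] \subseteq B \subseteq \mathcal O_L$ that are locally maximal at $\pi$ (all of them in case 1, only $\mathcal O_L$ in cases 2 and 3, by \cite[Prop.~5.1]{KKP} and \cite[Cor.~2.9]{KKP}), and the telescoping identity $H(B_f) = \sum_{g \mid f} h^\ast(B_g)$ then yields $H(A[\pi])$, resp.\ $H(\mathcal O_L)$. Be aware, though, that the step you treat as a standard dictionary --- that the classes with $\End(\phi') = \mathcal O$ are counted by the invertible ideal class number $h^\ast(\mathcal O)$ --- is exactly what the paper sources to an erratum of \cite[Thm.~5.4]{KKP}; it is not free, and your proposal gives no independent argument for it. In case 3 your mechanism also differs from the paper's: the paper locates the factor $2$ in the two Frobenius-stable lattices inside the Dieudonn\'e module at $\mathfrak p$, whereas you attribute it to the two embeddings $\mathbb F_{q^2} \hookrightarrow \mathbb F_{\mathfrak p^n}$ and to a ``local multiplicity $1-\chi_L(\wp) = 2$''; the latter conflates the exact count with the mod-$p$ congruence of Lemma~\ref{lem:hurwitz_number}, and as you acknowledge, this step is only sketched. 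The constant $2$ is correct, but as written this is a heuristic, not a proof.

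The genuine gap is case 4, where your two stated inputs are incompatible with your claimed output. If every unit group of (the relevant orders in) the quaternion algebra ramified exactly at $\{\mathfrak p, \infty\}$ were $\mathbb F_q^\times$, then the Eichler mass formula would make the class number equal to the mass, namely $(q^d-1)/(q^2-1)$ --- short of your target $(q^d-1)/(q-1)$ by a factor of $q+1$, and for $d$ odd not even an integer. So for $d$ odd the hypothesis ``all unit groups are $\mathbb F_q^\times$'' is self-contradictory (some class must have units $\mathbb F_{q^2}^\times$; this is visible in the correction term $q/(q+1)$ in the formula for $\#\Sigma(2,\mathfrak p)$ quoted in the appendix), and in no case does your computation produce the claimed constant. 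Note also that your unit groups directly contradict the automorphism groups the paper actually uses: its proof of case 4 proceeds not via ideal classes but by taking the known count $\#\Sigma(2,\mathfrak p)$ of geometric supersingular classes, using that for even $n$ all of these are defined over $\mathbb F_{\mathfrak p^n}$ to write $\#\Sigma(2,\mathfrak p)$ as the mass $\sum_{[\phi]/\mathbb F_{\mathfrak p^n}} 1/\#\Aut(\phi)$, subtracting the case-3 contribution when $\deg(\mathfrak p)$ is odd, spreading the remainder equally over the $q-1$ central isogeny classes via Remark~\ref{rem:iso_c}, and then converting mass into count via the assertion that case-4 modules have $\Aut(\phi) = \mathbb F_{q^2}^\times$.

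This automorphism bookkeeping is the entire content of case 4, and you should treat it with care rather than assert it: the final constant is exactly as sensitive to the index $[\Aut(\phi) : \mathbb F_q^\times] \in \{1, q+1\}$ as the factor-$(q+1)$ discrepancy above indicates. A small explicit test shows how delicate this is: for $q=2$, $\wp = T^2+T+1$, $n=2$, writing $\phi_T = \omega + \alpha_1\tau + \alpha_2\tau^2$ over $\mathbb F_{16}$ with $\omega^2+\omega+1=0$, one computes $\phi_\wp = (\alpha_1^3+\alpha_2)\tau^2 + (\alpha_1\alpha_2^2+\alpha_1^4\alpha_2)\tau^3 + \alpha_2^5\tau^4$, so the modules with $c_\phi(X) = (X-\wp)^2$ are exactly those with $\alpha_2 = \alpha_1^3$, $\alpha_1 \in \mathbb F_{16}^\times$; these form a single $\mathbb F_{16}^\times$-orbit and each has $\Aut(\phi) = \mathbb F_2^\times$. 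Here your unit-group hypothesis holds, but then the count is $1 = (q^d-1)/(q^2-1)$ rather than $(q^d-1)/(q-1) = 3$, so your route as stated cannot recover the constant in the proposition, and the reconciliation of the case-4 constant with the actual automorphism groups (both in your argument and against the paper's assertion $\Aut(\phi) = \mathbb F_{q^2}^\times$) is the point that must be resolved before the proof stands.
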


\begin{remark}
    For $q$ odd, this result follows from \cite[Prop.~7]{yu_isogs}. We note that each isomorphism class of Drinfeld modules in loc.\ cit.\ is weighted by the factor $q-1$ divided by the number of automorphisms of any representative of this isomorphism class. For $q$ even and $n=1$, this result follows from \cite[Prop.~6.8]{gekeler_frob_dist_dm}.
\end{remark}

\begin{proof} 
The endomorphism algebra of any Drinfeld module in an isogeny class in case 1, 2, 3 will be commutative, and more precisely an imaginary extension $L$ of~$K$ of degree two. 
By \cite[Prop.~5.1]{KKP} there is a Drinfeld module with endomorphism ring $B$ precisely if $B$ is a ring such that $A[\pi] \subset B \subset \mathcal O_L$ and $B$ is locally maximal at $\pi$. In cases 2 and 3 we see directly that only $B=\mathcal O_L$ with $A[\pi] \subset B \subset \mathcal O_L$ is locally maximal at $\pi$, and in case 1, by \cite[Cor.~2.9]{KKP}, every $A[\pi] \subset B \subset \mathcal O_L$ is locally maximal at $\pi$. 

It follows from an upcoming erratum to Theorem 5.4 of~\cite{KKP} that $\# \text{Iso}_{\mathfrak p^n}(a,b) = H(A[\pi])$ in case~1, that $\# \text{Iso}_{\mathfrak p^n}(a,b) = H(\mathcal O_L)$ in case~2 and that $\# \text{Iso}_{\mathfrak p^n}(a,b)=2$  in case~3 (which in case~3 is due to the fact that there are two  Frobenius-stable lattices inside the Dieudonn\'e module at $\mathfrak p$).

For case~4, let $\Sigma(2,\mathfrak p)$ denote the set of $\overline{\bF}_q$-isomorphism classes of supersingular Drinfeld modules of rank~2 with characteristic~$\mathfrak p$. Then we have (see \cite[Ex.~4.4]{gekeler_finite-dms} or \cite[Cor.~4.4.12]{papikian}),
\[
\#\Sigma(2,\mathfrak p) = \begin{cases}
    (q^{\deg(\mathfrak p)} - 1)/(q^2-1) & \text{if } {\deg(\mathfrak p)} \equiv_2 0; \\
    (q^{\deg(\mathfrak p)} - 1)/(q^2 - 1) + \frac{q}{q+1} &\text{if } {\deg(\mathfrak p)} \equiv_2 1.
\end{cases}
\]
On the other hand, since $n$ is even, every supersingular Drinfeld module is defined over $\bF_{\mathfrak p^n}$ \cite[Prop.~4.2]{gekeler_finite-dms}. Therefore we also have
\[
\# \Sigma(2,\mathfrak p) = \sum_{[\phi]/\bF_{\mathfrak p^n}} \frac{1}{\# \Aut(\phi)},
\]
where the sum is taken over the isomorphism classes of supersingular Drinfeld modules over $\bF_{\mathfrak p^n}$. If $\deg(\mathfrak p) \equiv_2 1$, the supersingular Drinfeld modules in case~3 of Prop.~\ref{prop:yu_pols} contribute 
\[
2\cdot \frac{q^2-q}{2}\cdot \frac{1}{q^2-1}=\frac{q}{q+1}
\]
to $\# \Sigma(2,\mathfrak p)$. The only other contribution comes from the supersingular Drinfeld modules $\phi$ of case~4. These come in $q-1$ isogeny classes, each of which contains the same number $N$ of isomorphism classes by Remark~\ref{rem:iso_c}. Moreover, any Drinfeld module in case~4 has automorphism group $\bF_{q^2}^ \times$. Putting this together gives
\[
\frac{q-1}{q^2-1} N = \frac{q^{\deg(\mathfrak p)}-1}{q^2-1}.\qedhere
\]
\end{proof}

\begin{remark}
   If $q$ is even then $L/K$ is inseparable in case~2. Moreover, $A[\sqrt{-b \wp}]=\mathcal O_L$ and $H(A[\sqrt{-b \wp}])=1$ by Lemma~\ref{lem:jac}.
\end{remark}

\subsection{Hyperelliptic curves} \label{sec:hypers}

In this subsection, we describe a way to make the computation of the Hurwitz class number of~$\mathcal O_L$ explicit in terms of hyperelliptic curves (and their Jacobians). 

Fix a monic irreducible polynomial $c(X)$ of degree two with coefficients in~$A$.
Denote its splitting field over~$K$ by~$L$. 
Let $\pi$ be a fixed root of $c(X)$ in~$L$.

Let us first assume $q$ to be odd. 
Factor the discriminant of~$c(X)$ into a product $Df^2$ with $D$ square-free and $f$ monic. The extension $L/K$ is then the splitting field of $\tilde c(X,T)=X^2-D$. Put $g = \lceil \deg(D)/2-1 \rceil$. 

Secondly, let us assume $q$ to be even. Say that $c(X)=c_1(X)=X^2+r_1X+s_1$. If $r_1=0$, then $L/K$ is inseparable, and all places of $K$ ramify in~$L$. 
Assume therefore that $r_1 \neq 0$. Let $g_1$ be the integer such that $2g_1+1 \leq \max(2\deg(r_1),\deg(s_1)) \leq
2g_1+2$. Put 
$f_1=\gcd(r_1,(s_1')^2+s_1(r_1')^2)$, 
$\hat f_1=\prod_{\wp \in \mathbb P(f_1)}\wp$ and $m=\deg (\hat f_1)$.
If $\deg(f_1) >0$, then let $l$ be the remainder of $s_1^{q^{m}/2}$ divided by $r_1$. 
Finally put $r_2=r_1/ \hat f_1$ and $s_2=(s_1+r_1l+l^2)/\hat f_1^2$. Then $r_2$ and $s_2$ will be in $A$, see \cite[Lemma 9.3]{Bhypers}. 
Let $g_2$ be the integer such that $2g_2+1 \leq \max(2\deg(r_2),\deg(s_2)) \leq
2g_2+2$ and note that $g_2<g_1$. 
Repeat this process with $c_2(X)=X^2+r_2X+s_2$, continuing until $f_k=\gcd(r_k,(s_k')^2+s_k(r_k')^2)$ has degree~$0$, for some $k \geq 1$.
Then put $f=\prod_{i=1}^k \hat f_i$, $\tilde c(X)=\tilde c(X,T)=X^2+r_kX+s_k$ and $g=g_k$. Note that $r_1 = fr_k$.

Let again $q$ be arbitrary, and $L/K$ separable. If $g \geq 0$, then $\tilde c(X,T)$ is an affine equation for the (geometrically irreducible, projective and non-singular) curve $C_L$ over $\mathbb F_q$ of genus $g$ that comes with a degree two cover of the projective line. So if $g \geq 2$ then $C_L$ will be a hyperelliptic curve. 

Let $\tilde \pi$ be a root of~$\tilde c(X)$. Then $A[\tilde \pi]=\mathcal O_L$, and by the Jacobian criterion for smoothness of $c(X)$ (see for instance \cite[Sec.~8]{Bhypers} for the characteristic two case) we see that $A[\pi]=B_f$. In particular, $A[\pi] = \mathcal O_L \iff f=1 \iff r_1 = r_k$.
For $\wp \in \mathbb P(f)$, we find that 
\begin{align*}
    \chi_L(\wp) = \begin{cases}
        1 & \text{if } \tilde c(X,T)  \text{ has two distinct linear factors in } (A/(\wp))[X]; \\
        0 & \text{if } \tilde c(X,T)  \text{ is a square in } (A/(\wp))[X];\\
        -1 &  \text{if } \tilde c(X,T)  \text{ is irreducible in } (A/(\wp))[X].
    \end{cases}
\end{align*}

If we put $\tilde c_\infty(Y,s) := s^{2g+2} \tilde c(s^{-(g+1)} Y,1/s)$, then $\tilde c_\infty(Y,s)$ lies in $\bF_q[s,Y]$. Thus $\tilde c_\infty(Y,0) \in \bF_q[Y]$, and the Kronecker symbol of the prime $\infty = (1/T)$ of~$K$ is given by
    \[
    \chi_L(\infty) =
    \begin{cases} 
    1 & \text{if }\tilde c_\infty(Y,0) \text{ has two distinct factors in } \bF_q[Y];\\ 
    0 & \text{if }\tilde c_\infty(Y,0) \text{ is a square in } \bF_q[Y];\\ 
    -1 & \text{if } \tilde c_\infty(Y,0) \text{ is irreducible in } \bF_q[Y].
    \end{cases}
    \]
Note that the extension $L/K$ is imaginary if and only if $\chi_L(\infty) \neq 1$.

\begin{lemma}\label{lem:jac} Let $L/K$ be an imaginary quadratic extension. 
\begin{enumerate}
    \item If $L/K$ is inseparable, then $H(\mathcal O_L) = 1$.
    \item If $L/K$ is separable and $g=-1$, then $H(\mathcal O_L) = 1$. 
    \item If $L/K$ is separable and $g \geq 0$,
then
    \[
    H(\mathcal O_{L}) = (1 - \chi_L(\infty))\#J_L(\bF_q),
    \]
    where $J_L$ denotes the Jacobian of $C_L$. 
\end{enumerate}
\end{lemma}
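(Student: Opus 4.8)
The plan is to identify the Hurwitz class number $H(\mathcal O_L)$ with the order of the ideal class group of the Dedekind domain $\mathcal O_L$ and then compute that order geometrically. Taking $f = 1$ in the defining Equation~\eqref{eq:hurwitz}, $H(\mathcal O_L)$ is the number of fractional $\mathcal O_L$-ideals modulo the relation $I \sim gI$, which is exactly $\#\Cl(\mathcal O_L) = \#\Pic(\Spec \mathcal O_L)$. Since $\mathcal O_L$ is the integral closure of $A$ in $L$, the scheme $\Spec \mathcal O_L$ is the affine curve $C_L \setminus S$, where $S$ is the set of closed points of $C_L$ lying over $\infty$. Because $L/K$ is imaginary, $S = \{\infty_L\}$ consists of a single closed point; write $\delta := \deg \infty_L$ for its degree over $\mathbb F_q$.

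For the main case (part~3, where $L/K$ is separable with $g \ge 0$), I would use that $C_L$ is geometrically irreducible over $\mathbb F_q$ with constant field $\mathbb F_q$, so that $\#\Pic^0(C_L) = \#J_L(\mathbb F_q)$ and, by the theorem of F.K.\ Schmidt, the degree map $\deg \: \Pic(C_L) \to \mathbb Z$ is surjective. Removing the single point $\infty_L$ yields the excision sequence $\mathbb Z \xrightarrow{1 \mapsto [\infty_L]} \Pic(C_L) \to \Pic(C_L \setminus \{\infty_L\}) \to 0$, so that $\Pic(\mathcal O_L) = \Pic(C_L)/\langle [\infty_L]\rangle$. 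Splitting $\Pic(C_L) \cong \Pic^0(C_L) \oplus \mathbb Z$ via a degree-one class and noting that $[\infty_L]$ maps to an element of degree $\delta \ne 0$, a direct cokernel computation (project to $\mathbb Z/\delta\mathbb Z$ with kernel $\cong \Pic^0(C_L)$) gives $\#\Pic(\mathcal O_L) = \delta \cdot \#\Pic^0(C_L) = \delta \cdot \#J_L(\mathbb F_q)$. It then remains to check that $\delta = 1 - \chi_L(\infty)$: since $\infty$ has degree one over $\mathbb F_q$ and $L/K$ is imaginary, $\infty$ is either ramified or inert in $L$. If ramified, then $\chi_L(\infty) = 0$ and $\infty_L$ has residue degree $1$, so $\delta = 1$; if inert, then $\chi_L(\infty) = -1$ and $\infty_L$ has residue degree $2$, so $\delta = 2$. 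In both cases $\delta = 1 - \chi_L(\infty)$, which gives the claimed formula.

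The two degenerate cases are handled directly, because in each of them $\mathcal O_L$ turns out to be a polynomial ring and hence a principal ideal domain. In the inseparable case (part~1), $L/K$ is purely inseparable of degree $2$, so $p = 2$ and $L = K^{1/2} = \mathbb F_q(T^{1/2})$ is rational with $\mathcal O_L = \mathbb F_q[T^{1/2}]$. In the case $g = -1$ (part~2), the invariant $g = \lceil \deg(D)/2 - 1 \rceil$ forces $\deg(D) = 0$, i.e.\ $D \in \mathbb F_q^\times$ is a non-square constant, so $L = \mathbb F_{q^2}(T)$ is the constant-field extension and $\mathcal O_L = \mathbb F_{q^2}[T]$. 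In either situation $\mathcal O_L$ is a PID, whence $H(\mathcal O_L) = \#\Cl(\mathcal O_L) = 1$.

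The bookkeeping in the excision sequence is routine; the step requiring the most care is ensuring that parts~1 and~2 genuinely fall outside the scope of the part~3 argument. Concretely, one must verify that in these degenerate cases the constant field of $L$ is either $\mathbb F_q$ with $C_L$ rational (so $\Pic^0$ is trivial and $\delta = 1$), or else $\mathbb F_{q^2}$, in which case $C_L$ is not geometrically connected over $\mathbb F_q$ and F.K.\ Schmidt no longer applies, so that the clean identity $H(\mathcal O_L) = (1 - \chi_L(\infty))\,\#J_L(\mathbb F_q)$ must be replaced by the direct PID computation. This is precisely why the statement isolates the cases $g = -1$ and $L/K$ inseparable.
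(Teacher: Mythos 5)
Your proof is correct and takes essentially the same route as the paper's: the paper handles parts 1 and 2 by the same explicit identifications $\mathcal O_L \cong \mathbb F_q[T]$ resp.\ $\mathcal O_L \cong \mathbb F_{q^2}[T]$, and for part 3 it identifies $H(\mathcal O_L)$ with the $S$-class number of~$L$ for $S = \{\infty_L\}$ and cites \cite[Prop.~14.1]{rosen} for exactly the relation $\#\Cl(\mathcal O_L) = \deg(\infty_L)\cdot \#J_L(\mathbb F_q)$ that you prove directly via the excision sequence and F.K.~Schmidt. One small caveat: your justification that $g=-1$ forces $L = \mathbb F_{q^2}(T)$ invokes $g = \lceil \deg(D)/2 - 1 \rceil$, which is the odd-$q$ definition; for even $q$ one should instead note that the normalization of Section~\ref{sec:hypers} with $g_k = -1$ forces $\deg(r_k) = 0$ and $\deg(s_k) \leq 0$, so $\tilde c(X)$ has constant coefficients and the same conclusion follows.
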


\begin{proof} 
    If $L/K$ is inseparable, then $q$ is even and~$L = K(\sqrt{T})$, so the map $L \to K$ induced by $\sqrt{T} \mapsto T$ gives an isomorphism $L \cong K$ of abstract fields. Since $\mathcal O_L \cong A$ under this isomorphism, we have $H(\mathcal O_L)=1$.
    
    If $L/K$ is separable and $g=-1$, then $L \cong \mathbb F_{q^2}(T)$ and $\mathcal O_L \cong \mathbb F_{q^2}[T]$, so $H(\mathcal O_L)=1$.
    
    Finally, assume that $L/K$ is separable and that $g \geq 0$.
    The divisor class group of~$L$ will then be isomorphic to $J_L(\mathbb F_q)$.    
    The Hurwitz class number $H(\mathcal O_L)$ is equal to the $S$-class number of~$L$, with $S$ the set containing only the place above~$\infty$.
    If $\infty$ is ramified in~$L$ then $H(\mathcal O_L)$ is equal to the divisor class number of~$L$, and if $\infty$ is inert in~$L$ then $H(\mathcal O_L)$ is twice the divisor class number, see \cite[Prop.~14.1]{rosen} or \cite[Exerc.~5.10]{sticht}. 
\end{proof}

\begin{remark}\label{rmk:point_counts}
The number of $\bF_q$-points on the Jacobian of a curve can be computed in terms of the number of points on the curve as follows. If $\psi_L(z)$ is the characteristic polynomial of the geometric Frobenius acting on the first $\ell$-adic \'etale cohomology of $C_L \otimes_{\mathbb F_q} \overline{\mathbb F}_q$, then $\# J_L(\mathbb F_q)=\psi_L(1)$. Moreover, if $e_i$ denotes the $i$-th elementary symmetric polynomial and $p_i$ the $i$-th power sum polynomial, then there are rational numbers $r_{i,\lambda}$ for each partition $\lambda=(\lambda_1,\ldots,\lambda_i)$ of the integer $i \geq 1$, such that
\[
e_i=\sum_{\lambda \vdash i} r_{i,\lambda} \prod_{j=1}^i p_j^{\lambda_j}
\]
and 
\[
\psi_L(z)=z^{2g}+q^g+(-1)^g \left( \sum_{\lambda \vdash g} r_{g,\lambda} \prod_{j=1}^g a_j^{\lambda_j} \right) z^{g} +\sum_{i=1}^{g-1} (-1)^i \left( \sum_{\lambda \vdash i} r_{i,\lambda} \prod_{j=1}^i a_j^{\lambda_j} \right) (z^{2g-i}+q^iz^i)
\]
with $a_j=q^j+1-\# C_L(\mathbb F_{q^j})$ for $j=1,\ldots,g$.
\end{remark}

Finally, if~$q$ is even, let us classify when $H(A[\pi])$ is even. This result is used in Section~\ref{sec:char2}.

\begin{proposition} \label{prop:evenclassnumber}
    Let $q$ be even and put $c(X)=X^2+rX+s$. Assume that $L/K$ is imaginary. Then $H(A[\pi])$ is even if and only if $\deg(r)>0$.
\end{proposition}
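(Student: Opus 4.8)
The plan is to combine the Hurwitz class number formula modulo~$p$ (Lemma~\ref{lem:hurwitz_number}) with a parity analysis of $H(\mathcal O_L)$ extracted from Lemma~\ref{lem:jac}. Write $A[\pi] = B_f$ for the conductor $f$ of the order $A[\pi]$ inside $\mathcal O_L$, so that Lemma~\ref{lem:hurwitz_number} gives
\[
H(A[\pi]) \equiv H(\mathcal O_L) \prod_{\wp \mid f} (1 - \chi_L(\wp)) \pmod 2.
\]
If $r = 0$ then $L/K$ is inseparable and $H(\mathcal O_L) = 1$ by Lemma~\ref{lem:jac}, which is odd, in agreement with $\deg(r) \le 0$; so I may assume $r \neq 0$, i.e.\ $L/K$ is separable. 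The first step is to translate $\deg(r)$ into ramification data. Since $\bF_q$ has characteristic~$2$, the discriminant of $X^2 + rX + s$ equals $r^2$, so the discriminant ideal of $A[\pi]$ is $(r)^2 = f^2 \mathfrak d_{L/K}$, where $\mathfrak d_{L/K}$ is the (finite) discriminant of $L/K$. Taking degrees yields $\deg(r) = \deg(f) + \tfrac12 \deg \mathfrak d_{L/K}$, and since both summands are non-negative I conclude that $\deg(r) = 0$ if and only if $f = (1)$ and $L/K$ is unramified at every finite place.

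The crux is the following parity statement, which I would prove next: $H(\mathcal O_L)$ is odd if and only if $L/K$ is unramified at every finite place. By Lemma~\ref{lem:jac}, if $\infty$ is inert then $H(\mathcal O_L) = (1 - \chi_L(\infty)) \# J_L(\bF_q) = 2 \# J_L(\bF_q)$ is even in the geometric case $g \ge 0$; the only everywhere-unramified separable extension is the constant-field extension, which is exactly the case $g = -1$ with $H(\mathcal O_L) = 1$. If instead $\infty$ ramifies, then $H(\mathcal O_L) = \# J_L(\bF_q)$, whose parity is governed by $J_L[2](\bF_q)$. Here I would invoke the Deuring--Shafarevich formula, equivalently the branch-divisor description of the $2$-torsion of the Jacobian of an Artin--Schreier double cover: as a Galois module, $J_L[2]$ is the degree-zero part of the permutation module on the geometric branch points, so $J_L[2](\bF_q)$ has $\bF_2$-dimension $(\#\{\text{ramified places}\}) - 1$, using that the ramified place $\infty$ has degree~$1$. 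Hence $J_L[2](\bF_q) \neq 0$ — equivalently $\# J_L(\bF_q)$ is even — exactly when there are at least two ramified places, i.e.\ when some finite place ramifies. This establishes the parity statement.

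Finally I would assemble the pieces. If $\deg(r) = 0$, then $f = (1)$ (so the product is empty) and $L/K$ is unramified at all finite places, whence $H(A[\pi]) \equiv H(\mathcal O_L) \equiv 1 \pmod 2$. If $\deg(r) > 0$, then either some finite place ramifies, in which case $H(\mathcal O_L)$ is even by the parity statement and therefore $H(A[\pi])$ is even; or no finite place ramifies but $f \neq (1)$, in which case every $\wp \mid f$ is unramified, so $\chi_L(\wp) = \pm 1$ and each factor $1 - \chi_L(\wp) \in \{0, 2\}$ is even, again forcing $H(A[\pi])$ to be even. This yields the desired equivalence, and in particular never requires analysing $\chi_L(\wp)$ at conductor primes that happen to ramify, since such primes occur only together with finite ramification, where $H(\mathcal O_L)$ is already even.

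The main obstacle I anticipate is the parity of $\# J_L(\bF_q)$ in the case that $\infty$ ramifies: pinning down the Galois-module structure of $J_L[2]$ — equivalently, the precise dependence of the $2$-rank of $J_L(\bF_q)$ on the set of ramified places — is the one genuinely non-formal input, and it is where the characteristic-$2$ Artin--Schreier geometry enters through Deuring--Shafarevich. By contrast, the reduction from $A[\pi]$ to $\mathcal O_L$, the discriminant bookkeeping relating $\deg(r)$ to the conductor and the finite discriminant, and the elimination of the conductor-prime factors are all routine once the parity statement is in hand.
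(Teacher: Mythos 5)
Your proof is correct and follows essentially the same route as the paper: reduce to the maximal order via Lemma~\ref{lem:hurwitz_number}, read off the parity of $H(\mathcal O_L)$ from $J_L[2](\bF_q)$ via the Deuring--Shafarevich formula (the paper realises your permutation-module description of $J_L[2]$ concretely through the divisors $D_S$ supported on the zeros of $\tilde r$, with distinctness checked via reduced divisors), and dispose of the conductor primes because $1-\chi_L(\wp)$ is even at unramified~$\wp$. Your discriminant--conductor identity $\deg(r)=\deg(f)+\tfrac12\deg\mathfrak d_{L/K}$ is a clean repackaging of the paper's bookkeeping $r=\tilde r\,f$ coming from the reduction process of Section~\ref{sec:hypers}, so the two arguments differ only in presentation.
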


\begin{proof} 
By Lemma~\ref{lem:hurwitz_number}, $H(A[\pi])$ is even if $H(\mathcal O_L)$ is even. Let us first consider the parity of~$H(\mathcal O_L)$.

If $L/K$ is inseparable, which happens only if $r=0$, then $H(\mathcal O_L)=1$ by Lemma~\ref{lem:jac}.1. 
If $g=-1$, then $\deg(r) = 0$ and $H(\mathcal O_L)=1$ by Lemma~\ref{lem:jac}.2. 

Assume now that $g \geq 0$ and $L/K$ is separable.
By Lemma~\ref{lem:jac}.3, we know that $H(\mathcal O_L)$ is even if $\infty$ is inert. We also find that if $\infty$ is inert then $\deg(r)\geq \deg(\tilde r)>0$.
Assume therefore that $\infty$ is ramified. This means that $\deg(\tilde{r}) \leq g$ and the coefficient of the degree $2g+2$ term in $\tilde{s}$ is a square, say $b^2$. By sending $X$ to $X + bT^{g+1}$ we may therefore assume that $\deg(\tilde{s}) = 2g+1$.

Since any $\bF_q$-point of the Jacobian $J_L$ of $C_L$ is a torsion point and $J_L(\bF_q)$ is finite, we have that 
\[
J_L(\bF_q) = \prod_{\ell} J_L[\ell^N](\bF_q)
\]
for some $N \geq 0$, where the product is over all prime numbers $\ell$. 
Moreover, $J_L[\ell^N](\bF_q)$ is a finite $\ell$-group, so $\# J_L(\bF_q)$ is even if and only if $\# J_L[2^N](\bF_q)$ is even. 
This is equivalent to $J_L[2^N](\bF_q)$ being non-trivial which is in turn equivalent to $J_L[2](\bF_q)$ being non-trivial.

Let $P_{\infty}$ denote the unique point of~$C_L(\mathbb F_q)$ over ~$\infty$. 
Let $\tilde c(X,T)$ be the affine equation for~$C_L$ as above. 
For any point $P_{(\alpha,\beta)}$ of $C_L(\overline{\mathbb F}_q)\setminus \{P_\infty\}$ with $T$-coordinate $\alpha$ and $X$-coordinate $\beta$, we have that $[P]+[P_{(\alpha,\beta+\tilde r(\alpha))}]-2[P_{\infty}]=0 \in J_L[2](\overline{\mathbb F}_q)$. 
Hence, if $\tilde r(\alpha)=0$ then $\beta$ is the unique square root of $\tilde s(\alpha)$ and the divisor $[P_{(\alpha,\sqrt{\tilde s(\alpha)})}]-[P_{\infty}]$ is in $J_L[2](\overline{\mathbb F}_q)$. 
Thus, for any $h \in \mathbb P(\tilde r)$ we get a divisor 
\[
D_{h}=\sum_{\substack{\alpha \in \overline{\bF}_q \\ h(\alpha)=0}} [P_{(\alpha,\sqrt{\tilde s(\alpha)})}]-\deg(h)[P_{\infty}] \in J_L[2](\mathbb F_q) \subset J_L[2](\overline{\bF}_q).
\]
For any non-empty $S \subset \mathbb P(\tilde r)$, $D_S=\sum_{h\in S} D_h$ is a reduced divisor, and hence will be non-zero (see \cite[Thm.~4.145]{cohen-frey}). 
Moreover, for any non-empty distinct $S,T \subset \mathbb P(\tilde r)$, $D_{S}+D_{T}=D_{S \cup T \setminus S\cap T} \neq 0$. 
It then follows from the Deuring--Shafarevich formula (see for instance \cite[Thm.~1.2]{Shiomi}) that the $2^{\# \mathbb P( \tilde r)}$ elements $D_S$ for $S \subset \mathbb P(\tilde r)$ are precisely the elements of $J_L[2](\mathbb F_q)$. 
We conclude that $H(\mathcal O_L)$ will be odd precisely if $\deg(\tilde r)=0$ and $\deg(\tilde s)>0$. 

Finally, let $A[\pi]=B_f$ for some $f \in A$. For any $\wp \in \mathbb P(f)$, $1 - \chi_L(\wp)$ is odd precisely if $\wp$ ramifies in $L$. If $L$ is inseparable then this happens for all $\wp \in \mathbb P(f)$, so in this case it follows from Lemma~\ref{lem:hurwitz_number} and the above that $H(A[\pi])$ is odd. If $\deg(\tilde r)=0$ then no $\wp \in \mathbb P(f)$ ramifies in~$L$. We conclude that if $\deg( \tilde r)=0$ then, by Lemma~\ref{lem:hurwitz_number} and the above, $H(A[\pi])$ is odd precisely if $f=1$, that is, if $\deg(r)=0$. Finally, if $\deg(\tilde{r}) > 0$, then $\deg(r) > 0$ and by the above $H(\mathcal O_L)$ is even, so we are done. 
\end{proof}

\begingroup
\sloppy
 \printbibliography

@book{cohen-frey,
    author = {Cohen, Henri and Frey, Gerhard and Avanzi, Roberto and Doche, Christophe & Lange, Tanja (Eds.)},
    year = {2006},
    title = {Handbook of Elliptic and Hyperelliptic Curve Cryptography},
    publisher = {Chapman and Hall/CRC},
}

@book{BBP,
    author = {Basson, Dirk and Breuer, Florian and Pink, Richard},
    title = {{D}rinfeld modular forms of arbitrary rank},
    year = {2024},
    series = {Memoirs of the American Mathematical Society},
    volume = {304},
    number = {1531},
}

@book{gekeler_modcurves,
    author = {Gekeler, Ernst-Ulrich},
    year = {1986},
    title = {Drinfeld Modular Curves},
    publisher = {Springer},
    series = {Lecture Notes in Mathematics},
    volume = {1231},
}

@article{magma_system,
    AUTHOR = {Bosma, Wieb and Cannon, John and Playoust, Catherine},
     TITLE = {The {M}agma algebra system. {I}. {T}he user language},
      NOTE = {Computational algebra and number theory (London, 1993)},
    JOURNAL = {Journal of Symbolic Computation},
    VOLUME = {24},
      YEAR = {1997},
    NUMBER = {3-4},
     PAGES = {235--265},
}

@book{magma_handbook,
    AUTHOR = {Bosma, Wieb and Cannon, John and Fieker, Claus and Steel, Allan},
     TITLE = {Handbook of Magma Functions},
    EDITION = {Version 2.19},
      YEAR = {2013},
    NOTE = {5488 pages},
}

@book{papikian,
    author = {Papikian, Mihran},
    year = {2023},
    title = {Drinfeld Modules},
    publisher = {Springer},
    series = {Graduate Texts in Mathematics},
    edition = {1},
    volume = {296},
}

@book{rosen,
    author = {Rosen, Michael},
    year = {2002},
    title = {Number Theory in Function Fields},
    publisher = {Springer New York},
    series = {Graduate Texts in Mathematics},
    volume = {210},
    }

@book{sticht,
    author = {Stichtenoth, Henning},
    year = {2009},
    title = {Algebraic Function Fields and Codes},
    publisher = {Springer},
    edition = {second},
    series = {Graduate Texts in Mathematics},
    volume = {254},
}

@article{band-val_atkin,
    author = {Bandini, Andrea and Valentino, Maria},
    title={On the Atkin $U_t$-operator for $\Gamma_0(t)$-invariant Drinfeld cusp forms},
    journal={Proceedings of the American Mathematical Society},
    volume = {147},
    number = {10},
    year={2019},
    pages = {4171-4187},
    %url={https://doi.org/10.1090/proc/14491},
}

@article{band-val_slopes,
    author = {Bandini, Andrea and Valentino, Maria},
    title = {On the structure and slopes of Drinfeld cusp forms},
    journal = {Experimental Mathematics},
    volume = {31:2},
    pages = {637-651},
    year = {2022},
}

@article{band-val_oldnew,
    author = {Bandini, Andrea and Valentino, Maria},
    title = {Drinfeld cusp forms: oldforms and newforms},
    journal = {Journal of Number Theory},
    volume = {237},
    pages = {124-144},
    year = {2022},
}

@article{band-val_hecke,
    author = {Bandini, Andrea and Valentino, Maria},
    title = {Hecke operators and Drinfeld cusp forms of level~$\boldsymbol {t}$},
    journal = {Bulletin of the Australian Mathematical Society},
    volume = {107},
    number = {2},
    year = {2023},
    pages = {184-195},
}

@article{bockle_maeda,
    author = {B\"ockle, Gebhard and Gr\"af, Peter Mathias and Perkins, Rudolph},
    year = {2021},
    title = {A Hecke-equivariant decomposition of spaces of Drinfeld cusp forms via representation theory, and an investigation of its subfactors},
    journal = {Research in Number Theory},
    volume = {7},
    %doi = {https://doi.org/10.1007/s40993-021-00254-0},
    }

@misc{bockle,
    author = {B\"ockle, Gebhard},
    year = {2002},
    title = {An Eichler-Shimura isomorphism over function fields between Drinfeld modular forms and cohomology classes of crystals},
    note = {Preprint},
    url = {https://arith-geom.github.io/ag-comp-arith-geom/assets/img/fileadmin/groups/arithgeo/templates/data/Gebhard_Boeckle/EiShNew.pdf},
    }

@article{bosser-pellarin,
    author = {Bosser, Vincent and Pellarin, Federico},
    title = {Hyperdifferential Properties of Drinfeld Quasi-Modular Forms},
    journal = {International Mathematics Research Notices},
    year = {2008},
    number = {9},
}

@article{coleman-edixhoven,
    author = {Coleman, Robert and Edixhoven, Bas},
    title = {On the semi-simplicity of the $U_p$-operator on modular forms},
    journal = {Mathematische Annalen},
    year = {1998},
    volume = {310},
    pages = {119 - 127},
}

@article{dalal-kumar,
    author= {Dalal, Tarun and Kumar, Narasimha}, 
    title={Notes on Atkin-Lehner theory for Drinfeld modular forms},
    volume={108},
    number={1},
    journal={Bulletin of the Australian Mathematical Society}, 
    year={2023}, 
    pages={50–68},
    }

@article{gekeler_coeffs,
    author = {Gekeler, Ernst-Ulrich},
    title = {On the Coefficients of Drinfeld modular forms},
    journal = {Inventiones Mathematicae},
    year = {1988},
    volume = {93},
    pages = {667-700},
}

@article{gekeler_finite-dms,
    author = {Gekeler, Ernst-Ulrich},
    title = {On Finite {D}rinfeld Modules},
    journal = {Journal of Algebra},
    year = {1991},
    volume = {141},
    pages = {187-203},
}

@article{gekeler_frob_dist_dm,
    author = {Gekeler, Ernst-Ulrich},
    title = {Frobenius distributions of Drinfeld modules over finite fields},
    journal = {Transactions of the American Mathematical Society},
    year = {2008},
    volume = {360},
    number = {4},
    pages = {1695-1721},
}

@article{goss_modf,
author = {Goss, David},
journal = {Journal für die reine und angewandte Mathematik},
pages = {16-39},
title = {Modular forms for $\mathbb{F}_r[T]$},
volume = {317},
year = {1980},
}

@article{goss_pi-adic,
    author = {Goss, David},
    title = {$\pi$-adic Eisenstein series for function fields},
    journal = {Compositio Mathematica},
    year = {1980},
    volume = {41},
    pages = {3-38},
}

@article{hattori_gouv,
    author = {Hattori, Shin},
    title = {Dimension Variation of Gouvêa–Mazur Type for Drinfeld Cuspforms of Level $\Gamma_1(t)$},
    journal = {International Mathematics Research Notices},
    volume = {2021},
    number = {3},
    pages = {2389-2402},
    year = {2021},
}

@article{hattori_ordinary,
    author = {Hattori, Shin},
    title = {Triviality of the {H}ecke action on ordinary {D}rinfeld cuspforms of level $\Gamma_1(t^n)$},
    journal = {Journal f\"ur die reine und angewandte Mathematik (Crelles Journal)},
    volume = {2022}, 
    number = {792},
    year = {2022},
    pages = {269-288},
}

@article{joshi-petrov,
    author = {Joshi, Kirti and Petrov, Aleksandar},
    title = {On the action of {H}ecke operators on {D}rinfeld modular forms},
    journal = {Journal of Number Theory},
    volume = {137},
    year = {2014},
    pages = {186-200},
    }

@article{KKP,
    author = {Karemaker, Valentijn and Katen, Jeffrey and Papikian, Mihran},
    title = {Isomorphism classes of {D}rinfeld modules over finite fields},
    journal = {Journal of Algebra},
    volume = {644},
    year = {2024},
    pages = {381-410},
}

@article{lehmer_stern,
    author = {Lehmer, Derrick Henry},
    title = {On Stern’s Diatomic Series},
    journal = {The American Mathematical Monthly}, 
    volume = {36},
    number = {2},
    pages = {59–67}, 
    year = {1929},
}

@article{li-meemark,
    author = {Li, Wen-Ching Winnie and Meemark, Yotsanan},
    title = {Hecke operators on Drinfeld cusp forms},
    journal = {Journal of Number Theory},
    year = {2008},
    volume = {128},
    pages = {1941-1965},
}

@article{maeda,
    author = {Maeda, Yoshitaka},
    title = {{M}aeda's conjecture and related topics},
    journal = {RIMS K\^{o}ky\^{u}roku Bessatsu},
    volume = {B53},
    year = {2015},
    pages = {305-324},
}

@article{mattarei,
    author = {Mattarei, Sandro},
    title = {Modular periodicity of binomial coefficients},
    journal = {Journal of Number Theory},
    volume = {117},
    issue = {2},
    year = {2006},
    pages = {471-481},
}

@article{nicole-rosso,
    author = {Nicole, Marc-Hubert and Rosso, Giovanni},
    title = {Perfectoid Drinfeld Modular Forms},
    journal = {Journal de Th\'eorie des Nombres de Bordeaux},
    year = {2021},
    volume = {33},
    pages = {1045-1067},
}

@article{petrov_a-exp,
    author = {Petrov, Aleksandar},
    title = {$A$-expansions of Drinfeld modular forms},
    journal = {Journal of Number Theory},
    volume = {133},
    year = {2013},
    pages = {2247-2266},
}

@article{val_atkin-lehner-theory,
    author = {Valentino, Maria},
    title = {{A}tkin-{L}ehner theory for {D}rinfeld modular forms and applications},
    year = {2022},
    journal = {The Ramanujan Journal},
    volume = {58},
    pages = {633-649},
}

@article{devries_newton,
    author = {de Vries, Sjoerd},
    title = {On {N}ewton's identities in positive characteristic},
    year = {2025},
    journal = {Journal of Algebra},
    volume = {668},
    pages = {348-364},
}

@article{devries_rambound,
    author = {de Vries, Sjoerd},
    title = {A {R}amanujan Bound for {D}rinfeld Modular Forms},
    year = {2025},
    journal = {International Mathematics Research Notices},
    volume = {2025},
    number = {13},
    pages = {rnaf193},
    }

@thesis{diva,
    author = {de Vries, Sjoerd},
    title = {Traces of Hecke operators on Drinfeld modular forms via point counts},
    type = {Licentiate thesis},
    school = {Stockholm University},
    year = {2023}, 
    url = {https://urn.kb.se/resolve?urn=urn:nbn:se:su:diva-223837},
}

@article{yu_isogs,
    author = {Yu, Jiu-Kang},
    title = {Isogenies of Drinfeld Modules over Finite Fields},
    journal = {Journal of Number Theory},
    volume = {54},
    year = {1995},
    pages = {161-171},
}

@article {Bhypers,
    AUTHOR = {Bergstr\"{o}m, Jonas},
     TITLE = {Equivariant counts of points of the moduli spaces of pointed
              hyperelliptic curves},
   JOURNAL = {Documenta Mathematica},
    VOLUME = {14},
      YEAR = {2009},
     PAGES = {259--296},
}

@article {Shiomi,
    AUTHOR = {Shiomi, Daisuke},
     TITLE = {On the {D}euring-{S}hafarevich formula},
   JOURNAL = {Tokyo J. Math.},
  FJOURNAL = {Tokyo Journal of Mathematics},
    VOLUME = {34},
      YEAR = {2011},
    NUMBER = {2},
     PAGES = {313--318},
}
\endgroup

\end{document}